\title{Heteroclinic traveling waves of 2D parabolic Allen-Cahn systems}
\author{Ramon Oliver-Bonafoux\thanks{Sorbonne Université, Laboratoire Jacques-Louis Lions. 4 Place Jussieu, 75005 Paris (France). \linebreak Email: \texttt{ramon.oliver\_bonafoux@upmc.fr}}}
\numberwithin{equation}{section}
\def\R{{\mathbb R}}
\def\N{{\mathbb N}}
\def\CA{\mathcal{A}}
\def\CC{\mathcal{C}}
\def\CE{\mathcal{E}}
\def\CV{\mathcal{V}}
\def\CF{\mathcal{F}}
\def\CW{\mathcal{W}}
\def\FU{\mathfrak{U}}
\def\FP{\mathfrak{P}}
\def\FM{\mathfrak{M}}
\def\scrH{\mathscr{H}}
\def\scrL{\mathscr{L}}
\def\scrF{\mathscr{F}}
\def\sfC{\mathsf{C}}
\def\sfT{\mathsf{T}}
\def\Fu{\mathfrak{u}}
\def\Fm{\mathfrak{m}}
\def\Fw{\mathfrak{w}}
\def\Fq{\mathfrak{q}}
\def\Fd{\mathfrak{d}}
\def\Fe{\mathfrak{e}}
\def\Fr{\mathfrak{r}}
\def\Fi{\mathfrak{i}}
\def\bV{\mathbf{V}}
\def\bU{\mathbf{U}}
\def\bE{\mathbf{E}}
\def\bW{\mathbf{W}}
\def\be{\mathbf{e}}
\def\bm{\mathbf{m}}
\def\bv{\mathbf{v}}
\def\eps{\varepsilon}
\def\loc{\mathrm{loc}}
\def\supp{\mathrm{supp}}
\def\dist{\mathrm{dist}}
\titleclass{\subsubsubsection}{straight}[\subsection]
\newcounter{subsubsubsection}[subsubsection]
\renewcommand\thesubsubsubsection{\thesubsubsection.\arabic{subsubsubsection}}
\renewcommand\theparagraph{\thesubsubsubsection.\arabic{paragraph}} % optional; useful if paragraphs are to be numbered
\renewcommand\paragraph{\@startsection{paragraph}{5}{\z@}%
  {3.25ex \@plus1ex \@minus.2ex}%
  {-1em}%
  {\normalfont\normalsize\bfseries}}
\renewcommand\subparagraph{\@startsection{subparagraph}{6}{\parindent}%
  {3.25ex \@plus1ex \@minus .2ex}%
  {-1em}%
  {\normalfont\normalsize\bfseries}}
\def\toclevel@subsubsubsection{4}
\def\toclevel@paragraph{5}
\def\toclevel@paragraph{6}
\def\l@subsubsubsection{\@dottedtocline{4}{7em}{4em}}
\def\l@paragraph{\@dottedtocline{5}{10em}{5em}}
\def\l@subparagraph{\@dottedtocline{6}{14em}{6em}}
\newtheorem{theorem}{Theorem}\setcounter{theorem}{0}
\newtheorem{definition}{Definition}
\newtheorem{proposition}{Proposition}
\newtheorem{lemma}{Lemma}
\newtheorem{corollary}{Corollary}
\theoremstyle{definition}
\newtheorem{remark}{Remark}
\newtheorem{asu}{}
\newtheorem{hyp}{}
\numberwithin{proposition}{section}
\numberwithin{lemma}{section}
\numberwithin{remark}{section}
\numberwithin{definition}{section}
\numberwithin{example}{section}
\numberwithin{corollary}{section}
\numberwithin{figure}{section}
\begin{document}
\maketitle
\begin{abstract}
In this paper we show the existence of traveling waves $w:  [0,+\infty) \times \R^2 \to \R^k$ ($k \geq 2$) for the parabolic Allen-Cahn system
\begin{equation}
\partial_t w - \Delta w = -\nabla_u V(w) \mbox{ in } [0,+\infty) \times \R^2,
\end{equation}
satisfying some \textit{heteroclinic} conditions at infinity. The potential $V$ is a non-negative and smooth multi-well potential, which means that its null set is finite and contains at least two elements. The traveling wave $w$ propagates along the horizontal axis according to a speed $c^\star>0$ and a profile $\FU$. The profile $\FU$ joins as $x_1 \to \pm \infty$ (in a suitable sense) two locally minimizing 1D heteroclinics which have \textit{different} energies and the speed $c^\star$ satisfies certain uniqueness properties. The proof of variational and, in particular, it requires the assumption of an upper bound, depending on $V$, on the difference between the energies of the 1D heteroclinics.
\end{abstract}

\section{Introduction}
Consider the parabolic system of equations
\begin{equation}\label{parabolic-allencahn}
\partial_t w - \Delta w = -\nabla_u V(w) \mbox{ in } [0,+\infty) \times \R^2,
\end{equation}
where $V: \R^k \to \R$ is a smooth, non-negative, multi-well potential (see assumptions \ref{asu-sigma}, \ref{asu-infinity}, \ref{asu-zeros} later) and $w: [0,+\infty) \times \R^2 \to \R^k$, with $k \geq 2$. We seek for \textit{traveling wave} solutions to \eqref{parabolic-allencahn}. That is, we impose on $w$
\begin{equation}
\forall (t,x_1,x_2) \in [0,+\infty) \times \R^2, \hspace{2mm} w(t,x_1,x_2)=\FU(x_1-c^\star t,x_2),
\end{equation}
where $\FU: \R^2 \to \R^k$ is the \textit{profile} of the wave and $c^\star>0$ is the \textit{speed} of propagation of the wave, which occurs in the $x_1$-direction. Both the profile and the speed are the unknowns of the problem. Replacing in \eqref{parabolic-allencahn}, we find that the profile $\FU$ and $c^\star$ must satisfy the elliptic system
\begin{equation}\label{profile}
-c^\star\partial_{x_1}\FU-\Delta \FU=-\nabla_uV(\FU) \mbox{ in } \R^2.
\end{equation}

The system \eqref{parabolic-allencahn} can be seen as a reaction-diffusion system. Since the early works, motivated by questions from population dynamics, of Fisher \cite{fisher} and Kolmogorov, Petrovsky and Piskunov \cite{kpp}, devoted to a scalar reaction-diffusion equation in one space dimension known today as the Fisher-KPP equation, traveling and stationary waves are known to play a major role in the dynamics of reaction-diffusion problems. For instance, in \cite{fife-mcleod77,fife-mcleod81}, Fife and McLeod proved stability results for the equations considered in \cite{fisher,kpp}. Regarding higher dimensional problems (but always in the scalar case), existence results for traveling waves were obtained by Aronson and Weinberger \cite{aronson-weinberger} for equations with $\R^N$ as space domain and by Berestycki, Larrouturou and Lions \cite{berestycki-larrouturou-lions}, Berestycki and Nirenberg \cite{berestycki-nirenberg} for unbounded cylinders of the type $\R \times \omega$, with $\omega \subset \R^{N-1}$ a bounded domain. We also mention that asymptotic stability results (for a suitable class of perturbations) for traveling waves in the scalar Allen-Cahn equation in $\R^N$ were obtained by Matano, Nara and Taniguchi \cite{matano-nara-taniguchi}.

All the papers mentioned above are devoted to scalar equations and they rely on the application of the maximum principle and its related tools. As it is well-known, the maximum principle does not apply in general to systems of equations, meaning that other techniques are needed in order to study the existence of traveling waves (and their properties in case they exist) for systems. Different, more general, approaches had been taken in order to circumvent the lack of the maximum principle when dealing with parabolic systems. We refer to the books by Smoller \cite{smoller} and Volpert, Volpert and Volpert \cite{volpertx3}. One of these approaches consists on the use of variational methods. In the context of reaction-diffusion equations, this approach seems to appear for the first time in Heinze's PhD thesis \cite{heinze} (even though the existence of a variational framework for reaction diffusion problems was known since \cite{fife-mcleod77,fife-mcleod81}) and subsequently carried on also by Muratov \cite{muratov}, Lucia, Muratov and Novaga \cite{lucia-muratov-novaga}, Alikakos and Katzourakis \cite{alikakos-katzourakis} (see also Alikakos, Fusco and Smyrnelis \cite{alikakos-fusco-smyrnelis}), Risler \cite{risler,risler2021-1,risler2021-2} and, more recently, by Chen, Chien and Huang \cite{chen-chien-huang}. In the latter, the authors consider a parabolic Allen-Cahn system in a two dimensional strip $\R\times (-l,l)$ and find traveling waves which join a well and an approximation of an heteroclinic orbit in $(-l,l)$, for a class of symmetric triple-well potentials. Lastly, we mention that variational methods have also been applied to scalar reaction-diffusion equations, see for instance Bouhours and Nadin \cite{bouhours-nadin} for the case of heterogeneous equations as well as Lucia, Muratov and Novaga \cite{lucia-muratov-novaga04}. In this paper we shall also take a variational approach for dealing with the following question:

\textit{Question: } Assuming that there exist two \textit{heteroclinic orbits}, joining two fixed wells, with \textit{different} energy (defined in \eqref{1D_energy}) levels, does there exist a solution $(c,\FU)$ to \eqref{profile} such that $\FU$ \textit{joins} the two heteroclinic orbits at infinity, uniformly in $x_1$?

Heteroclinic orbits are curves $\Fq: \R \to \R^k$ which solve the equation
\begin{equation}
\Fq''=\nabla_u V(\Fq) \mbox{ in } \R
\end{equation}
and join two \textit{different} wells of $\Sigma$ at $\pm\infty$. Moreover, one asks that the \textit{1D energy} (i. e., the functional associated to the previous equation, see \eqref{1D_energy}) is finite. We show that, under the proper assumptions, the question we posed has an affirmative answer. Our motivation comes from two different sides:
\begin{enumerate}
\item Stationary heteroclinic-type solutions of \eqref{parabolic-allencahn} have been known to exist in several situations for a long time. Indeed, for a class of symmetric potentials, Alama, Bronsard and Gui in \cite{alama-bronsard-gui} showed the existence of a stationary wave (that is, a solution to \eqref{profile} with $c=0$) in the situation such that two heteroclinics with \textit{equal} energy levels exist and are global minimizers of the 1D energy. Their analysis was later extended to potentials without symmetry in several papers, which in some cases obtained similar results by means of different techniques. See Fusco \cite{fusco}, Monteil and Santambrogio \cite{monteil-santambrogio}, Schatzman \cite{schatzman}, Smyrnelis \cite{smyrnelis}. A key observation is that this problem can be seen as a heteroclinic orbit problem for a potential (the 1D energy, see \eqref{1D_energy}) defined in the infinite-dimensional space $L^2(\R,\R^k)$. Therefore, it is natural to aim a solving a connecting orbit problem for potentials defined in, say, Hilbert spaces and then deduce the original problem as a particular case. This is the approach taken in \cite{monteil-santambrogio} (in the metric space setting) and in \cite{smyrnelis} (in the Hilbert space setting).
\item Alikakos and Katzourakis \cite{alikakos-katzourakis}, showed the existence of traveling waves for a class of 1D parabolic systems of gradient type. Essentially, they assume that the potential possesses two local minima (one of them global) at \textit{different} levels. Hence, their potential is not of multi-well type in general. The profile of the traveling waves connects the two local minima at infinity and the determination of the speed becomes also part of the problem.
\end{enumerate}
The results of this paper follow by suitably merging the ideas of the previous items. More precisely, we formulate and provide solutions for a \textit{heteroclinic traveling problem} as that in \cite{alikakos-katzourakis} for potentials defined in an abstract Hilbert space. Then, we recover as a particular case the existence of a traveling wave solution for \eqref{parabolic-allencahn} with heteroclinic behavior at infinity,
\subsection*{Acknowledgments}
I wish to thank my PhD advisor Fabrice Bethuel for bringing this problem into my attention and for many useful comments and remarks during the elaboration of this paper.
\includegraphics[scale=0.3]{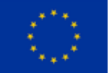}
This program has received funding from the European Union's Horizon 2020 research and innovation programme under the Marie Skłodowska-Curie grant agreement No 754362.
\tableofcontents
\section{The main results: Statements and discussions}\label{section_statements}

We now state the results of this paper. In Theorem \ref{THEOREM_main_TW}, which is the main result, existence of a traveling wave solution with speed $c^\star$ and profile $\FU$ is established as well as the uniqueness (in some sense) of $c^\star$ and the  $L^2$ exponential convergence of $\FU$ at the limit $x_1 \to +\infty$. For proving such a result, we use the bound assumption \ref{asu_perturbation}. In Theorem \ref{THEOREM_strong_bc}, we show that under the additional assumption \ref{asu_convergence}) the condition at infinity as $x_1 \to -\infty$ can be strengthened with respect to that given in Theorem \ref{THEOREM_main_TW} and in particular we show that the solution converges at $-\infty$ at an exponential rate. In Theorem \ref{THEOREM_uniform_convergence}, we show that under the previous assumptions we have uniform convergence of the solution in the $x_1$ and the $x_2$ direction.  Assumption \ref{asu_convergence} is also used for proving Theorem \ref{THEOREM_carac_speed}, which gives further properties on the speed $c^\star$. We conclude this section by describing the outline and main ideas of our proofs (subsection \ref{subs_proofs}) as well as giving examples of potentials that verify the assumptions of this paper (subsection \ref{subs_examples}).
\subsection{Basic assumptions and definitions}
Before stating the results, we recall some standard assumptions, definitions and results and we introduce some notation.  The multi-well potentials $V$ considered in this paper satisfy the following:
\begin{asu}\label{asu-sigma}
$V \in \CC^2_{\loc}(\R^k)$ and  $V \geq 0$ in $\R^k$. Moreover, $V(u) = 0$ if and only if $u \in \Sigma$, where, for some $l \geq 2$
\begin{equation}\label{DEF-Sigma}
\Sigma:= \{\sigma_1,\ldots ,\sigma_l\}.
\end{equation}
\end{asu}
\begin{asu}\label{asu-infinity}
There exist $\alpha_0, \beta_0, R_0 > 0$ such that for all $u \in \R^k$ with $\lvert u \rvert \geq R_0$ it holds $ \langle \nabla_u V(u), u \rangle \geq \alpha_0 \lvert u \rvert^2$ and $V(u) \geq \beta_0$.
\end{asu}
\begin{asu}\label{asu-zeros}
For all $\sigma \in \Sigma$, the matrix $D^2V(\sigma)$ is positive definite.
\end{asu}
As we advanced before, one considers the 1D energy functional
\begin{equation}\label{1D_energy}
E(q) := \int_\R e(q)(t) dt := \int_\R \left[ \frac{1}{2}\lvert q'(t) \rvert^2 + V(q(t)) \right]dt, \hspace{2mm} q \in H^1_{\loc}(\R,\R^k).
\end{equation}
Given a pair of wells $(\sigma_i,\sigma_j) \in \Sigma^2$, as done for instance in Rabinowitz \cite{rabinowitz93} we define
\begin{equation}\label{set-heteroclinic}
X (\sigma_i,\sigma_j):= \left\{ q \in H^1_{\mathrm{loc}}(\R,\R^k): E(q) < +\infty \mbox{ and } \lim_{t \to - \infty} q(t) = \sigma_i, \lim_{t \to +\infty}q(t)=\sigma_j\right\},
\end{equation} 
the set of curves in $\R^k$ connecting $\sigma_i$ and $\sigma_j$. The space $X(\sigma_i,\sigma_j)$ is a metric space when it is endowed with the $L^2$ and the $H^1$ distances, since $q-\tilde{q} \in H^1(\R,\R^k)$ whenever $q$ and $\tilde{q}$ belong to $X(\sigma_i,\sigma_j)$. If $\Fq$ is a critical point of the energy $E$ in $X(\sigma_i,\sigma_j)$, we say that $\Fq$ is an \textit{homoclinic orbit} when $\sigma_i=\sigma_j$ and that $\Fq$ is an \textit{heteroclinic orbit} when $\sigma_i \not = \sigma_j$. Define as well the corresponding infimum value
\begin{equation}\label{mij}
\Fm_{\sigma_i\sigma_j}:= \inf \{ E(q): q \in X(\sigma_i,\sigma_j)\}.
\end{equation}
If $\sigma^-$ and $\sigma^+$ are two distinct wells in $\Sigma$, it turns out that $\Fm_{\sigma^-\sigma^+}$ is not attained in general. We need to add the following assumption:
\begin{asu}\label{asu-wells}
We have that
\begin{equation}
\forall \sigma \in \Sigma \setminus \{ \sigma^-,\sigma^+ \}, \hspace{2mm} \Fm_{\sigma^-\sigma^+} < \Fm_{\sigma^-\sigma}+\Fm_{\sigma\sigma^+}.
\end{equation}
\end{asu}
Notice that one can always find a pair $(\sigma^-,\sigma^+) \in \Sigma^2$ such that \ref{asu-wells} holds. Assuming that \ref{asu-sigma}, \ref{asu-infinity}, \ref{asu-zeros} and \ref{asu-wells} hold, it is well known that there exists a minimizer of $E$ in $X(\sigma^-,\sigma^+)$. Moreover, we have the compactness of minimizing sequences as follows: For any $(q_n)_{n \in \N}$ in $X(\sigma^-,\sigma^+)$ such that $E(q_n) \to \Fm$, there exists $(\tau_n)_{n \in \N}$ in $\R$ and $\Fq \in X(\sigma^-,\sigma^+)$ such that $E(\Fq)=\Fm$ and, up to subsequences
\begin{equation}\label{MS_COMPACTNESS}
\lVert q_n(\cdot+\tau_n)-\Fq \rVert_{H^1(\R,\R^k)} \to 0 \mbox{ as } n \to +\infty.
\end{equation}
As we said before, this result is well-known. The earlier references are Bolotin  \cite{bolotin}, Bolotin and Kozlov \cite{bolotin-kozlov}, Bertotti and Montecchiari \cite{bertotti-montecchiari} and Rabinowitz \cite{rabinowitz89,rabinowitz92}, sometimes in a slightly different setting. Proofs and applications of the compactness property \eqref{MS_COMPACTNESS} are also given in Alama, Bronsard and Gui \cite{alama-bronsard-gui}, Alama et al \cite{alama-bronsard} and Schatzman \cite{schatzman}.

We fix the two wells $\sigma^-$ and $\sigma^+$ for the rest of the paper as well as $\Fm:=\Fm_{\sigma^-\sigma^+}$. According to the previous discussion, we have that the set
\begin{equation}\label{CF}
\CF:=\{ \Fq: \Fq \in X(\sigma^-,\sigma^+) \mbox{ and } E(\Fq)=\Fm\},
\end{equation}
is not empty. We term the elements of $\CF$ as \textit{globally minimizing heteroclinics} between $\sigma^-$ and $\sigma^+$. The term \textit{heteroclinics} comes from the fact that $\sigma^-$ and $\sigma^+$ are different. An important fact is that, due to the translation invariance of $E$ and $X(\sigma^-,\sigma^+)$, we have that if $\Fq \in \CF$, then for all $\tau \in \R$ it holds $\Fq(\cdot+\tau) \in \CF$. 
\subsection{Existence}
The assumptions \ref{asu-sigma}, \ref{asu-infinity}, \ref{asu-zeros} and \ref{asu-wells} stated before are classical. In order to obtain our results, we shall supplement them with the following one, which is more specific to the setting of this paper:
\begin{asu}\label{asu_unbalanced}
Assume that \ref{asu-sigma}, \ref{asu-infinity}, \ref{asu-zeros} and \ref{asu-wells} hold for the potential $V$. We keep the previous notations. We assume the following:
\begin{enumerate}
\item It holds that $\CF^-:=\CF= \{ \Fq^-(\cdot+\tau): \tau \in \R \}$ for some $\Fq^- \in X(\sigma^-,\sigma^+)$, where $\CF$ was defined in \eqref{CF}. We also set $\Fm^-:= \Fm$.
\item There exists $\Fm^+>\Fm^-$ and $\Fq^+ \in X(\sigma^-,\sigma^+)$ such that $E(\Fq^+)=\Fm^+$ and $\Fq^+$ is a local minimizer of $E$ with respect to the $H^1$ norm. We denote $\CF^+:= \{ \Fq^+(\cdot+\tau): \tau \in \R \}$.
\item We have the spectral nondegeneracy assumption due to Schatzman (\cite{schatzman}): For all $q \in X(\sigma^-,\sigma^+)$, let $A(q)$ be the unbounded linear operator in $L^2(\R,\R^k)$ with domain $H^2(\R,\R^k)$ defined as
\begin{equation}
A(q): v  \to -v''+D^2V(q)v,
\end{equation}
then, it holds that for any $\Fq \in \CF^- \cup \CF^+$ we have $\mathrm{Ker}(A(\Fq))=\{ \Fq' \}$. The fact that $\Fq' \in H^2(\R,\R^k)$ follows from the identity $\Fq'''=D^2V(\Fq)\Fq'$.
\end{enumerate}
\end{asu}
Notice that if we had $\Fm^+=\Fm^-$ we would be in the framework of Alama, Bronsard and Gui \cite{alama-bronsard-gui}, for which the 2D solution connecting $\Fq^-$ and $\Fq^+$ is stationary. Essentially, conditions 1. and 2. in \ref{asu_unbalanced} imply that $\Fq^-$ is a globally minimizing heteroclinic and $\Fq^+$ is a locally (but not globally) minimizing heteroclinic. Regarding assumption 3., introduced in \cite{schatzman}, it must be seen as a generalized non-degeneracy assumption for the minima. They are still degenerate critical points because every critical point of $E$ is degenerate due to the invariance by translations. Nevertheless, the assumption 3. implies that they are \textit{non-degenerate up to invariance by translations}. As shown in \cite{schatzman}, such a condition is generic in the sense that given a potential satisfying 1. and 2. one can always find a potential which verifies 1. 2. (with the same minimizers) and 3. and it is arbitrarily close to the given potential\footnote{One can think about the analogy between this property and the classical results for Morse functions (i. e., functions without degenerate critical points), which state that such type of functions are generic.}.  The most important consequence of this assumption, as proven in \cite{schatzman}, is the existence of two constants $\rho_0^+>0$ and $\rho_0^-$ such that
\begin{align}\label{rho_1}
\forall q \in L^2_{\loc}(\R,\R^k), &\hspace{2mm} \dist_{L^2(\R,\R^k)}(q,\CF^\pm) \leq \rho_0^\pm \\ &\Rightarrow \exists! \tau^\pm(q) \in \R: \lVert q-\Fq^\pm(\cdot + \tau^\pm(q)) \rVert_{L^2(\R,\R^k)}=\dist_{L^2(\R,\R^k)}(q,\CF^\pm)
\end{align}
and for some constant $\beta^\pm$ we have
\begin{align}\label{rho_2}
\forall q \in X(\sigma^-,\sigma^+), & \hspace{2mm} \dist_{H^1(\R,\R^k)}(q,\CF^\pm) \leq \rho_0^\pm\Rightarrow \dist_{H^1(\R,\R^k)}(q,\CF^\pm)^2 \leq \beta^\pm(E(q)-\Fm^\pm).
\end{align}
In fact, in \cite{schatzman} this is only proven for global minimizers but the proof readily extends to local ones as well. Notice that \eqref{rho_1} and \eqref{rho_2} state that the energy is \textit{quadratic} around $\CF^-$ and $\CF^+$, which is the infinite-dimensional analogue of \ref{asu-zeros}, taking into account the degeneracy generated by the group of translations. We will define for $r>0$ the sets
\begin{equation}\label{CFr}
\CF^\pm_{r}:= \{ q \in  L^2_{\loc}(\R,\R^k): \dist_{L^2(\R,\R^k)}(q,\CF^\pm) \leq r \},
\end{equation}
so that we can assume without loss of generality that $\CF^+_{\rho_0^+} \cap \CF^-_{\rho^-_0}=\emptyset$. See Figure \ref{Figure_unbalanced} for an explanatory design of \ref{asu_unbalanced}. Let us now assume that $\Fm^+-\Fm^-$ is bounded above as follows:

\begin{figure}[h!]
\centering
\includegraphics[scale=0.65]{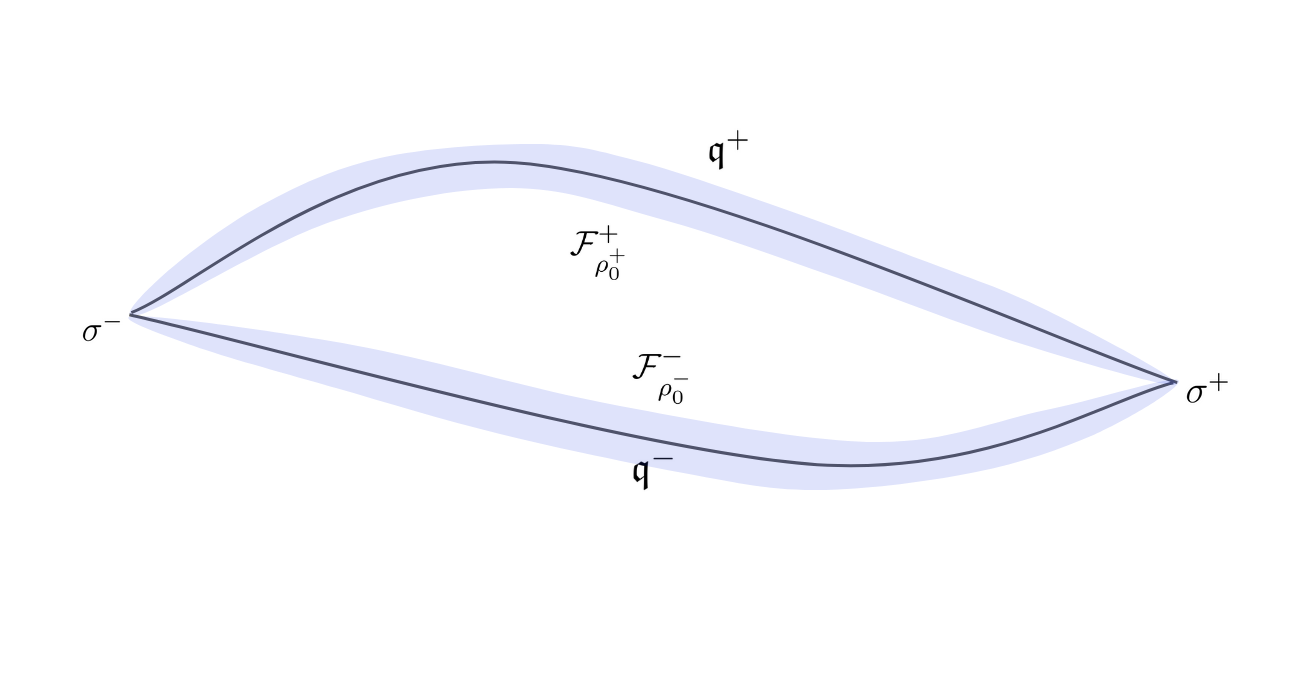}
\caption{Situation described by \ref{asu_unbalanced}. The curves correspond to the traces of $\Fq^-$ and $\Fq^+$ as indicated. The shadowed regions correspond to the traces of the functions in $\CF^-_{\rho_0^-}$ and $\CF^+_{\rho_0^+}$, which are a neighborhood of $\CF^-$ and $\CF^+$ respectively.}\label{Figure_unbalanced}
\end{figure}
 
\begin{asu}\label{asu_perturbation}
Assume that \ref{asu_unbalanced} holds and, moreover,
\begin{equation}
0<\Fm^+ - \Fm^- < E_{\max},
\end{equation}
where $E_{\max}$ will be defined later in \eqref{Fb_max}. Moreover, assume that
\begin{equation}\label{perturbation_ls}
\{ q \in X(\sigma^-,\sigma^+): E(q) < \Fm^+ \} \subset \CF^{-}_{\rho^-_0/2},
\end{equation}
with $ \CF^{-}_{\rho^-_0/2}$ as in \eqref{CFr}.
\end{asu}
See also Figure \ref{Figure_perturbation}. Essentially, \ref{asu_perturbation} requires that $\Fm^--\Fm^+$ is not too large and the bound is given by a constant $E_{\max}$ that can be computed through the constants produced in \eqref{rho_1} and \eqref{rho_2} as a consequence of \ref{asu_unbalanced}. If \ref{asu_perturbation} holds, then we are able to answer the question that we posed at the beginning of the paper in a positive way. More precisely, recall the equation of the profile:
\begin{equation}\label{profile_recall}
-c\partial_{x_1}\FU-\Delta\FU=-\nabla_u V(\FU) \mbox{ in } \R^2
\end{equation}
and consider the conditions at infinity
\begin{equation}\label{weak_bc-}
\exists L^- \in \R, \forall x_1\leq L^-, \hspace{2mm} \FU(x_1,\cdot) \in \CF^{-}_{\rho^-_0/2},
\end{equation}
\begin{equation}\label{weak_bc+}
\exists L^+ \in \R, \forall x_1\geq L^+, \hspace{2mm} \FU(x_1,\cdot) \in \CF^{+}_{\rho^+_0/2}
\end{equation}
As stated before, our proof is variational, which implies that the profile $\FU$ can be characterized as a critical point of a functional. The variational framework is as follows: assume that \ref{asu_perturbation} holds and set
\begin{align}\label{space_S}
S:= \{ U \in H^1_{\loc}(\R,L^2(\R,\R^k)): \exists L \geq 1, & \forall x_1 \geq L, \hspace{2mm} U(x_1,\cdot) \in \CF^+_{\rho_0^+/2} \\
& \forall x_1 \leq -L, \hspace{2mm} U(x_1,\cdot) \in \CF^-_{\rho^-_0/2} \}.
\end{align}
For $U \in S$ and $c>0$ we define the energy
\begin{equation}
E_{2,c}(U):= \int_\R \left(\int_\R \frac{\lvert \partial_{x_1} U(x_1,x_2) \rvert^2}{2}dx_2+(E(U(x_1,\cdot))-\Fm^+)\right)e^{cx_1}dx_1.
\end{equation}
Formally, critical points of $E_{2,c}$ give rise to solutions of \eqref{profile_recall}. If $U \in S$, we can define the translated function $U^\tau:= U(\cdot+\tau,\cdot)$ for $\tau \in \R$. Then, for all $c>0$ we have
\begin{equation}
E_{2,c}(U^\tau)=e^{-c\tau}E_{2,c}(U)
\end{equation}
which implies that
\begin{equation}
\forall c>0, \hspace{2mm} \inf_{U \in S}E_{2,c}(U) \in \{-\infty,0\}.
\end{equation}
We have by now introduced the notations which allow us to state the main result of this paper:
\begin{theorem}[Main Theorem]\label{THEOREM_main_TW}
Assume that \ref{asu_perturbation} holds. Then, we have:
\begin{enumerate}
\item \textbf{Existence}. There exist $c^\star>0$ and $\FU \in \CC^{2,\alpha}(\R^2,\R^k) \cap S$, $\alpha \in (0,1)$, which fulfill \eqref{profile_recall}. The profile $\FU$ satisfies the conditions at infinity \eqref{weak_bc-} and \eqref{weak_bc+} as well as the variational characterization
\begin{equation}\label{FU_min}
E_{2,c^\star}(\FU)=0=\inf_{U \in S}E_{2,c^\star}(U).
\end{equation}
\item \textbf{Uniqueness of the speed}. The speed $c^\star$ is unique in the following sense: Assume that $\overline{c^\star}>0$ is such that
\begin{equation}
\inf_{U \in S}E_{2,\overline{c^\star}}(U) = 0
\end{equation}
and that $\overline{\FU} \in S$ is such that $(\overline{c^\star},\overline{\FU})$ solves \eqref{profile_recall} and $E_{2,\overline{c^\star}}(\overline{\FU})<+\infty$. Then, $\overline{c^\star}=c^\star$. 
\item \textbf{Exponential convergence}. The convergence of $\FU$ at $+\infty$ is exponential with respect to the $L^2$-norm. More precisely, there exists $\FM^+>0$ and $\tau^+ \in \R$ such that for all $x_1 \in \R$
\begin{equation}\label{exp_convergence_main}
\lVert \FU(x_1,\cdot)-\Fq^+(\cdot+\tau^+) \rVert_{L^2(\R,\R^k)} \leq \FM^+ e^{-c^\star t}.
\end{equation}
\end{enumerate}
\end{theorem}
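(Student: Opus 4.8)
The plan is to build the solution as a minimizer of a renormalized energy and to simultaneously tune the speed $c$ so that the infimum is exactly $0$ rather than $-\infty$. First I would study, for each fixed $c>0$, the quantity $I(c):=\inf_{U\in S}E_{2,c}(U)$. By the translation identity $E_{2,c}(U^\tau)=e^{-c\tau}E_{2,c}(U)$ this infimum is either $0$ or $-\infty$, so the essential object is the set $\mathcal{C}:=\{c>0: I(c)=0\}$. I would show that $\mathcal{C}$ is nonempty by establishing monotonicity-type estimates: for large $c$ the exponential weight $e^{cx_1}$ heavily penalizes the region $x_1>0$ where the integrand $E(U(x_1,\cdot))-\Fm^+$ is forced (by the constraint $U(x_1,\cdot)\in\CF^+_{\rho_0^+/2}$ and the quadratic lower bound \eqref{rho_2}) to be nonnegative up to a controlled error, while for small $c$ one can exploit the gap $\Fm^+-\Fm^->0$ together with a competitor that stays near $\CF^-$ for a long range of $x_1$ to make $E_{2,c}$ negative. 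This is exactly where \ref{asu_perturbation} enters: the bound $\Fm^+-\Fm^-<E_{\max}$ is what guarantees that these two regimes overlap and that $\mathcal{C}$ is a nonempty closed set; I would take $c^\star:=\sup\mathcal{C}$ (or $\inf$, whichever the renormalization dictates) and note $I(c^\star)=0$ by closedness.

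Next, with $c^\star$ fixed, I would run the direct method on $E_{2,c^\star}$ over $S$. Take a minimizing sequence $(U_n)$; the exponential weight combined with the structure of $E$ gives coercivity of $\int |\partial_{x_1}U_n|^2 e^{c^\star x_1}$ on compact $x_1$-intervals and, through the quadratic behavior \eqref{rho_1}–\eqref{rho_2} near $\CF^\pm$, controls the tails as $x_1\to\pm\infty$. The key compactness issue is the translation invariance: because $I(c^\star)\in\{0,-\infty\}$, a naive minimizing sequence can slide off to $x_1=+\infty$. I would break this degeneracy by imposing a normalization — e.g. fixing the "first time" $L$ at which $U_n(x_1,\cdot)$ enters $\CF^+_{\rho_0^+/2}$, or pinning $\dist_{L^2}(U_n(0,\cdot),\CF^-)$ at a fixed value in $(\rho_0^-/2,\rho_0^-)$ using \eqref{perturbation_ls} and a continuity/intermediate-value argument. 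Having localized the sequence, weak $H^1_{\loc}$ limits pass to a limit $\FU\in S$ with $E_{2,c^\star}(\FU)=0$, by lower semicontinuity of the Dirichlet term and Fatou on the (nonnegative, after adding a constant) renormalized integrand; one must check the limit is not a trivial constant profile, which again is where the normalization and the strict inequality $\Fm^+>\Fm^-$ are used. Interior elliptic regularity (Schauder) applied to \eqref{profile_recall}, once one knows $\FU\in H^1_{\loc}$ solves the equation weakly and is bounded (boundedness from \ref{asu-infinity} via a maximum-principle-free energy truncation argument), upgrades $\FU$ to $\CC^{2,\alpha}$.

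For part 2, uniqueness of the speed, I would argue that if $\overline{c^\star}$ also makes the infimum zero and $\overline{\FU}$ is a finite-energy solution for that speed, then a comparison of the two renormalized energies along $\overline{\FU}$ (or along a well-chosen family of translates and cutoffs interpolating the two asymptotic heteroclinics) forces $\overline{c^\star}=c^\star$: the map $c\mapsto$ (renormalized energy of a fixed admissible competitor) is strictly monotone because differentiating in $c$ brings down a factor $x_1$ with a sign dictated by the asymptotic behavior, so two distinct speeds cannot both yield $0$. For part 3, the exponential convergence at $+\infty$, I would combine the variational identity $E_{2,c^\star}(\FU)=0$ with the quadratic lower bound \eqref{rho_2} near $\CF^+$: setting $\phi(x_1):=\dist_{H^1}(\FU(x_1,\cdot),\CF^+)^2$ for $x_1\ge L^+$, the energy identity plus \eqref{rho_2} yields a differential inequality of the form $\phi'\le -\kappa\phi$ (a Gronwall/ODE comparison, after controlling the unique nearest-point projection $\tau^+(x_1)$ from \eqref{rho_1}), whence exponential decay of $\phi$ and, after integrating the resulting bound on $\partial_{x_1}\tau^+$, the stated estimate \eqref{exp_convergence_main} with a single limiting translation $\tau^+$.

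The main obstacle I anticipate is the first paragraph: showing that the set $\mathcal{C}$ of admissible speeds is nonempty and that the borderline speed $c^\star$ is strictly positive — this is precisely the step that cannot work without the quantitative smallness hypothesis \ref{asu_perturbation}, and getting the two regimes (large $c$ forcing $I(c)=0$, small $c$ forcing $I(c)=-\infty$) to be separated by a genuine threshold, with the explicit constant $E_{\max}$, requires careful construction of test functions that transit from $\CF^-$ to $\CF^+$ at minimal renormalized cost.
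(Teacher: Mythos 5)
Your high-level plan — study the dichotomy $\inf_{U\in S}E_{2,c}(U)\in\{-\infty,0\}$, identify a threshold speed, and then minimize at that speed — matches the paper's strategy in outline. You also correctly locate where \ref{asu_perturbation} must enter (in separating the two regimes of speeds), and your observations about the small-$c$ competitor that lingers near $\CF^-$ and about the $L^\infty$ truncation for regularity are both in the spirit of what the paper does. However, there is a genuine gap at the centre of the argument, namely your plan to run the direct method on $E_{2,c^\star}$ over all of $S$ after imposing a ``normalization'' such as pinning $\dist_{L^2}(U_n(0,\cdot),\CF^-)$ at a fixed value. This does not restore compactness. The problem is not merely the translation invariance you mention, but that a minimizing sequence can oscillate back and forth between the neighborhoods $\CF^-_{\rho_0^-/2}$ and $\CF^+_{\rho_0^+/2}$ in wider and wider $x_1$-windows, with the negative contributions to $E_{2,c}$ near $\CF^-$ compensating the positive ones; a single pinning point does not rule this out, and in the limit the sequence can degenerate. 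This is precisely the obstruction the paper is organized around: it does \emph{not} minimize directly over $S$, but over the constrained classes $X_T$ (functions forced into $\CF^\pm_{\rho_0^\pm/2}$ outside $[-T,T]$), where the energy is well defined and the infimum is attained for every $c$ and $T$; then the key step — Proposition \ref{PROPOSITION_comparison} and Corollary \ref{COROLLARY_constrained_behavior} — uses \ref{asu_perturbation} (through the inclusion \eqref{perturbation_ls}) to show the constrained minimizers cannot oscillate and so the distance $t^+-t^-$ between the transition times is bounded uniformly in $T$; only then can the constraints be removed at the boundary value $c^\star$ of the set $\CC$. In your proposal \ref{asu_perturbation} is invoked only to say ``the two regimes overlap,'' which is not its actual role and does not by itself fix the compactness problem. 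Related to this, you also take for granted that $E_{2,c}$ is well defined on $S$, whereas the negative part of the integrand needs the $X_T$ structure (or a tail hypothesis) to be integrable against $e^{cx_1}$.

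Two smaller points. For the exponential rate at $+\infty$ the paper does not need a Gronwall argument or the PDE at all: since $\int|\partial_{x_1}\FU|^2 e^{c^\star x_1}<+\infty$, Cauchy--Schwarz gives $\int_{x_1}^{\infty}\lVert\partial_{s}\FU(s,\cdot)\rVert_{L^2}\,ds\lesssim e^{-c^\star x_1/2}$ and hence the $L^2$ decay \eqref{exp_convergence_main} for \emph{any} finite-energy element of the space (Lemma \ref{LEMMA_limit+}); your differential inequality in $\dist_{H^1}(\FU(x_1,\cdot),\CF^+)^2$ would, if it closed, prove $H^1$ exponential decay, which is strictly stronger than what the theorem asserts and which the paper explicitly states it cannot obtain. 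For the uniqueness of the speed, your sketch (``differentiating in $c$ brings down a factor $x_1$'') gestures at monotonicity but does not identify the actual mechanism; the paper's Proposition \ref{PROPOSITION-uniqueness} uses the pointwise energy identity \eqref{identity_uniqueness} along a solution, integrates it, and extracts a sign from $(c_1-c_2)\int\lVert \bU_2'\rVert^2 e^{c_1 t}\,dt$, which is a considerably more precise statement.
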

\begin{remark}
The existence part of Theorem \ref{THEOREM_main_TW} states that there exists a solution $(c^\star,\FU)$ such that $\FU$ is a global minimizer of $E_{c^\star}$ in $S$. We also have that the speed $c^\star$ is unique for some class of solutions, namely for finite energy solutions and speeds for which the corresponding energy is bounded below in $S$. In particular, $c^\star$ is unique among the class of globally minimzing profiles. In other words, if $c>0$ is such that the infimum of $E_c$ in $S$ is attained, then $c=c^\star$. This is analogous to what it was shown in Alikakos and Katzourakis \cite{alikakos-katzourakis}. As explained in the introduction, the main drawback of our approach is the existence assumption \ref{asu_perturbation}. In particular, the definition of the upper bound $E_{\max}$ is technical and it is possible that in several situations it could be small. Nevertheless, in subsection \ref{subs_examples} we show that there exists examples of potentials for which \ref{asu_perturbation} holds.
\end{remark}
\begin{figure}[h!]
\centering
\includegraphics[scale=0.35]{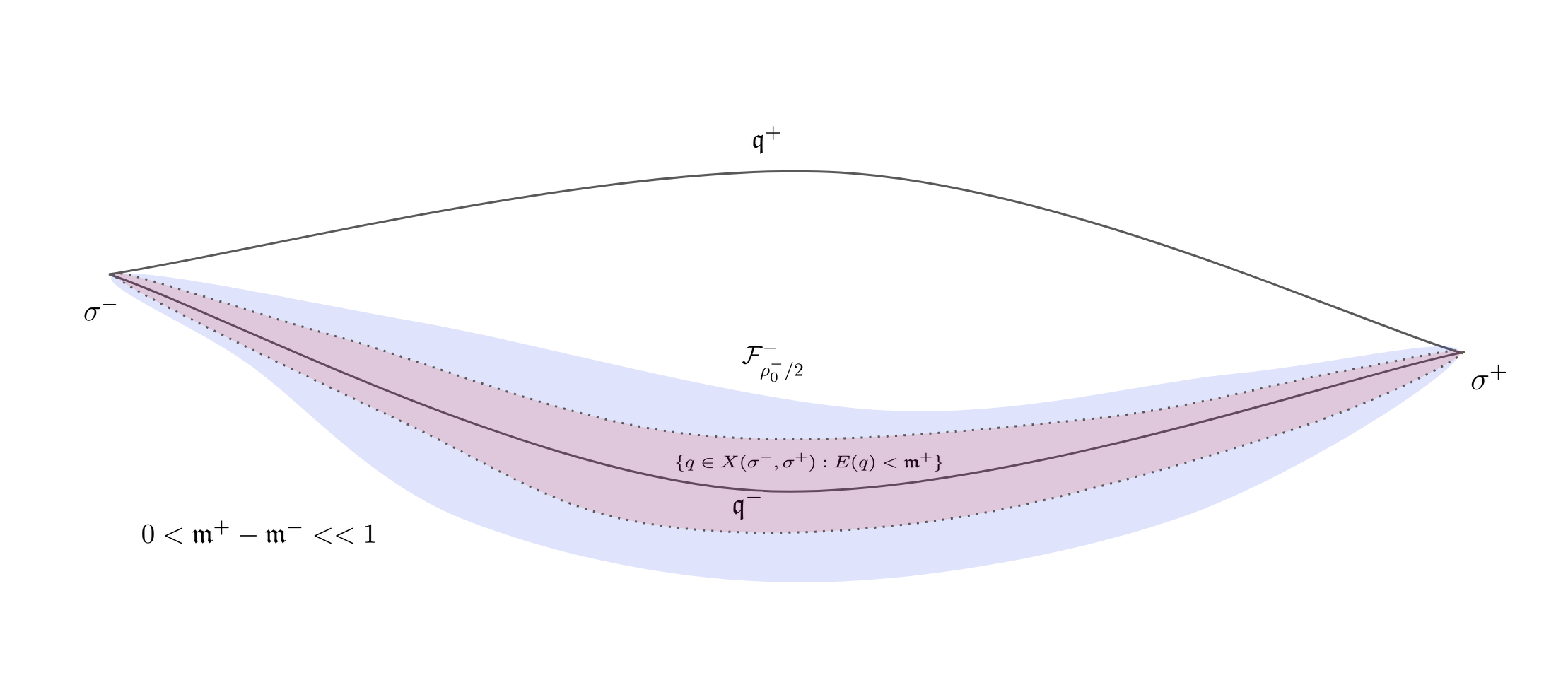}
\caption{Representation of \ref{asu_perturbation}. While the larger shadowed region corresponds to $\CF^-_{\rho_0^-/2}$, the smaller one which is contained inside represents the set $\{ q \in X(\sigma^-,\sigma^+): E(q) < \Fm^+\}$. Moreover, the value $\Fm^+-\Fm^-$ must be smaller than $E_{\max}$, defined in \eqref{Fb_max}.}\label{Figure_perturbation}
\end{figure}
\subsection{Conditions at infinity}
The solutions given by Theorem \ref{THEOREM_main_TW} satisfy the conditions at infinity \eqref{weak_bc-} and \eqref{weak_bc+}. As we can see, condition \eqref{weak_bc-} is more imprecise than expected, as it only states that $\FU(x_1,\cdot)$ is not far from $\CF^-$ with respect to the $L^2$ distance when $x_1$ is close enough to $-\infty$. In particular, we cannot ensure that $\FU$ is really heteroclinic, in the sense of connecting two stable states as $x_1 \to \pm \infty$. Therefore, it is reasonable to wonder if we can establish a behavior of the type
\begin{equation}\label{strong_bc-}
\exists \tau^- \in \R, \hspace{2mm} \lVert \FU(x_1,\cdot)-\Fq^-(\cdot+\tau^-) \rVert_{L^2(\R,\R^k)} \to 0 \mbox{ as } x_1 \to -\infty,
\end{equation}
so that $\FU$ is an actual heteroclinic. While \eqref{strong_bc-} was established in \cite{alikakos-katzourakis}, their argument does not seem to apply to the infinite-dimensional setting, which means that new ideas are needed. We have been able to show that \eqref{strong_bc-} holds under the following additional assumption:
\begin{asu}\label{asu_convergence}
We have that assumption \ref{asu_perturbation} holds and, additionally:
\begin{equation}\label{convergence_1}
\Fm^+-\Fm^- < \frac{( \mu^-\Fd_0)^2}{2},
\end{equation}
where the constants $\Fd_0$ and $\mu^-$ are defined later in \eqref{Fd0} and \eqref{mu-} respectively.
\end{asu}
Assumption \ref{asu_convergence} is not too restrictive (at least with respect to the assumptions we already have), since an upper bound on $\Fm^+-\Fm^-$ is already imposed in \ref{asu_perturbation}, meaning that at worst one only needs to lower it. The definition of the constant $\Fd_0$ is essentially technical depends only on the distance between the sets $\CF^-$ and $\CF^+$, while $\mu^-$ depends only on local information around $\CF^-$. Anyway, the result given by \ref{asu_convergence} writes as follows:
\begin{theorem}\label{THEOREM_strong_bc}
Assume that \ref{asu_perturbation} and \ref{asu_convergence} hold. Let $(c^\star,\FU)$ be the solution given by Theorem \ref{THEOREM_main_TW}. Then, $\FU$ satisfies the stronger condition \eqref{strong_bc-}. Moreover, it holds that $c^\star < \mu^-$, $\mu^-$ to be defined later in \eqref{mu-}, and there exists $\FM^->0$ such that for all $x_1 \in \R$
\begin{equation}\label{exp_convergence_-}
\lVert \FU(x_1,\cdot)- \Fq^-(\cdot+\tau^-) \rVert_{L^2(\R,\R^k)} \leq \FM^- e^{(\mu^--c^\star)x_1}
\end{equation}
\end{theorem}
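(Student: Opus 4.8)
The plan is to upgrade the weak condition \eqref{weak_bc-}, which only says that $\FU(x_1,\cdot)$ lies in the $L^2$-tube $\CF^-_{\rho_0^-/2}$ for $x_1$ very negative, into genuine convergence to a single translate $\Fq^-(\cdot+\tau^-)$ with an exponential rate. The starting point is the energy identity: since $\FU$ minimizes $E_{2,c^\star}$ in $S$ and the infimum equals $0$, we have $E_{2,c^\star}(\FU)=0$, and for all $x_1$ one can write (by the monotonicity formula / first-variation argument in the $x_1$ variable that must already be in the paper)
\begin{equation}
\int_{-\infty}^{x_1}\left(\int_\R\frac{|\partial_{x_1}\FU|^2}{2}\,dx_2+\bigl(E(\FU(s,\cdot))-\Fm^+\bigr)\right)e^{c^\star s}\,ds = \text{(boundary term at }x_1).
\end{equation}
Because $\{q:E(q)<\Fm^+\}\subset\CF^-_{\rho_0^-/2}$ by \eqref{perturbation_ls}, on the region $x_1\le L^-$ the slice $\FU(x_1,\cdot)$ stays inside the tube where the quadratic lower bound \eqref{rho_2} applies; hence $E(\FU(x_1,\cdot))-\Fm^-\geq (\beta^-)^{-1}\dist_{H^1}(\FU(x_1,\cdot),\CF^-)^2$. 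First I would use this to control $\int_{-\infty}^{x_1}\dist_{L^2}(\FU(s,\cdot),\CF^-)^2 e^{c^\star s}\,ds$ and $\int_{-\infty}^{x_1}\|\partial_{x_1}\FU\|_{L^2}^2 e^{c^\star s}\,ds$ by a quantity that decays like $e^{c^\star x_1}$, using that $\Fm^+-\Fm^-$ is small (this is where \ref{asu_convergence} enters, ensuring the "defect" energy $\Fm^+-\Fm^-$ does not overwhelm the positive quadratic term — and this is the step I expect to be the main obstacle, since it requires carefully tracking how the $e^{c^\star x_1}$ weight interacts with the fixed gap $\Fm^+-\Fm^-$ and extracting the precise threshold $(\mu^-\Fd_0)^2/2$).

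Once those weighted integrals are controlled, the second step is a standard ODE/Gronwall-in-$x_1$ argument. Writing $\delta(x_1):=\dist_{L^2}(\FU(x_1,\cdot),\CF^-)$, the quadratic structure near $\CF^-$ (the infinite-dimensional nondegeneracy from \ref{asu_unbalanced}(3), via the spectral gap of $A(\Fq^-)$) gives a differential inequality of the form $\tfrac12\tfrac{d^2}{dx_1^2}(\delta^2)\gtrsim (\mu^-)^2\delta^2 - (\text{small})$ modulo the $c^\star\partial_{x_1}$ drift term; comparing with the ODE $y''+c^\star y' = (\mu^-)^2 y$ whose decaying solution behaves like $e^{(\mu^--c^\star)x_1}$ (valid precisely because $\mu^-$ exceeds the other root, and one shows $c^\star<\mu^-$ here), one concludes $\delta(x_1)\le \FM^- e^{(\mu^--c^\star)x_1}$. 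The bound $c^\star<\mu^-$ itself should follow by testing: if $c^\star\ge\mu^-$ the weighted energy would fail to be finite or the infimum would be $-\infty$, contradicting \eqref{FU_min}; alternatively it drops out of the requirement that the decaying comparison solution actually decay.

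The final step is to promote the decay estimate to convergence to a \emph{fixed} translate. The projection $\tau^-(\FU(x_1,\cdot))$ onto $\CF^-$ is well-defined for $x_1\le L^-$ by \eqref{rho_1}; I would show $\tfrac{d}{dx_1}\tau^-(\FU(x_1,\cdot))$ is integrable near $-\infty$ (its size is controlled by $\|\partial_{x_1}\FU(x_1,\cdot)\|_{L^2}$ divided by the lower bound $\|\Fq^{-\prime}\|_{L^2}$ on the "speed" along the orbit, and $\int_{-\infty}^{L^-}\|\partial_{x_1}\FU\|_{L^2}\,dx_1<\infty$ follows from the weighted $L^2$ bound on $\partial_{x_1}\FU$ together with Cauchy–Schwarz against $e^{-c^\star x_1/2}$ — note the weight helps here since $x_1\to-\infty$). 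Hence $\tau^-(\FU(x_1,\cdot))\to\tau^-$ for some limit $\tau^-\in\R$, and combining with $\delta(x_1)\to0$ yields \eqref{strong_bc-} and, after absorbing the $O(\int_{x_1}^{L^-}\|\partial_{x_1}\FU\|)$ error into the exponential, the quantitative bound \eqref{exp_convergence_-}. The constants $\Fd_0$ and $\mu^-$ appearing in the statement are exactly the ones governing, respectively, the minimal $L^2$-separation between $\CF^-$ and $\CF^+$ (used to guarantee the trajectory cannot escape the good tube) and the spectral gap of $A(\Fq^-)$ (the decay rate), so the hypothesis \eqref{convergence_1} is precisely what makes the barrier/Gronwall argument close.
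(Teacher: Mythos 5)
Your plan correctly identifies the ingredients (minimality, the quadratic lower bound \eqref{rho_2}, the need to track weighted tails, the final Cauchy/convergence step), but it misfires on the two steps that actually carry the weight. The bound $c^\star<\mu^-$ does not follow from ``testing'' or from demanding that a comparison solution decay; both of those presume the decay you are trying to establish. In the paper it comes from an independent a priori upper bound on the speed: for a constrained minimizer with nonpositive weighted energy the entry times $t^\pm$ into the two tubes satisfy $\Fd_0\le\lVert\FU(t^+,\cdot)-\FU(t^-,\cdot)\rVert_{L^2}$, and Cauchy--Schwarz on $\int\lVert\partial_{x_1}\FU\rVert^2 e^{c^\star x_1}\,dx_1$ together with positivity of the energy density on $(t^-,+\infty)$ gives $c^\star\le\sqrt{2(\Fm^+-\Fm^-)}/\Fd_0$ (this is \eqref{sup_C_bound} in Lemma \ref{LEMMA_set_CC}). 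Assumption \ref{asu_convergence} is chosen precisely so that this right-hand side is $<\mu^-$. Thus $\Fd_0$ plays an a priori speed-bound role, not the ``keeps the trajectory inside the tube'' role you assign it.

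The rate $\mu^--c^\star$ also does not arise from a second-order ODE on $\delta^2$ via a spectral gap of $A(\Fq^-)$. The constant $\mu^-$ is the algebraic expression $1/(\beta^-+\overline{\beta^-})$ built from the quadratic-growth constants of \eqref{rho_2}, \eqref{overline_beta}, not a Hessian eigenvalue, and in any case the positive characteristic root of $y''+c^\star y'=(\mu^-)^2 y$ is $\bigl(-c^\star+\sqrt{(c^\star)^2+4(\mu^-)^2}\bigr)/2$, which equals $\mu^--c^\star$ only when $c^\star=0$: as written your comparison does not produce the stated rate. What the paper actually does (Proposition \ref{PROPOSITION_conv_sol_-}) is a first-order Gronwall on the weighted tail $\theta(t):=\int_{-\infty}^t\bigl(E(\FU(s,\cdot))-\Fm^-\bigr)e^{c^\star s}\,ds$: comparing $\FU$ with the competitor that equals the projection $P^-(\FU(t,\cdot))$ on $(-\infty,t-1]$ and interpolates linearly on $[t-1,t]$, minimality together with Lemma \ref{LEMMA_r} gives $\mu^-\theta(t)\le\theta'(t)$; integrating the logarithm yields exponential decay of $\theta$, the equipartition inequality of Lemma \ref{LEMMA_equipartition_pointwise} (which encodes the first-variation identity you alluded to) transfers this to $\lVert\partial_{x_1}\FU\rVert_{L^2}^2$, and Cauchy--Schwarz produces \eqref{exp_convergence_-}. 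Your final step of tracking the projection parameter $\tau^-(\FU(x_1,\cdot))$ would also work, but is more elaborate than the paper's direct Cauchy argument for $\FU(x_1,\cdot)$ itself.
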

The natural question is whether \eqref{exp_convergence_main} in Theorem \ref{THEOREM_main_TW} and \eqref{exp_convergence_-} in Theorem \ref{THEOREM_strong_bc} can be improved. In particular, whether the $L^2$-norm can be replaced by the $H^1$-norm. We conjecture that the answer to this question is positive, but we do not have a proof of this fact. However, as one can check in Smyrnelis \cite{smyrnelis} and Fusco \cite{fusco}, such a fact holds for the balanced 2D heteroclinic solution. They obtain these properties by combining standard elliptic estimates with some properties which are intrinsic to minimal solutions of the elliptic system \eqref{profile} with $c^\star=0$. See the results of section 4 in Alikakos, Fusco and Smyrnelis \cite{alikakos-fusco-smyrnelis}, mainly based on Alikakos and Fusco \cite{alikakos-fusco-15}. The main obstacle is that even if one was able to extend their analysis to the case $c^\star>0$, a crucial hypothesis of in their results is that solutions are minimal with respect to compactly supported perturbations, but the solution of Theorem \ref{THEOREM_main_TW} is only \textit{locally} minimizing (due to the fact that $\Fq^+$ is a local minimizer of the 1D energy). Therefore, we leave this question open. Nevertheless, besides the $L^2$-convergence rates \eqref{exp_convergence_main} and \eqref{exp_convergence_-} on can prove uniform convergence both in the $x_1$ and the $x_2$ direction:
\begin{theorem}\label{THEOREM_uniform_convergence}
Assume that \ref{asu_perturbation} holds. Let $(c^\star,\FU)$ be the solution given by Theorem \ref{THEOREM_main_TW}. Then, we have that
\begin{equation}\label{uniform_convergence_1}
\lim_{x_1 \to +\infty}\lVert \FU(x_1,\cdot)-\Fq^+(\cdot+\tau^+) \rVert_{L^\infty(\R,\R^k)}=0
\end{equation}
and for all $L \in \R$ we have
\begin{equation}\label{uniform_convergence_2}
\lim_{x_2 \to \pm \infty} \lVert \FU(\cdot,x_2)-\sigma^\pm \rVert_{L^\infty([L,+\infty),\R^k)}=0.
\end{equation}
If, moreover, \ref{asu_convergence} holds, then we have
\begin{equation}\label{uniform_convergence_3}
\lim_{x_1 \to -\infty}\lVert \FU(x_1,\cdot)-\Fq^-(\cdot+\tau^-) \rVert_{L^\infty(\R,\R^k)}=0
\end{equation}
and \eqref{uniform_convergence_2} can be improved into
\begin{equation}\label{uniform_convergence_4}
\lim_{x_2 \to \pm \infty}\lVert \FU(\cdot,x_2)-\sigma^\pm \rVert_{L^\infty(\R,\R^k)}=0.
\end{equation}
\end{theorem}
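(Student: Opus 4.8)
The plan is to obtain all four statements from the $L^2$-exponential estimates that are already available (Theorems \ref{THEOREM_main_TW} and \ref{THEOREM_strong_bc}), combined with uniform interior elliptic estimates and — for the statements involving the $x_2$-direction — a compactness/rigidity argument. First I would record the standing a priori bounds. Recall that $\FU$ is bounded on $\R^2$ (a consequence of \ref{asu-infinity}, established in the course of proving Theorem \ref{THEOREM_main_TW}); interior Schauder estimates applied on unit balls then give a bound on $\|\FU\|_{\CC^{2,\alpha}(\R^2,\R^k)}$ that is uniform over $\R^2$, so that $\FU$ and $\nabla\FU$ are globally bounded. Likewise each $\Fq^\pm$ is bounded, and since $(\Fq^\pm)'\in L^2(\R,\R^k)$ while $(\Fq^\pm)''=\nabla_uV(\Fq^\pm)$ is bounded, $(\Fq^\pm)'$ is bounded (and $\to 0$ at $\pm\infty$). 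Hence for every $x_1$ the function $x_2\mapsto\FU(x_1,x_2)-\Fq^\pm(x_2+\tau^\pm)$ is Lipschitz with a constant $K$ independent of $x_1$. I would then use the elementary one-dimensional inequality $\|g\|_{L^\infty(\R,\R^k)}\le(4K)^{1/3}\|g\|_{L^2(\R,\R^k)}^{2/3}$ for every $K$-Lipschitz $g$ (if $|g(y_0)|=a$ then $|g|\ge a/2$ on an interval of length $a/K$ around $y_0$): applied to $g=\FU(x_1,\cdot)-\Fq^+(\cdot+\tau^+)$ with \eqref{exp_convergence_main} this gives $\|\FU(x_1,\cdot)-\Fq^+(\cdot+\tau^+)\|_{L^\infty(\R,\R^k)}\le(4K)^{1/3}(\FM^+)^{2/3}e^{-\frac23c^\star x_1}\to0$, i.e. \eqref{uniform_convergence_1}; the same computation with $g=\FU(x_1,\cdot)-\Fq^-(\cdot+\tau^-)$ and \eqref{exp_convergence_-} (available once \ref{asu_convergence} holds) gives \eqref{uniform_convergence_3}.

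For \eqref{uniform_convergence_2}, fix $L\in\R$ and treat the limit $x_2\to+\infty$ (towards $\sigma^+$), the case $x_2\to-\infty$ being symmetric. Pick $r_0>0$ so small that $\sigma^+$ is the only point of $\Sigma$ in $\overline{B(\sigma^+,r_0)}$ and the smallest eigenvalue of $D^2V$ is bounded below by some $\mu'>0$ there. Using \eqref{uniform_convergence_1} and $\Fq^+(t)\to\sigma^+$ as $t\to+\infty$, choose $L'\ge L$ and $R_0$ with $\sup_{x_1\ge L'}\|\FU(x_1,\cdot)-\Fq^+(\cdot+\tau^+)\|_{L^\infty(\R,\R^k)}\le r_0/4$ and $|\Fq^+(t+\tau^+)-\sigma^+|\le r_0/4$ for $t\ge R_0$; then $|\FU(x_1,x_2)-\sigma^+|\le r_0/2$ whenever $x_1\ge L'$ and $x_2\ge R_0$, which settles \eqref{uniform_convergence_2} on $\{x_1\ge L'\}$. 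Suppose it failed on the strip $\{L\le x_1\le L'\}$: there would be $\delta>0$ and sequences $x_1^n\in[L,L']$, $x_2^n\to+\infty$ with $|\FU(x_1^n,x_2^n)-\sigma^+|\ge\delta$. Write $x_2^n=m_n+s_n$, $m_n\in\N$, $s_n\in[0,1)$, and pass to a subsequence so that $x_1^n\to\bar x_1\in[L,L']$ and $s_n\to\bar s$. The translates $W_n:=\FU(\cdot,\cdot+m_n)$ solve \eqref{profile_recall} with the uniform bound above, so along a subsequence $W_n\to W_\infty$ in $\CC^2_{\loc}(\R^2,\R^k)$, with $W_\infty$ again a solution. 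From \eqref{FU_min} one has $\int_\R\int_\R|\partial_{x_1}\FU|^2e^{c^\star x_1}\,dx_2\,dx_1<+\infty$, hence for fixed $a<b$, $M>0$,
$$\int_a^b\!\!\int_{-M}^{M}|\partial_{x_1}W_n|^2\,dx_2\,dx_1=\int_a^b\!\!\int_{m_n-M}^{m_n+M}|\partial_{x_1}\FU|^2\,dx_2\,dx_1\le e^{-c^\star a}\int_a^b\!\!\int_{m_n-M}^{m_n+M}|\partial_{x_1}\FU|^2e^{c^\star x_1}\,dx_2\,dx_1\longrightarrow0$$
as $n\to+\infty$, so $\partial_{x_1}W_\infty\equiv0$ and $W_\infty(x_1,x_2)=\Fw(x_2)$ with $\Fw''=\nabla_uV(\Fw)$ on $\R$. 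Evaluating $W_n$ at the fixed abscissa $x_1=L'$, the bound $|W_n(L',x_2)-\Fq^+(x_2+m_n+\tau^+)|\le r_0/4$ together with $\Fq^+(x_2+m_n+\tau^+)\to\sigma^+$ (as $m_n\to+\infty$) forces $|\Fw(x_2)-\sigma^+|\le r_0/4$ for all $x_2$. Then $\phi:=\tfrac12|\Fw-\sigma^+|^2$ satisfies $\phi''=|\Fw'|^2+\langle\nabla_uV(\Fw),\Fw-\sigma^+\rangle\ge\mu'|\Fw-\sigma^+|^2=2\mu'\phi\ge0$, so $\phi$ is bounded and convex on $\R$, hence constant, and then $0=\phi''\ge2\mu'\phi$ gives $\phi\equiv0$, i.e. $\Fw\equiv\sigma^+$. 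But $(x_1^n,s_n)\to(\bar x_1,\bar s)$ stays in a compact set, so $\FU(x_1^n,x_2^n)=W_n(x_1^n,s_n)\to W_\infty(\bar x_1,\bar s)=\sigma^+$, contradicting $|\FU(x_1^n,x_2^n)-\sigma^+|\ge\delta$. This proves \eqref{uniform_convergence_2}.

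Under \ref{asu_convergence} one additionally has \eqref{uniform_convergence_3}, so the region $x_1\to-\infty$ becomes controlled just as $x_1\to+\infty$ was above (now using $\Fq^-(t)\to\sigma^+$ as $t\to+\infty$). Choosing $L''\ge1$ with $\sup_{|x_1|\ge L''}$ of the relevant $L^\infty$-distance at most $r_0/4$, the two half-planes $\{x_1\ge L''\}$ and $\{x_1\le-L''\}$ are handled by proximity to $\Fq^+(\cdot+\tau^+)$, resp. $\Fq^-(\cdot+\tau^-)$, while on the compact strip $\{|x_1|\le L''\}$ the contradiction argument of the previous paragraph applies verbatim (the failing abscissae lying in $[-L'',L'']$, and $\Fw\equiv\sigma^+$ being obtained by evaluating at $x_1=L''$). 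The limit $x_2\to-\infty$, giving $\sigma^-$, is symmetric; this establishes \eqref{uniform_convergence_4}.

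The only nontrivial point is \eqref{uniform_convergence_2}: near $x_1=+\infty$ the only information supplied by $S$ is the $L^2$-proximity $\FU(x_1,\cdot)\in\CF^+_{\rho_0^+/2}$, which is neither pointwise nor uniform in $x_1$, and it must be upgraded to genuine uniform convergence over a set unbounded in $x_1$. The compactness argument does this by trading the missing a priori structure for the rigidity of bounded entire solutions of $\Fw''=\nabla_uV(\Fw)$ near the hyperbolic well $\sigma^+$, while the role of the weighted bound $\int_\R\int_\R|\partial_{x_1}\FU|^2e^{c^\star x_1}<+\infty$ is precisely to suppress the $x_1$-dependence of the blow-up limit. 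I expect this $x_1$-collapse to be the delicate step; it also explains why \eqref{uniform_convergence_2} is stated only for $x_1\ge L$ and requires \ref{asu_convergence} to reach all $x_1\in\R$: as $x_1\to-\infty$ the weight $e^{c^\star x_1}$ degenerates, so the mechanism fails unless one already knows, via Theorem \ref{THEOREM_strong_bc}, that $\FU(x_1,\cdot)$ converges as $x_1\to-\infty$.
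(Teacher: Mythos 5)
Your proof is correct, but it takes a genuinely different route from the one in the paper, and the differences are worth recording. For the $x_1$-direction limits \eqref{uniform_convergence_1} and \eqref{uniform_convergence_3}, the paper (Lemmas \ref{LEMMA_uniform_convergence+}, \ref{LEMMA_uniform_convergence-}, \ref{LEMMA_1D_H1conv}) runs an abstract argument: $L^2$-convergence to a heteroclinic together with convergence of the 1D energy is upgraded to $H^1$-convergence, hence to $L^\infty$-convergence, and the finiteness of the weighted functional is what furnishes the energy convergence along suitable sequences. You instead use the global $\CC^{2,\alpha}$-bound on $\FU$ from Theorem \ref{THEOREM_main_TW} together with the boundedness of $(\Fq^\pm)'$, and invoke the elementary interpolation $\lVert g\rVert_{L^\infty}\le(4K)^{1/3}\lVert g\rVert_{L^2}^{2/3}$ valid for $K$-Lipschitz $g$; this is more elementary and actually yields a quantitative exponential $L^\infty$-rate which the paper's proof does not. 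For the $x_2$-direction \eqref{uniform_convergence_2} and \eqref{uniform_convergence_4}, the paper performs a direct finite covering/induction over strips, using the finiteness of $\int\lvert E(\FU(x_1,\cdot))-\Fm^+\rvert\,dx_1$ and uniform continuity to rule out an infinite accumulation of energy excess. You perform a blow-up in the $x_2$-direction, collapse the limit to an $x_1$-independent solution of $\Fw''=\nabla_uV(\Fw)$ using the weighted $L^2$-bound $\int\!\int\lvert\partial_{x_1}\FU\rvert^2e^{c^\star x_1}<+\infty$, and conclude by the rigidity of bounded entire orbits near the hyperbolic well (the convexity argument on $\phi=\tfrac12\lvert\Fw-\sigma^\pm\rvert^2$); this is a more structural argument whose chief mechanism is the same as the paper's (the weighted energy bound suppressing $x_1$-dependence) but packaged differently. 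Both proofs need \ref{asu_convergence} to pass from \eqref{uniform_convergence_2} to \eqref{uniform_convergence_4} for the same underlying reason you identify: nothing controls $\FU$ as $x_1\to-\infty$ without it, so the left half-plane cannot be handled by proximity to $\Fq^-$. One small remark for precision: you should take $\bar x_1\in[L,L']$ (or $[-L'',L'']$) when passing to the limit, which your notation already does, and note that the blow-up limit $W_\infty$ is identified on all of $\R^2$ because the weighted bound controls $\partial_{x_1}W_n$ on every fixed rectangle $[a,b]\times[-M,M]$; this is what your displayed estimate does.
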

\subsection{Min-max characterization of the speed}
We provide here a min-max characterization of the speed $c^\star$ and other related properties which are summarized in Theorem \ref{THEOREM_carac_speed}. The idea of providing a variational characterization for the speed of traveling waves in reaction-diffusion systems can be traced back to Heinze \cite{heinze}, Heinze, Papanicolau and Stevens \cite{heinze-papanicolau-stevens} and it was used later in several other papers \cite{alikakos-katzourakis,bouhours-nadin,lucia-muratov-novaga,lucia-muratov-novaga04,muratov}.
\begin{theorem}\label{THEOREM_carac_speed}
Assume that \ref{asu_perturbation} and \ref{asu_convergence} hold. Let $(c^\star,\FU)$ be the solution given by Theorem \ref{THEOREM_main_TW}. Then for any $\tilde{\FU} \in S_i$ such that
\begin{equation}
E_{2,c^\star}(\tilde{\FU})=0
\end{equation}
we have that $(c^\star,\tilde{\FU})$ solves \eqref{profile_recall} and
\begin{equation}\label{speed_formula}
c^\star= \frac{\Fm^+-\Fm^-}{\int_{\R^2} \lvert \partial_{x_1}\tilde{\FU}(x_1,x_2) \rvert^2dx_2dx_1}.
\end{equation}
In particular, the quantity $\int_{\R^2}\lvert \partial_{x_1}\tilde{\FU}(x_1,x_2) \rvert^2 dx_2dx_1$ is well-defined and constant among the set of minimizers of $E_{2,c}$ in $S$. Moreover, it holds
\begin{equation}\label{speed_variational}
c^\star=\sup\{ c>0: \inf_{U \in S}E_{2,c}(U)=-\infty\}=\inf\{c>0: \inf_{U \in S}E_{2,c}(U)=0\}
\end{equation}
and we have the bound
\begin{equation}
c^\star \leq \frac{\sqrt{2(\Fm^+-\Fm^-)}}{\Fd_0}< \min\left\{\frac{\sqrt{2E_{\max}}}{\Fd_0}, \mu^-\right\}
\end{equation}
where $\Fd_0$, $\mu^-$ and  $E_{\max}$ will be defined later in \eqref{Fd0}, \eqref{mu-} and \eqref{Fb_max} respectively and the second inequality follows from the bound on $\Fm^+-\Fm^-$ given by \ref{asu_perturbation} and \ref{asu_convergence}.
\end{theorem}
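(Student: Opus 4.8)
The plan is to derive all four assertions from a single energy-balance (Hamiltonian) identity together with the scaling behaviour of $E_{2,c}$ under $x_1$-dilations.

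\emph{Speed formula and constancy of the Dirichlet integral.} Let $\tilde\FU\in S$ with $E_{2,c^\star}(\tilde\FU)=0$. By \eqref{FU_min} this is the infimum, so $\tilde\FU$ is a global minimizer of $E_{2,c^\star}$ in $S$; consequently the arguments establishing the properties of the minimizer produced in Theorem \ref{THEOREM_main_TW} (and, under \ref{asu_convergence}, Theorems \ref{THEOREM_strong_bc} and \ref{THEOREM_uniform_convergence}) apply to it verbatim: $(c^\star,\tilde\FU)$ is a classical solution of \eqref{profile_recall}, $\tilde\FU(x_1,\cdot)\to\Fq^\pm(\cdot+\tau^\pm)$ exponentially in $L^2(\R,\R^k)$ as $x_1\to\pm\infty$, $\tilde\FU(\cdot,x_2)\to\sigma^\pm$ as $x_2\to\pm\infty$, and (by interior elliptic estimates applied to the exponentially small differences) the same exponential $L^2$-decay holds for $\partial_{x_1}\tilde\FU(x_1,\cdot)$ as $x_1\to\pm\infty$ and for the $x_2$-derivatives as $x_2\to\pm\infty$. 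Set $K(x_1):=\tfrac12\int_\R|\partial_{x_1}\tilde\FU(x_1,x_2)|^2\,dx_2$, $G(x_1):=E(\tilde\FU(x_1,\cdot))$ and $\mathsf H:=K-G$. Taking the scalar product of \eqref{profile_recall} with $\partial_{x_1}\tilde\FU$ and integrating in $x_2$ over $\R$ --- the $x_2$-boundary terms vanishing by the exponential decay near the nondegenerate wells (\ref{asu-zeros}) --- yields
\[
\mathsf H'(x_1)+c^\star\int_\R|\partial_{x_1}\tilde\FU(x_1,x_2)|^2\,dx_2=0\qquad(x_1\in\R).
\]
Integrating over $\R$ and using $K(x_1)\to0$, $G(x_1)\to\Fm^\pm$ as $x_1\to\pm\infty$ (it is here that the strong convergence at $-\infty$, hence \ref{asu_convergence}, is needed) gives $\Fm^--\Fm^++c^\star\int_{\R^2}|\partial_{x_1}\tilde\FU|^2\,dx_2dx_1=0$, i.e. \eqref{speed_formula}. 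The denominator is finite by the exponential decay and strictly positive, since otherwise $\tilde\FU$ would be $x_1$-independent, impossible as it joins the disjoint sets $\CF^{-}_{\rho^-_0/2}$ and $\CF^{+}_{\rho^+_0/2}$. As $c^\star$ and $\Fm^+-\Fm^-$ do not depend on $\tilde\FU$, it follows that $\int_{\R^2}|\partial_{x_1}\tilde\FU|^2\,dx_2dx_1=(\Fm^+-\Fm^-)/c^\star$ for every such minimizer, which is the ``In particular'' assertion.

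\emph{Min-max identity \eqref{speed_variational}.} For $U\in S$ and $\lambda>0$ set $U_\lambda(x_1,\cdot):=U(\lambda x_1,\cdot)$; then $U_\lambda\in S$, and the substitution $y=\lambda x_1$ gives $E_{2,\lambda c}(U_\lambda)=\lambda\,\mathsf D_c(U)+\lambda^{-1}\mathsf P_c(U)$, where $\mathsf D_c(U):=\tfrac12\int_{\R^2}|\partial_{x_1}U|^2e^{cx_1}\,dx_2dx_1\ge0$ and $\mathsf P_c(U):=\int_\R\big(E(U(x_1,\cdot))-\Fm^+\big)e^{cx_1}\,dx_1$, with $E_{2,c}(U)=\mathsf D_c(U)+\mathsf P_c(U)$ and $\mathsf D_c(U)>0$ whenever $U\in S$. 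Suppose $\inf_SE_{2,c}=-\infty$ for some $c>0$ and pick $U\in S$ with $E_{2,c}(U)<0$; then $\mathsf P_c(U)<-\mathsf D_c(U)<0$, so $\lambda\mapsto\lambda\mathsf D_c(U)+\lambda^{-1}\mathsf P_c(U)$ is strictly increasing, it is negative at $\lambda=1$, and at $\lambda=c^\star/c$ it equals $E_{2,c^\star}(U_{c^\star/c})\ge0$ by \eqref{FU_min}; hence $c^\star/c>1$, i.e. $c<c^\star$. Conversely, applied to $\FU$ (for which $\mathsf P_{c^\star}(\FU)=-\mathsf D_{c^\star}(\FU)<0$ by \eqref{FU_min}) the same identity gives $E_{2,\lambda c^\star}(\FU_\lambda)=\mathsf D_{c^\star}(\FU)(\lambda-\lambda^{-1})<0$ for every $\lambda\in(0,1)$, so $\inf_SE_{2,c}=-\infty$ for all $c\in(0,c^\star)$. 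Therefore $\{c>0:\inf_SE_{2,c}=-\infty\}=(0,c^\star)$, and by the dichotomy $\inf_SE_{2,c}\in\{-\infty,0\}$ also $\{c>0:\inf_SE_{2,c}=0\}=[c^\star,+\infty)$; reading off the supremum and infimum gives \eqref{speed_variational}.

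\emph{Upper bound on $c^\star$.} Using $E_{2,c^\star}(\FU)=0$ together with $E(\FU(x_1,\cdot))-\Fm^+\ge-(\Fm^+-\Fm^-)$ everywhere and $E(\FU(x_1,\cdot))-\Fm^+\ge0$ whenever $\FU(x_1,\cdot)\notin\CF^{-}_{\rho^-_0/2}$ (by the inclusion \eqref{perturbation_ls} in \ref{asu_perturbation}), one obtains $\int_\R K(x_1)e^{c^\star x_1}\,dx_1\le(\Fm^+-\Fm^-)\int_{\Omega^-}e^{c^\star x_1}\,dx_1$ with $\Omega^-:=\{x_1:\FU(x_1,\cdot)\in\CF^{-}_{\rho^-_0/2}\}$; since $\FU(x_1,\cdot)\in\CF^{+}_{\rho^+_0/2}$ for large $x_1$, $t_0:=\sup\Omega^-$ is finite and $\Omega^-\subset(-\infty,t_0]$, so $\int_{\Omega^-}e^{c^\star x_1}\,dx_1\le(c^\star)^{-1}e^{c^\star t_0}$. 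Choosing $s_0>t_0$ with $\FU(s_0,\cdot)\in\CF^{+}_{\rho^+_0/2}$ one has $\|\FU(s_0,\cdot)-\FU(t_0,\cdot)\|_{L^2(\R,\R^k)}\ge\Fd_0$ ($\Fd_0$ being a lower bound for the $L^2$-distance between $\CF^{-}_{\rho^-_0/2}$ and $\CF^{+}_{\rho^+_0/2}$); writing $\FU(s_0,\cdot)-\FU(t_0,\cdot)=\int_{t_0}^{s_0}\partial_{x_1}\FU(s,\cdot)\,ds$ and applying Cauchy--Schwarz with the weights $e^{\pm c^\star s/2}$,
\[
\Fd_0\le\Big(2\!\int_{t_0}^{s_0}\!K(s)e^{c^\star s}\,ds\Big)^{1/2}\Big(\int_{t_0}^{s_0}\!e^{-c^\star s}\,ds\Big)^{1/2}\le\Big(\tfrac{2(\Fm^+-\Fm^-)}{c^\star}e^{c^\star t_0}\Big)^{1/2}\Big(\tfrac{e^{-c^\star t_0}}{c^\star}\Big)^{1/2}=\frac{\sqrt{2(\Fm^+-\Fm^-)}}{c^\star},
\]
the factors $e^{\pm c^\star t_0}$ cancelling, so $c^\star\le\sqrt{2(\Fm^+-\Fm^-)}/\Fd_0$; the remaining strict inequalities are immediate from $\Fm^+-\Fm^-<E_{\max}$ (\ref{asu_perturbation}) and $\Fm^+-\Fm^-<(\mu^-\Fd_0)^2/2$ (\ref{asu_convergence}).

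\emph{Main obstacle.} The delicate step is the first one: to derive \eqref{speed_formula} one needs the Hamiltonian identity to hold and to be integrable over all of $\R$, which forces one to know that \emph{every} minimizer $\tilde\FU$ --- not merely the solution constructed in Theorem \ref{THEOREM_main_TW} --- is smooth and has exponential $L^2$-decay of $\tilde\FU(x_1,\cdot)$ and of $\partial_{x_1}\tilde\FU(x_1,\cdot)$ as $x_1\to\pm\infty$, as well as exponential decay as $x_2\to\pm\infty$; this is exactly where the full strength of Theorems \ref{THEOREM_main_TW}--\ref{THEOREM_uniform_convergence} (and of \ref{asu_convergence}, for the behaviour at $-\infty$) has to be re-invoked for a general minimizer. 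The weighted Cauchy--Schwarz bookkeeping in the upper-bound step --- in particular the exact cancellation of the $e^{\pm c^\star t_0}$ factors, which is what makes the estimate scale-correctly in $\Fd_0$ --- also requires some care.
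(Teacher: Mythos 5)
Your scaling argument for \eqref{speed_variational} is a genuine and cleaner alternative to the paper's route. The paper obtains \eqref{speed_variational} by establishing properties of the set $\CC=\{c>0:\exists T\geq1, U\in X_T, \bE_c(U)<0\}$ across several intermediate results (Lemma \ref{LEMMA_set_CC} and Corollaries \ref{COROLLARY_CC}, via the uniqueness Proposition \ref{PROPOSITION-uniqueness}); you instead exploit the homogeneity $E_{2,\lambda c}(U_\lambda)=\lambda\mathsf D_c(U)+\lambda^{-1}\mathsf P_c(U)$ under $x_1$-dilations, which, together with \eqref{FU_min}, identifies $\{c:\inf_S E_{2,c}=-\infty\}=(0,c^\star)$ in a few lines. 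This is a nice simplification that the paper does not use, and it even bypasses the uniqueness proposition. Your derivation of the upper bound $c^\star\leq\sqrt{2(\Fm^+-\Fm^-)}/\Fd_0$ via weighted Cauchy–Schwarz along the $L^2$-trajectory of $\FU$ is essentially the same estimate the paper performs in the proof of \eqref{sup_C_bound} (Lemma \ref{LEMMA_set_CC}), except that you carry it out directly on the unconstrained minimizer whereas the paper does it on constrained minimizers $\bU_{c,T}$; both are correct, yours is marginally more self-contained.

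The gap is in the derivation of \eqref{speed_formula}. Your virial identity and the subsequent integration are correct in spirit, and indeed the paper proves the same identity (this is Lemma \ref{LEMMA_equipartition} in its abstract form, and the proof of Theorem \ref{THEOREM_abstract_speed}). But you invoke pointwise limits $K(x_1)\to0$ and $G(x_1)\to\Fm^\pm$ at $x_1\to\pm\infty$, and you justify this by claiming exponential $L^2$-decay of $\partial_{x_1}\tilde\FU(x_1,\cdot)$ and $H^1$-convergence of the traces, obtained ``by interior elliptic estimates applied to the exponentially small differences''. The paper explicitly states that the improvement of the $L^2$-convergence rate to $H^1$ is an open question it cannot settle (see the discussion after Theorem \ref{THEOREM_strong_bc}), so your step goes beyond what is established. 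The paper avoids the issue entirely: it proves only that $\bU'\in L^2(\R,\scrL)$ and that $\CE\circ\bU-a-\tfrac12\lVert\bU'\rVert_{\scrL}^2$ (resp.\ $\tfrac12\lVert\bU'\rVert_{\scrL}^2-\CE\circ\bU$) is nonnegative and integrable near $-\infty$ (resp.\ $+\infty$), via the equipartition inequality (Lemma \ref{LEMMA_equipartition_pointwise}) and the weighted $L^1$-bounds in Proposition \ref{PROPOSITION_conv_sol_-}; it then integrates the virial identity over $[t_n^-,t_n^+]$ along sequences where the Hamiltonian boundary terms vanish, which exists by nonnegativity and integrability. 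This yields \eqref{abstract_speed_formula} without any pointwise decay. Relatedly, your $x_2$-integration by parts, whose boundary terms you discard ``by the exponential decay near the nondegenerate wells'', is not justified by anything written; the paper sidesteps it too, since $\langle D_\scrL\CE(\bU),\bU'\rangle_{\scrL}=(\CE(\bU))'$ is obtained at the abstract level from \ref{hyp_smaller_space} rather than by a direct computation in $x_2$. You should replace your pointwise claims by the paper's sequence argument, or at least flag that the $H^1$-improvement you rely on requires a separate proof.
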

\begin{remark}
Notice that the conditions at infinity imply that any $U \in S$ is such that
\begin{equation}
\int_{\R^2} \frac{\lvert \partial_{x_1}U(x_1,x_2) \rvert^2}{2}dx_2dx_1>0
\end{equation}
\end{remark}
As it can be seen, Theorem \ref{THEOREM_carac_speed} shows that the speed $c^\star$ is characterized by the explicit formula \eqref{speed_formula}, which nevertheless requires knowledge on a profile $\tilde{\FU}$. However, one also has the variational characterization \eqref{speed_variational}, which does not involve any information on the profiles. Indeed, one only needs to be able to compute the infimum of the energies with $c>0$ as a parameter. Moreover, notice that combining \eqref{speed_variational} with the uniqueness part of Theorem \ref{THEOREM_main_TW}, we obtain that if $\overline{c}>c^\star$ and $(\overline{c},\overline{\FU})$, with $\overline{\FU} \in S$, solves \eqref{profile_recall}, then $E_{2,\overline{c}}(\overline{\FU})=+\infty$, which is actually a contradiction. On the contrary, if we take $\overline{c}<c^\star$, then \eqref{speed_variational} implies that $\inf_{U \in S}E_{2,\overline{c}}(U)=-\infty$, meaning that Theorem \ref{THEOREM_main_TW} does not apply and nothing else can be said.
\subsection{Definition of the upper bounds}\label{subsection_constants}
We will now define some important numerical constants which are necessary in order to formulate assumptions \ref{asu_perturbation} and \ref{asu_convergence}. Assume first that \ref{asu_unbalanced} holds. Let $\rho_0^\pm$ as in \eqref{rho_1} and \eqref{rho_2}. Recall that we chose $\rho_0^+$ and $\rho_0^-$ such that
\begin{equation}
\CF^+_{\rho_0^+} \cap \CF^-_{\rho_0^-}  = \emptyset
\end{equation}
and, since those two sets (see the definition in \eqref{CFr}) are $L^2$-closed due to the local compactness of the sets $\CF^-$ and $\CF^+$, we have that
\begin{equation}\label{Fd0}
\Fd_0:= \dist_{L^2(\R,\R^k)}(\CF^+_{\rho_0^+/2}, \CF^-_{\rho_0^-/2})
\end{equation}
is positive. Therefore, as we advanced before, one can see that the constant $\Fd_0$ depends only on the distance between the two families of minimizing heteroclincs. Next, under \ref{asu_unbalanced}, recall the constants $\beta^\pm$ from \eqref{rho_2}. Set
\begin{equation}\label{overline_beta}
\overline{\beta^\pm}:= \frac{1}{2}(\beta^\pm)^2((\beta^\pm)^2+(\beta^\pm+1)^2)>0
\end{equation}
and, subsequently
\begin{equation}\label{mu-}
\mu^-:= \frac{1}{\beta^-+\overline{\beta^-}}>0
\end{equation}
which is the constant appearing in \ref{asu_convergence}. Of course, the nature of the definition given in \eqref{mu-} obeys to technical considerations. But $\mu^-$ should be thought as a constant depending only on the local behavior of the energy around $\CF^-$ and, in particular, independent on the behavior of the energy near $\CF^+$. Now let for $r \in (0, \rho^\pm_0]$
\begin{equation}\label{Fe}
\Fe^\pm_r := \inf \{ E(q): q \in X(\sigma^-,\sigma^+), \dist_{L^2(\R,\R^k)}(q,\CF^\pm) \in [r,\rho^\pm_0]\}.
\end{equation}
Known results which follow from compactness of minimizing sequences (see for instance Schatzman \cite{schatzman}) imply that $\Fe^\pm_r>0$. Moreover, we also have that for $r \in (0,\rho^\pm_0]$ there exists $\nu^\pm(r)>0$ such that
\begin{equation}\label{nu_r}
\forall q \in \CF^\pm_{\rho_0^\pm/2}, \hspace{2mm} E(q)-\Fm^\pm \leq \nu^\pm(r) \Rightarrow \dist_{H^1(\R,\R^k)}(q, \CF^\pm) \leq r.
\end{equation}
This leads to define the constants
\begin{equation}
\delta_0^-:= \min \left\{ \sqrt{e^{-1}\frac{\rho_0^-}{4}\sqrt{2(\Fe^-_{\rho_0^-/4}-\Fm^-)}},\frac{\rho_0^-}{4} \right\}>0,
\end{equation}
\begin{equation}
\Fr^-:=\frac{\rho_0^-}{\beta^-+1}>0
\end{equation}
and
\begin{equation}\label{Fb_max}
E_{\max}:= \frac{1}{(\beta^-)^2(\beta^-+1)}\min \left\{ \frac{(\delta_0^-)^2}{4},\Fe^-_{\delta_0^-}-\Fm^-,\nu^-(\Fr^-),\nu^-(\delta_0^-) \right\}>0
\end{equation}
which is the constant appearing in \ref{asu_perturbation}. Again, the definition on $E_{\max}$ is essentially due to technical reasons, but it must be thought as a constant which only depends on local information around $\CF^-$.
\subsection{Methods and ideas of the proofs}\label{subs_proofs}
The main result of this paper is Theorem \ref{THEOREM_main_TW}, which establishes the existence of a solution $(c^\star,\FU)$, with the profile $\FU$ satisfying the heteroclinic asymptotic conditions \eqref{weak_bc-}, \eqref{weak_bc+}. We also prove an exponential rate of convergence for the profile at $+\infty$ (with respect to the $L^2$-norm). We finally show that the speed $c^\star$ has some uniqueness properties. Important properties on the profile and the speed, as well as improvements on the results under additional assumptions, are also established in Theorems \ref{THEOREM_strong_bc}, \ref{THEOREM_uniform_convergence} and \ref{THEOREM_carac_speed}.

As already stated, the proof of our results follows by bringing together two different lines of research, see items 1. and 2. in the introduction. More precisely, in the spirit of \cite{monteil-santambrogio,schatzman}, we adapt the result of Alikakos and Katzourakis \cite{alikakos-katzourakis} (actually, we rather follow more closely the simplified version given in Alikakos, Fusco and Smyrnelis \cite{alikakos-fusco-smyrnelis}) to potentials defined in an abstract, possibly infinite-dimensional, Hilbert space and possessing two local minima at different levels. This abstract setting is established in section \ref{sect_abstract} and the main abstract results are Theorems \ref{THEOREM-ABSTRACT}, \ref{THEOREM_abstract_bc} and \ref{THEOREM_abstract_speed}. The proof of these results is found in section \ref{section_abs}. Assumption \ref{asu_unbalanced} guarantees that our main results (Theorems \ref{THEOREM_main_TW}-\ref{THEOREM_carac_speed}) are a particular case of the abstract results. Naturally, the advantage of proving the results in an abstract framework is that one can apply them to several problems different than the original one. In our case, the results in this paper apply to the 1D system
\begin{equation}
\partial_t w-\partial_x^2w=-\nabla_uW(w) \mbox{ in } [0,+\infty) \times \R,
\end{equation}
where $W$ is a smooth potential bounded below possessing two local and non-degenerate minima at different levels. As said before, this is formally the system considered in \cite{alikakos-katzourakis}, but the results of this paper allow to somewhat relax the non-degeneracy assumption used in \cite{alikakos-katzourakis}. More details, as well as other extensions, are given in our companion paper \cite{oliver-bonafoux-tw-bis}.

Generalizing the result from \cite{alikakos-katzourakis} for curves taking values in a more general, possibly infinite-dimensional, Hilbert space raises several additional difficulties. A detailed outline of our proof is given in subsection \ref{subs_scheme}, but let us here try to motivate the main difficulties of the problem we are facing.

As pointed out before, the approach in \cite{alikakos-katzourakis} is variational. A family of weighted energy functionals (essentially those introduced in Fife and McLeod \cite{fife-mcleod77,fife-mcleod81}) depending on a speed parameter $c>0$ is considered. In order to make the functionals well defined in the space of curves that connect the minima, the global minimum of the potential must be negative and the local one must be zero, which is always true up to an additive constant. As a consequence, one deals with an energy density which changes sign, which is in contrast with the equal depth (balanced) case, for which the energy is always non-negative. Recall that finite energy 1D connecting heteroclinics between two wells at the same level must be stationary. Another difficulty of the heteroclinic traveling wave existence problem comes from the fact that not only the profile but also the speed of the wave is an unknown as well. 

The method used in \cite{alikakos-katzourakis} is an adaptation of that introduced in Alikakos and Fusco \cite{alikakos-fusco} for the equal depth case. This method consists on considering families of solutions with prescribed behavior outside an interval of length $2T$ (namely, they are forced to stay close to the respective minimum) and minimizing the weighted energy functionals seeing the speed as parameter. Since compactness is restored due to the constrains, the problem has a solution for each $c>0$ and $T  \geq 1$. These ideas can be adapted to our setting without major difficulty. The next step consists on determining the solution speed $c^\star$ and then showing that for $c^\star$ and a suitable $T$ the corresponding constrained minimizer does not meet the constraints, meaning that it is an actual solution. Since the energy functionals change sign, one needs to show that the constrained minimizers do not oscillate between positive and negative regions of the energy (which would produce compensations) inside arbitrarily long intervals as $T$ goes to infinity. In order to show that, the authors in \cite{alikakos-katzourakis} assume that the local minima are isolated and that in the negative region of the functional one has strict radial monotonicity with respect to the global minimum. Subsequently, they use this property in combination with the ODE system and minimality arguments in order to exclude oscillations. 

For several reasons, the previous idea does not seem available in our setting without substantial modifications. Despite the fact that, as we show in \cite{oliver-bonafoux-tw-bis}, one can adapt the assumption of \cite{alikakos-katzourakis} for potentials in infinite-dimensional spaces with possibly degenerate minima, in this case we have trouble showing that our original problem can be put as a particular case of the abstract one. In other words, it does not seem reasonable to expect that such adaptation of the radial monotonicity assumption of \cite{alikakos-katzourakis} would be met in our original problem. Indeed, one would need to prove some kind of radial monotonicity for the energy $E$ (see \eqref{1D_energy}), in some suitable subset. We think that this might be too restrictive and we cannot prove it even for simple explicit examples. The difficulty comes from the fact that, while in the finite-dimensional case one can directly modify the potential, the level sets of $E$ depend on a rather indirect way on the potential $V$ and they are infinite-dimensional manifolds. Therefore, the most important difficulty of our problem is to replace the radial monotonicity assumption of \cite{alikakos-katzourakis} by another one which can be met in our situation and which allows to obtain a similar type of conclusion (namely, exclude oscillatory behavior for the constrained minimizers in arbitrarily large intervals). We have been able to provide one assumption, \ref{asu_perturbation}, which plays this role. It consists on imposing an upper bound on the difference between the energy levels. This upper bound is (the abstract version of) the constant $\CE_{\max}$, defined in \eqref{Fb_max}, subsection \ref{subsection_constants}. It enables us to exclude oscillations on the minimizers because the (renormalized) energy is positive outside the region in which the solution is constrained. Once oscillations are excluded, we conclude the proof as in \cite{alikakos-katzourakis}. 

The main drawback of our proof is that the computation of the upper bound in \ref{asu_perturbation} is not straightforward and it obeys technical considerations, as the definition of $\CE_{\max}$ \eqref{Fb_max} in subsection \ref{subsection_constants} shows. In particular, we cannot exclude the possibility that $\CE_{\max}$ is \textit{small}. However, in subsection \ref{subs_examples} we give a general method in order to obtain potentials for which the corresponding energy functional satisfies the bound assumption \ref{asu_perturbation}. Essentially, one considers a potential for which two different globally minimizing heteroclinics exist (which implies $k \geq 2$) and then modifies it in a suitable manner. It would be also interesting to know whether our assumption \ref{asu_perturbation} is only technical or rather there is some kind of obstruction for existence when the difference between the energy of the heteroclinics is too large. We think that the answer to this question is possibly related to the loss of compactness for the 1D energy functional (see \cite{oliver-bonafoux} and the references therein) and hence we conjecture that \ref{asu_perturbation} is not only technical (although it is likely non-optimal) and that in its absence some counterexamples might be found. It is also reasonable to conjecture that the removal of \ref{asu_perturbation} would imply the existence of traveling waves with more complicated behavior at infinity, for example approaching chains of connecting orbits (heteroclinic or homoclinic) stable in some suitable sense.

At the final stage of the proof, one needs to ensure that the solution obtained presents the suitable heteroclinic behavior at infinity. Moreover, we want to obtain more refined convergence results, see Theorems \ref{THEOREM_main_TW} and \ref{THEOREM_strong_bc}. This asymptotic analysis is delicate, as finite energy functions do not necessarily converge at all $-\infty$ and they converge exponentially at $+\infty$ but only with respect to the $L^2$-norm. Moreover, one needs to distinguish between $L^2$ convergence (which is weaker and does not imply convergence of the energy) and $H^1$ convergence. We deal with all these difficulties by using minimality of the solution (inspired for instance by \cite{smyrnelis}), which allows us to obtain the convergence at $-\infty$, exponentially with respect to the $L^2$-norm. Assumption \ref{asu_convergence} is needed in order to obtain the convergence at $-\infty$. Moreover, working in the main setting, we obtain Theorem \ref{THEOREM_uniform_convergence}, which shows that convergence is not only $L^2$ but also $L^\infty$, and not only according to $x_1$ but also $x_2$, the limit in this case being the wells $\sigma^\pm$.

We will use extensively the fact that the energy has \textit{good} properties at least on a neighborhood of the 1D minimizing heteroclinics. Essentially, those are the results and assumptions made by Schatzman \cite{schatzman}, which in our case are given in \ref{asu_unbalanced} and the discussion that follows. To our knowledge, these properties have not been shown to hold for the 2D heteroclinic solutions of \cite{alama-bronsard-gui,schatzman} and one could expect that some of them do not hold. This represents, in our opinion, a (momentary) obstruction to establishing the existence of 3D heteroclinic traveling waves connecting two 2D heteroclinics. Moreover, notice that the fact that $V$ is multi-well implies that 1D heteroclinic traveling waves do not exist in general, unless one imposes the existence of a local minimum at a level higher than 0 and some other properties are verified.
\subsection{Examples of potentials verifying the assumptions}\label{subs_examples}
In order to conclude this section, we exhibit a rather general and elementary method in order to produce examples of potentials for which the assumptions we make in this paper are satisfied. As we advanced before, the idea is to modify a given multi-well potential $V_0 : \R^k \to \R$ satisfying \ref{asu-sigma}, \ref{asu-infinity} and \ref{asu-zeros} such that the associated energy possesses two minimizing heteroclinics (up to translations) for two given wells $\sigma^-$, $\sigma^+$ in a finite set $\Sigma$. We also assume that the strict triangle's inequality \ref{asu-wells} is met for $V_0$ with respect to $(\sigma^-,\sigma^+)$. Furthermore, we assume that the generic Schatzman's spectral assumption (\cite{schatzman}) is satisfied for those heteroclinics, meaning that the constants defined in subsection \ref{subsection_constants} (with the obvious modifications) also make sense here. That is, one can think of any potential $V_0$ satisfying the assumptions of Schatzman's paper \cite{schatzman}. For the reader's convenience, we shall give here some explicit examples of such potentials which we found on the literature. 

The first of the examples we give was found by Antonopoulos and Smyrnelis, see \textit{Remark 3.6} in \cite{antonopoulos-smyrnelis}. Consider the case $k=2$. Let $V_{GL}$ be the Ginzburg-Landau potential
\begin{equation}\label{V_GL}
V_{GL}: u=(u_1,u_2) \in \R^2 \to \frac{(1-\lvert u \rvert^2)^2}{4} \in \R
\end{equation}
and consider the corresponding energy
\begin{equation}\label{E_GL}
E_{GL}(q):= \int_\R\left[ \frac{\lvert q'(t) \rvert^2}{2}+\frac{(1-\lvert q(t) \rvert^2)^2}{4} \right]dt, \hspace{2mm} q \in H^1_{\loc}(\R,\R^2).
\end{equation}
The idea is to perturb $V_{GL}$ in order to obtain a double-well potential with zero set $\{ (-1,0),(1,0) \}$ and symmetric with respect to the axis $\{ u_2=0 \}$. Such a potential will possess two heteroclinics provided that any curve with trace in $\{ u_2=0\}$ can be beaten by a competitor with a trace that is not contained in this set. Notice that for all $(u_1,0) \in \R \times \{0\}$ we have that $V_{GL}(u_1,0)=(1-u_1^2)^2/4$, which is the standard scalar double-well potential. As it is well known, the (unique) heteroclinic for such potential is given by the odd function $\Fq_{AC}:t \in \R \to \tanh(t/\sqrt{2}) \in \R$. Therefore, each curve $q=(q_1,q_2)$ in $H^1_{\loc}(\R,\R^2)$ with $q_2=0$ and $\lim_{t \to \pm \infty}q_1(t)=\pm 1$ verifies
\begin{equation}\label{Energy_q_AC}
E_{GL}(q) \geq E_{GL}(\Fq_{AC}) = \frac{2\sqrt{2}}{3}.
\end{equation}
For $T>0$, define 
\begin{equation}
q_T(t):=\begin{cases}
(-1,0) &\mbox{ if } t \leq -T-1,\\
((t+T)+(t+T+1)\Fq_{AC}(-T),0) &\mbox{ if } -T-1 \leq t \leq -T, \\
 -\Fq_{AC}(T)(\cos(\pi(t+T)/(2T)),\sin(\pi(t+T)/(2T))) &\mbox{ if } -T \leq t \leq T,\\
 ((t-T)-(t-T-1)\Fq_{AC}(T),0) &\mbox{ if } T \leq t \leq T+1,\\
 (1,0) &\mbox{ if } T+1 \leq t.
\end{cases}
\end{equation}
A modification of the computations made in \cite{antonopoulos-smyrnelis} shows that
\begin{equation}
\lim_{T \to +\infty}E_{GL}(q_T) = 0
\end{equation}
meaning that by \eqref{Energy_q_AC} there exists $\overline{T}>0$ such that $E_{GL}(q_{\overline{T}}) < E_{GL}(\Fq_{AC})$. Then, given $\eps:= (1-\lvert \Fq_{AC}(\overline{T}) \rvert^2)/4$, consider $\phi \in \CC^\infty(\R,[0,+\infty))$ such that
\begin{equation}
\phi(t)=\begin{cases}
0 &\mbox{ if } t \leq \lvert \Fq_{AC}(\overline{T}) \rvert^2+\eps\\
1 &\mbox{ if } t \geq 1-\eps
\end{cases}
\end{equation}
and define $\tilde{V}_0: u=(u_1,u_2) \in \R^2 \to V_{GL}(u)+u_2^2 \phi(\lvert u \rvert^2)$. Let $\tilde{E}_0$ be the corresponding energy. Notice that $\tilde{V}_0$ is a double-well potential verifying \ref{asu-sigma}, \ref{asu-infinity} and \ref{asu-zeros}. By definition, we have that if $q=(q_1,q_2) \in H^1([-R,R],\R^2)$ is such that $q_2=0$, then $\tilde{E}_0(q)=E_{GL}(q)$. Moreover, we also have $\tilde{E}_0(q_{\overline{T}}) =E_{GL}(q_{\overline{T}})<E_{GL}(\Fq_{AC})$. As a consequence, the minimizer $\Fq=(\Fq_1,\Fq_2)$ of $\tilde{E}_0$ in the class of curves in $H^1_{\loc}(\R,\R^k)$ which tend to $(\pm 1,0)$ at $\pm \infty$ satisfies $\Fq_2 \not =0$, which means that $\hat{\Fq}:=(\Fq_1,-\Fq_2)$ is also a minimizer due to the symmetry of $\tilde{V}_0$ and $\hat{\Fq}$ is not a translation of $\Fq$. Therefore, $\tilde{V}_0$ possesses two geometrically distinct globally minimizing heteroclinics. In order to find our example of potential, we need that such heteroclinics are non-degenerate in the sense asked by Schatzman in \cite{schatzman}, see our 3. in \ref{asu_unbalanced}. However, as shown in her \textit{Theorem 4.3} such assumption is generic, i. e., we can find $V_0$ arbitrarily close to $\tilde{V}_0$ which is still a double-well potential with wells $(-1,0)$, $(1,0)$ and with $\Fq$ and $\hat{\Fq}$ non-degenerate globally minimizing heteroclinics which satisfy the spectral assumptions.

Another example, this time in dimension $k=3$, is provided by Zuñiga and Sternberg \cite{zuniga-sternberg}. They consider the potential
\begin{equation}
\tilde{V}_0 : u=(u_1,u_2,u_3) \to u_1^2(1-u_1^2)^2+\left(u_2^2-\frac{1}{2}(1-u_1^2)^2 \right)^2+\left( u_3^2-\frac{1}{2}(1-u_1^2)^2 \right)^2 \in \R,
\end{equation}
which vanishes exactly on the points
\begin{align}
&(-1,0,0), (1,0,0),\\ &\left(0,\frac{1}{\sqrt{2}},\frac{1}{\sqrt{2}}\right), \left(0,-\frac{1}{\sqrt{2}},\frac{1}{\sqrt{2}}\right), \left(0,\frac{1}{\sqrt{2}},-\frac{1}{\sqrt{2}}\right), \left(0,-\frac{1}{\sqrt{2}},-\frac{1}{\sqrt{2}}\right).
\end{align}
By explicit computations, they show that the potential $\tilde{V}_0$ satisfies \ref{asu-sigma}, \ref{asu-infinity}, \ref{asu-zeros} and \ref{asu-wells} with $\sigma^\pm:=(\pm 1,0,0)$ and, moreover, that the infimum of the corresponding energy $\tilde{E_0}$ in $X(\sigma^-,\sigma^+)$ is not attained by a curve with trace contained in $\{ u_2=u_3=0\}$. Using the reflections $(0,u_2,0) \to (0,-u_2,0)$ and $(0,0,u_3) \to (0,0,-u_3)$, one deduces the multiplicity up to translations of the globally minimizing heteroclinics for $\tilde{E}_0$ in $X(\sigma^-,\sigma^+)$. As above, one can obtain $V_0$ arbitrarily close to $\tilde{V}_0$ such that the globally minimizing heteroclinics satisfy the spectral assumption.

Let us now return to the initial problem, and let $V_0$ be any potential satisfying the previous assumptions. In order to obtain a potential which satisfies the requirements of our setting, the idea is to make arbitrarily small smooth perturbations of $V_0$ around the trace of one of the heteroclinics, in such a way that its energy increases but a locally minimizing heteroclinic still exists (at least for small perturbations), which must necessarily have larger energy. One then chooses a perturbation which is not too large so that the upper bound on the difference of the energies is met. The idea is pictured in Figure \ref{FIGURE_chi}. We now show how to rigorously implement this idea. Let $\Fq^-$ and $\Fq^+$ in $X(\sigma^-,\sigma^+)$ be different up to translations and such that
\begin{equation}
E_0(\Fq^-)=E_0(\Fq^+)=\Fm_0:= \inf_{q \in X(\sigma^-,\sigma^+)}E_0(q),
\end{equation}
where, for $q \in H^1_\loc(\R,\R^k)$
\begin{equation}
E_0(q):= \int_\R \left[\frac{\lvert q'(t) \rvert^2}{2}+ V_0(q(t)) \right]dt.
\end{equation}
Recall that there exist $\rho_0^\pm$ such that
\begin{equation}
\forall q \in X(\sigma^-,\sigma^+), \hspace{2mm} \dist_{H^1(\R,\R^k)}(q,\CF^\pm) \leq \rho_0^\pm\Rightarrow \dist_{H^1(\R,\R^k)}(q,\CF^\pm)^2 \leq \beta^\pm(E_0(q)-\Fm_0)
\end{equation}
where
\begin{equation}
\CF^\pm:= \{ \Fq^\pm(\cdot+\tau): \tau \in \R \}.
\end{equation}
Let $t_0 \in \R$ be such that $\dist(\Fq^+(t_0),\Sigma)=\max_{t \in \R}\dist(\Fq^+(t),\Sigma)$ for some $\Fq^+ \in \CF^+$ and set $u_0:=\Fq^+(t_0)$. Let
\begin{equation}
r:=\min\{\rho_0^+/2,\dist(\Fq^+(t_0),\Sigma)/2\}>0.
\end{equation}
Define $\chi \in \CC^\infty_c(\R^k)$ be such that $0 \leq \chi \leq 1$, $\chi=1$ on $B(u_0,r)$ and $\supp(\chi)\subset B(u_0,2r)$. For each $\delta >0$, consider the potential $V_\delta:= V+\delta\chi \geq 0$. Define
\begin{equation}
E_\delta(q):= \int_\R \left[ \frac{\lvert q'(t) \rvert^2}{2}+ V_\delta(q(t))\right]dt
\end{equation}

Notice that, by the choice of $\chi$, $V_{\delta}$ vanishes exactly in $\Sigma$. Let now be $q \in X(\sigma^-,\sigma^+)$ such that $\dist_{H^1(\R,\R^k)}(q,\CF^+) \leq \rho_0^+/2$. We have that
\begin{equation}\label{ineq_delta}
\Fm_0+\frac{1}{\beta^+}\dist_{H^1(\R,\R^k)}(q,\CF^+)^2 \leq E_0(q) < E_0(q)+\delta\int_\R \chi(q) =E_\delta(q)
\end{equation}
and notice that for $q \in \CF^+$ we have that $E_\delta(q) =\Fm_0+\delta\Fi$ with
\begin{equation}
\Fi:=\int_\R \chi(\Fq^+(t))dt >0.
\end{equation}
A contradiction argument shows that
\begin{equation}
\Fm^+_\delta:= \inf\{ E_\delta(q): \dist_{H^1(\R,\R^k)}(q,\CF^+) \leq \rho_0^+/2\} > \Fm_0
\end{equation}
and we have $\Fm^+_{\delta} \leq E_{\delta}(\Fq^+)=\Fm_0+\delta \Fi$. Since the cut-off function is supported away from $\Sigma$, we can show by the usual concentration-compactness arguments that there exists $\Fq^+_{\delta}\in X(\sigma^-,\sigma^+)$ such that $\dist_{H^1(\R,\R^k)}(\Fq^+_{\delta},\CF^+) \leq \rho_0^+/2$ and $E_\delta(\Fq^+_{\delta})=\Fm^+_{\delta}$. If we show that $\dist_{H^1(\R,\R^k)}(\Fq^+_{\delta},\CF^+) < \rho_0^+/2$, then the constraints of the minimization problem are not saturated and $\Fq^+_{\delta}$ is an actual critical point. Notice that if $q \in X(\sigma^-,\sigma^+)$ is such that $\dist_{H^1(\R,\R^k)}(q,\CF^+)=\rho_0^+/2$, then by \eqref{ineq_delta} we obtain $E_0(q) \geq \Fm_0 +(\rho_0^+)^2/(4\beta^+)>\Fm_0$. Then, if we take $\delta < \delta_1$ with
\begin{equation}
\delta_1:= \frac{(\rho_0^+)^2}{4\beta^+\Fi}>0,
\end{equation}
it holds $E_{\delta}(q) > E_0(q) \geq \Fm_0+\delta\Fi \geq   \Fm^+_{\delta}$, so that $q$ cannot be a minimum. Therefore, for such $\delta$ items 1. and 2. in \ref{asu_unbalanced} are satisfied for $E_{\delta}$ with minimizing heteroclinics $\Fq^-$ and $\Fq^+_\delta$, with the obvious modifications on the notations. Regarding item 3., which is the spectral assumption of Schatzman \cite{schatzman}, it is a generic assumption, meaning that, arguing as it is done in her \textit{Theorem 4.3}, we find that $V_\delta$ can be modified with an arbitrary small perturbation away from the traces of $\Fq^+_\delta$ and $\Fq^-$ so that 3. holds. As a consequence, we can assume that \ref{asu_unbalanced} holds for all $\delta \in (0,\delta_1)$. Regarding \ref{asu_perturbation},  compute the constant $E_{\max}$ as in \eqref{Fb_max}, which by the choice of $r$ and $\chi$ does not depend on $\delta$, and set
\begin{equation}
\delta_2:= \frac{E_{\max}}{\Fi}>0,
\end{equation}
so that for all $\delta \in (0,\delta_2)$ we have $\Fm^+_\delta-\Fm_0<E_{\max}$. Define now $\CF_{\rho_0^-/2}^-$ as in \eqref{CFr}. The choice of $r$ and $\chi$ implies that $\CF_{\rho_0^-/2}^-$ does not depend on $\delta$, meaning that we can find $\delta_3$ such that for all $\delta \in (0,\delta_3)$ it holds
\begin{equation}
\{ q \in X(\sigma^-,\sigma^+): E_\delta(q) < \Fm_\delta^+ \} \subset \CF_{\rho^-_0/2}^-,
\end{equation}
meaning that \ref{asu_perturbation} holds for $E_{\delta}$ provided that $\delta \in (0,\delta_{\max})$ with $\delta_{\max}:=\min\{\delta_1,\delta_2,\delta_3\}>0$. As a consequence, we have found a family $\{ V_{\delta} \}_{\delta \in (0,\delta_{\max})}$ of potentials which are in the framework of Theorem \ref{THEOREM_main_TW} and we obtain a heteroclinic traveling wave with limits $\Fq^-$ and $\Fq^+$. Moreover, recall that if we put when $\delta=0$ we recover the \textit{classical} potentials considered in \cite{alama-bronsard-gui,fusco,monteil-santambrogio,schatzman,smyrnelis}, meaning that in this setting one can prove convergence results of the traveling waves toward stationary waves as $\delta \to 0^+$. Moreover, we see that is possible to decrease the value of $\delta$ even more so that the convergence assumption \ref{asu_convergence} holds and Theorems \ref{THEOREM_strong_bc} and \ref{THEOREM_carac_speed} also apply.
\begin{figure}
\centering
\includegraphics[scale=0.7]{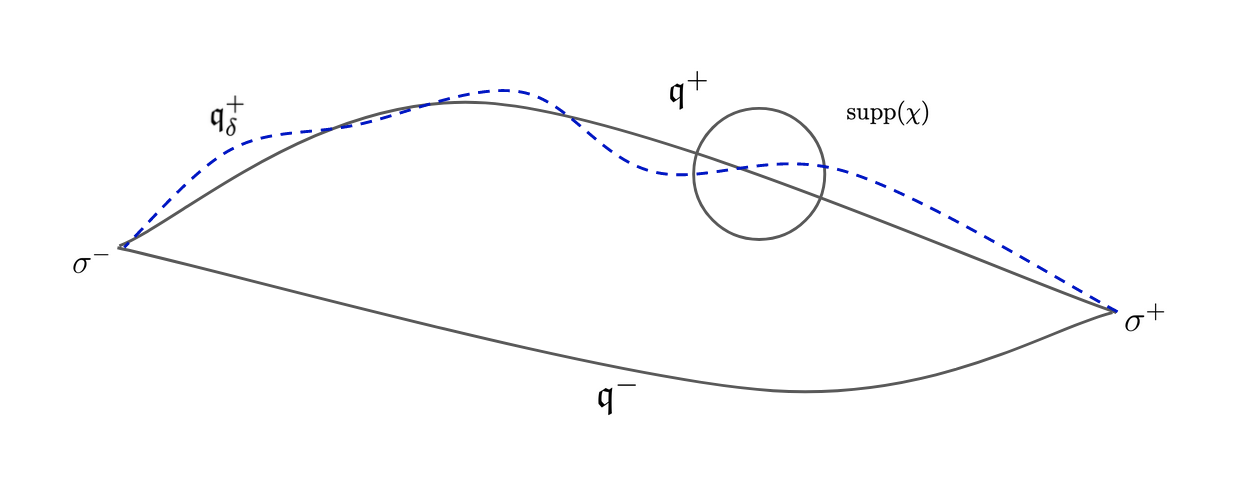}
\caption{Representation of the cut-off function $\chi$ used in order to produce the family of perturbed functionals $V_{\delta}$. We also draw the corresponding local minimizer $\Fq^+_\delta$ (discontinuous curve).}\label{FIGURE_chi}
\end{figure}

\section{Discussion on the previous literature and open problems}

\subsection{Related reaction-diffusion models and the question of stability}\label{subs_previous}

As we said in the introduction, the problem of existence of traveling waves for reaction-diffusion systems as well as their qualitative properties has been widely studied since the early works of Fisher \cite{fisher}, Kolmogorov, Petrovsky and Piskunov \cite{kpp} regarding the equation today known as the \textit{Fisher-KPP equation}. From the modeling perspective, while the aim of these authors was to describe of the dynamics of a given population, reaction-diffusion systems have also been proposed as models in other domains of the natural and social sciences. For example, applications in chemistry were given by Zeldovich \cite{zeldovich} and Kanel \cite{kanel} (see also Berestycki, Nicolaenko and Scheurer \cite{berestycki-nicolaenko-scheurer}) and the same Allen-Cahn model that we consider in this paper was proposed by Allen and Cahn \cite{allen-cahn}, following Cahn and Hilliard \cite{cahn-hilliard}, for describing phase transition problems in material physics.  It is also worth mentioning that for the most classical studies for traveling waves in reaction-diffusion problems the profile tends at infinity to two (possibly equal) constant stable states. However, other type of stable states (in particular, non constant) can be considered as conditions at infinity (as we do here). Moreover, the notion of traveling wave can be generalized in order to contain and describe similar structures. We refer to the papers by Berestycki and Hamel \cite{berestycki-hamel-07,berestycki-hamel-12} and the references therein.

Going back to the model Fisher-KPP equation, it can be written as follows:
\begin{equation}
\partial_t w- \partial_{x}^2w = f(w), \mbox{ in } [0,+\infty) \times \R
\end{equation}
where $f: \R \to \R$ is such that $f(0)=0$, $f(1)=0$, $f>0$ in $(0,1)$, $f<0$ in $(-\infty,0)$ and $f(u)<f'(0)u$ for $u>0$. Traveling waves for this equation are solutions of the type
\begin{equation}
w(t,x)=U(x-ct)
\end{equation}
with $c>0$ and $U: \R \to \R$ satisfies
\begin{equation}
\lim_{x \to -\infty}U(x)=0 \mbox{ and } \lim_{x \to +\infty}U(x)=1.
\end{equation}
From the point of view of modelling, traveling waves intend, for instance, to describe the invasion from a stable state to another one. An important feature of the Fisher-KPP equation is the existence of an important speed parameter, $c_{KPP}>0$, usually called the \textit{invasion speed} which can be explicitly computed as follows:
\begin{equation}
c_{KPP}:= 2\sqrt{f'(0)}.
\end{equation}
The previous problem, and related ones, is studied by means of the maximum principle and comparison results. Using these methods, one proves existence and uniqueness of a traveling wave with fixed speed $c>0$ if and only if $c \geq c_{KPP}$. This seems to be an important contrast with respect to the model that we consider here. Indeed, recall that our Theorem \ref{THEOREM_main_TW} states that the threshold speed $c^\star$ is, in particular, unique among the class of profiles which are minimizers in our variational setting. In fact, the same phenomenon is observed in earlier papers which also establish existence of traveling waves for reaction-diffusion systems by a variational procedure:  Muratov \cite{muratov}, Lucia, Muratov and Novaga \cite{lucia-muratov-novaga}, Alikakos and Katzourakis \cite{alikakos-katzourakis}, Chen, Chien and Huang \cite{chen-chien-huang}. Nevertheless, we point out that our results (the same as the ones we cite) do not exclude the possibility of other type of traveling wave solutions with speed different than $c^\star$. In particular, there could exist traveling waves with heteroclinic profiles which are obtained from a different variational setting than ours, or even from non-variational methods.

The analysis in \cite{fisher,kpp} was substantially extended in subsequent works. Fife and McLeod \cite{fife-mcleod77,fife-mcleod81} established stability properties for traveling waves in the Fisher-KPP equation. Generalizations bringing into consideration higher-dimensional equations (in space) were also made. For instance, Aronson and Weinberger \cite{aronson-weinberger} (see also Hamel and Nadirashvili \cite{hamel-nadirashvili} and the references therein) considered the case of $\R^N$ as space domain. We also mention the work of Berestycki, Larrouturou and Lions \cite{berestycki-larrouturou-lions} Berestycki and Nirenberg \cite{berestycki-nirenberg} for the case of a cylinder $\R \times \omega$, with $\omega\subset \R^{N-1}$ a bounded domain. For the case of periodic domains, see Berestycki and Hamel \cite{berestycki-hamel-02}, Berestycki, Hamel and Nadirashvili \cite{berestycki-hamel-nadirashvili-05}. The case of more general domains is adressed in Berestycki, Hamel and Nadirashvili \cite{berestycki-hamel-nadirashvili-10}. For the non-local problem see Berestycki et. al. \cite{berestycki-09}.

The family of non-linear functions $f$ which are admissible for the Fisher-KPP model does not contain non-linearities of Allen-Cahn type. Indeed, such non-linearities are written as $f=-V_{AC}'$ where $V_{AC}$ is a non-negative double-well potential, the prototypical case being
\begin{equation}
V_{AC}: u \in \R \to \frac{(1-u^2)^2}{4} \in \R,
\end{equation}
which does not satisfy the assumptions for required the equations of Fisher-KPP type, written above. For the scalar Allen-Cahn equation, we mention the result due to Matano, Nara and Taniguchi regarding the stability for traveling waves with $\R^N$ as space domain. In this case, the waves propagate according to one direction and connect the stable states $\pm 1$ at infinity. The case of traveling waves that connect one stable state with one unstable, non-constant periodic 1D solution was studied by Hamel and Roquejoffre \cite{hamel-roquejoffre}. The non-local case was adressed in Bates et. al. \cite{bates97}.

Many results are available for Allen-Cahn \textit{systems}, but mostly in one space dimension. In this case, the (negative) gradient flow structure implies that for initial data of finite energy which connects two different wells, the corresponding solution at long time the solution should look as a chain of glued 1D connecting orbits. More generally, if the initial condition connects at infinity two local minima at \textit{possibly different} levels (and a suitable weighted energy is finite), then traveling waves should also appear in the asymptotic pattern. Proofs of these facts, even in a more general framework, can be found in Risler \cite{risler,risler2021-1,risler2021-2}. Moreover, one can aim at obtaining quantitative results which describe more precisely the previous qualitative behavior and also introduce the problem of considering the system as a singular perturbation. That is, one considers a coefficient $\eps^{-2}$ multiplying the non-linear term and passes to the limit $\eps \to 0^+$. In this direction, it has been shown that the \textit{fronts} (that is, the regions in which the solution is far from the set of wells) of the solution of the gradient flow problem move at slow motion. The first rigorous proofs of this fact was given by Carr and Pego \cite{carr-pego89,carr-pego90}, Fusco and Hale \cite{fusco-hale}, for the Allen-Cahn equation. This analysis was later extended to multi-well systems by Bethuel, Orlandi and Smets \cite{bethuel-orlandi-smets} for multi-well systems. Bethuel and Smets \cite{bethuel-smets17,bethuel-smets19} obtained results regarding the motion law and the long-time interaction between stationary solutions in the multi-well \textit{scalar} case, allowing also for degenerate wells, but for the moment their work has not been extended to systems.

Regarding Allen-Cahn systems in higher dimensions, besides the classical articles regarding the stationary wave and this paper, we are only aware of the recent work of Chen, Chien and Huang \cite{chen-chien-huang}. In the latter, the authors consider the strip $\R \times (-l,l)$ as space domain and, for a class of symmetric triple-well potentials on the plane (similar to that from Bronsard, Gui and Schatzman \cite{bronsard-gui-schatzman}), show the existence of traveling wave solutions connecting at infinity a well and an approximation in $(-l,l)$ of a globally minimizing heteroclinic, which they assume to be unique. Their proof follows by a suitable application of the variational device of Muratov \cite{muratov}, which differs from that by Alikakos and Katzourakis \cite{alikakos-katzourakis} mainly on the fact that a different constrained minimization problem is considered.

A discussion concerning the mathematical methods used for addressing these problems is in order. As it is well known, while the maximum principle and the comparison theorems play a key role in the study of most \textit{scalar} reaction-diffusion equations (such as the Fisher-KPP equation), those tools are no longer available for systems except in some particular classes, for instance when dealing with the so-called \textit{monotone systems}, see Volpert, Volpert and Volpert \cite{volpertx3}. As a consequence, for more general classes of systems one needs other (more general) tools. Several approaches were developed, for example the use of Leray-Schauder degree \cite{volpertx3} or Conley theory as discussed in Smoller \cite{smoller}. We refer to the reader to the sources given in \cite{smoller,volpertx3}. While the gradient structure of some reaction-diffusion equations enables the application of variational methods (see the already cited references \cite{alikakos-fusco,bouhours-nadin,chen-chien-huang,heinze,lucia-muratov-novaga04,lucia-muratov-novaga,muratov,risler,risler2021-1,risler2021-2}), these methods have not been extensively used in this context. This is some kind of contrast with respect to the case of dispersive equations where, since the seminal work of Cazenave and Lions \cite{cazenave-lions}, a large amount of results regarding the existence and orbital stability of traveling waves and solitons has been produced. For instance, this has been done for Gross-Pitaevskii equations and systems, which are in some sense the dispersive counterpart of the parabolic problems of Allen-Cahn type. See Bethuel, Gravejat and Saut \cite{bethuel-gravejat-saut}, Bethuel et. al. \cite{bethuel-gravejat-saut-smets} for the orbital stability of traveling waves for the 1D Gross-Pitaevskii equation, Maris \cite{maris} for the Gross-Pitaevskii equation in $\R^N$, $N \geq 3$. The orbital stability of stationary waves (of heteroclinic type) for two coupled Gross-Pitaevskii equations was proven by Alama et. al. \cite{alama-bronsard}.

In order to conclude this section, we mention that the question of the local stability for the parabolic system of this paper is wide open. Even for the minimizing stationary wave obtained by Alama, Bronsard and Gui \cite{alama-bronsard-gui}, stability properties have not been studied to our knowledge. Of course, the question is also open for the traveling wave solutions that we obtain here.
\subsection{The heteroclinic stationary wave for 2D Allen-Cahn systems}
As pointed out before, the profile of the traveling wave solution that we obtain of this paper behaves at infinity as the stationary waves obtained in \cite{alama-bronsard-gui}. We briefly recall here how the existence of these solutions is shown, which we hope will make the links with our problem clearer.  Consider the elliptic system
\begin{equation}\label{stationary}
-\Delta \FU= \nabla_u V(\FU) \mbox{ in } \R^2,
\end{equation}
which corresponds to stationary solutions of \eqref{parabolic-allencahn}. The main result obtained in \cite{alama-bronsard-gui} states that if one assumes the existence of two distinct globally minimizing heteroclinics up to translations, $\Fq_-$ and $\Fq_+$, and adds a symmetry assumption on the potential, there exists a solution $\FU$ to \eqref{stationary} satisfying the conditions at infinity
\begin{equation}\label{stationary_limits}
\begin{cases}
\FU(x_1,x_2) \to \sigma^\pm & \mbox{ as } x_2 \to \pm \infty, \mbox{ uniformly in } x_1,\\
\FU(x_1,x_2)  \to \Fq_\pm(\cdot+\tau^\pm) &\mbox{ as } x_1 \to \pm \infty, \mbox{ uniformly in } x_2,
\end{cases}
\end{equation}
for some translation parameters $(\tau^-,\tau^+)\in \R^2$ (in the symmetric case $\tau^-=\tau^+=0$). The symmetry assumption was later removed by Schatzman in \cite{schatzman}, so that the parameters $(\tau^-,\tau^+)$ are part of the solution as well. While the existence proofs in \cite{alama-bronsard-gui} and \cite{schatzman} are (roughly speaking) addressed by addressing directly a functional associated to \eqref{stationary}, a more general approach was carried out successfully in a more recent paper by Monteil and Santambrogio \cite{monteil-santambrogio}, later by Smyrnelis in \cite{smyrnelis}. Their approach follows from the key observation (for more details see Alessio and Montecchiari \cite{alessio-montecchiari} and the references therein) that $\FU$ can be seen as a curve in the space
\begin{align}
Y(\Fq_-,\Fq_+):=\left\{ U: H^1_{\loc}(\R,(X(\sigma^-,\sigma^+),\right.&\left.\lVert \cdot \rVert_{L^2(\R,\R^k)})) :\right.\\ &\left.\lim_{x_1 \to \pm \infty}\inf_{\tau \in \R}\lVert U(x_1)-\Fq_\pm(\cdot+\tau) \rVert_{L^2(\R,\R^k)}=0 \right\}.
\end{align}
The previous leads to consider the functional
\begin{equation}\label{E2}
E_2: U \in Y(\Fq_-,\Fq_+) \to E_2(U):=\int_{\R} \left[\frac{\lVert U'(x_1) \rVert_{L^2(\R,\R^k)}^2}{2}+\CV(U(x_1))\right]dx_1 \in \R,
\end{equation}
where $\CV:X(\sigma^-,\sigma^+) \to \R$ is the normalized energy
\begin{equation}
\CV: q \in X(\sigma^-,\sigma^+) \to E(q)-\Fm \in \R,
\end{equation}
so that $\CV$ is a non-negative functional in $X(\sigma^-,\sigma^+)$ with zero set equal to $\CF$. Therefore, $\CV$ can be thought as a multi-well potential (modulo translations) in an infinite dimensional space.  Identifying $U$ with a function in $H^1_{\loc}(\R^2,\R^k)$ in the obvious way, we rewrite from \eqref{E2}
\begin{align}
E_2(U)= \int_\R \left[\int_\R\right.& \frac{\lvert \partial_{x_1} U(x_1,x_2) \rvert^2}{2} dx_2 \\ &\left.+\left[ \int_\R \left( \frac{\lvert\partial_{x_2} U(x_1,x_2)\rvert^2}{2}+V(U(x_1,x_2)) \right)dx_2-\Fm\right] \right]dx_1,
\end{align}
so, formally, critical points $\FU$ of $E_2$ in $Y(\Fq_-,\Fq_+)$ are solutions to \eqref{stationary} satisfying the conditions at infinity 
\begin{equation}
\lim_{x_1 \to \pm \infty} \inf_{\tau \in\R}\lVert \FU(x_1)-\Fq(\cdot+\tau) \rVert_{L^2(\R,\R^k)}=0
\end{equation}
In particular, the solution found in \cite{alama-bronsard-gui} is a global minimizer of $E_2$ in $Y(\Fq_-,\Fq_+)$. For the case of global minimizers, one shows that the stronger condition \eqref{stationary_limits} holds, which we suspect might not be true for other critical points. From this starting point, the authors in \cite{monteil-santambrogio} generalize the one-dimensional problem \eqref{mij} seeing it as a problem of finding geodesics in a metric space (more general than $\R^k$), which can be chosen to be equal to $Y(\Fq_-,\Fq_+)$. Subsequently, they are able to deduce the results of \cite{alama-bronsard-gui} as particular cases. These type of ideas inspired us for proving the results of this paper, which we do in the framework of abstract Hilbert spaces similar to that in \cite{smyrnelis}.

\subsection{Traveling waves for 1D parabolic systems of gradient type}
For the reader's convenience, we provide some more details on the result which was proven by Alikakos and Katzourakis \cite{alikakos-katzourakis} and how it links to our problem. Consider the 1D parabolic system
\begin{equation}\label{1D_Parabolic}
\partial_t \Fw - \partial^2_{x}\Fw=-\nabla_{u} W(\Fw) \mbox{ in } [0,+\infty) \times \R.
\end{equation}
Here $W: \R^k \to \R$ is an \textit{unbalanced} double-well potential.  The existence of traveling wave solutions for \eqref{1D_Parabolic} has been adressed by several authors, see for instance Risler \cite{risler}, Lucia, Muratov, Novaga \cite{lucia-muratov-novaga} as well as Alikakos and Katzourakis \cite{alikakos-katzourakis}. As we mentioned earlier, in this paper we look closely to the proof given in \cite{alikakos-katzourakis} (see also the book by Alikakos, Fusco and Smyrnelis \cite{alikakos-fusco-smyrnelis}). In order to be more precise, one looks for a pair $(c^\star,\Fu)$ such that the function
\begin{equation}
\Fw: (t,x) \in  [0,+\infty) \times \R \to \Fu(x-ct) \in \R^k
\end{equation}
solves \eqref{1D_Parabolic} and the profile $\Fu$ joins at infinity two local minimzers of $W$ at \textit{different} levels. The solution $\Fw$ is then a traveling wave solution. More precisely, the profile $\Fu$ solves the system
\begin{equation}\label{TW-FD}
-c^\star \Fu'-\Fu''=-\nabla_u W(\Fu) \mbox{ in } \R,
\end{equation}
and it satisfies at infinity
\begin{equation}
\lim_{t \to \pm \infty}\Fu(t)=a^\pm
\end{equation} 
where $a^- \in \R^k$ is a global minimum of $W$ with $W(a^-) < 0$ and $a^+$ is a local minimum of $W$ and $W(a^+)=0$. Moreover, in \cite{alikakos-katzourakis} it is also shown that the speed $c^\star$ is unique (a property to which our Proposition \ref{PROPOSITION-uniqueness} is analogous), while the profile does not need to be. 

The approach in \cite{alikakos-katzourakis,alikakos-fusco-smyrnelis} is variational and uses some previous ideas from Muratov \cite{muratov}. More precisely, they study the family of weighted functionals introduced by Fife and McLeod \cite{fife-mcleod77,fife-mcleod81}
\begin{equation}
E_c(q):= \int_\R \left( \frac{\lvert q'(t) \rvert^2 }{2}+W(q(t)) \right)e^{ct}dt,
\end{equation}
where $q$ belongs to a suitable subspace in $H^1_{\loc}(\R,\R^k)$ such that $\lim_{t \to \pm \infty}q(t) = a^\pm$. We formally check that critical points of $E_{c^\star}$ solve \eqref{TW-FD}. The strategy of the proof in \cite{alikakos-katzourakis} was introduced before in Alikakos and Fusco \cite{alikakos-fusco} and it can be summarized as follows: First, one solves a family of constrained minimization problems for $E_c$, where $c>0$ is at this point thought just as parameter. Once these problems have been solved one needs to find proper speed $c^\star$. Finally, one needs to ``remove" the constraints, that is, to show that for $c^\star$ one can find a constrained minimizer which does not saturate the constraints, meaning that it is an actual solution to \eqref{TW-FD}. These last two steps are accomplished by showing that constrained minimizers exhibit a suitable asymptotic behavior (more precisely, that they do not present a oscillatory behavior in arbitrarily large regions) . 

Therefore, the idea is to follow Monteil and Santambrogio \cite{monteil-santambrogio}, Smyrnelis \cite{smyrnelis} and adapt the result of Alikakos and Katzourakis for infinite-dimensional ODE systems, in which curves take values on an abstract Hilbert space. More precisely, we consider for $c>0$ the functional
\begin{equation}
E_{2,c}(U):= \int_\R \left(\int_\R \frac{\lvert \partial_{x_1} U(x_1,x_2) \rvert^2}{2}dx_2+(E(U(x_1,\cdot))-\Fm^+)\right)e^{cx_1}dx_1
\end{equation}
which we already introduced before. We then see $U$ (with the proper identifications) as a mapping $U: x_1 \in \R \to U(x_1,\cdot) \in L^2(\R,\R^k)$. For $v \in L^2(\R,\R^k)$, we set 
\begin{equation}\label{CW}
\CW(v):= \begin{cases}
E(U(x_1,\cdot))-\Fm^+ &\mbox{ if } v \in H^1(\R,\R^k),\\
+\infty & \mbox{ otherwise}.\\
\end{cases}
\end{equation}
Then, under assumption \ref{asu_unbalanced} we have that $\CW$ is an unbalanced double well potential in $L^2(\R,\R^k)$ and $E_{2,c}$ can be rewritten as
\begin{equation}\label{E2c}
E_{2,c}(U)= \int_{\R} \left[\frac{\lVert U'(x_1) \rVert_{L^2(\R,\R^k)}}{2}+ \CW(U(x_1))\right]e^{cx_1}dx_1,
\end{equation}
which is as $E_c$ but for curves taking values in $L^2(\R,\R^k)$ instead of $\R^k$. Therefore, the main issue here is to adapt the result of \cite{alikakos-katzourakis} for curves which take values in a possibly infinite-dimensional Hilbert space $\scrL$ (to be thought as $L^2(\R,\R^k)$) and possessing a proper subspace $\scrH$ (to be thought as $H^1(\R,\R^k)$) satisfying suitable properties with respect to $\CW$. A difficulty arises, since the minima of $\CW$ are non-isolated due to the invariance by translations of $E$. Nevertheless, this difficulty can be circumvented, and otherwise we could always restrict to potentials which are symmetric as Alama, Bronsard and Gui \cite{alama-bronsard-gui} and working in the resulting space of equivariant curves, in which invariance by translations disappears.  The major difficulty comes from the fact that in \cite{alikakos-katzourakis} the authors impose some non-degeneracy and radial monotonicity assumptions which prevent the constrained minimizers for exhibiting a degenerate, oscillatory behavior.  This assumption can be, in some sense, weakened in order to allow degenerate minima (we prove this in \cite{oliver-bonafoux-tw-bis}), but we cannot prove that $\CW$ fulfills them even for simple examples and we think it can be too restrictive. The reason is that the geometry of the level sets $\CW$ is difficult to understand, as it depends indirectly on $V$. For this reason, a new type of assumption, which in our case is \ref{asu_perturbation}, is needed to replace the one from \cite{alikakos-katzourakis}.

Finally, we point out that since our results are proven in an abstract setting, they also apply to 1D systems as \eqref{1D_Parabolic} by seeing this time $\scrL$ and $\scrH$ as $\R^k$. Moreover, in the companion paper \cite{oliver-bonafoux-tw-bis} we show that the abstract approach allows to modify the non-degeneracy assumptions on the minimizers used in \cite{alikakos-katzourakis} and consider classes of potentials with some kind of degenerate minima.

\subsection{Link with the singular limit problem}

The asymptotic behavior as $\eps \to 0$ for families of solutions $(u_\eps)_{\eps>0}$ of
\begin{equation}\label{parabolic_ac_eps}
\partial_t u_\eps - \Delta u_\eps = -\eps^{-2} \nabla_u V(u_\eps), \hspace{2mm} u_\eps: [0,T] \times \Omega \to \R^k
\end{equation}
has been extensively studied for bounded domains $\Omega \subset \R^N$ and $T>0$. Concerning the scalar case $k=1$, Ilmanen showed in \cite{ilmanen} that the equation above converges to \textit{Brakke's motion by mean curvature} as $\eps \to 0$. Regarding the vectorial case, while analogous results are established under several additional assumptions, little is proven regarding the general picture. We refer to Bronsard and Reitich \cite{bronsard-reitich} as well as the more recent Laux and Simon \cite{laux-simon} and the references therein. A state of the art regarding the elliptic problem can be found in Bethuel \cite{bethuel}. We will now briefly comment on how the results obtained in this paper can be linked to (and hopefully shed some light into) to asymptotic problem introduced above. For $\eps>0$, consider $(c^\star,\FU)$ the solution given by Theorem \ref{THEOREM_main_TW} and let
\begin{equation}\label{tw_eps}
w_\eps(t,x_1,x_2):=\FU_\eps(x_1-c_\eps t, x_2), \mbox{ for } (t,(x_1,x_2)) \in [0,+\infty) \times \R^2
\end{equation}
where for $(\tilde{x}_1,\tilde{x}_2) \in \R^2$
\begin{equation}\label{profile_eps}
\FU_\eps(\tilde{x}_1,\tilde{x}_2):= \FU(\eps^{-1}\tilde{x}_1,\eps^{-1}\tilde{x}_2)
\end{equation}
and
\begin{equation}\label{c_eps}
c_\eps:=\eps^{-1}c.
\end{equation}
Combining \eqref{tw_eps}, \eqref{profile_eps} and \eqref{c_eps} we have that for $(t,x_1,x_2) \in [0,+\infty) \times \R^2$
\begin{equation}\label{w_eps}
w_\eps(t,x_1,x_2)=\FU(\eps^{-1}x_1-\eps^{-2}c t,\eps^{-1}x_2)
\end{equation}
and recall that by Theorem \ref{THEOREM_main_TW} we have
\begin{equation}\label{profile_recall2}
-c\partial_{x_1}\FU-\Delta \FU=-\nabla_u V(\FU) \mbox{ in } \R^2
\end{equation}
which implies that for all $\eps>0$
\begin{equation}
\partial_t w_\eps-\Delta w_\eps= \eps^{-2} (-c\partial_{x_1}\FU-\Delta \FU)= -\eps^{-2}\nabla_u V(\FU)=-\eps^{-2}\nabla_u V(w_\eps) \mbox{ in }[0,+\infty) \times \R^2.
\end{equation}
Therefore, for any bounded domain $\Omega \subset \R^2$, $T>0$ and $\eps>0$, $w_\eps$ solves \eqref{parabolic_ac_eps}. That is, to sum up, $w_\eps$ is a traveling wave for the re-scaled potential $V_\eps:=\eps^{-2}V$, with profile $\FU_\eps$ as in \eqref{profile_eps} and with speed $c_\eps$ as in \eqref{c_eps}. Notice that $c_\eps \to +\infty$ as $\eps \to 0$. Regarding the asymptotics of $(w_\eps)_{\eps>0}$, let $\Omega=(-1,1)^2 \subset \R^2$ for simplification and consider a time interval $[0,T]$, $T>0$. Assume also that \ref{asu_convergence} holds, so that by Theorem \ref{THEOREM_strong_bc} we have
\begin{equation}\label{convergence_as}
\lim_{x_1 \to \pm \infty} \lVert \FU(x_1,\cdot)-\Fq^\pm(\cdot+\tau^\pm) \rVert_{H^1(\R,\R^k)}=0.
\end{equation}
In combination with \eqref{w_eps}, \eqref{convergence_as} implies that for all $x^+ \in (0,1)$
\begin{equation}
\lim_{\eps \to 0}\sup_{x_1 \in [x^+,1)}\lVert w_\eps(0,x_1,\cdot) - \Fq^+(\eps^{-1}(\cdot+\tau^+)) \rVert_{L^\infty((-1,1),\R^k)}=0
\end{equation}
and for all $x^- \in (-1,0)$
\begin{equation}
\lim_{\eps \to 0}\sup_{x_1 \in (-1,x^-]}\lVert w_\eps(0,x_1,\cdot) - \Fq^-(\eps^{-1}(\cdot+\tau^-)) \rVert_{L^\infty((-1,1),\R^k)}=0.
\end{equation}
Therefore, we find a phase transition on the line $\{(x_1,x_2) \in \R^2: x_1=0\}$, as it happens in the elliptic case with the rescaling of the stationary wave. On the contrary, for any $t>0$ we have
\begin{equation}
\lim_{\eps \to 0}\sup_{x_1 \in (-1,1)}\lVert w_\eps(t,x_1,\cdot) - \Fq^-(\eps^{-1}(\cdot+\tau^-)) \rVert_{L^\infty((-1,1),\R^k)}=0.
\end{equation}
That is, for positive time and small $\eps$, the rescaled solutions tend to look like the globally minimizing heteroclinic at the limit $x_1 \to\pm \infty$, in contrast with what is observed for $t=0$. In terms of the \textit{interfacial density}, the previous means that an initial condition with non-constant density gives a solution with constant density for $t>0$. This phenomenon is probably explained by some kind of parabolic regularization effects. To conclude this paragraph, notice that the considerations presented here are obtained by direct scaling computations. That is, they do not depend on the way $\FU$ is obtained. It is only required that $\FU$ solves \eqref{profile_recall2} with conditions \eqref{convergence_as}. In particular, the assumption \ref{asu_perturbation} is not relevant here and the same would apply to profiles obtained by different means under some other type of assumptions.

\section{The abstract setting}\label{sect_abstract}

\subsection{Main definitions and notations}
As we advanced in the introduction, instead of proving directly Theorems \ref{THEOREM_main_TW}, \ref{THEOREM_strong_bc} and \ref{THEOREM_carac_speed} , we will prove a set of more general results which will allow us to deduce the original ones as particular cases. In particular, we introduce an abstract setting similar to the ones considered in \cite{monteil-santambrogio} and specially \cite{smyrnelis}. The proof of the main abstract results, Theorems \ref{THEOREM-ABSTRACT}, \ref{THEOREM_abstract_bc} and \ref{THEOREM_abstract_speed} below, are thus the core of the paper. The passage between the abstract and the original setting is established in section \ref{section_proofs_final}, which in turn proves Theorems \ref{THEOREM_main_TW}, \ref{THEOREM_strong_bc} and \ref{THEOREM_carac_speed}. 

As we said before, the abstract results should be thought as an extension of the work by Alikakos and Katzourakis \cite{alikakos-katzourakis} to curves taking values in a more general Hilbert space and with minimum \textit{sets} instead of isolated minimum \textit{points}. In fact, we essentially perform an adaptation of their strategy of proof, which turns out to carry on to our setting. That is, our approach will consist on establishing existence of a pair $(c,\bU)$ in $(0,+\infty) \times X$ which fulfills
\begin{equation}\label{abstract_equation}
\bU''-D_{\scrL}\CE(\bU)=-c\bU' \mbox{ in } \R
\end{equation}
and satisfies the conditions at infinity
\begin{equation}\label{abstract_bc_weak-}
\exists T^-  \in \R: \forall t \leq T^-, \hspace{2mm} \bU(t) \in \scrF^-_{r_0^-/2},
\end{equation}
\begin{equation}\label{abstract_bc_weak+}
\exists T^+ \in \R: \forall t \geq T^+, \hspace{2mm} \bU(t) \in \scrF^+_{r_0^+/2}.
\end{equation}
Notice that this problem can also be thought as a heteroclinic connection problem on Hilbert spaces for a second order potential system with friction term. Such a problem could have its own interest besides the main application to the existence of traveling waves that we give here. Of course, analogous considerations can be also applied to the results in \cite{alikakos-katzourakis} as well as our companion paper \cite{oliver-bonafoux-tw-bis}.

The nature of the objects introduced above will be made precise along this paragraph. Let $\scrL$ be a Hilbert space with inner product $\langle \cdot,\cdot \rangle_{\scrL}$ and induced norm $\lVert \cdot \rVert_{\scrL}$. Let $\scrH \subset \scrL$ a Hilbert space with inner product $\langle \cdot,\cdot \rangle_{\scrH}$. In the original setting, $\scrL$ is $L^2(\R,\R^k)$ and $\scrH$ is $H^1(\R,\R^k)$, both endowed with their natural inner products. We will take $\CE: \scrL \to (-\infty,+\infty]$ an \textit{unbalanced} potential. In the setting of Theorem \ref{THEOREM_main_TW}, $\CE$ will \emph{essentially} coincide with $E-\Fm^+$ in $H^1(\R,\R^k)$ and with $+\infty$ elsewhere\footnote{this statement is not exact as the energy $E$ is not defined in $H^1(\R,\R^k)$, but on an affine space based on $H^1(\R,\R^k)$. However, we can trivially obtain a functional defined on $H^1(\R,\R^k)$ from $E$. See subsection \ref{subs_implication}.}. Here we just impose a set of abstract assumptions on $\CE$. Most of those assumptions follow bi combining ideas in \cite{alikakos-katzourakis} with ideas in Schatzman \cite{schatzman} and Smyrnelis \cite{smyrnelis}. We will begin by fixing two sets $\scrF^-$ and $\scrF^+$ in $\scrL$.  For $r>0$, we define
\begin{equation}\label{Fr}
\scrF^\pm_r:= \left\{ v \in \scrL: \inf_{\bv \in \scrF^\pm}\lVert v-\bv \rVert_{\scrL}\leq r \right\},
\end{equation}
and
\begin{equation}\label{FrH}
\scrF^\pm_{\scrH,r}:= \left\{ v \in \scrH: \inf_{\bv \in \scrF^\pm}\lVert v-\bv \rVert_{\scrH}\leq r \right\},
\end{equation}
that is, the closed balls in $\scrL$ and $\scrH$ respectively, with radius $r>0$ and center $\scrF^\pm$. The main assumption reads as follows:
\begin{hyp}\label{hyp_unbalanced}
The potential $\CE$ is weakly lower semicontinuous in $\scrL$. The sets $\scrF^-$ and $\scrF^+$ are closed in $\scrL$. There exists a constant $a<0$ such that
\begin{equation}
\forall v \in \scrL, \forall \bv^- \in \scrF^-, \hspace{2mm} \CE(v)  \geq \CE(\bv^-) = a
\end{equation}
and each $\bv^+ \in \scrF^+$ is a local minimizer satisfying $\CE(\bv^+) =0$. Moreover, there exist two positive constants $r_0^-$, $r_0^+$ such that $\scrF^+_{r_0^-} \cap \scrF^-_{r_0^+} = \emptyset$ (see \eqref{Fr}). There also exist $C^\pm>1$ such that
\begin{equation}\label{coercivityL}
\forall v \in \scrF^\pm_{r_0^\pm}, \hspace{2mm} (C^\pm)^{-1}\dist_{\scrL}(v,\scrF^\pm)^2 \leq \CE(v)-\min\{ \pm(-a),0\}.
\end{equation}
Moreover, for any $v \in \scrF_{r_0^\pm}^\pm$, there exists a unique $\bv^\pm(v) \in \scrF^\pm$ such that
\begin{equation}
\lVert v-\bv^\pm(v) \rVert_{\scrL}=\inf_{\bv^\pm \in \scrF^\pm}\lVert v-\bv^\pm \rVert_{\scrL}.
\end{equation}
Moreover, the projection maps
\begin{equation}\label{projection}
P^\pm: v \in \scrF^\pm_{r_0^\pm} \to \bv^\pm(v) \in \scrF^\pm
\end{equation}
are $C^2$ with respect to the $\scrL$-norm.
\end{hyp}
Hypothesis \ref{hyp_unbalanced} defines $\CE$ as an unbalanced double well potential with respect to $\scrF^-$ and $\scrF^+$ and gives local information of the minimizing sets. Compare with \ref{asu_unbalanced} and the remarks that follow. We have the following immediate consequence, which will be useful in the sequel:
\begin{lemma}\label{LEMMA_positivity}
Assume that \ref{hyp_unbalanced} holds. If we define for $r \in (0, r_0^\pm]$ we define
\begin{equation}\label{kappa_r}
\kappa^\pm_r:=\inf\{ \CE(v): \dist_\scrL(v,\scrF^\pm) \in [r,r_0^\pm] \}
\end{equation}
then we have $\kappa^\pm_r>\min\{ \pm(-a),0\}$. Moreover,
\begin{equation}\label{positivity}
\forall v \in \scrF^+_{r_0^+/2},\hspace{2mm} \CE(v) \geq 0.
\end{equation}
\end{lemma}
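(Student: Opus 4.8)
The plan is to derive the whole statement directly from the coercivity estimate \eqref{coercivityL} in \ref{hyp_unbalanced}; neither the weak lower semicontinuity of $\CE$ nor any compactness argument is needed here. The first step is the elementary observation that, since $a<0$, the quantity $\min\{\pm(-a),0\}$ equals $0$ for the upper sign (because $-a>0$) and equals $a$ for the lower sign (because $a<0$). Hence \eqref{coercivityL} specializes to $\CE(v) \geq (C^+)^{-1}\dist_\scrL(v,\scrF^+)^2$ for every $v \in \scrF^+_{r_0^+}$, and to $\CE(v)-a \geq (C^-)^{-1}\dist_\scrL(v,\scrF^-)^2$ for every $v \in \scrF^-_{r_0^-}$.

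Next I would bound $\kappa^\pm_r$. Fix $r \in (0,r_0^\pm]$ and let $v$ be any element of the set appearing in \eqref{kappa_r}, that is, $\dist_\scrL(v,\scrF^\pm) \in [r,r_0^\pm]$. In particular $v \in \scrF^\pm_{r_0^\pm}$, so the estimate of the previous paragraph applies and, using $\dist_\scrL(v,\scrF^\pm)^2 \geq r^2$, gives
\begin{equation}
\CE(v) \geq \min\{\pm(-a),0\} + (C^\pm)^{-1}r^2.
\end{equation}
Taking the infimum over all such $v$ yields $\kappa^\pm_r \geq \min\{\pm(-a),0\} + (C^\pm)^{-1}r^2$; since $r>0$ and $0<(C^\pm)^{-1}<1$, the extra term is strictly positive, whence $\kappa^\pm_r > \min\{\pm(-a),0\}$. (If the competitor set in \eqref{kappa_r} happens to be empty, then $\kappa^\pm_r=+\infty$ and the inequality is trivially true.)

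Finally, for \eqref{positivity}, observe that $\scrF^+_{r_0^+/2}\subset \scrF^+_{r_0^+}$, so the specialized estimate for the upper sign gives $\CE(v) \geq (C^+)^{-1}\dist_\scrL(v,\scrF^+)^2 \geq 0$ for every $v \in \scrF^+_{r_0^+/2}$. I do not expect any genuine obstacle in this proof: the lemma is a purely formal consequence of \eqref{coercivityL} together with the sign of $a$. The only points requiring a bit of care are keeping track of the two cases hidden in $\min\{\pm(-a),0\}$ and noticing that the region appearing in \eqref{kappa_r} lies inside $\scrF^\pm_{r_0^\pm}$, which is exactly where the coercivity estimate \eqref{coercivityL} is available.
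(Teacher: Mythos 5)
Your proof is correct and takes exactly the route the paper intends: the paper's own proof consists of the single sentence ``It follows directly from \eqref{coercivityL} in \ref{hyp_unbalanced},'' and what you have written out is precisely the straightforward unpacking of that coercivity estimate, including the correct resolution of $\min\{\pm(-a),0\}$ using $a<0$ and the observation that the competitor set for $\kappa^\pm_r$ and the ball $\scrF^+_{r_0^+/2}$ both sit inside $\scrF^\pm_{r_0^\pm}$ where the estimate is available.
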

\begin{proof}
It follows directly from \eqref{coercivityL} in \ref{hyp_unbalanced}.
\end{proof}
 We now impose the following regarding the relationship between $\scrL$ and $\scrH$:
\begin{hyp}\label{hyp_smaller_space}
We have that $\scrH=\{ v \in \scrL: \CE(v)<+\infty\}$ and $\lVert \cdot \rVert_{\scrL} \leq \lVert \cdot \rVert_{\scrH}$. In particular, $\scrF^\pm \subset \scrH$. Moreover, $\CE$ is a $C^1$ functional on $(\scrH,\lVert \cdot \rVert_{\scrH})$ with differential $D\CE:v \in \scrH \to D\CE(v) \in \scrH'$, where $\scrH'$ is the (topological) dual of $\scrH$. Furthermore, there exists an even smaller space $\tilde{\scrH}$ with an inner product $\langle \cdot, \cdot \rangle_{\tilde{\scrH}}$ and associated norm $\lVert \cdot \rVert_{\tilde{\scrH}} \geq \lVert \cdot \rVert_{\scrH}$ such that we can find a continuous correspondence
\begin{equation}\label{DL}
D_{\scrL}\CE:v \in (\tilde{\scrH},\lVert \cdot \rVert_{\tilde{\scrH}}) \to D_{\scrL}\CE(v) \in (\scrL,\lVert \cdot \rVert_{\scrL})
\end{equation}
such that
\begin{equation}\label{abstract_ipp}
\forall v \in \tilde{\scrH}, \forall w \in \scrH,\hspace{2mm} D_{\scrL}\CE(v)(w)= D\CE(v)(w).
\end{equation}
\end{hyp}
Notice that in the context of Theorem \ref{THEOREM_main_TW} assumption \ref{hyp_smaller_space} is easily verified. The space $\tilde{\scrH}$ will be chosen $H^2(\R,\R^k)$ and \eqref{abstract_ipp} is no other that integration by parts. The notation $D_{\scrL}\CE$ is chosen to emphasize the formal $\scrL$-gradient flow structure of the corresponding abstract evolution equation.  We now continue by imposing a compactness assumption on $\scrF^\pm$:
\begin{hyp}\label{hyp_compactness}
$\scrL$-bounded subsets of $\scrF^\pm$ are compact with respect to $\scrH$-convergence.\footnote{hence, they are in particular compact with respect to $\scrL$-convergence}
\end{hyp}
Assumption \ref{hyp_compactness} readily implies the following:
\begin{lemma}\label{LEMMA_compactness}
Assume that \ref{hyp_unbalanced} and \ref{hyp_compactness} hold. Then, the sets $\scrF^\pm_{r_0^\pm/2}$ defined in \eqref{Fr} are closed in $\scrL$.
\end{lemma}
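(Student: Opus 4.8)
The plan is the standard sequential-closedness argument in a metric space. I would fix $\pm$ and take an arbitrary sequence $(v_n)_{n \in \N} \subset \scrF^\pm_{r_0^\pm/2}$ converging in $\scrL$ to some $v \in \scrL$, and aim to show $v \in \scrF^\pm_{r_0^\pm/2}$, i.e. $\dist_\scrL(v,\scrF^\pm) \le r_0^\pm/2$. Since $\scrL$ is a Hilbert space, hence metric, establishing this for every convergent sequence gives that $\scrF^\pm_{r_0^\pm/2}$ is closed in $\scrL$.

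First I would use \ref{hyp_unbalanced}. Because $\scrF^\pm_{r_0^\pm/2} \subset \scrF^\pm_{r_0^\pm}$, the nearest-point projection is available on this set, so for each $n$ the element $\bv_n := P^\pm(v_n) \in \scrF^\pm$ is well-defined and satisfies $\lVert v_n - \bv_n \rVert_\scrL = \dist_\scrL(v_n,\scrF^\pm) \le r_0^\pm/2$. The sequence $(v_n)$ is $\scrL$-bounded since it converges, so $(\bv_n)$ is $\scrL$-bounded as well, by $\lVert \bv_n \rVert_\scrL \le \lVert v_n \rVert_\scrL + r_0^\pm/2$. Next I would invoke \ref{hyp_compactness} (together with the footnote observation that $\scrH$-compactness entails $\scrL$-compactness, since $\lVert \cdot \rVert_\scrL \le \lVert \cdot \rVert_\scrH$): the $\scrL$-bounded set $(\bv_n) \subset \scrF^\pm$ is $\scrL$-precompact, so after passing to a subsequence $\bv_n \to \bv$ in $\scrL$ for some $\bv \in \scrL$; and since $\scrF^\pm$ is $\scrL$-closed by \ref{hyp_unbalanced}, in fact $\bv \in \scrF^\pm$.

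Finally I would pass to the limit in $\lVert v_n - \bv_n \rVert_\scrL \le r_0^\pm/2$ along that subsequence, using $v_n \to v$ and $\bv_n \to \bv$ in $\scrL$, to get $\lVert v - \bv \rVert_\scrL \le r_0^\pm/2$, hence $\dist_\scrL(v,\scrF^\pm) \le \lVert v - \bv \rVert_\scrL \le r_0^\pm/2$, i.e. $v \in \scrF^\pm_{r_0^\pm/2}$. The use of subsequences only serves to produce a candidate limit point in $\scrF^\pm$, and the final inequality does not depend on that choice, so the conclusion stands. There is no substantial obstacle here; the only point that must be kept straight is that $\dist_\scrL(\cdot,\scrF^\pm)$ is \emph{attained} on $\scrF^\pm_{r_0^\pm}$ (via $P^\pm$ from \ref{hyp_unbalanced}), which is exactly what converts the abstract compactness assumption \ref{hyp_compactness} into $\scrL$-closedness of the set $\scrF^\pm_{r_0^\pm/2}$ (as opposed to merely showing it equals the closure of a smaller ball). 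One could equivalently bypass $P^\pm$ by choosing $\bv_n \in \scrF^\pm$ with $\lVert v_n - \bv_n \rVert_\scrL \le r_0^\pm/2 + 1/n$ and running the same compactness step.
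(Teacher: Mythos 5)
Your proof is correct, and since the paper itself omits the proof of Lemma~\ref{LEMMA_compactness} (it only remarks that \ref{hyp_compactness} ``readily implies'' it), this is exactly the standard argument the author had in mind: project onto $\scrF^\pm$ via $P^\pm$ from \ref{hyp_unbalanced}, extract an $\scrL$-convergent subsequence of the projections using \ref{hyp_compactness}, and pass to the limit, with $\scrL$-closedness of $\scrF^\pm$ (also from \ref{hyp_unbalanced}) ensuring the limit lies in $\scrF^\pm$. The closing remark that one can avoid $P^\pm$ by near-minimizers is a sensible robustness check, though here the attainment of the distance is already guaranteed by \ref{hyp_unbalanced}.
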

%\begin{hyp}\label{hyp_projections}

% Furthermore, if $DP^\pm$ is the differential of $P^\pm$, then we have the uniform bound
%\begin{equation}\label{projection_bound}
%\exists C_P^\pm >0 : \forall v \in \scrF^\pm_{r_0}, \forall w \in \scrL, \hspace{2mm} \lVert DP^\pm(v)(w) \rVert_{\scrL}\leq C_P^\pm \lVert w \rVert_{\scrL},
%\end{equation}
%and the orthogonality relation
%\begin{equation}\label{orthogonality}
%\forall v \in \scrF^\pm_{r_0}, \forall w \in \scrL, \hspace{2mm} \langle DP^\pm(v)(w),P^\pm(v)-v \rangle_{\scrL}=0,
%\end{equation}
%where we identify $DP^\pm(v)(w)$ with an element of $\scrL$ via Riesz Theorem.
%\end{hyp}
Assumption \ref{hyp_compactness} is necessary in order to establish the conditions at infinity. In the main context, it is a straightforward consequence of the compactness of the minimizing sequences. Subsequently, we impose the following:
\begin{hyp}\label{hyp_projections_inverse}
Assume that \ref{hyp_unbalanced} holds. For $\scrF^\pm$, one of the two following alternatives holds:
\begin{enumerate}
\item $\scrF^\pm$ is $\scrL$-bounded.
\item For all $(v,\bv^\pm) \in \scrF^\pm_{r_0^\pm} \times \scrF^\pm$, there exists an associated map $\hat{P}^\pm_{(v,\bv^\pm)}: \scrL \to \scrL$ such that
\begin{equation}\label{hatP}
P^\pm(\hat{P}^\pm_{(v,\bv^\pm)}(v))=\bv^\pm
\end{equation}
and
\begin{equation}\label{hatP_dist}
 \dist_{\scrL}(\hat{P}^\pm_{(v,\bv^\pm)}(v),\scrF^\pm)=\dist_{\scrL}(v,\scrF^\pm).
\end{equation}
Moreover, $\hat{P}^\pm_{(v,\bv^\pm)}: \scrL \to \scrL$ is differentiable and
\begin{equation}\label{D_inverse}
\forall (w_1,w_2) \in \scrL^2, \hspace{2mm} \lVert D(\hat{P}^\pm_{(v,\bv^\pm)})(w_1,w_2) \rVert_{\scrL}=\lVert w_2 \rVert_{\scrL}
\end{equation}
\begin{equation}\label{hatP_CE}
\CE(\hat{P}_{(v,\bv^\pm)}(v))=\CE(v).
\end{equation}
\end{enumerate}
\end{hyp}
Essentially, in 2. we impose that the projections $P^\pm$ from  \ref{hyp_unbalanced} are, in some sense, invertible. Again, this is straigtforward in the concrete setting, as the projections $P^\pm$ consist on performing a translation. We now impose an assumption for the sets $\scrF^\pm_{\scrH, r_0}$:
\begin{hyp}\label{hyp_projections_H}
For any $v \in \scrF_{\scrH,r_0^\pm}^\pm$, as defined in \eqref{FrH}, there exists a unique $\bv_\scrH^\pm(v) \in \scrF^\pm$ such that
\begin{equation}
\lVert v-\bv_\scrH^\pm(v) \rVert_{\scrL}=\inf_{\bv^\pm \in \scrF^\pm}\lVert v-\bv^\pm \rVert_{\scrL}.
\end{equation}
Moreover, the projection maps
\begin{equation}
P^\pm_\scrH: v \in \scrF^\pm_{\scrH,r_0^\pm} \to \bv_\scrH^\pm(v) \in \scrF^\pm
\end{equation}
are $C^1$ with respect to the $\scrH$-norm. Moreover, if $C^\pm>1$ is the constant from \ref{hyp_unbalanced}, we have
\begin{equation}\label{H_difference_projections}
\forall v \in \scrF^\pm_{\scrH,r_0^\pm}, \hspace{2mm} \lVert P^\pm(v)-P^\pm_{\scrH}(v) \rVert_{\scrH} \leq C^\pm \lVert v-P^\pm_{\scrH}(v) \rVert_{\scrH}.
\end{equation}
Furthermore, for each $r^\pm \in (0,r_0^\pm]$ there exist constants $\beta^\pm(r^\pm)>0$ such that in case that $v \in \scrF^\pm_{r_0^\pm}$ satisfies
\begin{equation}\label{H_local_bound}
\CE(v) \leq \min\{ \pm(-a),0\}+\beta^\pm(r^\pm),
\end{equation}
then $v \in \scrF_{\scrH,r}^\pm$. Finally, we have the following
\begin{equation}\label{H_local_est}
\forall v \in \scrF_{\scrH,r_0^\pm}^\pm, \hspace{2mm} (C^\pm)^{-2}\lVert v-P^\pm_\scrH(v) \rVert_{\scrH}^2 \leq \CE(v)- \min\{ \pm(-a),0\} \leq (C^\pm)^2 \lVert v-P^\pm_\scrH(v) \rVert_{\scrH}^2.
\end{equation}
\end{hyp}

 %We now impose a radial monotonicity assumption with respect to projection
%\begin{hyp}\label{hyp_radial}
%Let $v \in \scrF^\pm_{r_0} \cap \scrH \setminus \scrF^\pm$. The function
%\begin{equation}
%\lambda \in \left[ 0, \frac{r_0}{\lVert v-P^\pm(v) \rVert_{\scrL}} \right] \to \CE(P^\pm(v)+\lambda(v-P^\pm(v))),
%\end{equation}
%is strictly increasing. Recall that $P^\pm$ is the projection mapping defined in \eqref{projection}).
%\end{hyp}
%In the context of Theorem \ref{THEOREM_main_TW}, hypothesis \ref{hyp_radial} will follow from the spectral assumption in \ref{asu_unbalanced}. Notice that, in such a scenario, one cannot hope to get a stronger radial monotonicity property, as the elements in $\CF^\pm$ form a continuum.

Assumption \ref{hyp_projections_H} is made in order to ensure the suitable local properties around $\scrF^\pm$ in $\scrH$. In the main setting, those are known results which follow essentially from the spectral assumption by Schatzman \cite{schatzman}. Before introducing the last assumptions, we need some additional notation. For $U \in H^1_{\loc}(\R,\scrL)$ and $c>0$, we (formally) define
\begin{equation}\label{Ec}
\bE_c(U):= \int_\R \be_c(U)(t) dt:= \int_{\R} \left[ \frac{\lVert U'(t) \rVert_{\scrL}^2}{2}+\CE(U(t)) \right]e^{ct} dt.
\end{equation}
More generally, for $I \subset \R$ a non-empty interval and $U \in H^1_{\loc}(I,\scrL)$, put
\begin{equation}\label{Ec_interval}
\bE_c(U;I) := \int_I \be_c(U)(t) dt.
\end{equation}
Notice that the integrals defined in \eqref{Ec} and \eqref{Ec_interval} might not even make sense in general due to the fact that $\CE$ has a sign. Nevertheless, we can define the notion of \textit{local minimizer} of $\bE_c(\cdot;I)$ as follows:
\begin{definition}\label{def_local_minimizer}
Assume that \ref{hyp_unbalanced} and \ref{hyp_smaller_space} hold. Let $I \subset \R$ be a bounded, non-empty interval. Assume that  $U \in H^1_{\loc}(I,\scrL)$ is such that $E_c(U;I)$ is well-defined and finite. Assume also that there exists $C>0$ such that for any $\phi \in \CC^1_c(\mathrm{int}(I),(\scrH,\lVert \cdot \rVert_{\scrH}))$ such that
\begin{equation}
\max_{t \in I} \lVert \phi(t) \rVert_{\scrH}<C,
\end{equation}
the quantity $\bE_c(U+\phi;I)$ is well-defined and larger than $\bE_c(U;I)$. Then, we say that $U$ is a \emph{local minimizer} of $\bE_c(\cdot;I)$.
\end{definition}
We assume the following property for local minimizers:
\begin{hyp}\label{hyp_regularity}
Assume that \ref{hyp_unbalanced} and \ref{hyp_smaller_space} hold. There exists a map $\FP:\scrL \to \scrL$ such that
\begin{equation}\label{FP_1}
\forall v \in \scrL, \hspace{2mm} \CE(\FP(v)) \leq \CE(v) \mbox{ and } \CE(\FP(v))=\CE(v) \Leftrightarrow \FP(v)=v,
\end{equation}
\begin{equation}\label{FP_2}
\forall (v_1,v_2) \in \scrL^2, \hspace{2mm} \lVert \FP(v_1)-\FP(v_2) \rVert_{\scrL} \leq \lVert v_1-v_2 \rVert_{\scrL},
\end{equation}
and
\begin{equation}\label{FP_3}
\FP|_{\scrF^\pm}=\mathrm{Id}|_{\scrF^\pm}.
\end{equation}

 Let $I\subset \R$, possibly unbounded and non-empty. Let $c>0$. If $\bW \in H^1_{\loc}(I,\scrL)$ is a local minimizer of $\bE_c(\cdot;I)$ in the sense of Definition \ref{def_local_minimizer}, which, additionally, is such that for all $t \in I$, $\bW(t)=\FP(\bW(t))$,  then $\bW \in \CA(I)$ where for any open set $O\subset \R$, $\CA(O)$ is defined as
\begin{equation}\label{CA_def}
\CA(O):= \CC^2_\loc(O,\scrL) \cap \CC^1_\loc(O,(\scrH,\lVert \cdot \rVert_{\scrH})) \cap \CC^0_\loc(O,(\tilde{\scrH},\lVert \cdot \rVert_{\tilde{\scrH}}))
\end{equation}
and $\bW$ solves
\begin{equation}
\bW''-D_\scrL\CE(\bW)=-c\bW' \mbox{ in } I,
\end{equation}
where $D_\scrL\CE$ was introduced in \eqref{DL}. 
%Moreover if $I=(-\infty,t_0)$ for some $t_0 \in \R$ and there exists $\bv^-(\bW) \in\scrF^-$ such that
%\begin{equation}\label{elliptic_requirement-}
%\lim_{t \to -\infty}\lVert \bW(t) - \bv^-(\bW) \rVert_{\scrL} = 0,
%\end{equation}
%and
%\begin{equation}\label{elliptic_requirement-2}
%\CE(\bW(\cdot))-a \in L^1((-\infty,t_0])
%\end{equation}
%then
%\begin{equation}\label{elliptic_conclusion1}
%\lim_{t \to -\infty} \lVert \bW(t)-\bv^-(\bW) \rVert_{\scrH}=0.
%\end{equation}
%Furthermore, if $I=(t_0,+\infty)$ with $t_0 \in \R$ and for some $\bv^+(\bW) \in \scrF^+$ we have
%\begin{equation}\label{elliptic_requirement+}
%\lim_{t \to +\infty} \lVert \bW(t)-\bv^+(\bW) \rVert_{\scrH}=0,
%\end{equation}
%as well as
%\begin{equation}\label{elliptic_requirement+2}
%\CE(\bW(\cdot)) \in L^1((-\infty,t_0])
%\end{equation}
%we then have
%\begin{equation}\label{elliptic_conclusion2}
%\sup_{t \in [t_0+1,+\infty)}\lVert \bW(t) \rVert_{\tilde{\scrH}}+\lVert \bW'(t)\rVert_{\scrH}+\lVert \bW''(t) \rVert_{\scrL} < +\infty.
%\end{equation}
\end{hyp}
In the context of Theorem \ref{THEOREM_main_TW}, \ref{hyp_regularity} is a consequence of classical elliptic regularity results as well as properties on the energy functional. The purpose of the projection $\FP$ is technical, and in the main setting it will mean that constrained minimizers are bounded with respect to the $L^\infty$ norm.   Before stating the abstract result, we introduce the following constants (assuming that all the previous assumptions hold) which are obviously analogous with those introduced in subsection \ref{subsection_constants}:
\begin{equation}\label{intr_eta_0-}
\eta_0^-:= \min\left\{ \sqrt{e^{-1}\frac{r_0^-}{4}\sqrt{2(\kappa^-_{r_0^-/4}-a)}}, \frac{r_0^-}{4} \right\}>0,
\end{equation}
\begin{equation}\label{intr_r-}
\hat{r}^-:= \frac{r_0^-}{C^-+1}>0
\end{equation}
\begin{equation}\label{intr_eps_0-}
\CE_{\max}^-:= \frac{1}{(C^-)^2(C^-+1)}\min\left\{ \frac{(\eta_0^-)^2}{4},\kappa_{\eta_0^-}^--a,\beta^-(\hat{r}^-),\beta^-(\eta_0^-)\right\}>0,
\end{equation}
\begin{equation}\label{sfC}
\sfC^\pm:= \frac{1}{2}(C^\pm)^2((C^\pm)^2+(C^\pm+1)^2)>0,
\end{equation}
\begin{equation}\label{gamma-}
\gamma^-:= \frac{1}{C^-+\sfC^-}>0
\end{equation}
and
\begin{equation}\label{d0}
d_0:= \dist_{\scrL}(\scrF^+_{r_0^+/2},\scrF^-_{r_0^-/2})>0,
\end{equation}
where the constants $C^-, \beta^-(\hat{r}^-), \beta^-(\eta_0^-)$ are those from \ref{hyp_projections_H} and $\kappa_r^\pm$ for $r>0$ are defined in \eqref{kappa_r}. The fact that $d_0>0$ follows from Lemma \ref{LEMMA_compactness} and \ref{hyp_unbalanced}. We can finally state the following assumption:
\begin{hyp}\label{hyp_levelsets}
Assume that \ref{hyp_unbalanced} and \ref{hyp_smaller_space} hold.  Moreover,  assume that
\begin{equation}\label{intr_asu_a}
-a < \CE_{\max}^-
\end{equation}
and
\begin{equation}\label{negative_levelset}
\{ v \in \scrH: \CE(v) < 0 \} \subset \scrF^-_{r_0^-/2}.
\end{equation}
\end{hyp} 
Assumption \ref{hyp_levelsets} is essentially the abstract version of \ref{asu_perturbation}.  
\subsection{Statement of the abstract results}
Let us define the space
\begin{align}\label{X}
X:= \left\{ U \in H^1_{\loc}(\R,\scrL): \exists T \geq 1, \right. & 
\forall t \geq T, \hspace{2mm} \dist_{\scrL}(U(t),\scrF^+) \leq \frac{r_0^+}{2}, \\ &\left.
\forall t \leq -T, \hspace{2mm} \dist_{\scrL}(U(t),\scrF^-) \leq \frac{r_0^-}{2} \right\}.
\end{align}
The statement of the main abstract result is as follows:
\begin{theorem}[Main abstract result]\label{THEOREM-ABSTRACT}
Assume that \ref{hyp_compactness}, \ref{hyp_projections_inverse}, \ref{hyp_projections_H}, \ref{hyp_regularity} and \ref{hyp_levelsets} hold. Then, the following holds:
\begin{enumerate}
\item \textbf{Existence}. There exists $c^\star>0$ and $\bU \in \CA(\R) \cap X$, $\CA(\R)$ as in \eqref{CA_def} and $X$ as in \eqref{X}, such that $(c^\star,\bU)$ solves \eqref{abstract_equation} with conditions at infinity \eqref{abstract_bc_weak-}, \eqref{abstract_bc_weak+} and $\bU$ is a global minimizer of $\bE_c$ in $X$, that is, $\bE_c(\bU)=0$. Moreover, for all $t \in \R$, $\bU(t)=\FP(\bU(t))$, where $\FP$ is as in \ref{hyp_regularity}.
\item \textbf{Uniqueness of the speed}. The speed $c^\star$ is unique in the following sense: if $\overline{c^\star}>0$ is such that
\begin{equation}
\inf_{U \in X}\bE_{\overline{c^\star}}(U) =0
\end{equation}
and there exists $\overline{\bU} \in \CA(\R) \cap X$ such that $(\overline{c^\star},\overline{\bU})$ solves \eqref{abstract_equation} and $\bE_{\overline{c^\star}}(\overline{\bU})<+\infty$, then $\overline{c^\star}=c^\star$.
\item \textbf{Exponential convergence}. There exists a constant $M^+>0$ such that for all $t \in \R$ we have
\begin{equation}\label{exponential_abstract}
\lVert \bU(t)-\bv^+(\bU) \rVert_{\scrL} \leq M^+ e^{-ct},
\end{equation}
for some $\bv^+(\bU) \in \scrF^+$.
\end{enumerate}
\end{theorem}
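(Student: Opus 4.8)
The plan is to transpose the constrained‑minimization scheme of Alikakos--Katzourakis \cite{alikakos-katzourakis} (in the streamlined form of Alikakos--Fusco--Smyrnelis \cite{alikakos-fusco-smyrnelis}) to the present Hilbert‑valued setting; all of \ref{hyp_unbalanced}--\ref{hyp_levelsets} are in force. For $c>0$ and $T\ge 1$ let $X_T\subset X$ be the subclass of those $U$ already meeting the neighborhood constraints on $\{|t|\ge T\}$, so that $X=\bigcup_{T\ge1}X_T$ and $X_T\subseteq X_{T'}$ for $T\le T'$. By \eqref{negative_levelset} in \ref{hyp_levelsets} the integrand of $\bE_c$ is negative only where $U(t)\in\scrF^-_{r_0^-/2}$, where $\CE\ge a$; since $e^{ct}$ is integrable near $-\infty$, $\bE_c$ is bounded below on $X_T$ and $m^T_c:=\inf_{X_T}\bE_c$ is finite. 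The direct method then produces a minimizer $\bW_{c,T}$: a minimizing sequence is bounded in $H^1_{\loc}(\R,\scrL)$, one passes to a weak‑$H^1_{\loc}$ limit, uses weak lower semicontinuity of $\CE$ (\ref{hyp_unbalanced}) together with Fatou for the weighted kinetic term, and uses \ref{hyp_compactness} to carry the constraints to the limit; replacing $\bW_{c,T}$ by $\FP\circ\bW_{c,T}$, which by \eqref{FP_1}--\eqref{FP_3} does not raise the energy and preserves the constraints, we may assume $\bW_{c,T}=\FP(\bW_{c,T})$. Wherever $\bW_{c,T}$ lies strictly inside the constraints it is a local minimizer in the sense of Definition \ref{def_local_minimizer}, hence by \ref{hyp_regularity} it belongs to $\CA$ there and solves $\bW''-D_\scrL\CE(\bW)=-c\bW'$. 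Where $\scrF^\pm$ is unbounded, the maps $\hat P^\pm$ of \ref{hyp_projections_inverse} compensate for the translation degeneracy in these and the later estimates.

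\textbf{The speed $c^\star$ and a Hamiltonian identity.} Translation invariance gives $\bE_c(U^\tau)=e^{-c\tau}\bE_c(U)$, so $\inf_X\bE_c\in\{-\infty,0\}$ for each $c>0$. Testing with a long, slowly varying transition from $\scrF^-$ to $\scrF^+$ (whose negative contribution near $\scrF^-$ is not yet damped by the weight) shows $\inf_X\bE_c=-\infty$ for small $c$; conversely, for large $c$ the coercivity \eqref{coercivityL} together with an integration‑by‑parts/decay argument forces $\bE_c\ge 0$ on $X$. Since decreasing $c$ enlarges the (left‑supported) negative part of $\bE_c$ and shrinks its (right‑supported) nonnegative part, $\{c:\inf_X\bE_c=-\infty\}$ is an interval $(0,c^\star)$, which defines $c^\star\in(0,\infty)$. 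For any solution $(\overline c,\overline\bU)$ of \eqref{abstract_equation} with $\overline\bU\in X$ and $\bE_{\overline c}(\overline\bU)<+\infty$, bootstrapping from \eqref{abstract_bc_weak-}--\eqref{abstract_bc_weak+} and finite energy via \eqref{coercivityL} and the equation yields genuine convergence $\overline\bU(t)\to\scrF^\pm$ and $\overline\bU'(t)\to0$ as $t\to\pm\infty$; then the Hamiltonian $h(t):=\tfrac12\lVert\overline\bU'(t)\rVert_\scrL^2-\CE(\overline\bU(t))$ obeys $h'=-\overline c\,\lVert\overline\bU'\rVert_\scrL^2\le0$, decreases from $-a>0$ to $0$, and rewriting $\tfrac12\lVert\overline\bU'\rVert_\scrL^2+\CE(\overline\bU)=\lVert\overline\bU'\rVert_\scrL^2-h=-\tfrac{1}{\overline c}h'-h$ and integrating by parts gives $\bE_c(\overline\bU)=\tfrac{c-\overline c}{\overline c}\int_\R h(t)e^{ct}\,dt$ with $\int_\R h(t)e^{ct}\,dt>0$; in particular $\bE_{\overline c}(\overline\bU)=0$.

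\textbf{Removing the constraints (the crux).} Fix $c=c^\star$. The heart of the proof is a uniform no‑long‑oscillation/detachment estimate: there is $T_0\ge 1$, independent of $T$, such that for all $T$ large the constrained minimizer $\bW:=\bW_{c^\star,T}$ satisfies $\dist_\scrL(\bW(t),\scrF^-)<r_0^-/2$ for $t\le -T_0$ and $\dist_\scrL(\bW(t),\scrF^+)<r_0^+/2$ for $t\ge T_0$, so that the constraints are never active and $\bW$ solves \eqref{abstract_equation} on all of $\R$. On the $\scrF^-$ side this is precisely where \ref{hyp_levelsets} enters: the bound $-a<\CE_{\max}^-$, with $\CE_{\max}^-,\eta_0^-,\hat r^-$ as in \eqref{intr_eps_0-}, \eqref{intr_eta_0-}, \eqref{intr_r-}, says the well $\scrF^-$ is shallow compared with the cost $\kappa^-_\eta-a$ of an excursion of size $\eta$ around it (see \eqref{kappa_r}), while at the threshold speed the minimizer's total weighted ``negative budget'' near $\scrF^-$ is controlled by $-a$; combining this with \eqref{coercivityL}, the estimates \eqref{H_local_bound}--\eqref{H_local_est} of \ref{hyp_projections_H}, the equation and a Gronwall/energy‑monotonicity argument on an arbitrarily long left half‑line shows that $\bW$, once $L^2$‑close to $\scrF^-$, is trapped in a neighborhood of radius $<r_0^-/2$, and a companion transition‑length bound (again via the positivity constants $\kappa^\pm_\eta$) keeps the region where $\bW$ is far from both $\scrF^\pm$ of length uniform in $T$. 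On the $\scrF^+$ side one uses $\CE\ge 0$ there (Lemma \ref{LEMMA_positivity}), minimality and $\inf_X\bE_{c^\star}=0$: a minimizer cannot carry positive weighted energy to $+\infty$, so $\bW$ must converge to $\scrF^+$, detaching from the right constraint and, via a differential inequality for $\phi(t):=\dist_\scrL(\bW(t),\scrF^+)^2$ built from \eqref{coercivityL} and the equation, together with finiteness of $\int^{+\infty}(\tfrac12\lVert\bW'\rVert_\scrL^2+\CE(\bW))e^{c^\star t}\,dt$ and stabilization of the projection $\bv^+(\bW(t))$ to a single $\bv^+\in\scrF^+$, producing the exponential rate \eqref{exponential_abstract}. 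Granting detachment, $\bU:=\bW$ lies in $\CA(\R)\cap X$, satisfies \eqref{abstract_bc_weak-}--\eqref{abstract_bc_weak+} and $\bU=\FP(\bU)$, and by the Hamiltonian identity of the previous step $\bE_{c^\star}(\bU)=0$; since $\bU\in X$ forces $\inf_X\bE_{c^\star}\le0$ and a value $<0$ would, by translating to the left, make $\inf_X\bE_{c^\star}=-\infty$, we conclude $\bE_{c^\star}(\bU)=0=\inf_X\bE_{c^\star}$. This establishes parts (i) and (iii); I expect this detachment estimate — in particular, making the abstract constants $\CE_{\max}^-,\eta_0^-$ do the work that radial monotonicity does in \cite{alikakos-katzourakis} — to be the main obstacle.

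\textbf{Uniqueness of the speed.} Let $\overline{c^\star}>0$ with $\inf_X\bE_{\overline{c^\star}}=0$ and a finite‑energy solution $\overline\bU\in\CA(\R)\cap X$. Since $\inf_X\bE_c=-\infty$ for $c<c^\star$, necessarily $\overline{c^\star}\ge c^\star$. If $\overline{c^\star}>c^\star$, the identity $\bE_c(\overline\bU)=\tfrac{c-\overline{c^\star}}{\overline{c^\star}}\int_\R h(t)e^{ct}\,dt$ from the second step, with $h\ge 0$ not identically zero, gives $\bE_c(\overline\bU)<0$ for every $c\in(c^\star,\overline{c^\star})$; as $\overline\bU\in X$, this forces $\inf_X\bE_c=-\infty$ for such $c$, contradicting $c>c^\star$. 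Hence $\overline{c^\star}=c^\star$, which is part (ii).
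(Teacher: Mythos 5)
Your plan correctly names all the ingredients of the paper's argument (constrained minimization in $X_T$, a no-oscillation/transition-length estimate, a Hamiltonian identity, and the monotone structure of the set of speeds), but there is a structural gap at the crux. You fix $c=c^\star$ and assert that the constrained minimizer $\bW_{c^\star,T}$ detaches from the constraints for $T$ large. However, the paper's detachment and transition-length estimates --- Proposition \ref{PROPOSITION_comparison} and Corollary \ref{COROLLARY_constrained_behavior} --- are only available for competitors $U$ with $\bE_c(U)\le 0$, and at $c=c^\star$ one has $\inf_X \bE_{c^\star}=0$, hence $\bm_{c^\star,T}\ge 0$ for \emph{every} $T$; there is no a priori reason why this infimum should be attained (with value $0$) in some fixed cylinder so that the comparison machinery applies. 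The paper's route is deliberately different: it introduces the set $\CC$ of \eqref{set_CC}, shows it is open, non-empty and bounded (Lemma \ref{LEMMA_set_CC}), takes $c_n\in\CC$ with $c_n\to c^\star$ and associated constrained minimizers with strictly negative energy, applies Corollary \ref{COROLLARY_constrained_behavior} uniformly along the sequence (this is where $\sfT_\star(c)$ and its continuity in $c$ matter), translates by the exit times $t^+_n$ to land in a fixed $X_{\sfT_\star}$, and only then passes to a weak limit via Lemma \ref{LEMMA_LSC}; the limit has energy $\le 0$, hence $=0$, and \emph{then} the comparison corollary applies to it (Proposition \ref{PROPOSITION_existence}, Corollary \ref{COROLLARY_CC}). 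Without this detour through $\CC$ your claimed "$T_0$ independent of $T$" has no foundation. Also, the mechanism you invoke for the $\scrF^-$-side detachment ("Gronwall/energy-monotonicity on a long left half-line") is not the one the paper uses; Proposition \ref{PROPOSITION_comparison} constructs explicit competitors by splicing affine segments toward $P^\pm(U(t_0^\pm))$ and compares energies via Lemma \ref{LEMMA_r}, \eqref{intr_eps_0-} and \eqref{intr_eta_0-}. The Gronwall-type argument you describe is Proposition \ref{PROPOSITION_conv_sol_-}, which is used \emph{after} detachment and under the stronger \ref{hyp_convergence}. Likewise, "$\{c:\inf_X\bE_c=-\infty\}$ is an interval" is not established before existence; in the paper it is Corollary \ref{COROLLARY_CC}, a \emph{consequence} of Propositions \ref{PROPOSITION_existence} and \ref{PROPOSITION-uniqueness}.

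Your uniqueness-of-speed argument is close in spirit to Proposition \ref{PROPOSITION-uniqueness}, but it is not valid as stated under the hypotheses of Theorem \ref{THEOREM-ABSTRACT}. You assume that a finite-energy solution $\overline\bU\in X$ satisfies $\overline\bU(t)\to\scrF^\pm$ and $\overline\bU'(t)\to 0$ as $t\to\pm\infty$, in order to say $h$ decreases from $-a$ to $0$ and to kill the boundary terms in your integration by parts. Under \ref{hyp_compactness}--\ref{hyp_levelsets} alone this is false in general: at $-\infty$ finite energy does not force convergence (that is precisely the point of the extra assumption \ref{hyp_convergence}, and of the separate Theorem \ref{THEOREM_abstract_bc}), and at $+\infty$ Lemma \ref{LEMMA_limit+} gives only $\scrL$-convergence of $\overline\bU$, not convergence of $\overline\bU'$ nor of $\CE(\overline\bU)$ (recall $\CE$ is only $\scrL$-lower semicontinuous). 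Proposition \ref{PROPOSITION-uniqueness} avoids these hypotheses by not postulating pointwise limits: it uses $\be_{c_2}(\bU_2)\in L^1(\R)$ to extract sequences $t^\pm_n\to\pm\infty$ along which $\be_{c_2}(\bU_2(t^\pm_n))\to 0$, and exploits $c_1<c_2$ to show $e^{c_1 t^\pm_n}\bigl(\CE(\bU_2(t^\pm_n))-\tfrac12\lVert\bU_2'(t^\pm_n)\rVert_\scrL^2\bigr)\to 0$ along these sequences. You should replace the convergence assertion by this sequence-extraction argument.
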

\begin{remark}\label{REMARK-X}
Given the definition of $X$ in \eqref{X}, we have that for any $U \in X$ and $\tau \in \R$ it holds $U(\cdot+\tau) \in X$ and for any $c>0$ it holds $\bE_c(U(\cdot+\tau)) =e^{-c\tau}\bE_c(U)$. Such a thing implies
\begin{equation}
\forall c>0, \hspace{2mm} \inf_{U \in X}\bE_c(U) \in \{-\infty,0\}.
\end{equation}
Moreover, we see that in case $c>0$ is such that $\inf_{U \in X}\bE_c(U)=0$ one can find plenty of examples of minimizing sequences in $X$ which cannot ever reasonably produce a global minimizer. Indeed, consider any function $\tilde{U} \in X$ such that $E_c(\tilde{U})>0$ and then take the minimizing sequence $(\tilde{U}(\cdot+n))_{n \in \N}$.
\end{remark}
\begin{remark}
A more general statement can be given about the uniqueness of the speed, which in particular works for eventual non-minimizing solutions. See Proposition \ref{PROPOSITION-uniqueness}.
\end{remark}
Theorem \ref{THEOREM-ABSTRACT} will be shown to contain Theorem \ref{THEOREM_main_TW} in section \ref{section_proofs_final}. Notice that, as before, the conditions at infinity \eqref{abstract_bc_weak-} are rather weak (and not really of heteroclinic type), since we do not have convergence to an element of $\scrF^-$ as $t \to -\infty$. It is however clear that the conditions at infinity \eqref{abstract_bc_weak-}, \eqref{abstract_bc_weak+} are enough to ensure that the solution given by Theorem \ref{THEOREM-ABSTRACT} is not constant. In any case, we can impose an additional assumption in order to obtain stronger conditions at $- \infty$ on the solution:
\begin{hyp}\label{hyp_convergence}
Hypothesis \ref{hyp_levelsets} is fulfilled and, additionally:
\begin{equation}\label{a_convergence}
-a < \frac{(d_0\gamma^-)^2}{2},
\end{equation}
where $d_0$ and $\gamma^-$ were defined in \eqref{d0} and \eqref{gamma-} respectively.
\end{hyp}
Then we can show the following exponential convergence result
\begin{theorem}\label{THEOREM_abstract_bc}
Assume that  \ref{hyp_smaller_space}, \ref{hyp_compactness}, \ref{hyp_projections_inverse}, \ref{hyp_projections_H}, \ref{hyp_regularity}, \ref{hyp_levelsets} and \ref{hyp_convergence} hold. Then, if $(c^\star,\bU)$ is the solution given by Theorem \ref{THEOREM-ABSTRACT}, it holds that $\gamma^->c^\star$ ($\gamma^-$ as in \eqref{gamma-}) and there exists $\FM^->0$ such that for all $t \in \R$
\begin{equation}\label{abstract_bc_strong}
\lVert \bU(t)-\bv^-(\bU) \rVert_{\scrL} \leq M^- e^{(\gamma^--c^\star)t}
\end{equation}
for some $\bv^-(U) \in \scrF^-$.
\end{theorem}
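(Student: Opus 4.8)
The plan is to use the global minimality of $\bU$ to show that the weighted tail energy of $\bU$ decays exponentially \emph{faster} than $e^{c^\star t}$ as $t\to-\infty$, and then to convert this energy decay into the pointwise $\scrL$-estimate \eqref{abstract_bc_strong} by a telescoping argument. The inequality $\gamma^->c^\star$ comes essentially for free: the a priori speed bound $c^\star\le\sqrt{-2a}/d_0$ (Theorem \ref{THEOREM_abstract_speed}, which does not use \ref{hyp_convergence}) combined with \eqref{a_convergence}, which reads exactly $\sqrt{-2a}<d_0\gamma^-$, gives $c^\star<\gamma^-$.

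For $T\in\R$ I would introduce the excess tail energy
\begin{equation}
 g(T):=\bE_{c^\star}(\bU;(-\infty,T])-\frac{a}{c^\star}e^{c^\star T}=\int_{-\infty}^{T}\Bigl(\tfrac12\lVert\bU'(t)\rVert_{\scrL}^{2}+\CE(\bU(t))-a\Bigr)e^{c^\star t}\,dt .
\end{equation}
Since $\CE\ge a$ on $\scrL$ by \ref{hyp_unbalanced}, the integrand is non-negative, so $g\ge 0$ and $g$ is non-decreasing; a preliminary analysis (discussed in the last paragraph) guarantees that $g$ is finite on a half-line and $g(T)\to0$ as $T\to-\infty$. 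Testing the equation $\bU''-D_{\scrL}\CE(\bU)=-c^\star\bU'$ against $\bU'$ (licit since $\bU\in\CA(\R)$, cf. \ref{hyp_regularity}) gives $\tfrac{d}{dt}\bigl(\tfrac12\lVert\bU'\rVert_{\scrL}^{2}-\CE(\bU)\bigr)=-c^\star\lVert\bU'\rVert_{\scrL}^{2}$, so the Hamiltonian $H(t):=\tfrac12\lVert\bU'(t)\rVert_{\scrL}^{2}-(\CE(\bU(t))-a)$ is non-increasing; integrating the exact derivative $\tfrac{d}{dt}\bigl((H-a)e^{c^\star t}\bigr)=-c^\star\be_{c^\star}(\bU)$ over $(-\infty,T]$ (the boundary term at $-\infty$ vanishing since $H$ stays bounded there) yields the identity $H(T)e^{c^\star T}=-c^\star g(T)$, equivalently
\begin{equation}
 \CE(\bU(T))-a=\tfrac12\lVert\bU'(T)\rVert_{\scrL}^{2}+c^\star g(T)e^{-c^\star T},\qquad g'(T)-c^\star g(T)=\lVert\bU'(T)\rVert_{\scrL}^{2}e^{c^\star T}\ge0 .
\end{equation}

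The core step is a comparison argument. Fix $\gamma>0$, to be chosen later, and take $T$ negative enough that $\bU(T)$ lies in the $\scrH$-neighbourhood of $\scrF^-$ on which \eqref{H_local_est} applies; let $\bar v\in\scrF^-$ be a projection of $\bU(T)$ onto $\scrF^-$ and $w_T:=\bU(T)-\bar v$. I would compare $\bU$ with the function equal to $\bU$ on $[T,+\infty)$ and to $\bar v+e^{\gamma(t-T)}w_T$ on $(-\infty,T]$: it is continuous at $T$, it belongs to $X$ (its $\scrL$-distance to $\scrF^-$ never exceeds $\lVert w_T\rVert_{\scrL}$, small for $T$ negative), and it coincides with $\bU$ after time $T$. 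Bounding its energy by means of the quadratic behaviour of $\CE$ near $\scrF^-$, namely $\CE(\tilde U(t))-a\le (C^-)^{2}e^{2\gamma(t-T)}\lVert w_T\rVert_{\scrH}^{2}$ and $\lVert w_T\rVert_{\scrL}^{2},\lVert w_T\rVert_{\scrH}^{2}\le(C^-)^{2}(\CE(\bU(T))-a)$ from \eqref{coercivityL}--\eqref{H_local_est}, computing the resulting elementary exponential integrals, and invoking minimality of $\bU$ on $(-\infty,T]$ (after subtracting $\tfrac{a}{c^\star}e^{c^\star T}$), gives
\begin{equation}
 g(T)\le K_\gamma\bigl(\CE(\bU(T))-a\bigr)e^{c^\star T},\qquad K_\gamma:=\frac{\tfrac12\gamma^{2}(C^-)^{2}+(C^-)^{4}}{2\gamma+c^\star}.
\end{equation}
Substituting the Hamiltonian identity and then $\lVert\bU'(T)\rVert_{\scrL}^{2}e^{c^\star T}=g'(T)-c^\star g(T)$ turns this into $\bigl(1-\tfrac12K_\gamma c^\star\bigr)g(T)\le\tfrac12K_\gamma g'(T)$, i.e. $g'(T)\ge\bigl(\tfrac{2}{K_\gamma}-c^\star\bigr)g(T)$. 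Because $c^\star<\gamma^-$ and, by the deliberately generous definition of $\mathsf{C}^-$ in \eqref{sfC}, one may choose $\gamma$ (of order $C^-$) so that $K_\gamma\le C^-+\mathsf{C}^-=1/\gamma^-$, the exponent $\mu:=\tfrac{2}{K_\gamma}-c^\star$ satisfies $\mu\ge 2\gamma^--c^\star>c^\star$, and Gronwall's lemma gives $g(T)\le Me^{\mu T}$ on a half-line. Finally, from $g'(T)\ge\tfrac12\lVert\bU'(T)\rVert_{\scrL}^{2}e^{c^\star T}$, integrating over $[T,T+1]$ and using $g(T+1)-g(T)\le Me^{\mu(T+1)}$, one gets $\int_{T}^{T+1}\lVert\bU'(t)\rVert_{\scrL}^{2}\,dt\le C_1e^{(\mu-c^\star)T}$ and, via \eqref{coercivityL}, $\int_{T}^{T+1}\dist_{\scrL}(\bU(t),\scrF^-)^{2}\,dt\le C_2e^{(\mu-c^\star)T}$; hence $\lVert\bU(T)-\bU(T-1)\rVert_{\scrL}\le\bigl(\int_{T-1}^{T}\lVert\bU'\rVert_{\scrL}^{2}\bigr)^{1/2}\le C_3e^{(\mu-c^\star)T/2}$, summing the geometric series shows $\bU(t)$ is $\scrL$-Cauchy as $t\to-\infty$, its limit $\bv^-(\bU)$ has $\dist_{\scrL}(\bv^-(\bU),\scrF^-)=0$ hence lies in $\scrF^-$ (which is $\scrL$-closed), and $\lVert\bU(T)-\bv^-(\bU)\rVert_{\scrL}\le M^-e^{(\mu-c^\star)T/2}$. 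Since $\tfrac12(\mu-c^\star)\ge\gamma^--c^\star$, after enlarging $M^-$ to absorb the bounded behaviour for $t\ge0$, this is precisely \eqref{abstract_bc_strong}.

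The hard part is the preliminary analysis underlying both the admissibility of the comparison functions and the finiteness and decay of $g$: one must first know that $\bU(T)$ eventually lies not merely in the $\scrL$-neighbourhood of $\scrF^-$ built into the definition of $X$, but in the $\scrH$-neighbourhood on which \eqref{H_local_est} is available, so that the potential of the competitor is quadratically controlled. I would obtain this beforehand from the monotonicity and boundedness of $H$ (which already yield $\int_{-\infty}^{T_0}\lVert\bU'\rVert_{\scrL}^{2}<+\infty$ and $\scrL$-Cauchyness at $-\infty$), the compactness \ref{hyp_compactness}, the $\scrH$-local estimates of \ref{hyp_projections_H} together with the threshold \eqref{H_local_bound}, and a crude ``go straight to $\scrF^-$ and then stay'' competitor which, by minimality, already forces $\CE(\bU(T))\to a$ — hence the required $\scrH$-closeness — and $g(T)\to0$. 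Keeping track of the interplay between the $\scrL$- and $\scrH$-topologies, and of the change of sign of $\CE$, is where the technical difficulty concentrates.
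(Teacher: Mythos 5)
Your overall route is the paper's: the inequality $c^\star<\gamma^-$ follows from the a priori speed bound $c^\star\le\sqrt{-2a}/d_0$ combined with \eqref{a_convergence}, exactly as in the paper; the exponential $\scrL$-convergence is then obtained by a minimality/comparison argument yielding a Gronwall differential inequality for a weighted tail energy, plus the equipartition identity of Lemma \ref{LEMMA_equipartition} and a Cauchy--Schwarz/telescoping step. The paper packages these estimates as Proposition \ref{PROPOSITION_conv_sol_-} (for constrained minimizers $\bU_{c,T}$ under the hypothesis $c<\gamma^-$), with the quantity $\theta^-_{c,T}(t)=\int_{-\infty}^t(\CE(\bU_{c,T})-a)e^{cs}\,ds$ playing the role of your $g$; Theorem \ref{THEOREM_abstract_bc} is then a one-line corollary. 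Your use of the Hamiltonian identity, the monotonicity of $H$, and the vanishing of $(H-a)e^{c^\star t}$ at $-\infty$ is exactly Lemma \ref{LEMMA_equipartition_pointwise}.

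Where you diverge is the competitor. The paper interpolates \emph{linearly} over a unit window, $(t-s)\bv^-(t)+(s-t+1)\bU(t)$ for $s\in[t-1,t]$, then sits at $P^-(\bU(t))\in\scrF^-$; you use the exponential profile $\bar v+e^{\gamma(s-T)}w_T$ on all of $(-\infty,T]$. The paper's choice is calibrated to apply Lemma \ref{LEMMA_r} directly, which only gives the $\lambda$-\emph{uniform} potential bound $\CE(\lambda v+(1-\lambda)P^-(v))-a\le\sfC^-(\CE(v)-a)$; integrated over a unit window this produces the constant $C^-+\sfC^-=1/\gamma^-$ that defines $\gamma^-$. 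Your competitor instead relies on the $\lambda$-\emph{dependent} bound $\CE(\tilde U(s))-a\le(C^-)^2e^{2\gamma(s-T)}\lVert w_T\rVert_{\scrH}^2$, which is not what Lemma \ref{LEMMA_r} or \eqref{H_local_est} supply: \eqref{H_local_est} compares $\tilde U(s)$ to its \emph{own} $\scrH$-projection $P^-_\scrH(\tilde U(s))$, which need not coincide with $\bar v$, and one would need the $C^1$ regularity of $P^-_\scrH$ plus an extra mean-value argument to extract the $O(\lambda^2)$ decay as $\lambda=e^{\gamma(s-T)}\to0$. With only the $\lambda$-uniform bound, the potential part of $g_{\tilde U}(T)$ scales like $\sfC^-(\CE(\bU(T))-a)e^{c^\star T}/c^\star$ rather than $O(1/(2\gamma+c^\star))$, and the claim $K_\gamma\le1/\gamma^-$ (on which your exponent $\mu\ge2\gamma^--c^\star$ rests) would have to be reworked. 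This is the one concrete gap: either justify the $\lambda$-dependent quadratic estimate from the hypotheses, or replace the exponential competitor by the paper's cutoff-interpolation. The remaining ``preliminary analysis'' you flag (ensuring $\bU(T)$ eventually lies in the $\scrH$-neighbourhood where \eqref{H_local_est} applies, and that $g(T)\to0$) is the same issue the paper handles through $t^-(\bU_{c,T},\CE^-_{\max})$, Lemma \ref{LEMMA_comparison_preliminary} and Corollary \ref{COROLLARY_constrained_behavior}; your sketch of it is plausible and not a genuine gap.
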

Theorem \ref{THEOREM_abstract_bc} corresponds to Theorem \ref{THEOREM_strong_bc}. Finally, we will prove the following result:
\begin{theorem}\label{THEOREM_abstract_speed}
Assume that  \ref{hyp_smaller_space}, \ref{hyp_compactness}, \ref{hyp_projections_inverse}, \ref{hyp_projections_H}, \ref{hyp_regularity}, \ref{hyp_levelsets} and \ref{hyp_convergence} hold. Let $(c^\star,\bU)$ be the solution given by Theorem \ref{THEOREM-ABSTRACT}. Then, if $\tilde{\bU} \in \CA(\R) \cap X$ is such that
\begin{equation}
\bE_{c^\star}(\tilde{\bU}) =0
\end{equation}
then we have that $(c^\star,\tilde{\bU})$ solves \eqref{abstract_equation} and
\begin{equation}\label{abstract_speed_formula}
c^\star=\frac{-a}{\int_\R \lVert \tilde{\bU}'(t) \rVert_{\scrL}^2dt}.
\end{equation}
In particular, the quantity $\int_\R \lVert \tilde{\bU}'(t) \rVert_{\scrL}^2dt$ is finite. Moreover, we have
\begin{equation}\label{abstract_speed_variational}
c^\star=\sup\{c>0: \inf_{U \in X}\bE_c(U) = -\infty\}=\inf\{ c>0: \inf_{U \in X}\bE_c(U)=0\}
\end{equation}
as well as the bound
\begin{equation}\label{abstract_speed_bound}
c^\star \leq \frac{\sqrt{-2a}}{d_0}  <\min\left\{ \frac{\sqrt{2\CE_{\max}^-}}{d_0}, \gamma^- \right\}
\end{equation}
with $\CE_{\max}^-$ as in \eqref{intr_eps_0-}, $d_0$ as in \eqref{d0} and $\gamma^-$ as in \eqref{gamma-}. The second inequality follows from the bounds on $-a$ given by \ref{hyp_levelsets} and \ref{hyp_convergence}.
\end{theorem}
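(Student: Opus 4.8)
The plan is to prove the three assertions in turn: the speed formula \eqref{abstract_speed_formula} via a Hamiltonian (Pohozaev-type) identity for the limiting profile; the characterization \eqref{abstract_speed_variational} by combining the sign dichotomy of Remark \ref{REMARK-X} with a time-rescaling monotonicity; and the bound \eqref{abstract_speed_bound} by a weighted Cauchy--Schwarz estimate that exploits hypothesis \ref{hyp_levelsets}. The main difficulty will lie in the asymptotic analysis feeding the first step.

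\textbf{Speed formula.} Let $\tilde{\bU}\in\CA(\R)\cap X$ with $\bE_{c^\star}(\tilde{\bU})=0$; since $\inf_{U\in X}\bE_{c^\star}(U)=0$ by Theorem \ref{THEOREM-ABSTRACT}, $\tilde{\bU}$ is a global minimizer. First I would show $\FP\circ\tilde{\bU}=\tilde{\bU}$: by \eqref{FP_2} the map $\FP$ is $1$-Lipschitz, by \eqref{FP_3} it does not increase the $\scrL$-distance to $\scrF^\pm$ (compare with the nearest point), and by \eqref{FP_1} it decreases $\CE$ pointwise, so $\FP\circ\tilde{\bU}\in X$ and $\be_{c^\star}(\FP\circ\tilde{\bU})(t)\le\be_{c^\star}(\tilde{\bU})(t)$; minimality forces the two integrands to coincide a.e., and \eqref{FP_1} then gives $\FP(\tilde{\bU}(t))=\tilde{\bU}(t)$ (a.e., hence everywhere). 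Being a global minimizer over $X$, $\tilde{\bU}$ is a local minimizer of $\bE_{c^\star}(\cdot;\R)$ fixed by $\FP$, so Hypothesis \ref{hyp_regularity} applies and $\tilde{\bU}$ solves \eqref{abstract_equation}. Setting $H(t):=\tfrac12\lVert\tilde{\bU}'(t)\rVert_{\scrL}^2-\CE(\tilde{\bU}(t))$ and using $\tfrac{d}{dt}\CE(\tilde{\bU}(t))=D_{\scrL}\CE(\tilde{\bU}(t))(\tilde{\bU}'(t))$ (legitimate by \eqref{abstract_ipp}, as $\tilde{\bU}\in\CA(\R)$) together with \eqref{abstract_equation},
\[
H'(t)=\bigl\langle \tilde{\bU}''(t)-D_{\scrL}\CE(\tilde{\bU}(t)),\,\tilde{\bU}'(t)\bigr\rangle_{\scrL}=-c^\star\lVert\tilde{\bU}'(t)\rVert_{\scrL}^2\le 0 .
\]
Integrating the monotone $H$ over $\R$ and inserting the limits $H(-\infty)=-a$ and $H(+\infty)=0$ yields $-a=c^\star\int_\R\lVert\tilde{\bU}'\rVert_{\scrL}^2\,dt$, i.e.\ \eqref{abstract_speed_formula}, and in particular $\int_\R\lVert\tilde{\bU}'\rVert_{\scrL}^2\,dt=(-a)/c^\star<\infty$. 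The limits are read off from the exponential convergence of $\tilde{\bU}$ towards $\scrF^-$ (as in Theorem \ref{THEOREM_abstract_bc}, using \ref{hyp_convergence}) and towards $\scrF^+$ (as in Theorem \ref{THEOREM-ABSTRACT}(3)), which also deliver $\tilde{\bU}'(t)\to0$ in $\scrL$ and $\CE(\tilde{\bU}(t))\to a,0$ as $t\to\mp\infty$.

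\textbf{Variational characterization.} By Remark \ref{REMARK-X}, $\inf_{U\in X}\bE_c(U)\in\{-\infty,0\}$ for every $c>0$, and $\inf_{U\in X}\bE_{c^\star}(U)=0$ by Theorem \ref{THEOREM-ABSTRACT}. The new ingredient is monotonicity: if $\inf_{U\in X}\bE_c(U)=-\infty$ and $0<c'<c$ then $\inf_{U\in X}\bE_{c'}(U)=-\infty$. To see this I would pick $U\in X$ with $\bE_c(U)<0$ (and $\int_\R\lVert U'\rVert_{\scrL}^2e^{ct}\,dt<\infty$, otherwise the conclusion is immediate), set $\lambda:=c'/c\in(0,1)$ and $V:=U(\lambda\,\cdot)\in X$, and compute after the substitution $s=\lambda t$
\[
\bE_{c'}(V)=\int_\R\Bigl[\tfrac{\lambda}{2}\lVert U'(s)\rVert_{\scrL}^2+\tfrac1\lambda\CE(U(s))\Bigr]e^{cs}\,ds=\tfrac1\lambda\bE_c(U)+\tfrac{\lambda^2-1}{2\lambda}\int_\R\lVert U'(s)\rVert_{\scrL}^2e^{cs}\,ds\le\tfrac1\lambda\bE_c(U)<0,
\]
so $\inf_{U\in X}\bE_{c'}(U)=-\infty$ (by translation). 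Hence $\{c>0:\inf_{U\in X}\bE_c(U)=-\infty\}$ is an interval $(0,\bar c)$ or $(0,\bar c]$, whence $\inf\{c>0:\inf_{U\in X}\bE_c(U)=0\}=\bar c$, and since $\inf_{U\in X}\bE_{c^\star}(U)=0$ we get $\bar c\le c^\star$. The reverse inequality $c^\star\le\bar c$ — equivalently $\inf_{U\in X}\bE_c(U)=-\infty$ for $c<c^\star$ — is part of the construction of $c^\star$ in the proof of Theorem \ref{THEOREM-ABSTRACT}, where $c^\star$ is exactly the threshold speed; combining, $\bar c=c^\star$, which is \eqref{abstract_speed_variational}.

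\textbf{Upper bound.} Let $(c^\star,\bU)$ be the solution of Theorem \ref{THEOREM-ABSTRACT}, so $\bE_{c^\star}(\bU)=0$ and \eqref{abstract_bc_weak-}, \eqref{abstract_bc_weak+} hold. Put $t_0:=\sup\{t:\bU(t)\in\scrF^-_{r_0^-/2}\}$ and $t_1:=\inf\{t>t_0:\bU(t)\in\scrF^+_{r_0^+/2}\}$; since $\scrF^+_{r_0^+/2}\cap\scrF^-_{r_0^-/2}=\emptyset$ and $\scrF^\pm_{r_0^\pm/2}$ are $\scrL$-closed (Lemma \ref{LEMMA_compactness}), the conditions at infinity give $t_0<t_1\le T^+<\infty$, $\bU(t_0)\in\scrF^-_{r_0^-/2}$, $\bU(t_1)\in\scrF^+_{r_0^+/2}$, hence $\lVert\bU(t_1)-\bU(t_0)\rVert_{\scrL}\ge d_0$. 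From $\bE_{c^\star}(\bU)=0$, $\CE\ge a$, and — crucially — \eqref{negative_levelset} (which forces $\{t:\CE(\bU(t))<0\}\subset(-\infty,t_0]$),
\[
\int_\R\lVert\bU'\rVert_{\scrL}^2e^{c^\star t}\,dt=-2\int_\R\CE(\bU(t))e^{c^\star t}\,dt\le 2(-a)\int_{-\infty}^{t_0}e^{c^\star t}\,dt=\tfrac{2(-a)}{c^\star}e^{c^\star t_0},
\]
and then a weighted Cauchy--Schwarz inequality on $[t_0,t_1]$ gives
\[
d_0^2\le\Bigl(\int_{t_0}^{t_1}\lVert\bU'\rVert_{\scrL}\,dt\Bigr)^2\le\Bigl(\int_{t_0}^{t_1}\lVert\bU'\rVert_{\scrL}^2e^{c^\star t}\,dt\Bigr)\Bigl(\int_{t_0}^{t_1}e^{-c^\star t}\,dt\Bigr)\le\tfrac{2(-a)}{c^\star}e^{c^\star t_0}\cdot\tfrac{e^{-c^\star t_0}}{c^\star}=\tfrac{2(-a)}{(c^\star)^2},
\]
i.e.\ $c^\star\le\sqrt{-2a}/d_0$. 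The strict inequality $\sqrt{-2a}/d_0<\min\{\sqrt{2\CE_{\max}^-}/d_0,\gamma^-\}$ then follows at once from $-a<\CE_{\max}^-$ (\eqref{intr_asu_a}) and $-a<(d_0\gamma^-)^2/2$ (\eqref{a_convergence}).

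\textbf{Main obstacle.} The delicate point is the asymptotic input to the first step, namely the identification $H(-\infty)=-a$, $H(+\infty)=0$: for a global minimizer only the $\scrL$-convergence $\tilde{\bU}(t)\to\bv^\pm$ is available a priori, and one must still upgrade it to $\lVert\tilde{\bU}'(t)\rVert_{\scrL}\to0$ and $\CE(\tilde{\bU}(t))\to a,0$. This is precisely the content packaged in Theorems \ref{THEOREM-ABSTRACT}(3) and \ref{THEOREM_abstract_bc}, whose proofs rely on minimality and on the local quadratic structure near $\scrF^\pm$ (\eqref{coercivityL} and \ref{hyp_projections_H}); once those are in hand, the computations above close the argument. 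A lesser point is the identity $\bar c=c^\star$ in the second step, whose ``$\le$'' half must be traced back to the construction of $c^\star$ in Theorem \ref{THEOREM-ABSTRACT}.
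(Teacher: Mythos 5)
Your overall architecture matches the paper's: a Hamiltonian identity for the speed formula, the dichotomy $\inf_X\bE_c\in\{-\infty,0\}$ for the variational characterization, and a weighted Cauchy--Schwarz inequality through $\scrF^-_{r_0^-/2}$ and $\scrF^+_{r_0^+/2}$ for the bound.

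Your route for \eqref{abstract_speed_variational} genuinely differs from the paper's. You prove the monotonicity ``$\inf_X\bE_c=-\infty$ and $c'<c$ implies $\inf_X\bE_{c'}=-\infty$'' directly by the rescaling $V:=U(\lambda\cdot)$, which shows that the set of speeds with $\inf=-\infty$ is a down-closed interval and therefore equals $(0,c^\star)$. The paper instead obtains $\CC=(0,c(\CC))$ as Corollary \ref{COROLLARY_CC}, which in turn needs the uniqueness result Proposition \ref{PROPOSITION-uniqueness} (applied to two boundary points of $\CC$). Your rescaling argument is more elementary and self-contained, sidestepping the uniqueness proposition at this step; the paper's route buys nothing extra here but is already available from earlier in the development. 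Your observation that $\FP\circ\tilde{\bU}=\tilde{\bU}$ (needed before invoking \ref{hyp_regularity}) is also a correct and useful expansion of a step the paper leaves implicit, since it reprises the argument of Lemma \ref{LEMMA_mcT_solution} for an unconstrained minimizer.

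There is a real gap in your justification of the boundary values of $H$. You assert that the limits $\lVert\tilde{\bU}'(t)\rVert_\scrL\to0$ and $\CE(\tilde{\bU}(t))\to a,\,0$ as $t\to\mp\infty$ are ``precisely the content packaged in Theorems \ref{THEOREM-ABSTRACT}(3) and \ref{THEOREM_abstract_bc}.'' They are not: both theorems only deliver exponential convergence of $\tilde{\bU}(t)$ itself in the $\scrL$-norm, which, since $\CE$ is only lower semicontinuous on $\scrL$ and says nothing about the derivative, does not by itself give either limit. What does close the argument is the $L^1$/$L^2$ integrability packaged elsewhere: $\tilde{\bU}'\in L^2(\R,\scrL)$ and $(\CE\circ\tilde{\bU}-a)\in L^1$ on left half-lines (Proposition \ref{PROPOSITION_conv_sol_-} together with Lemma \ref{LEMMA_equipartition_pointwise}, valid because \ref{hyp_convergence} forces $c^\star<\gamma^-$), plus $\lvert\be_{c^\star}(\tilde{\bU})\rvert\in L^1(\R)$ on the right (Lemma \ref{LEMMA_integrability}). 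From these one extracts sequences $t_n^\pm\to\pm\infty$ along which $\frac12\lVert\tilde{\bU}'(t_n^-)\rVert^2+(\CE(\tilde{\bU}(t_n^-))-a)\to0$ and $\frac12\lVert\tilde{\bU}'(t_n^+)\rVert^2+\CE(\tilde{\bU}(t_n^+))\to0$. Your monotonicity of $H$ then rescues the pointwise statement ($H$ has limits at $\pm\infty$, which the subsequences identify as $-a$ and $0$), but the identification must be derived from integrability, not from the exponential $\scrL$-decay of $\tilde{\bU}$. This is exactly how the paper argues, without ever invoking pointwise convergence of $\CE\circ\tilde{\bU}$ or $\tilde{\bU}'$.
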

Theorem \ref{THEOREM_abstract_speed} corresponds to Theorem \ref{THEOREM_carac_speed}.

\section{Proof of the abstract results}\label{section_abs}
\subsection{Scheme of the proofs}\label{subs_scheme}
As pointed out several times, the structure of the proofs of our abstract results, Theorems \ref{THEOREM-ABSTRACT}, \ref{THEOREM_abstract_bc} and \ref{THEOREM_abstract_speed}, is analogous to that in Alikakos and Katzourakis \cite{alikakos-katzourakis}, which has its roots in Alikakos and Fusco \cite{alikakos-fusco}. In fact, most of their results also carry into the abstract setting with the suitable modifications. In fact, the structure of our proofs should be rather compared with \textit{subsection 2.6} in the book by Alikakos, Fusco and Smyrnelis \cite{alikakos-fusco-smyrnelis}, which slightly modifies and simplifies the argument in \cite{alikakos-katzourakis}.  We will also rely on some arguments provided in Smyrnelis \cite{smyrnelis}, when an analogous abstract approach is taken for the stationary problem. As usual, most of the intermediate results we prove hold under smaller subsets of assumptions (with respect to the set of all assumptions that we dropped in the previous section). Therefore, for the sake of clarity and generality, the necessary assumptions (and only these) that we use to prove a result are specified in its statement.

Despite the previous facts, and as pointed before, several important difficulties not present in \cite{alikakos-katzourakis} arise when one tries to tackle the same problem in the abstract setting we introduced in the previous section. One of those extra difficulties is due to the fact that, in our setting, we need deal with two different norms in the configuration space of the curves, $\scrL$ and $\scrH$ (to be thought as $L^2$ and $H^1$ respectively, for simplification) and that the potential $\CE$ is only lower semicontinuous with respect to $\scrL$-convergence. An additional difficulty comes from the fact that, due to the requirements of our original problem, we are not looking at curves that join two \textit{isolated minimum points}, but rather two \textit{isolated minimum sets}. This turns out to be an obstacle when one tries to adapt argument in \cite{alikakos-katzourakis}, even if one were to restrict to finite-dimensional configuration spaces. However, this difficulty is successfully dealt with using the precise knowledge about the \textit{projection mappings} (namely assumptions \ref{hyp_unbalanced}, \ref{hyp_projections_inverse} and \ref{hyp_projections_H}) is available. That is, one uses that, for a suitable neighborhood of the minimum sets, the projection onto the sets (both with the $\scrL$ and $\scrH$ norms) is well defined and enjoys some type of continuity and differentiability properties. This idea, in the Allen-Cahn systems setting, has to be be traced back to Schatzman \cite{schatzman}.

We will now briefly sketch the scheme of the proof of Theorem \ref{THEOREM-ABSTRACT}. Recall that, according to Remark \ref{REMARK-X}, direct minimization of $\bE_c$ in $X$ cannot yield solutions to the problem, the reason being the action of the group of translations. The spaces $X_T$, which were introduced in \cite{alikakos-katzourakis} (also in \cite{alikakos-fusco} for the equal-depth case) and will be precisely presented in \eqref{XT}, are defined in order to overcome this source of degeneracy, as they are no longer invariant by the action of the group of translations. See the design in Figure \ref{Figure_X_T_design}. As a consequence, compactness is restored and the corresponding minimization problem has a solution for all $c>0$ and $T \geq 1$. See Lemma \ref{LEMMA_mcT} later on. In general, minimizers in $X_T$ solve the profile equation on a (possibly proper) subset of $\R$ (see Lemma \ref{LEMMA_mcT_solution}), meaning that they are in general not solutions of \eqref{abstract_equation}. However, such constrained minimizers are in fact solutions of \eqref{abstract_equation} in the case they do not saturate the constraints. Therefore, the goal will be to show the existence of the speed $c^\star$ such that, for some $T \geq 1$, there exists a constrained minimizer in $X_T$ which does not saturate the constraints. For that purpose, a careful analysis of the behavior of the constrained minimizers is needed. Indeed, one needs a uniform bound (independent on $T$ and continuous on $c$) on the distance between the \textit{entry times}, i. e. the times in which the constrained minimizers enter $\scrF^\pm_{r_0^\pm/2}$. In the balanced case this follows from the fact that the energy density is bounded below by a positive constant outside  $\scrF^-_{r_0^-/2}\cup\scrF^+_{r_0^+/2}$ (see for instance Smyrnelis \cite{smyrnelis}). However this is no longer true for our unbalanced problem, which makes it more involved: If one does not have the positivity of the energy density, the constraint solutions can oscillate between the regions $\scrF^\pm_{r_0^\pm/2}$ (producing energy compensations) in larger and larger intervals as $T \to \infty$, so that no $T$-independent bound can be found. This is the main new difficulty with respect to the balanced setting, as one needs new ideas in order to obtain a uniform bound on the distance between the entry times. Our assumption \ref{hyp_levelsets} provides this control because the energy density of the constrained minimizers is bounded below by a positive constant in the interval given by the two entry times mentioned before, meaning that we can argue as in the balanced case. The precise result is Corollary \ref{COROLLARY_constrained_behavior}. This is the main step in which our proof differs with that in \cite{alikakos-katzourakis}.

The natural question is what happens if we remove \ref{hyp_levelsets}. A natural approach is to replace \ref{hyp_levelsets} by an assumption more closely related to the one used in \cite{alikakos-katzourakis} and \cite{alikakos-fusco-smyrnelis}. This would lead to introduce a convexity assumption on the level sets of $\CE$, as well as some sort of strict monotonicity on well-chosen segments. While this assumption can be worked out in the abstract setting and it is applicable for the finite-dimensional situation considered in \cite{alikakos-katzourakis} (as we show in the companion paper \cite{oliver-bonafoux-tw-bis}), we believe it to be too restrictive to be applied to our original problem.

In any case, after the uniform bound on the entry times of constrained minimizers is obtained, one needs to find the speed $c^\star$ as, until this point, the speed $c>0$ has been only considered as a parameter of the problem without any special role. Our arguments adapts without major difficulty from \cite{alikakos-fusco-smyrnelis} and it goes as follows: One introduces a set which classifies the speeds according to the value of the infimum of the corresponding energy on $X$ (which, due to the weight and the invariance by translations, is either $-\infty$ or $0$). Such a set is $\CC$, defined in \eqref{set_CC}. Subsequently, one shows (Lemma \ref{LEMMA_set_CC}) that $\CC$ is open, bounded, non-empty and that its positive limit points give rise to entire minimizing solutions of the equations (since for those points one can find corresponding constrained minimizers which do not saturate the constraints). The speed $c^\star$ is then defined as the supremum of $\CC$, which is in fact the unique positive limit point of the set, as shown in Corollary \ref{COROLLARY_CC}. At this point, the process of the proof of Theorem \ref{THEOREM-ABSTRACT} is completed. Later on, we show that the asymptotic behavior of the constrained solutions can be improved under an additional assumption, namely an upper bound on the speed. This is Proposition \ref{PROPOSITION_conv_sol_-}. Theorems \ref{THEOREM_abstract_bc} and \ref{THEOREM_abstract_speed} can be then proven.
\begin{figure}[h!]
\centering
\includegraphics[scale=0.4]{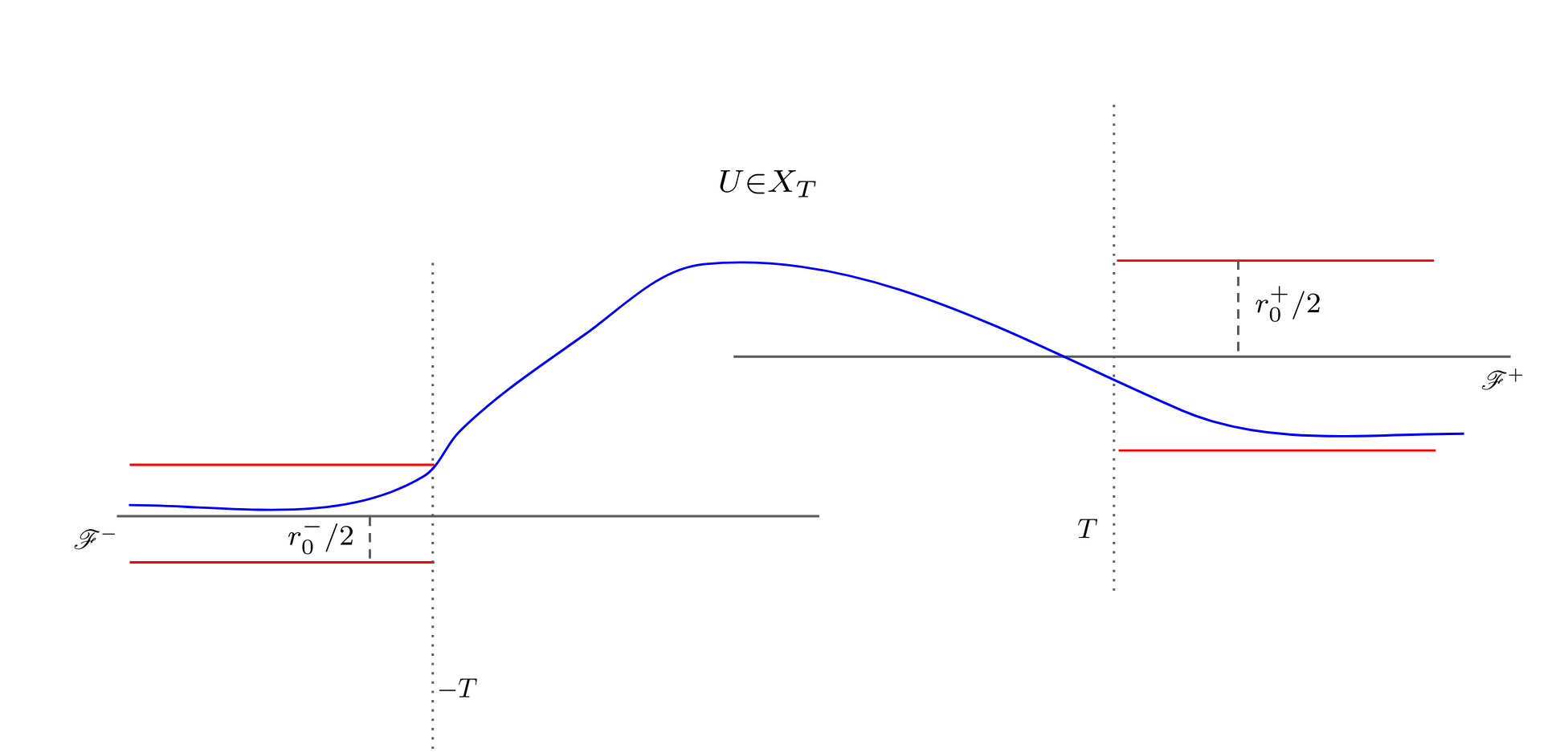}
\caption{One-dimensional representation of $X_T$. The blue line represents a function $U$ belonging to $X_T$. The red lines contain the points which are at $\scrL$-distance smaller than $r_0^\pm/2$ from $\scrF^\pm$.}\label{Figure_X_T_design}
\end{figure}
\subsection{Preliminaries}
Let $r_0^-$ and $r_0^+$ be the constants introduced in section \ref{sect_abstract} and $\scrF_{r_0^\pm/2}^\pm$ be the corresponding closed balls as in \eqref{Fr}. Assume that \ref{hyp_unbalanced} holds. For $T \geq 1$, we define the sets
\begin{equation}
X_{T}^-:= \left\{ U\in H^1_{\loc}(\R,\scrL): \forall t \leq -T, \hspace{1mm} U(t) \in \scrF^-_{r_0^-/2} \right\},
\end{equation}
\begin{equation}
X_{T}^+:= \left\{ U \in H^1_{\loc}(\R,\scrL): \forall t \geq T, \hspace{1mm}U(t) \in \scrF^+_{r_0^+/2} \right\}.
\end{equation}
Subsequently,  we set
\begin{equation}\label{XT}
X_{T}:=X_{T}^- \cap X_{T}^+.
\end{equation}
Recall the space $X$ introduced in \eqref{X}. Notice that
\begin{equation}
X=\bigcup_{T \geq 1}X_T.
\end{equation}
We have the following preliminary properties on the spaces $X_T$:
\begin{lemma}\label{LEMMA_well_defined}
Assume that \ref{hyp_unbalanced} holds.  Let $c>0$ and $T \geq 1$. For any $U \in X_T$, we have that
\begin{equation}\label{positivity_U}
\forall t \geq T, \hspace{2mm} \CE(U(t)) \geq 0.
\end{equation}
Moreover, the quantity $\bE_c(U)$ as introduced in \eqref{Ec} is well defined in $(-\infty,+\infty]$.
\end{lemma}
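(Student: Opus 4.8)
The plan is to handle the two assertions separately: the first is an immediate consequence of Lemma~\ref{LEMMA_positivity}, and the second reduces to showing that the negative part of the integrand $\be_c(U)$ is integrable over $\R$, the sign change of $\CE$ being the only point requiring attention.

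For the first assertion, fix $t\ge T$. Since $U\in X_T\subset X_T^+$, the definition of $X_T^+$ gives $U(t)\in\scrF^+_{r_0^+/2}$, and then \eqref{positivity} in Lemma~\ref{LEMMA_positivity} yields $\CE(U(t))\ge 0$; no further work is needed.

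For the second assertion, I would first observe that $t\mapsto\be_c(U)(t)=\bigl(\tfrac12\lVert U'(t)\rVert_\scrL^2+\CE(U(t))\bigr)e^{ct}$ is measurable: $U'\in L^2_\loc(\R,\scrL)$ makes $t\mapsto\lVert U'(t)\rVert_\scrL^2$ measurable, $t\mapsto U(t)$ is $\scrL$-continuous by the embedding $H^1_\loc(\R,\scrL)\hookrightarrow C^0_\loc(\R,\scrL)$, and $\CE$ is weakly lower semicontinuous, hence Borel, so that $t\mapsto\CE(U(t))$ is measurable, while the weight $e^{ct}$ is continuous. It then suffices to bound $(\be_c(U))^-$, which I would do by splitting $\R=(-\infty,T]\cup[T,+\infty)$. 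On $[T,+\infty)$ the kinetic term and the weight are nonnegative and $\CE(U(t))\ge 0$ by the first assertion, so $\be_c(U)\ge 0$ there and this half-line contributes nothing to the negative part. On $(-\infty,T]$ I would only invoke the global lower bound $\CE\ge a$ from \ref{hyp_unbalanced} (recall $a<0$), which, since $e^{ct}>0$, gives $\be_c(U)(t)\ge a\,e^{ct}$, hence
\[
\int_\R (\be_c(U))^-\,dt=\int_{(-\infty,T]}(\be_c(U))^-\,dt\le\int_{-\infty}^{T}|a|\,e^{ct}\,dt=\frac{|a|}{c}\,e^{cT}<+\infty .
\]
Therefore $\bE_c(U)=\int_\R\be_c(U)(t)\,dt$ is an unambiguously defined element of $(-\infty,+\infty]$, the value $+\infty$ occurring exactly when the positive part fails to be integrable (e.g. when $\CE(U(t))=+\infty$ on a set of positive measure), which is precisely the claim.

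There is no genuine obstacle here; the one thing to keep in mind is the asymmetry of the exponential weight, and this is exactly why the first assertion is needed for the second: $e^{ct}$ is integrable against a constant near $-\infty$ but not near $+\infty$, so the crude bound $\CE\ge a$ is harmless on $(-\infty,T]$ yet useless on $[T,+\infty)$, where one genuinely needs the sharper information $\CE(U(t))\ge 0$ furnished by $U(t)\in\scrF^+_{r_0^+/2}$.
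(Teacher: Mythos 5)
Your proposal is correct and follows essentially the same route as the paper: the positivity on $[T,+\infty)$ comes from $U(t)\in\scrF^+_{r_0^+/2}$ together with Lemma~\ref{LEMMA_positivity}, and well-posedness of $\bE_c(U)$ is obtained by checking that the negative part of $\be_c(U)$ is integrable, which on $(-\infty,T]$ reduces to the crude bound $\CE\ge a$ weighted against $e^{ct}$. The short measurability preamble is a harmless addition the paper leaves implicit.
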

\begin{proof}
Let $U \in X_T$. Notice that for $t \geq T$, we have that $U(t) \in \scrF^+_{r_0/2}$. Therefore,  \eqref{positivity_U} follows directly from \eqref{positivity} in Lemma \ref{LEMMA_positivity}.

Let now $\CE^+(U) \geq 0$ and $\CE^-(U) \geq 0$ be, respectively, the non-negative and the non-positive part of $\CE(U)$, so that $\CE(U)=\CE^+(U)-\CE^{-}(U)$. We have that $\CE^-(U)$ is null on $[T,+\infty)$. That is
\begin{equation}
\int_{-\infty}^{+\infty} \CE^-(U(t)) e^{ct}dt = \int_{-\infty}^T \CE^-(U(t))e^{ct}dt \leq -\frac{a}{c}e^{cT}<+\infty,
\end{equation}
where $a$ is the minimum value from \ref{hyp_unbalanced}. Therefore, the negative part of the energy density $\be_c(U)$ (see \eqref{Ec}) belongs to $L^1(\R)$, which establishes the result.
\end{proof}
Lemma \ref{LEMMA_well_defined} shows that for any $T \geq 1$ and $c>0$, $\bE_c$ is well defined as an extended functional on $X_T$, at least if sufficient hypothesis are made. Moreover, it gives the following useful inequalities:
\begin{lemma}\label{LEMMA_integrability}
Assume that \ref{hyp_unbalanced} holds. Let $c>0$ and $T \geq 1$. For any $U \in X_T$, we have that
\begin{equation}\label{integrability_derivative}
\int_{\R} \frac{\lVert U'(t) \rVert_{\scrL}^2}{2}e^{ct}dt \leq \bE_c(U) -\frac{a}{c}e^{cT}
\end{equation}
and
\begin{equation}\label{integrability_energy}
\int_{\R} \lvert \CE(U(t)) \rvert e^{ct} dt \leq \bE_c(U)-\frac{a}{c}e^{cT}.
\end{equation}
Finally, we have that for all $t \in \R$,
\begin{equation}\label{integrability_TV}
\int_{t}^{+\infty} \lVert U'(s) \rVert_{\scrL}ds  \leq \left(\left(\bE_c(U) -\frac{a}{c}e^{cT}\right)\frac{e^{-ct}}{c}\right)^{\frac{1}{2}}
\end{equation}
\end{lemma}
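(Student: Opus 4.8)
The plan is to obtain all three estimates from the sign decomposition of the potential term that was already used in the proof of Lemma~\ref{LEMMA_well_defined}. Fix $c>0$, $T\geq 1$ and $U\in X_T$; we may assume $\bE_c(U)<+\infty$, since otherwise all three inequalities hold vacuously (by Lemma~\ref{LEMMA_well_defined} the quantity $\bE_c(U)$ is well defined in $(-\infty,+\infty]$). Write $\CE(U(t))=\CE^+(U(t))-\CE^-(U(t))$ with $\CE^\pm(U(\cdot))\geq 0$ and $\CE^+(U(\cdot))\,\CE^-(U(\cdot))\equiv 0$, as in the proof of Lemma~\ref{LEMMA_well_defined}. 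Two facts are available: by Lemma~\ref{LEMMA_well_defined}, $\CE(U(t))\geq 0$ for $t\geq T$, so $\CE^-(U(\cdot))$ vanishes on $[T,+\infty)$; and by \ref{hyp_unbalanced}, $\CE(v)\geq a$ for every $v\in\scrL$, so $\CE^-(U(t))\leq -a$ pointwise. Combining them gives
\begin{equation}
\int_\R \CE^-(U(t))\,e^{ct}\,dt \;\leq\; \int_{-\infty}^T (-a)\,e^{ct}\,dt \;=\; -\frac{a}{c}\,e^{cT}.
\end{equation}

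Next I would use the identity $\bE_c(U)=\int_\R \frac{\lVert U'(t)\rVert_\scrL^2}{2}e^{ct}\,dt+\int_\R\CE^+(U(t))\,e^{ct}\,dt-\int_\R\CE^-(U(t))\,e^{ct}\,dt$, in which the last integral is finite and nonnegative by the previous display and the first two integrals are nonnegative. Discarding $\int_\R\CE^+(U(t))e^{ct}\,dt$ gives $\int_\R \frac{\lVert U'(t)\rVert_\scrL^2}{2}e^{ct}\,dt\leq \bE_c(U)+\int_\R\CE^-(U(t))e^{ct}\,dt\leq \bE_c(U)-\frac{a}{c}e^{cT}$, which is \eqref{integrability_derivative}; discarding the kinetic term instead bounds $\int_\R\CE^+(U(t))e^{ct}\,dt$ by the same quantity, and then adding $\int_\R\CE^-(U(t))e^{ct}\,dt$ and using $\lvert\CE\rvert=\CE^++\CE^-$ yields \eqref{integrability_energy}. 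For \eqref{integrability_TV} I would apply the Cauchy--Schwarz inequality with weight $e^{cs}$,
\begin{equation}
\int_t^{+\infty}\lVert U'(s)\rVert_\scrL\,ds \;\leq\; \Big(\int_t^{+\infty}\lVert U'(s)\rVert_\scrL^2\,e^{cs}\,ds\Big)^{1/2}\Big(\int_t^{+\infty}e^{-cs}\,ds\Big)^{1/2},
\end{equation}
bound the first factor using \eqref{integrability_derivative} and evaluate $\int_t^{+\infty}e^{-cs}\,ds=e^{-ct}/c$.

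The argument is entirely elementary and there is no genuine obstacle here; the only point worth stating with care is the support and the pointwise bound of $\CE^-(U(\cdot))$ — precisely the content borrowed from Lemma~\ref{LEMMA_well_defined} together with the global lower bound $\CE\geq a$ of \ref{hyp_unbalanced} — and the remark that none of the estimates needs $\bE_c(U)$ to be finite, since each is trivial when $\bE_c(U)=+\infty$. These inequalities will then be the workhorse for the compactness and decay estimates in the sequel.
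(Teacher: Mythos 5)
Your approach is essentially the paper's own, which is nearly wordless: positivity of $\CE(U(\cdot))$ on $[T,+\infty)$ from Lemma~\ref{LEMMA_well_defined}, the global lower bound $\CE\geq a$ from \ref{hyp_unbalanced}, and Cauchy--Schwarz for \eqref{integrability_TV}. Making the sign decomposition $\CE=\CE^+-\CE^-$ explicit is just a cleaner bookkeeping of the same two facts, and your treatment of \eqref{integrability_derivative} and \eqref{integrability_TV} is correct.

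Where you over-claim is the last step for \eqref{integrability_energy}. Discarding the kinetic term gives $\int_\R\CE^+e^{ct}\,dt\leq\bE_c(U)+\int_\R\CE^-e^{ct}\,dt\leq\bE_c(U)-\frac{a}{c}e^{cT}$, and then \emph{adding} $\int_\R\CE^-e^{ct}\,dt\leq-\frac{a}{c}e^{cT}$ to both sides produces a second copy of $-\frac{a}{c}e^{cT}$, so what your argument actually delivers is
\begin{equation}
\int_\R \lvert\CE(U(t))\rvert e^{ct}\,dt \;\leq\; \bE_c(U)-\frac{2a}{c}e^{cT},
\end{equation}
which, since $a<0$, is strictly weaker than the printed constant $\bE_c(U)-\frac{a}{c}e^{cT}$. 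Equivalently, $\lvert\CE\rvert=\CE+2\CE^-$, and the factor $2$ does not go away. I believe the printed inequality in the lemma is itself a harmless slip (the paper's ``obtained in the same fashion'' argument faces the identical doubling), and the precise constant is immaterial for every downstream use — all that is invoked, e.g.\ in Lemma~\ref{LEMMA_limit+}, is that $t\mapsto\CE(U(t))e^{ct}$ is integrable — but you assert the stated bound while your own arithmetic gives the weaker one; you should either record the $-2a/c$ constant or note explicitly that only finiteness is used.
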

\begin{proof}
Using \eqref{positivity_U} in Lemma \ref{LEMMA_well_defined}, we get that
\begin{equation}
\int_{\R} \frac{\lVert U'(t) \rVert_{\scrL}^2}{2}e^{ct}dt \leq \bE_c(U)-\int_{-\infty}^T \CE(U(t))e^{ct},
\end{equation}
which, by \ref{hyp_unbalanced}, implies that \eqref{integrability_derivative} holds. Inequality \eqref{integrability_energy} is obtained in the same fashion. Finally, we have that \eqref{integrability_TV} follows by combining \eqref{integrability_derivative} with Cauchy-Schwartz inequality.
\end{proof}
The previous results allow to prove the following convergence properties at $+\infty$ for finite energy functions in $X_T$:
\begin{lemma}\label{LEMMA_limit+}
Assume that \ref{hyp_unbalanced} and \ref{hyp_projections_H} hold. Let $c>0$ and $T \geq 1$. Take $U \in X_T$ such that $\bE_c(U) <+\infty$. Then, we have that there exists a subsequence $(t_n)_{n \in \N}$ in $\R$ such that $t_n \to +\infty$ as $n \to \infty$ and
\begin{equation}\label{limit+_energy0_weak}
\lim_{n \to \infty} \CE(U(t_n))e^{c t_n}=0.
\end{equation}
Moreover, there exists $\bv^+(U) \in \scrF^+$ such that for all $t \in \R$ it holds
\begin{equation}\label{limit+_function0_weak}
\lVert U(t) - \bv^+(U) \rVert_{\scrL}^2 \leq \left(\frac{\bE_c(U) -\frac{a}{c}e^{cT}}{c}\right)e^{-ct}.
\end{equation}
That is, $U$ tends to $\bv^+(U)$ at $+\infty$ with an exponential rate of convergence and with respect to the $\scrL$-norm.
\end{lemma}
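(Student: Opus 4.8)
The plan is to read off both conclusions from the integral bounds of Lemma~\ref{LEMMA_integrability}, supplemented by the local quadratic lower bound \eqref{coercivityL}. For the first claim I would begin with \eqref{integrability_energy}: it says $t\mapsto\lvert\CE(U(t))\rvert e^{ct}\in L^1(\R)$, hence this function is finite almost everywhere and $\int_T^{+\infty}\lvert\CE(U(t))\rvert e^{ct}\,dt<+\infty$. A nonnegative integrable function cannot stay bounded below by a positive constant near $+\infty$, so there is a sequence $t_n\to+\infty$, lying in the set of full measure on which $\CE(U(\cdot))$ is finite, with $\CE(U(t_n))e^{ct_n}\to0$; this is \eqref{limit+_energy0_weak}. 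Dropping finitely many indices we may assume $t_n\geq T$, so that $\CE(U(t_n))\geq0$ by \eqref{positivity_U}, and since $e^{ct_n}\to+\infty$ we also get $\CE(U(t_n))\to0$.

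For the second claim, the heart of the matter is \eqref{integrability_TV}: for any fixed $t\in\R$ it gives $\int_t^{+\infty}\lVert U'(s)\rVert_{\scrL}\,ds<+\infty$. Since $U\in H^1_{\loc}(\R,\scrL)$ is absolutely continuous as an $\scrL$-valued curve, $\lVert U(s_2)-U(s_1)\rVert_{\scrL}\leq\int_{s_1}^{s_2}\lVert U'\rVert_{\scrL}$ for $s_1\leq s_2$, and the right-hand side tends to $0$ as $s_1,s_2\to+\infty$; by completeness of $\scrL$ the curve $U(s)$ converges to some $\bv^+(U)\in\scrL$ as $s\to+\infty$. To place this limit in $\scrF^+$ I would use the sequence from the first claim: for $n$ large, $U(t_n)\in\scrF^+_{r_0^+/2}\subset\scrF^+_{r_0^+}$, so \eqref{coercivityL}, in which the relevant minimum equals $0$ because $-a>0$, yields $(C^+)^{-1}\dist_{\scrL}(U(t_n),\scrF^+)^2\leq\CE(U(t_n))\to0$; hence $\dist_{\scrL}(U(t_n),\scrF^+)\to0$, and since $U(t_n)\to\bv^+(U)$ in $\scrL$ and $\scrF^+$ is closed in $\scrL$ by \ref{hyp_unbalanced}, we conclude $\bv^+(U)\in\scrF^+$. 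The pointwise bound \eqref{limit+_function0_weak} then follows by letting $s\to+\infty$ in $\lVert U(t)-U(s)\rVert_{\scrL}\leq\int_t^{+\infty}\lVert U'\rVert_{\scrL}$, squaring, and invoking \eqref{integrability_TV}.

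The only step that requires genuine care is identifying $\bv^+(U)$ as an actual element of $\scrF^+$ and not merely of the neighbourhood $\scrF^+_{r_0^+/2}$ in which $U(t)$ is trapped for $t\geq T$: the total-variation estimate produces an $\scrL$-limit but is blind to the potential, so one must marry it to the energy-decay subsequence \eqref{limit+_energy0_weak} and the quadratic lower bound \eqref{coercivityL}. The remaining ingredients --- integrability of the weighted energy density and of $\lVert U'\rVert_{\scrL}$ near $+\infty$ --- are immediate from Lemma~\ref{LEMMA_integrability}, which rests on Lemma~\ref{LEMMA_well_defined}.
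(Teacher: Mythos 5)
Your proposal is correct and follows essentially the same route as the paper: \eqref{integrability_energy} gives the energy-density decay, \eqref{integrability_TV} gives the Cauchy estimate and the pointwise bound. The one place you go further than the written proof is in pinning down $\bv^+(U)\in\scrF^+$: the paper asserts this directly from \eqref{integrability_TV}, whereas you correctly note that the total-variation bound only produces an $\scrL$-limit, and that one must combine the energy-decay subsequence with \eqref{coercivityL} (the relevant minimum being $0$ since $-a>0$) and the $\scrL$-closedness of $\scrF^+$ from \ref{hyp_unbalanced} to conclude — a detail worth making explicit.
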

\begin{proof}
We have by \eqref{integrability_energy} in Lemma \ref{LEMMA_integrability} that $t \in \R \to \CE(U(t))e^{ct} \in \R$ belongs to $L^1(\R)$ because $\bE_c(U)<+\infty$. Therefore, combining with \eqref{positivity_U} in Lemma \ref{LEMMA_positivity}, we obtain \eqref{limit+_energy0_weak}.

Subsequently, notice that \eqref{integrability_TV} in Lemma \ref{LEMMA_integrability} and the fact that $\bE_c(U)<+\infty$ gives the existence of $\bv^+(U) \in \scrF^+$ such that  $\lim_{t \to +\infty}\lVert U(t)-\bv^+(U) \rVert_{\scrL}=0$. Therefore, fix $t \in \R$ and notice that for any $\tilde{t}>t$ we have
\begin{equation}
\lVert U(\tilde{t})-U(t) \rVert_{\scrL} \leq \int_{t}^{\tilde{t}}\lVert U'(s) \rVert_{\scrL}ds \leq \int_{t}^{+\infty}\lVert U'(s) \rVert_{\scrL},
\end{equation}
which by \eqref{integrability_TV} in Lemma \ref{LEMMA_integrability} means that
\begin{equation}
\lVert U(\tilde{t})-U(t) \rVert_{\scrL}^2 \leq \left(\frac{\bE_c(U) -\frac{a}{c}e^{cT}}{c}\right)e^{-ct}.
\end{equation}
Therefore, passing to the limit $\tilde{t} \to +\infty$ we obtain \eqref{limit+_function0_weak}, also due to the fact that $U$ is continuous with respect to the $\scrL$-norm.

\end{proof}
\begin{remark}
Notice that \eqref{limit+_function0_weak} in Lemma \ref{LEMMA_limit+} does not imply convergence of $\CE(U)$ towards 0 at $+\infty$, due to the fact that $\CE$ is not continuous with respect to the $\scrL$ norm.
\end{remark}
\begin{remark}
Regarding the behavior at $-\infty$, notice that we can only say that if $U \in X_T$ is such that $\bE_c(U)<+\infty$, then $\CE(U)$ does not go to $+\infty$ faster than $e^{ct}$ at the limit $t \to -\infty$. That is, almost nothing can be said for generic finite energy solutions regarding their behavior at $-\infty$.
\end{remark}
\subsection{The infima of $\bE_c$ in $X_T$ are well defined}
Once we have defined the spaces $X_T$, we show that the corresponding infimum of $\bE_c$ is well defined as a real number for all $c>0$. Set
\begin{equation}\label{mcT}
\bm_{c,T}:= \inf_{U \in X_T}\bE_c(U) \in [-\infty,+\infty).
\end{equation}
We have the following:
\begin{lemma}\label{LEMMA_PSI}
Assume that \ref{hyp_unbalanced} and \ref{hyp_smaller_space} hold. Fix $\hat{\bv}^\pm \in \scrF^\pm$. Let $c>0$ and $T \geq 1$. For all $T \geq 1$ the function
\begin{equation}\label{Psi}
\Psi(t):= \begin{cases}
\hat{\bv}^- &\mbox{ if } t \leq -1,\\
\frac{1-t}{2}\hat{\bv}^-+\frac{t+1}{2}\hat{\bv}^+ &\mbox{ if } -1 \leq t \leq 1,\\
\hat{\bv}^+ &\mbox{ if } t \geq 1,
\end{cases}
\end{equation}
belongs to $X_T$. Moreover, for all $c>0$
\begin{equation}\label{Ec_PSI}
\bE_c(\Psi)<+\infty.
\end{equation}
Furthermore, we have
\begin{equation}\label{mcT_finite}
-\infty<\bm_{c,T}<+\infty.
\end{equation}
\end{lemma}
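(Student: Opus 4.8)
The plan is to verify the three assertions in turn; each follows fairly directly from \ref{hyp_unbalanced}, \ref{hyp_smaller_space} and Lemma \ref{LEMMA_well_defined}, so I do not expect a genuine obstacle — the only delicate point is the integrability of $t\mapsto\CE(\Psi(t))$ on the transition interval, addressed in Step 2.

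\emph{Step 1: $\Psi\in X_T$.} I would first note that $\Psi$ is continuous and piecewise affine with values in $\scrL$, and globally bounded, hence $\Psi\in H^1_{\loc}(\R,\scrL)$. For $t\leq -T\leq -1$ one has $\Psi(t)=\hat{\bv}^-\in\scrF^-$, so $\dist_\scrL(\Psi(t),\scrF^-)=0\leq r_0^-/2$; symmetrically $\Psi(t)=\hat{\bv}^+\in\scrF^+$ for $t\geq T\geq 1$, so $\dist_\scrL(\Psi(t),\scrF^+)=0\leq r_0^+/2$. Hence $\Psi\in X_T^-\cap X_T^+=X_T$.

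\emph{Step 2: $\bE_c(\Psi)<+\infty$.} The derivative vanishes outside $(-1,1)$ and equals the constant $(\hat{\bv}^+-\hat{\bv}^-)/2$ on $(-1,1)$, so the kinetic part $\int_\R\tfrac12\|\Psi'(t)\|_\scrL^2e^{ct}\,dt$ is a finite multiple of $\int_{-1}^1e^{ct}\,dt$. For the potential part: on $(-\infty,-1]$ we have $\CE(\Psi(t))=\CE(\hat{\bv}^-)=a$, which is integrable against $e^{ct}$ on this half-line; on $[1,+\infty)$ we have $\CE(\Psi(t))=\CE(\hat{\bv}^+)=0$; and on $[-1,1]$, since $\hat{\bv}^\pm\in\scrF^\pm\subset\scrH$ and $\scrH$ is a linear subspace of $\scrL$ (by \ref{hyp_smaller_space}), every convex combination $\Psi(t)$ lies in $\scrH$, the map $t\mapsto\Psi(t)$ is affine, hence continuous, into $(\scrH,\|\cdot\|_\scrH)$, and $\CE$ is $C^1$ — in particular continuous — on $(\scrH,\|\cdot\|_\scrH)$, so $t\mapsto\CE(\Psi(t))$ is continuous, hence bounded, on the compact interval $[-1,1]$. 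Summing these contributions gives $\bE_c(\Psi)<+\infty$, i.e.\ \eqref{Ec_PSI}; together with the fact (Lemma \ref{LEMMA_well_defined}) that $\bE_c(\Psi)$ is a well-defined element of $(-\infty,+\infty]$, this shows it is a finite real number.

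\emph{Step 3: $-\infty<\bm_{c,T}<+\infty$.} The upper bound follows at once from $\Psi\in X_T$: $\bm_{c,T}\leq\bE_c(\Psi)<+\infty$. For the lower bound I would take any $U\in X_T$ and argue as in the proof of Lemma \ref{LEMMA_well_defined}: since $\CE\geq a$ everywhere and $\CE(U(t))\geq 0$ for $t\geq T$, the negative part of $\be_c(U)$ is supported in $(-\infty,T)$ and dominated there by $|a|e^{ct}$, whence
\begin{equation}
\bE_c(U)\ \geq\ \int_\R\CE(U(t))e^{ct}\,dt\ \geq\ -\int_{-\infty}^{T}|a|e^{ct}\,dt\ =\ \frac{a}{c}e^{cT}.
\end{equation}
Taking the infimum over $U\in X_T$ gives $\bm_{c,T}\geq\tfrac{a}{c}e^{cT}>-\infty$, which with the upper bound yields \eqref{mcT_finite}. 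As noted, the one place requiring a little care is the integrability of $t\mapsto\CE(\Psi(t))$ on $[-1,1]$, which is precisely where one must invoke that $\scrH$ is a vector space containing $\scrF^-\cup\scrF^+$ together with the continuity of $\CE$ on $\scrH$, rather than attempting any quantitative estimate on $\CE$ itself.
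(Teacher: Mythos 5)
Your proof is correct and follows essentially the same route as the paper's: membership of $\Psi$ in $X_T$ is immediate, the energy $\bE_c(\Psi)$ is split into the two constant tails plus the transition interval $[-1,1]$ where $\CE\circ\Psi$ is bounded because $\scrH$ is a vector space containing $\scrF^\pm$ and $\CE$ is continuous there (the paper simply cites \ref{hyp_smaller_space} for this boundedness, which is the same point), and the lower bound $\bm_{c,T}\geq\frac{a}{c}e^{cT}$ comes from the positivity of $\CE$ on $\scrF^+_{r_0^+/2}$ and the global bound $\CE\geq a$. No gaps.
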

\begin{proof}
It is clear that $\Psi \in X_T $. We now show that \eqref{Ec_PSI} holds. Notice first that
\begin{equation}
\int_{-\infty}^1 \be_c(\Psi) = \int_{-\infty}^1  a e^{ct}dt=\frac{a}{c}e^c,
\end{equation}
where $a$ is the minimum value from \ref{hyp_unbalanced}. Subsequently, we have
\begin{equation}
\int_{1}^{+\infty}\be_c(\Psi)=0
\end{equation}
and
\begin{align}
\int_{-1}^1 \be_c(\Psi)&= \int_{-1}^1 \left[ \frac{\left\lVert \hat{\bv}^+-\hat{\bv}^- \right\rVert_\scrL^2}{8}+\CE\left( \frac{1-t}{2}\hat{\bv}^-+\frac{t+1}{2}\hat{\bv}^+ \right) \right]e^{ct}dt \\
& \leq \left[\frac{\left\lVert \hat{\bv}^+-\hat{\bv}^- \right\rVert_\scrL^2}{4}+2\max_{t \in [-1,1]}\CE\left( \frac{1-t}{2}\hat{\bv}^-+\frac{t+1}{2}\hat{\bv}^+ \right)\right]\frac{e^c-e^{-c}}{c}
\end{align}
and we have
\begin{equation}
\max_{t \in [-1,1]}\CE\left( \frac{1-t}{2}\hat{\bv}^-+\frac{t+1}{2}\hat{\bv}^+ \right)<+\infty,
\end{equation}
by \ref{hyp_smaller_space}. Therefore, we have obtained $E_c(\Psi) <+\infty$, which readily implies that $\bm_{c,T} <+\infty$. In order to establish \eqref{mcT_finite}, we still need to show that $\bm_{c,T}>-\infty$. For that purpose, let $U \in X_T$. By \eqref{positivity_U} in Lemma \ref{LEMMA_well_defined}, we have
\begin{equation}
\int_T^{+\infty}\be_c(U) \geq 0.
\end{equation}
We also have
\begin{equation}
\int_{-\infty}^T \be_c(U) \geq \int_{-\infty}^{T} a \be^{ct}dt = \frac{a}{c} e^{cT}.
\end{equation}
That is
\begin{equation}
\forall U \in X_T, \hspace{2mm} \bE_c(U) \geq \frac{a}{c}e^{cT}>-\infty,
\end{equation}
which means that $\bm_{c,T}>-\infty$. 
\end{proof}
The next goal will be to show that, under the proper assumptions, we have that for any $c>0$ and $T\geq 1$, the infimum values defined in \eqref{mcT} are attained. Such a fact is not hard to prove since the constraints that define the spaces $X_T$ allow to restore compactness. It relies on some properties that will be proven in the next subsection.
\subsection{General continuity and semi-continuity results}
We now provide some results which address continuity and semicontinuity properties of the energies $\bE_c$ in the spaces $X_T$. Such properties will allow us to show that the infimum values defined in \eqref{mcT} are attained under the proper assumptions. They will be also be useful in a more advanced stage of the proof, when the constrains will be removed. For now, we essentially adapt some results from \cite{alikakos-katzourakis} to our setting. 

Our first result is essentially \textit{Lemma 26} in \cite{alikakos-katzourakis}:

\begin{lemma}\label{LEMMA_ATU}
Assume that \ref{hyp_unbalanced} holds. Fix $T \geq 1$ and $U \in X_T$. Consider the set
\begin{equation}\label{ATU}
A_{T,U}:=\{ c>0: \bE_c(U) <+\infty\}.
\end{equation}
Then,  if $c \in A_{T,U}$, then $(0,c] \subset A_{T,U}$. Moreover, the correspondence
\begin{equation}
c \in A_{T,U} \to \bE_c(U) \in \R
\end{equation}
 is continuous.
\end{lemma}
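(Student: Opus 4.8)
\emph{Proof plan.} The whole argument rests on one observation: for $U\in X_T$ the \emph{negative} part of the weighted density $\be_c(U)(t)=g(t)e^{ct}$, where $g(t):=\tfrac12\lVert U'(t)\rVert_\scrL^2+\CE(U(t))$ is independent of $c$, is controlled uniformly in $c$. Indeed, Lemma~\ref{LEMMA_well_defined} gives $g(t)\ge 0$ for $t\ge T$, while $\CE\ge a$ from \ref{hyp_unbalanced} gives $g(t)\ge a$ for all $t$; hence $\big(g(t)e^{ct}\big)^-$ is supported in $(-\infty,T]$ and bounded there by $(-a)e^{ct}$, so (exactly as in the proof of Lemma~\ref{LEMMA_well_defined})
\[
\int_\R\big(g(t)e^{ct}\big)^-\,dt\ \le\ (-a)\int_{-\infty}^{T}e^{ct}\,dt\ =\ \tfrac{-a}{c}\,e^{cT}\ <\ +\infty\qquad\text{for every }c>0 .
\]
Consequently $\bE_c(U)\in(-\infty,+\infty]$ is a well-defined extended real and $c\in A_{T,U}$ is equivalent to $\int_\R g^+(t)e^{ct}\,dt<+\infty$; from here on I only need to track the positive part $g^+(t)e^{ct}$.

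\emph{Step 1: $(0,c]\subset A_{T,U}$ whenever $c\in A_{T,U}$.} Fix $0<c'\le c\in A_{T,U}$ and split $\R=(-\infty,0]\cup[0,+\infty)$. On $[0,+\infty)$ one has $e^{c't}\le e^{ct}$, whence $\int_0^{+\infty}g^+e^{c't}\le\int_0^{+\infty}g^+e^{ct}<+\infty$. Over the compact interval $[-T,0]$ the weights $e^{ct}$ and $e^{c't}$ are bounded above and below by positive constants, so $\int_{-T}^{0}g^+e^{c't}<+\infty$ follows from $\int_{-T}^{0}g^+e^{ct}<+\infty$. The remaining integral over $(-\infty,-T]$ is the one requiring care: here I use that $U(t)$ is pinned in $\scrF^-_{r_0^-/2}$ (definition of $X_T^-$), so that $(g(t)-a)e^{ct}=\be_c(U)(t)+(-a)e^{ct}\ge 0$ is a nonnegative function which, by the finiteness of $\int_{-\infty}^{-T}g^+e^{ct}$ together with the bound $\int_{-\infty}^{-T}g^-e^{ct}\le\tfrac{-a}{c}e^{-cT}$ and $e^{ct}\in L^1((-\infty,-T])$, lies in $L^1((-\infty,-T])$; this left-tail control is then used to dominate the $c'$-weighted density on $(-\infty,-T]$, giving $\int_{-\infty}^{-T}g^+e^{c't}<+\infty$. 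Adding the three pieces yields $c'\in A_{T,U}$; in particular $A_{T,U}$ is an interval whose left endpoint is $0$.

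\emph{Step 2: continuity of $c\mapsto\bE_c(U)$ on $A_{T,U}$.} Fix $c_0\in A_{T,U}$ and, using Step~1, pick $c_1,c_2\in A_{T,U}$ with $c_1<c_0\le c_2$ (allow $c_2=c_0$ if $c_0$ is the right endpoint of $A_{T,U}$). From the elementary inequality $e^{c't}\le e^{c_1t}+e^{c_2t}$, valid for all $c'\in[c_1,c_2]$ and all $t\in\R$ (check $t\le 0$ and $t\ge 0$ separately), one gets the $c'$-independent bound
\[
\big\lvert\be_{c'}(U)(t)\big\rvert=\lvert g(t)\rvert\,e^{c't}\ \le\ \big(g^+(t)e^{c_1t}+g^+(t)e^{c_2t}\big)+(-a)\big(e^{c_1t}+e^{c_2t}\big)\mathbf{1}_{(-\infty,T]}(t)=:\Phi(t),
\]
and $\Phi\in L^1(\R)$ since $c_1,c_2\in A_{T,U}$ controls the first bracket and $\int_{-\infty}^{T}e^{c_it}\,dt<+\infty$ the second. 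Because $c'\mapsto\be_{c'}(U)(t)=g(t)e^{c't}$ is continuous for a.e.\ $t$, dominated convergence gives continuity of $c'\mapsto\bE_{c'}(U)$ on $[c_1,c_2]$, hence at $c_0$; as $c_0$ was arbitrary, $c\mapsto\bE_c(U)$ is continuous on $A_{T,U}$.

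\emph{Main obstacle.} Everything is routine measure theory except the treatment of the left tail $(-\infty,-T]$ in Step~1: there the $c'$-weight \emph{exceeds} the $c$-weight, so finiteness cannot be inherited by monotonicity of $t\mapsto e^{ct}$, and one genuinely has to use the constraint $U(t)\in\scrF^-_{r_0^-/2}$ together with the uniform estimate $g^-\le -a$ to compare the two weighted densities there. This is the only place where the structure of $A_{T,U}$ is actually decided, and hence the step that must be carried out with care.
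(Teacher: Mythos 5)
Your Step~1 is where the content of the lemma lives, and that is exactly where your argument stops short. After correctly observing that $(g-a)e^{ct}\in L^1((-\infty,-T])$, you assert that this ``left-tail control is then used to dominate the $c'$-weighted density on $(-\infty,-T]$'', but you never say how --- and it cannot be done by any pointwise comparison: for $t<0$ and $0<c'<c$ one has $e^{c't}>e^{ct}$, so the fact that $g^+e^{ct}\in L^1((-\infty,-T])$ gives no control on $g^+e^{c't}$ there. The only constraint that membership in $X_T$ imposes on the far left, namely $\dist_\scrL(U(t),\scrF^-)\leq r_0^-/2$, bounds neither $\CE(U(t))$ from above nor $\lVert U'(t)\rVert_\scrL$, so $g$ itself can grow as $t\to-\infty$. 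For instance, already in the scalar model $\scrL=\R$, $\scrF^-=\{0\}$, $r_0^-=2$, the choice $U(t)=\sin(e^{-\tilde c t/2})$ for $t\leq -T$ (suitably extended for $t>-T$) satisfies the constraint and has $\lvert U'(t)\rvert^2=\cos^2(e^{-\tilde c t/2})\,\tfrac{\tilde c^2}{4}\,e^{-\tilde c t}$, which forces $A_{T,U}=(\tilde c,+\infty)$; thus $(0,c]\not\subset A_{T,U}$ even though $c\in A_{T,U}$. So the monotonicity claim in Step~1 genuinely needs input beyond what your write-up supplies.

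You should know, though, that you are in good company: the paper's own proof stumbles at precisely the same place, asserting $\lvert\be_{c'}(U(\cdot))\rvert\leq\lvert\be_c(U(\cdot))\rvert$ a.e.\ on $(-\infty,T)$, which reverses for $t<0$. Your split-and-dominate strategy is otherwise in line with the paper's, and your Step~2 (the bound $e^{c't}\leq e^{c_1t}+e^{c_2t}$ plus dominated convergence) is perfectly sound once one knows $c_1\in A_{T,U}$; it is only the left-tail comparison in Step~1 that requires an a priori growth bound on $\lVert U'\rVert_\scrL$ near $-\infty$ which the bare definition of $X_T$ does not supply. The lemma is applied in the sequel to constrained minimizers or to maps that have been cut off to be constant near the ends, for which one can hope for such a bound; but as a statement about arbitrary $U\in X_T$, the step your proposal waves past is a gap common to both your argument and the one in the paper.
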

\begin{proof}
Let $c \in A_{T,U}$. On the one hand, inequality \eqref{positivity_U} in Lemma \ref{LEMMA_well_defined} gives
\begin{equation}
0 \leq \int_{T}^{+\infty} \be_c(U(t))dt \leq \bE_c(U)-\int_{-\infty}^{T} \CE^-(U(t))e^{ct}dt \leq \bE_c(U)-ae^{cT}<+\infty,
\end{equation}
which implies that a. e. in $(T,+\infty)$
\begin{equation}\label{ATU_L1}
0 \leq \be_c(U(\cdot)) \in L^1((T,+\infty)).
\end{equation}
Therefore, if $c' \leq c$ we have that a. e. in $(T,+\infty)$
\begin{equation}\label{ATU_c'1}
0 \leq \be_{c'}(U(\cdot)) \leq \be_{c}(U(\cdot)) \in L^1(T,+\infty).
\end{equation}
On the other hand,
\begin{equation}
\int_{-\infty}^{T} \lvert \be_{c}(U(t)) \rvert dt \leq \bE_c(U)+2ae^{cT}<+\infty
\end{equation}
because $\be_c(U(\cdot))$ is non-negative a. e. in $[T,+\infty)$. The previous inequality shows
\begin{equation}
\lvert \be_c(U(\cdot)) \rvert \in L^1((-\infty,T)),
\end{equation}
and we have that a. e. in $(-\infty,T)$
\begin{equation}\label{ATU_c'2}
\lvert \be_{c'}(U(\cdot)) \rvert \leq \lvert \be_c(U(\cdot)) \rvert \in L^1((-\infty,T)).
\end{equation}
Combining \eqref{ATU_c'1} and \eqref{ATU_c'2}, we obtain that $\lvert e_c(U(\cdot)) \rvert \in L^1(\R)$, meaning that $c' \in A_{T,U}$. Hence, we have $(0,c] \subset A_{T,U}$ as we wanted to show.

Consider now a sequence $(c_n)_{n \in \N}$ in $A_{T,U}$ such that $c_n \to c_\infty \in A_{T,U}$. The sequence $(c_n)_{n \in \N}$ is convergent (so in particular it is bounded), meaning that in case it does not attain its $\sup$ we must have $c_\infty=\sup_{n \in \N}c_n$. Therefore, we can set
\begin{equation}
\hat{c}:= \begin{cases}
c_\infty &\mbox{ if } c_\infty = \sup_{n \in \N}c_n, \\
\max_{n \in \N}c_n &\mbox{ otherwise},
\end{cases}
\end{equation}
and we obviously have $\hat{c} \in A_{T,U}$. As a consequence, \eqref{ATU_c'1} and \eqref{ATU_c'2} imply that a. e. in $\R$
\begin{equation}
\forall n \in \N, \hspace{2mm} \lvert \be_{c_n}(U(\cdot)) \rvert \leq \lvert \be_{\hat{c}}(U(\cdot)) \rvert \in L^1(\R).
\end{equation}
Since we also have $\be_{c_n}(U(\cdot)) \to \be_{c_\infty}(U(\cdot))$ pointwise a. e. in $\R$, the Dominated Convergence Theorem gives the result.
\end{proof}
We now show a lower semicontinuity result, which in particular will imply the existence of the constrained solutions:
\begin{lemma}\label{LEMMA_LSC}
Assume that \ref{hyp_unbalanced}, \ref{hyp_compactness} and \ref{hyp_projections_inverse} hold. Let $T \geq 1$ be fixed. Let $(U_n^i)_{n \in \N}$ be a sequence in $X_T$ and $(c_n)_{n \in \N}$ a convergent sequence of positive real numbers such that
\begin{equation}\label{LSC_hyp}
\sup_{n \in \N}\bE_{c_n}(U_n^i) <+\infty.
\end{equation}
Then, there exists a sequence $(U_n)_{n \in \N}$ in $X_T$ and $U_\infty \in X_T$ such that up to extracting a subsequence in $(U_n,c_n)_{n \in \N}$ it holds
\begin{equation}\label{LSC_U_n}
\forall n \in \N, \hspace{2mm} \bE_{c_n}(U_n)=\bE_{c_n}(U_n^i),
\end{equation}
\begin{equation}\label{U_infty_conv1}
\forall t \in \R, \hspace{2mm} U_n(t) \rightharpoonup U_\infty(t) \mbox{ weakly in } \scrL
\end{equation}
\begin{equation}\label{U_infty_conv2}
U_n'e^{c_n \mathrm{Id}/2} \rightharpoonup U_\infty' e^{c_\infty \mathrm{Id}/2} \mbox{ weakly in } L^2(\R,\scrL)
\end{equation}
and
\begin{equation}\label{U_infty_LSC}
\bE_{c_\infty}(U_\infty) \leq \liminf_{n \to \infty}\bE_{c_n}(U_n),
\end{equation}
where $c_\infty:=\lim_{n \to \infty}c_n$.
\end{lemma}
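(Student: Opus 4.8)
The plan is to run a direct-method compactness argument; the only delicate features are that $\CE$ is merely \emph{weakly} lower semicontinuous on $\scrL$, that it changes sign, and that the sets $\scrF^\pm$ need not be $\scrL$-bounded. First I would extract the uniform estimates implied by \eqref{LSC_hyp}: since $c_n\to c_\infty>0$ the numbers $\tfrac{a}{c_n}e^{c_n T}$ stay in a fixed bounded interval, so Lemma \ref{LEMMA_integrability} (namely \eqref{integrability_derivative} and \eqref{integrability_energy}) bounds, uniformly in $n$, both $\int_\R\lVert (U_n^i)'(t)\rVert_{\scrL}^2e^{c_n t}\,dt$ and $\int_\R\lvert\CE(U_n^i(t))\rvert e^{c_n t}\,dt$; in particular $(U_n^i)'e^{c_n\mathrm{Id}/2}$ is bounded in $L^2(\R,\scrL)$. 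To regain $\scrL$-boundedness at the entry level $-T$, I would set $U_n:=U_n^i$ if $\scrF^-$ is $\scrL$-bounded, and otherwise replace $U_n^i$ by $U_n:=\hat{P}^-_{(U_n^i(-T),\hat{\bv}^-)}(U_n^i(\cdot))$, with $\hat{\bv}^-\in\scrF^-$ fixed, using alternative~2 of \ref{hyp_projections_inverse}: by \eqref{hatP_dist}, \eqref{D_inverse} and \eqref{hatP_CE} this leaves $U_n$ in $X_T$ with $\bE_{c_n}(U_n)=\bE_{c_n}(U_n^i)$ (giving \eqref{LSC_U_n}) and arranges $\lVert U_n(-T)-\hat{\bv}^-\rVert_{\scrL}\le r_0^-/2$. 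Combining the latter with \eqref{integrability_derivative} and Cauchy--Schwarz, one obtains that for each fixed $t\in\R$ the family $(U_n(t))_n$ is bounded in $\scrL$, and that on each compact interval the curves $U_n$ are uniformly bounded and uniformly $\tfrac12$-H\"older continuous in $\scrL$.

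Next I would extract the limit $U_\infty$. Taking a countable dense set $D\subset\R$ and using weak sequential precompactness of bounded sets in the Hilbert space $\scrL$, a diagonal extraction produces a subsequence along which $U_n(q)\rightharpoonup U_\infty(q)$ in $\scrL$ for every $q\in D$; the uniform $\tfrac12$-H\"older bound plus a standard $3\eps$-argument upgrade this to $U_n(t)\rightharpoonup U_\infty(t)$ in $\scrL$ for \emph{every} $t\in\R$, which is \eqref{U_infty_conv1}, with $U_\infty$ locally $\tfrac12$-H\"older for the weak topology. A further extraction gives $U_n'e^{c_n\mathrm{Id}/2}\rightharpoonup G$ in $L^2(\R,\scrL)$, and integrating by parts against compactly supported test functions, together with $c_n\to c_\infty$, the pointwise weak convergence and local $\scrL$-boundedness, identifies $G=U_\infty'e^{c_\infty\mathrm{Id}/2}$; in particular $U_\infty\in H^1_\loc(\R,\scrL)$ and \eqref{U_infty_conv2} holds. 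Finally, by \ref{hyp_compactness} (as in Lemma \ref{LEMMA_compactness}) the intersection of $\scrF^\pm_{r_0^\pm/2}$ with any $\scrL$-bounded set is weakly sequentially closed in $\scrL$: given $t\le -T$ (resp.\ $t\ge T$), the $\scrL$-bounded sequence $U_n(t)\in\scrF^-_{r_0^-/2}$ (resp.\ $\in\scrF^+_{r_0^+/2}$) converges weakly to $U_\infty(t)$, so $U_\infty(t)\in\scrF^-_{r_0^-/2}$ (resp.\ $\in\scrF^+_{r_0^+/2}$), whence $U_\infty\in X_T$ and $\bE_{c_\infty}(U_\infty)$ is well defined by Lemma \ref{LEMMA_well_defined}.

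It remains to prove \eqref{U_infty_LSC}, which is the heart of the matter. Split $\bE_{c_n}(U_n)=K_n+P_n^-+P_n^+$ with $K_n:=\int_\R\tfrac12\lVert U_n'(t)\rVert_{\scrL}^2e^{c_n t}\,dt\ge0$, $P_n^-:=\int_{-\infty}^{T}\CE(U_n(t))e^{c_n t}\,dt$ and $P_n^+:=\int_{T}^{+\infty}\CE(U_n(t))e^{c_n t}\,dt\ge0$ (the last by \eqref{positivity_U}), and analogously $K_\infty,P_\infty^-,P_\infty^+$ for $U_\infty$. Weak lower semicontinuity of the $L^2(\R,\scrL)$-norm applied to \eqref{U_infty_conv2} gives $K_\infty\le\liminf_nK_n$. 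On $(T,+\infty)$ the integrand $\CE(U_n(t))e^{c_n t}$ is nonnegative, so Fatou's lemma, the weak lower semicontinuity of $\CE$ from \ref{hyp_unbalanced} (hence $\CE(U_\infty(t))\le\liminf_n\CE(U_n(t))$), and $c_n\to c_\infty$ yield $P_\infty^+\le\liminf_nP_n^+$. For $P_n^-$ the integrand is only bounded below, so I would apply Fatou to the nonnegative integrand $(\CE(U_n(t))-a)e^{c_n t}$ on $(-\infty,T)$ and then subtract the convergent quantity $\int_{-\infty}^T(-a)e^{c_n t}\,dt=-\tfrac{a}{c_n}e^{c_n T}\to-\tfrac{a}{c_\infty}e^{c_\infty T}$, obtaining $P_\infty^-\le\liminf_nP_n^-$. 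Since $K_n,P_n^+\ge0$ and $P_n^-\ge\tfrac{a}{c_n}e^{c_n T}$ are all bounded below, superadditivity of $\liminf$ gives $\liminf_n\bE_{c_n}(U_n)\ge\liminf_nK_n+\liminf_nP_n^-+\liminf_nP_n^+\ge K_\infty+P_\infty^-+P_\infty^+=\bE_{c_\infty}(U_\infty)$, which is \eqref{U_infty_LSC}. The main obstacle is this last paragraph: the sign change of $\CE$ forbids a one-line appeal to weak lower semicontinuity of a convex integral functional and forces the splitting at $T$, the use of $\CE\ge a$ only on the left half-line, and careful bookkeeping of the $e^{c_n t}$-weights while $c_n$ varies; the remaining technical points are the translations from \ref{hyp_projections_inverse} used to regain $\scrL$-boundedness and the diagonal extractions producing a single subsequence that works simultaneously for \eqref{U_infty_conv1}, \eqref{U_infty_conv2} and for $U_\infty\in X_T$.
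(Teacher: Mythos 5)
Your proof is correct and follows essentially the same route as the paper's: normalize via the inverse projection maps of \ref{hyp_projections_inverse} to recover $\scrL$-boundedness without changing the energy, extract weak limits of the weighted derivatives and the pointwise values, verify $U_\infty\in X_T$ using \ref{hyp_compactness}, and obtain \eqref{U_infty_LSC} from Fatou together with weak lower semicontinuity of $\CE$ and of the weighted $L^2(\R,\scrL)$-norm. The only noticeable variations are cosmetic: you anchor at $t=-T$ rather than $t=T$, you identify the pointwise weak limit via a diagonal extraction over a dense set plus a uniform $\tfrac12$-Hölder estimate, whereas the paper simply writes $U_n(t)=U_n(T)+\int_T^t U_n'(s)\,ds$ and passes to the weak limit term by term (which also gives $U_\infty\in H^1_\loc(\R,\scrL)$ immediately), and you split Fatou at $t=T$ while the paper applies it once globally with the pointwise lower bound $\CE\ge a$ -- your version spells out the $L^1$ dominating bound more carefully, but both are sound.
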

\begin{proof}
Denote $M:=\sup_{n \in \N}E_{c_n}(U_n^i)$, which is finite by \eqref{LSC_hyp}. We will now use  \ref{hyp_projections_inverse}. We assume that 2. holds, the argument when 1. holds being similar and easier. Fix any $\bv^+ \in \scrF^\pm$ and for all $n \in \N$, set $v_n:= U_n^i(T) \in \scrF^+$. Define
\begin{equation}\label{LSC_U_n_def}
U_n: t \in \R \to \hat{P}_{(v_n,\bv^+)}(U^i_n(t)),
\end{equation}
where $\hat{P}_{\bv^+}$ is the differentiable operator introduced in \ref{hyp_projections_inverse}. We apply the properties summarized in 2. of \ref{hyp_projections_inverse}. Notice that for all $n \in \N$ we have $U_n \in X_T$ due to \eqref{hatP_dist}. The energy equality \eqref{LSC_U_n} follows from \eqref{D_inverse} and \eqref{hatP_CE}. Moreover, \eqref{hatP} implies that for all $n \in \N$
\begin{equation}
P^+(U_n(T))=P^+(\hat{P}_{(v_n,\bv^+)}(U^i_n(T)))=P^+(\hat{P}_{(v_n,\bv^+)}(v_n))=\bv^+,
\end{equation}
which in particular means
\begin{equation}\label{unif_bound_T}
\lVert U_n(T)-\bv^+ \rVert_{\scrL} \leq \frac{r_0^+}{2}.
\end{equation}
Notice now that $\CE(U(\cdot))$ is non-negative in $[T,+\infty)$ as $U \in X_T$ by \eqref{positivity_U} in Lemma \ref{LEMMA_well_defined}, therefore
\begin{align}\label{boundL2_Un}
\forall n \in \N, \hspace{2mm} \frac{1}{2}\int_\R \lVert U'_n(t) \rVert_\scrL^2 e^{c_n t} dt &\leq M-\int_\R \CE(U_n(t))e^{c_n t} dt\\ &\leq M-\int_{-\infty}^T \CE(U_n(t)) e^{c_n t}dt \leq \sup_{n \in \N}\left\{ M- \frac{a}{c_n}e^{c_nT} \right\}< +\infty.
\end{align} 
That is, we have that $(U_n'e^{c_n\mathrm{Id}/2})_{n \in \N}$ is uniformly bounded in $L^2(\R,\scrL)$. Therefore, there exists $\tilde{U} \in L^2(\R,\scrL)$ such that
\begin{equation}\label{LSC_Utilde}
U_n'e^{c_n\mathrm{Id}/2} \rightharpoonup \tilde{U} \mbox{ weakly in } L^2(\R,\scrL)
\end{equation}
up to subsequences. Such a thing implies
\begin{equation}\label{ineq_Un'}
\int_\R \lVert\tilde{U}(t) \rVert_{\scrL}^2 dt  \leq \liminf_{n \to \infty} \int_\R \lVert U'_n(t) \rVert_\scrL^2 e^{c_n t} dt.
\end{equation}
Now, notice that by \eqref{unif_bound_T} we have that $(U_n(T))_{n \in \N}$ is bounded in $\scrL$. Therefore, up to an extraction there exists $v_\infty \in \scrL$ such that
\begin{equation}\label{v_infinity}
U_n(T) \rightharpoonup v_\infty \mbox{ in } \scrL.
\end{equation}
As in \cite{smyrnelis}, we point out that
\begin{equation}\label{smyrnelis_obs}
\forall t \in \R, \forall n \in \N, \hspace{2mm} U_n(t)= U_n(T)+\int_{T}^t U_n'(s) ds.
\end{equation}
Now, notice that for all $t \in \R$ we have $\mathbf{1}_{(0,t)} e^{-c_n\mathrm{Id}/2} \to \mathbf{1}_{(0,t)}e^{-c_\infty \mathrm{Id}/2}$ in $L^\infty(\R)$, where $\mathbf{1}$ states for the indicator function of a set. Therefore, we obtain by \eqref{LSC_Utilde} and \eqref{v_infinity}
\begin{equation}
\forall t \in \R, \hspace{2mm} U_n(t) \rightharpoonup U_\infty(t):= v_\infty+\int_{T}^t \tilde{U}(s)e^{-c_\infty s/2}ds,
\end{equation}
which gives \eqref{U_infty_conv1}. Moreover, we have that $U_\infty \in H^1_{\loc}(\R,\scrL)$ and $U_\infty'=\tilde{U}e^{-c_\infty\mathrm{Id}/2}$, meaning by \eqref{LSC_Utilde} that \eqref{U_infty_conv2} also holds.

Recall now that $\CE$ is lower semicontinuous on $\scrL$ by \ref{hyp_unbalanced}, so that \eqref{U_infty_conv1} gives
\begin{equation}\label{simple_convergence_Un}
\forall t \in \R, \hspace{2mm} \CE(U_\infty(t)) \leq \liminf_{n \to \infty} \CE(U_n(t)).
\end{equation}

We need to show that $U_\infty \in X_T$ and to establish the inequality \eqref{U_infty_LSC}. 
\begin{itemize}
\item We begin by showing that $U_\infty \in X_T$. We need to show that for all $t \in [T,+\infty)$, it holds $U_\infty(t) \in \scrF^+_{r_0^+/2}$ and similarly for $(-\infty,-T]$. Fix $t \in [T,+\infty)$. We have that $U_n(t) \in \scrF^+_{r_0^+/2}$, so we can define the sequence $(\bv^+_n(t))_{n  \in \N}$ in $\scrF^+$ as $\bv^+_n(t):=P^+(U_n(t))$. We show that such a sequence is bounded. Indeed, we have
\begin{equation}
\forall n \in \N, \hspace{2mm} \lVert \bv_n^+(t) \rVert_{\scrL} \leq \frac{r_0^+}{2}+\lVert U_n(t) \rVert_{\scrL}
\end{equation}
and $(U_n(t))_{\scrL}$ converges weakly in $\scrL$, so in particular it is bounded. Therefore, up to an extraction we can assume that $\bv_n^+(t) \rightharpoonup \bv^+_{\infty}(t) \in \scrL$ and by \ref{hyp_compactness} we have $\bv^+_{\infty}(t) \in \scrF^\pm$. Using now the convergence properties we get the inequality
\begin{equation}
\lVert U_\infty(t)-\bv^+_\infty(t) \rVert_{\scrL} \leq \liminf_{n \to \infty} \lVert U_n(t)-\bv_n^+(t) \rVert_{\scrL} \leq \frac{r_0^+}{2},
\end{equation}
so that $U_n(t) \in \scrF^+_{r_0^+/2}$. An identical argument shows that for all $t \in (-\infty,-T]$ we have $U_n(t) \in \scrF^-_{r_0^-/2}$. Therefore, we have shown that $U_\infty \in X_T$.
\item Next, we prove \eqref{U_infty_LSC}.  We have
\begin{equation}
\sup_{n \in \N}\int_\R \CE(U_n(t))e^{c_nt}dt \leq M-\sup_{n \in \N} \int_\R \frac{\lVert U_n'(t) \rVert_\scrL^2}{2}e^{ct} dt <+\infty,
\end{equation}
by \eqref{boundL2_Un}. Hence, we can apply Fatou's Lemma to $(t \in \R \to \CE(U_n(t))e^{c_nt})_{n \in \N}$ (a sequence of functions uniformly bounded below by $a$) to show
\begin{equation}
\int_\R \liminf_{n \to +\infty} \CE(U_n(t))e^{c_nt}dt \leq \liminf_{n \to \infty} \int_\R \CE(U_n(t))e^{c_nt}dt,
\end{equation}
which, combined with \eqref{simple_convergence_Un} implies
\begin{equation}\label{ineq_VUn}
\int_\R \CE(U_\infty(t))e^{ct}dt \leq \liminf_{n \to \infty} \int_\R \CE(U_n(t))e^{c_nt}dt.
\end{equation}
Combining \eqref{ineq_Un'} and \eqref{ineq_VUn} we get
\begin{equation}
\bE_c(U_\infty) \leq \liminf_{n \to \infty} \int_\R \frac{\lVert U'_n(t) \rVert_\scrL^2}{2} e^{c_nt} dt + \liminf_{n \to \infty} \int_\R \CE(U_n(t)) e^{c_nt}dt,
\end{equation}
which, by superadditivity of the limit inferior gives \eqref{U_infty_LSC}.
\end{itemize}
\end{proof}

\subsection{Existence of an infimum for $\bE_c$ in $X_T$}

The goal now is to show that, for $T \geq 1$ and $c>0$ fixed, the infimum $\bm_{c,T}$ as defined in \eqref{mcT} is attained by a function in $X_T$. This will actually follow easily from Lemma \ref{LEMMA_LSC}.
\begin{lemma}\label{LEMMA_mcT}
Assume that \ref{hyp_unbalanced}, \ref{hyp_smaller_space}, \ref{hyp_compactness} and \ref{hyp_projections_inverse} hold. Let $c >0$, $T \geq 1$ and $\bm_{c,T}$ be as in \eqref{mcT}. Then, $\bm_{c,T}$ is attained for some $\bU_{c,T} \in X_T$. 
\end{lemma}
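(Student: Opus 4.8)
The plan is to apply the direct method of the calculus of variations, with essentially all the compactness and semicontinuity work already packaged into Lemma \ref{LEMMA_LSC}. First I would invoke Lemma \ref{LEMMA_PSI} (which needs only \ref{hyp_unbalanced} and \ref{hyp_smaller_space}): it gives $-\infty < \bm_{c,T} < +\infty$, so that a minimizing sequence is meaningful. Choose $(U_n^i)_{n \in \N}$ in $X_T$ with $\bE_c(U_n^i) \to \bm_{c,T}$; discarding finitely many terms we may assume $\sup_{n \in \N} \bE_c(U_n^i) < +\infty$, which is precisely the form of the hypothesis required by Lemma \ref{LEMMA_LSC}.

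Next I would apply Lemma \ref{LEMMA_LSC} to this minimizing sequence together with the constant sequence $c_n := c$; the hypotheses \ref{hyp_unbalanced}, \ref{hyp_compactness}, \ref{hyp_projections_inverse} are exactly those assumed here. This produces, after passing to a subsequence, a sequence $(U_n)_{n \in \N}$ in $X_T$ with $\bE_c(U_n) = \bE_c(U_n^i)$ for every $n$, a limit $U_\infty \in X_T$ with $U_n(t) \rightharpoonup U_\infty(t)$ weakly in $\scrL$ for each $t \in \R$ and $U_n' e^{c\,\mathrm{Id}/2} \rightharpoonup U_\infty' e^{c\,\mathrm{Id}/2}$ weakly in $L^2(\R,\scrL)$, and the lower semicontinuity inequality $\bE_c(U_\infty) \leq \liminf_{n \to \infty} \bE_c(U_n)$.

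Combining these facts, $\bE_c(U_\infty) \leq \liminf_{n} \bE_c(U_n) = \liminf_{n} \bE_c(U_n^i) = \bm_{c,T}$, while $U_\infty \in X_T$ forces $\bE_c(U_\infty) \geq \bm_{c,T}$ by definition of the infimum in \eqref{mcT}. Hence $\bE_c(U_\infty) = \bm_{c,T}$, and one sets $\bU_{c,T} := U_\infty$, completing the proof.

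There is essentially no genuine obstacle remaining at this stage: the two places where something could go wrong --- restoring compactness despite the non-coercivity of the $\scrL$-gradient structure and the sign-changing weighted density, and passing to the limit in the (possibly $+\infty$-valued, only weakly lower semicontinuous) functional $\CE$ --- have both been handled inside Lemma \ref{LEMMA_LSC}, which crucially uses the constraints built into $X_T$ (which pin the sequence near $\scrF^+$ on $[T,+\infty)$, where $\CE \geq 0$ by Lemma \ref{LEMMA_well_defined}), the compactness of $\scrL$-bounded subsets of $\scrF^\pm$ from \ref{hyp_compactness}, the ``invertible projections'' device of \ref{hyp_projections_inverse} to normalize the competitors at $t = T$ so that $(U_n(T))_n$ is $\scrL$-bounded, and Fatou's lemma together with weak lower semicontinuity of $\CE$. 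Thus the proof of Lemma \ref{LEMMA_mcT} is merely the assembly ``minimizing sequence $\rightarrow$ Lemma \ref{LEMMA_LSC} $\rightarrow$ lower semicontinuous comparison''; the only point worth flagging is that the finiteness of $\bm_{c,T}$ coming from Lemma \ref{LEMMA_PSI} is needed not only to make sense of a minimizing sequence but also to secure the uniform energy bound that Lemma \ref{LEMMA_LSC} requires.
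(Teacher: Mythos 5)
Your proof is correct and follows essentially the same route as the paper: invoke Lemma \ref{LEMMA_PSI} for finiteness of $\bm_{c,T}$, take a minimizing sequence, and apply Lemma \ref{LEMMA_LSC} with the constant speed sequence to pass to a limit in $X_T$ satisfying the lower semicontinuity inequality.
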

\begin{proof}
By \eqref{mcT_finite} in Lemma \ref{LEMMA_PSI}, we have that there exists a minimizing sequence $(U_n)_{n \in \N}$ for $\bE_c$ in $X_T$. We apply Lemma \ref{LEMMA_LSC} to $(U_n)_{n \in \N}$ and the sequence of speeds constantly equal to $c$. We obtain a function $\bU_{c,T} \in X_T$ such that
\begin{equation}
\bE_c(\bU_{c,T}) \leq \liminf_{n \to \infty}\bE_c(U_n)=\bm_{c,T},
\end{equation}
due to \eqref{U_infty_LSC}. Therefore, $\bm_{c,T}$ is attained by $\bU_{c,T}$ in $X_T$. 
\end{proof}

Subsequently, we show that assumption \ref{hyp_regularity} implies that the constrained minimizers are solutions of the equation in a certain set containing $(-T,T)$, with the proper regularity.

\begin{lemma}\label{LEMMA_mcT_solution}
Assume that \ref{hyp_regularity} holds. Let $c >0$, $T \geq 1$ and $\bm_{c,T}$ be as in \eqref{mcT}. Let $\bU_{c,T} \in X_T$ be such that $\bE_c(\bU_{c,T})=\bm_{c,T}$. Then, $\bU_{c,T} \in \CA((-T,T))$, $\CA((-T,T))$ as in \eqref{CA_def} and
\begin{equation}\label{mcT_equation}
\bU_{c,T}''-D_{\scrL}\CE(\bU_{c,T})=-c\bU_{c,T}' \mbox{ in } (-T,T).
\end{equation}
Moreover, if $t \geq T$ is such that
\begin{equation}\label{mcT_no_constrain_+}
\dist_{\scrL}(\bU_{c,T}(t),\scrF^+)< \frac{r_0^+}{2},
\end{equation}
then, there exists $\delta^+(t)>0$ such that $\bU_{c,T} \in \CA((t-\delta^+(t),t+\delta^+(t)))$ and
\begin{equation}\label{mcT_equation_delta+}
\bU_{c,T}''-D_{\scrL}\CE(\bU_{c,T})=-c\bU_{c,T}' \mbox{ in } (t-\delta^+(t),t+\delta^+(t)).
\end{equation}
Similarly, if $t \leq -T$ is such that
\begin{equation}\label{mcT_no_constrain_-}
\dist_{\scrL}(\bU_{c,T}(t),\scrF^-) < \frac{r_0^-}{2},
\end{equation}
then, there exists $\delta^-(t)>0$ such that $\bU_{c,T} \in \CA((t-\delta^-(t),t+\delta^-(t)))$ and
\begin{equation}\label{mcT_equation_delta-}
\bU_{c,T}''-D_{\scrL}\CE(\bU_{c,T})=-c\bU_{c,T}' \mbox{ in } (t-\delta^-(t),t+\delta^-(t)).
\end{equation}
\end{lemma}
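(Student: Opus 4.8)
The plan is to recognize $\bU_{c,T}$ as a \emph{local minimizer} of $\bE_c(\cdot\,;I)$, in the sense of Definition~\ref{def_local_minimizer}, on each open interval $I$ on which admissible perturbations cannot violate the constraints defining $X_T$ — namely $I=(-T,T)$ and, near a point where one of the two constraints fails to be saturated, a small interval around that point — and then to invoke Hypothesis~\ref{hyp_regularity} to obtain the $\CA$-regularity and the equation. The only preliminary needed before one can quote \ref{hyp_regularity} is that the minimizer coincide with its image under the map $\FP$ of \ref{hyp_regularity}, so I would dispose of that first.

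\textbf{The $\FP$-fixed-point property.} Put $\bV:=\FP\circ\bU_{c,T}$. Since $\FP$ is $1$-Lipschitz on $\scrL$ by \eqref{FP_2}, $\bV$ is locally absolutely continuous into $\scrL$ with $\lVert\bV'(t)\rVert_{\scrL}\le\lVert\bU_{c,T}'(t)\rVert_{\scrL}$ for a.e.\ $t$, so $\bV\in H^1_{\loc}(\R,\scrL)$; and from \eqref{FP_2}--\eqref{FP_3}, for $t\le -T$ and any $\bv\in\scrF^-$ one has $\lVert\bV(t)-\bv\rVert_{\scrL}=\lVert\FP(\bU_{c,T}(t))-\FP(\bv)\rVert_{\scrL}\le\lVert\bU_{c,T}(t)-\bv\rVert_{\scrL}$, hence $\dist_{\scrL}(\bV(t),\scrF^-)\le\dist_{\scrL}(\bU_{c,T}(t),\scrF^-)\le r_0^-/2$, and symmetrically at $+\infty$, so $\bV\in X_T$. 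Combining the derivative estimate with \eqref{FP_1} gives $\be_c(\bV)(t)\le\be_c(\bU_{c,T})(t)$ for a.e.\ $t$; since $\be_c(\bU_{c,T})\in L^1(\R)$ (the total energy being finite and its negative part absolutely integrable, exactly as in Lemma~\ref{LEMMA_well_defined}), this yields $\bE_c(\bV)\le\bE_c(\bU_{c,T})=\bm_{c,T}$, so $\bV$ is again a minimizer of $\bE_c$ on $X_T$. Then $\int_{\R}(\be_c(\bU_{c,T})-\be_c(\bV))=0$ with a nonnegative integrand forces $\be_c(\bV)=\be_c(\bU_{c,T})$ a.e., and, since the kinetic and the potential parts of $\be_c(\bV)$ are \emph{separately} dominated by those of $\be_c(\bU_{c,T})$, both parts must coincide a.e.; in particular $\CE(\bV(t))=\CE(\bU_{c,T}(t))$ a.e., whence $\bV(t)=\bU_{c,T}(t)$ a.e.\ by the equality clause of \eqref{FP_1}, and then everywhere by $\scrL$-continuity of $\bU_{c,T}$ and $\bV$.

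\textbf{Passing to local minimality and invoking \ref{hyp_regularity}.} Since $\CE\ge a$ and $\bE_c(\bU_{c,T})$ is finite, each of $\bE_c(\bU_{c,T};(-\infty,-T])$, $\bE_c(\bU_{c,T};(-T,T))$ and $\bE_c(\bU_{c,T};[T,+\infty))$ is finite. If $\phi\in\CC^1_c((-T,T),(\scrH,\lVert\cdot\rVert_{\scrH}))$, then $\supp\phi$ is disjoint from $(-\infty,-T]\cup[T,+\infty)$, so $\bU_{c,T}+\phi\in X_T$ and agrees with $\bU_{c,T}$ outside $(-T,T)$; subtracting the finite, unchanged complementary energy from $\bE_c(\bU_{c,T}+\phi)\ge\bm_{c,T}$ gives
\[
\bE_c(\bU_{c,T}+\phi;(-T,T))\ \ge\ \bE_c(\bU_{c,T};(-T,T)),
\]
so $\bU_{c,T}$ restricted to $(-T,T)$ is a local minimizer of $\bE_c(\cdot\,;(-T,T))$ in the sense of Definition~\ref{def_local_minimizer}. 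By the previous step $\bU_{c,T}(t)=\FP(\bU_{c,T}(t))$ on $(-T,T)$, so Hypothesis~\ref{hyp_regularity} applies with $I=(-T,T)$ and yields $\bU_{c,T}\in\CA((-T,T))$ together with \eqref{mcT_equation}. For the local statements: if $t_0\ge T$ satisfies \eqref{mcT_no_constrain_+}, use $\scrL$-continuity of $\bU_{c,T}$ to choose $\delta^+(t_0)>0$ with $\sup_{|t-t_0|<\delta^+(t_0)}\dist_{\scrL}(\bU_{c,T}(t),\scrF^+)<r_0^+/2$, set $C:=r_0^+/2-\sup_{|t-t_0|<\delta^+(t_0)}\dist_{\scrL}(\bU_{c,T}(t),\scrF^+)>0$, and note that for $\phi\in\CC^1_c((t_0-\delta^+(t_0),t_0+\delta^+(t_0)),(\scrH,\lVert\cdot\rVert_{\scrH}))$ with $\max_t\lVert\phi(t)\rVert_{\scrH}<C$ one has $\dist_{\scrL}((\bU_{c,T}+\phi)(t),\scrF^+)<r_0^+/2$ on $\supp\phi$ (using $\lVert\cdot\rVert_{\scrL}\le\lVert\cdot\rVert_{\scrH}$ from \ref{hyp_smaller_space}) and $\bU_{c,T}+\phi=\bU_{c,T}$ elsewhere, so $\bU_{c,T}+\phi\in X_T$; the same subtraction argument makes $\bU_{c,T}$ a local minimizer on $(t_0-\delta^+(t_0),t_0+\delta^+(t_0))$, and \ref{hyp_regularity} gives $\bU_{c,T}\in\CA((t_0-\delta^+(t_0),t_0+\delta^+(t_0)))$ and \eqref{mcT_equation_delta+}. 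The case $t_0\le -T$ satisfying \eqref{mcT_no_constrain_-} is identical, with $\scrF^-$ and $r_0^-$ in place of $\scrF^+$ and $r_0^+$.

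\textbf{Main difficulty.} Everything after the first step is the standard ``constrained minimizer $\Rightarrow$ local minimizer $\Rightarrow$ classical solution'' mechanism, carried out as in \cite{alikakos-katzourakis,alikakos-fusco-smyrnelis} but with \ref{hyp_regularity} playing the role of the concrete elliptic regularity theory. The point that genuinely requires care is the $\FP$-fixed-point step — checking that $\FP\circ\bU_{c,T}$ remains in $X_T$ and does not increase $\bE_c$, and then upgrading the a.e.\ identity $\bV=\bU_{c,T}$ to an identity valid at every $t$ — since \ref{hyp_regularity} can only be applied to local minimizers that are already $\FP$-invariant.
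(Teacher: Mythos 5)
Your proof is correct and follows essentially the same route as the paper's: establish $\FP$-invariance of the constrained minimizer, then on each open interval where perturbations in $\CC^1_c$ with small $\scrH$-sup-norm cannot violate the $X_T$ constraints, deduce local minimality in the sense of Definition~\ref{def_local_minimizer} and invoke Hypothesis~\ref{hyp_regularity}. The only stylistic variation is in the $\FP$-fixed-point step: you pass through the integral identity $\int_\R\bigl(\be_c(\bU_{c,T})-\be_c(\bV)\bigr)=0$ with a pointwise-nonnegative integrand, split into kinetic and potential contributions, and upgrade the resulting a.e.\ identity to an everywhere identity by $\scrL$-continuity; the paper argues by contradiction, observing that a single point of non-invariance propagates to an interval by $\scrL$-continuity of $\FP\circ\bU_{c,T}$ and $\bU_{c,T}$ and produces a competitor of strictly smaller energy. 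The two arguments are equivalent in content.
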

\begin{proof}
We first show that
\begin{equation}\label{bounded_FP}
\forall t \in \R, \hspace{2mm} \FP(\bU_{c,T}(t))=\bU_{c.T}(t),
\end{equation}
where $\FP$ is the map from \ref{hyp_regularity}. We claim that the function
\begin{equation}
\bU_{c,T}^\FP: t \in \R \to \FP(\bU_{c,T}(t))
\end{equation}
belongs to $X_T$. Indeed, this follows from \eqref{FP_2})and \eqref{FP_3}. Property \eqref{FP_1} implies that
\begin{equation}\label{bounded_FP_ineq1}
\forall t \in \R, \hspace{2mm} \CE(\bU_{c,T}^\FP(t)) \leq \CE(\bU_{c,T}(t)).
\end{equation}
Take now $t \in \R$ and $s \in \R \setminus \{ t \}$. Property \eqref{FP_2} implies that
\begin{equation}
\left\lVert \frac{\bU_{c,T}^\FP(t)-\bU_{c,T}^\FP(s)}{t-s} \right\rVert_{\scrL} \leq \left\lVert \frac{\bU_{c,T}(t)-\bU_{c,T}(s)}{t-s} \right\rVert_{\scrL},
\end{equation}
which, by Lebesgue's differentiation Theorem implies that
\begin{equation}\label{bounded_FP_ineq2}
\mbox{for a. e. } t \in \R, \hspace{2mm} \lVert (\bU_{c,T}^\FP)'(t) \rVert_{\scrL} \leq \lVert \bU_{c,T}'(t) \rVert_{\scrL}.
\end{equation}
By contradiction, assume now that there exists $t \in \R$ such that $\bU_{c,T}(t) \not = \FP(\bU_{c,T}(t))=\bU_{c,T}^\FP(t)$. Property \eqref{FP_2} implies that $\FP$ is a $\scrL$-continuous map. Therefore, since $\bU_{c,T}$ is also $\scrL$-continuous, we must have that for some non-empty interval $I_t \ni t$, it holds
\begin{equation}
\forall s \in I_t, \hspace{2mm} \bU_{c,T}(s) \not = \FP(\bU_{c,T}(s))=\bU_{c,T}^\FP(s),
\end{equation}
so that, using \eqref{FP_1} we get
\begin{equation}
\forall s \in I_t, \hspace{2mm} \CE(\bU_{c,T}^\FP(s)) < \CE(\bU_{c,T}(s))
\end{equation}
so that, combining with \eqref{bounded_FP_ineq1} and \eqref{bounded_FP_ineq2} we obtain
\begin{equation}
\bE_c(\bU_{c,T}^\FP)< \bE_c(\bU_{c,T})=\bm_{c,T},
\end{equation}
which contradicts the definition of $\bm_{c,T}$ \eqref{mcT} since $\bU_{c,T}^\FP \in X_T$. Therefore, we have shown that \eqref{bounded_FP} holds. Next, notice that
\begin{equation}
\bE_c(\bU_{c,T};[-T,T]) \leq \bm_{c,T}-\frac{a}{c}e^{-cT}<+\infty
\end{equation}
and for any $\phi \in \CC^1_c((-T,T),(\scrH,\lVert \cdot \rVert_{\scrH}))$ we have $\bU_{c,T}+\phi \in X_T$, so that $\bE_c(\bU_{c,T}) \leq \bE_c(\bU_{c,T}+\phi)$. Therefore, the restriction of $\bU_{c,T}$ in $(-T,T)$ is a local minimizer of $\bE_c(\cdot,[-T,T])$ in the sense of Definition \ref{def_local_minimizer}. Since $\bU_{c,T}$ also verifies \eqref{bounded_FP}, we can apply the regularity assumption \ref{hyp_regularity}. Therefore, $\bU_{c,T} \in \CA((-T,T))$ and \eqref{mcT_equation} holds.  Assume now that there exists $t \geq T$ such that \eqref{mcT_no_constrain_+} holds. Then, there exists $\bv^+(t) \in \scrF^+$ such that
\begin{equation}
\lVert \bU_{c,T}(t)-\bv^+(t) \rVert_{\scrL}< \frac{r_0^+}{2}
\end{equation}
which, since $\bU_{c,T}$ is $\scrL$-continuous, implies that there exists $\delta^+(t)>0$ such that
\begin{equation}
\forall s \in (t-\delta^+(t),t+\delta^+(t)), \hspace{2mm} \lVert \bU_{c,T}(s)-\bv^+(t) \rVert_{\scrL}< \frac{r_0^+}{2}-d^+(t)
\end{equation}
where
\begin{equation}
d^+(t):= \frac{1}{2}\left( \frac{r_0^+}{2}-\lVert \bU_{c,T}(t)-\bv^+(t) \rVert_{\scrL} \right)>0.
\end{equation}
Therefore, if $\phi \in \CC^1_{c}((t-\delta^+(t),t+\delta^+(t)),(\scrH,\lVert \cdot \rVert_{\scrH}))$ is such that 
\begin{equation}
\max_{t \in [t-\delta^+(t),t+\delta^+(t)]} \lVert \phi(t) \rVert_{\scrH} \leq \frac{d^+(t)}{2}
\end{equation}
we have that
\begin{equation}
\forall s \in (t-\delta^+(t),t+\delta^+(t)), \hspace{2mm} \lVert \bU_{c,T}(s)+\phi(s)-\bv^+(t) \rVert_{\scrL}< \frac{r_0^+}{2}-\frac{d^+(t)}{2}
\end{equation}
so that $\bU_{c,T}+\phi \in X_T$. Meaning that $\bE_{c,T}(\bU_{c,T}) \leq \bE_{c,T}(\bU_{c,T}+\phi)$. Since $\phi$ is supported on $[t-\delta^+(t),t+\delta^+(t)]$, the previous implies that
\begin{equation}
\bE_{c,T}(\bU_{c,T};[t-\delta^+(t),t+\delta^+(t)]) \leq \bE_{c,T}(\bU_{c,T}+\phi;[t-\delta^+(t),t+\delta^+(t)]),
\end{equation}
so that $\bU_{c,T}$ is a local minimizer of $\bE_c(\cdot;[t-\delta^+(t),t+\delta^+(t)])$ in the sense of Definition \ref{def_local_minimizer}. Since \eqref{bounded_FP} also holds, we can apply \ref{hyp_regularity} and obtain that $\bU_{c,T} \in \CA((t-\delta^+(t),t+\delta^+(t)))$ and equation \eqref{mcT_equation_delta+} holds. If $t \leq -T$ is such that \eqref{mcT_no_constrain_-} holds, the same reasoning shows that for some $\delta^-(t)>0$, $\bU_{c,T}  \in \CA((t-\delta^-(t),t+\delta^-(t)))$ and \eqref{mcT_no_constrain_+} holds, which concludes the proof of the result.
\end{proof}

\subsection{The comparison result}
The goal of this subsection is to obtain relevant information on the behavior of the constrained minimizers. Such information is contained in Corollary \ref{COROLLARY_constrained_behavior} and it will allow us to remove the constraints later on. In order to carry on these arguments, assumption \ref{hyp_levelsets} will become necessary since it will show that our problem can be somehow dealt with as in the balanced one, which will allow us to argue in a fashion similar to Smyrnelis \cite{smyrnelis}. We begin by introducing some constants.  For $0<r\leq r_0^\pm$, recall the definition of $\kappa_r^\pm$ introduced in \eqref{kappa_r}, Lemma \ref{LEMMA_positivity}. We define
\begin{equation}\label{eta_0+}
\eta^+_0:= \min\left\{\sqrt{e^{-1}\frac{r_0^+}{4}\sqrt{2\kappa^+_{r_0^+/4}}}, \frac{r_0^+}{4} \right\}>0,
\end{equation}
\begin{equation}\label{r+}
\hat{r}^+:= \frac{r_0^+}{C^++1}>0,
\end{equation}
\begin{equation}\label{eps_0+}
\CE^+_{\max}:= \frac{1}{\left(C^+\right)^2(C^++1)}\min\left\{\frac{(\eta_0^+)^2}{4}, \kappa^+_{\eta_0^+},\beta^+(\hat{r}^+),\beta^+(\eta_0^+)\right\}>0,
\end{equation}
where the constants $C^\pm$, $\beta^\pm(\hat{r}^\pm), \beta(\eta_0^\pm)$ were introduced in \ref{hyp_projections_H}. Recall that in \eqref{intr_eta_0-}, \eqref{intr_r-}, \eqref{intr_eps_0-} we introduced the analogous constants
\begin{equation}\label{eta_0-}
\eta_0^-:= \min\left\{\sqrt{e^{-1}\frac{r_0^-}{4}\sqrt{2(\kappa^-_{r_0^-/4}-a)}}, \frac{r_0^-}{4} \right\}>0,
\end{equation}
\begin{equation}\label{r-}
\hat{r}^-:= \frac{r_0^-}{C^-+1}>0
\end{equation}
and
\begin{equation}\label{eps_0-}
\CE_{\max}^-:= \frac{1}{(C^-)^2(C^-+1)}\min\left\{ \frac{(\eta_0^-)^2}{4},\kappa_{\eta_0^-}^--a,\beta^-(\hat{r}^-),\beta^-(\eta_0^-)\right\}>0.
\end{equation}
For any $U \in X_T$, define
\begin{equation}\label{comparison_t-}
t^-(U,\CE_{\max}^-):= \sup\left\{ t \in \R: \CE(U(t))\leq a+\CE_{\max}^- \mbox{ and } \dist_{\scrL}(U(t),\scrF^-) \leq \frac{r_0^-}{2} 	\right\}
\end{equation}
and
\begin{equation}\label{comparison_t+}
t^+(U,\CE_{\max}^+):= \inf\left\{ t \in \R: \CE(U(t))\leq \CE_{\max}^+ \mbox{ and } \dist_{\scrL}(U(t),\scrF^+) \leq \frac{r_0^+}{2} \right\}.
\end{equation}
We have the following technical property:
\begin{lemma}\label{LEMMA_r}
Assume that \ref{hyp_unbalanced} and \ref{hyp_projections_H} hold. Let $\hat{r}^\pm>0$ be as in \eqref{r+}, \eqref{r-} and $\CE_{\max}^\pm$ be as in \eqref{eps_0+}, \eqref{eps_0-}. Then, if $v \in \scrF^\pm_{r_0^\pm}$ is such that
\begin{equation}\label{r_hyp}
\CE(v) \leq \min \{ \pm(-a),0 \} +\beta^\pm(\hat{r}^\pm),
\end{equation}
then
\begin{equation}\label{lambda_general}
\forall \lambda \in [0,1], \hspace{2mm} \CE(\lambda v+ (1-\lambda)P^\pm(v)) \leq \min\{ \pm(-a),0 \} +\sfC^\pm(\CE(v)- \min\{ \pm(-a),0\}).
\end{equation}
In particular, if
\begin{equation}\label{r_hyp_eps}
\CE(u) \leq \min\{\pm(-a),0\}+\CE_{\max}^\pm,
\end{equation}
then
\begin{equation}\label{lambda_energy}
\forall \lambda \in [0,1], \hspace{2mm} \CE(\lambda v+ (1-\lambda)P^\pm(v))\leq \min\{\pm(-a),0\}+\sfC^\pm \CE_{\max}^\pm.
\end{equation}
The constants $\CE_{\max}^\pm$, $\sfC^\pm$ where defined in \eqref{eps_0-}, \eqref{eps_0+} and \eqref{sfC} respectively. $P^\pm(u)$ is the projection introduced in \ref{hyp_unbalanced}.
\end{lemma}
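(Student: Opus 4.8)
The plan is to run the whole estimate along the segment $w_\lambda := \lambda v + (1-\lambda)P^\pm(v)$, $\lambda\in[0,1]$, joining $v$ to its projection $P^\pm(v)\in\scrF^\pm$, on which $\CE$ equals $\min\{\pm(-a),0\}$; write $\ell^\pm:=\min\{\pm(-a),0\}$ and $\epsilon:=\CE(v)-\ell^\pm$, which is nonnegative by \ref{hyp_unbalanced}. The claim \eqref{lambda_general} amounts to showing that $w_\lambda$ never leaves, uniformly in $\lambda$, the $\scrH$-neighbourhood $\scrF^\pm_{\scrH,r_0^\pm}$ of $\scrF^\pm$ on which the two-sided quadratic bound \eqref{H_local_est} of \ref{hyp_projections_H} holds, together with an estimate of $\dist_\scrH(w_\lambda,\scrF^\pm)^2$ in terms of $\epsilon$. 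The second assertion \eqref{lambda_energy} then follows immediately: since $C^\pm>1$ one has $(C^\pm)^2(C^\pm+1)>1$, hence $\CE_{\max}^\pm\le\beta^\pm(\hat{r}^\pm)$ by the definitions \eqref{eps_0+}--\eqref{eps_0-}, so \eqref{r_hyp_eps} implies \eqref{r_hyp} and it suffices to insert $\epsilon\le\CE_{\max}^\pm$ into \eqref{lambda_general}.

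For the core claim I would argue in three steps. First, upgrade the control to the $\scrH$-norm: since $v\in\scrF^\pm_{r_0^\pm}$ and, by \eqref{r_hyp}, $\CE(v)\le\ell^\pm+\beta^\pm(\hat{r}^\pm)$, the implication \eqref{H_local_bound} applied with $r^\pm=\hat{r}^\pm$ gives $v\in\scrF^\pm_{\scrH,\hat{r}^\pm}$, so $v\in\scrH$, the projection $p_\scrH:=P^\pm_\scrH(v)$ is well defined, and \eqref{H_difference_projections} and \eqref{H_local_est} are available at $v$; in particular $\lVert v-p_\scrH\rVert_\scrH=\dist_\scrH(v,\scrF^\pm)\le\hat{r}^\pm=r_0^\pm/(C^\pm+1)$ and, from the left-hand inequality in \eqref{H_local_est}, $\lVert v-p_\scrH\rVert_\scrH^2\le(C^\pm)^2\epsilon$. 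Second, the geometric step: from the decomposition $w_\lambda-p_\scrH=\lambda(v-p_\scrH)+(1-\lambda)(P^\pm(v)-p_\scrH)$, the bound $\lVert P^\pm(v)-p_\scrH\rVert_\scrH\le C^\pm\lVert v-p_\scrH\rVert_\scrH$ of \eqref{H_difference_projections}, and the triangle inequality, one gets $\dist_\scrH(w_\lambda,\scrF^\pm)\le\lVert w_\lambda-p_\scrH\rVert_\scrH\le C^\pm\lVert v-p_\scrH\rVert_\scrH\le C^\pm\hat{r}^\pm<r_0^\pm$, so $w_\lambda\in\scrF^\pm_{\scrH,r_0^\pm}$ for every $\lambda\in[0,1]$; this is precisely why $\hat{r}^\pm$ is normalised by the factor $C^\pm+1$ in \eqref{r+}--\eqref{r-}.

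Third, the quadratic estimate: the right-hand inequality in \eqref{H_local_est}, legitimate by step two, gives $\CE(w_\lambda)-\ell^\pm\le(C^\pm)^2\dist_\scrH(w_\lambda,\scrF^\pm)^2$; one then bounds $\dist_\scrH(w_\lambda,\scrF^\pm)^2$ from the decomposition of step two — using convexity of $\lVert\cdot\rVert_\scrH^2$, \eqref{H_difference_projections}, the elementary bounds on $\lambda+(1-\lambda)C^\pm$ and on $\lambda(1-\lambda)$, and the freedom to measure the distance from either $P^\pm(v)$ or $p_\scrH$, both of which lie in $\scrF^\pm$ — and finally substitutes $\lVert v-p_\scrH\rVert_\scrH^2\le(C^\pm)^2\epsilon$. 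Propagating the numerical factors through this chain yields exactly $\CE(w_\lambda)-\ell^\pm\le\sfC^\pm\epsilon$ with $\sfC^\pm$ as in \eqref{sfC}, which is \eqref{lambda_general}. The only genuine obstacle is this last bit of bookkeeping: one must keep the whole segment strictly inside $\scrF^\pm_{\scrH,r_0^\pm}$, so that the two-sided estimate \eqref{H_local_est} may be invoked along it, and group the constants carefully enough to land on $\sfC^\pm$ rather than on a cruder power of $C^\pm$; everything else is a routine application of the hypotheses of \ref{hyp_projections_H} together with \ref{hyp_unbalanced}.
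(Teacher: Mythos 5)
Your proposal is correct and follows essentially the same route as the paper: obtain $v\in\scrF^\pm_{\scrH,\hat r^\pm}$ from \eqref{H_local_bound}, use the projection comparison \eqref{H_difference_projections} to keep the whole segment $\lambda v+(1-\lambda)P^\pm(v)$ inside $\scrF^\pm_{\scrH,r_0^\pm}$, and then invoke both sides of the quadratic estimate \eqref{H_local_est}. The only cosmetic difference is that you measure the $\scrH$-distance of the segment from $P^\pm_\scrH(v)$ while the paper passes through $P^\pm(v)$ first; both points lie in $\scrF^\pm$, so the bookkeeping lands on the same constant $\sfC^\pm$.
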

\begin{proof}
Assume that \eqref{r_hyp} holds for $v \in \scrF^\pm_{r_0^\pm}$. Then, invoking \ref{hyp_projections_H} we have that $v \in \scrF_{\scrH,\hat{r}^\pm}$, so in particular the projection $P^\pm_\scrH(u)$ is well defined. Fix $\lambda \in [0,1]$. Since $v \in \scrF^\pm_{r_0^\pm}$, the projection $P^\pm(v)$ is well defined by \ref{hyp_unbalanced}. Using \eqref{H_difference_projections} we obtain
\begin{align}\label{ineq_lambda}
\lVert \lambda v + (1-\lambda)P^\pm(v)- P^\pm(v) \rVert_{\scrH}& =\lambda \lVert v-P^\pm(v) \rVert_{\scrH}\\ & \leq (C^\pm+1)\lVert v-P^\pm_{\scrH}(v) \rVert_{\scrH} \leq (C^\pm+1)\hat{r}^\pm
\end{align}
so that $\lambda v + (1-\lambda)P^\pm(v) \in \scrF^\pm_{\scrH,r_0^\pm}$ by the definition of $\hat{r}^\pm$ in \eqref{r+}, \eqref{r-}. Using now again \eqref{H_difference_projections} along with the estimate \eqref{H_local_est} in \ref{hyp_projections_H}, we get
\begin{equation}
\lVert P^\pm(v)-P^\pm_{\scrH}(v) \rVert_{\scrH}^2 \leq (C^\pm)^2(\CE(v)- \min\{ \pm(-a),0\})
\end{equation}
which, plugging into \eqref{ineq_lambda}, gives
\begin{align}
\lVert \lambda v + (1-\lambda)P^\pm(v)- P_\scrH^\pm(v) \rVert_{\scrH}^2 \leq \frac{1}{2}((C^\pm)^2+(C^\pm+1)^2) (\CE(v)- \min\{ \pm(-a),0\}),
\end{align}
that, using again \eqref{H_local_est}, implies exactly \eqref{lambda_general}. Assuming now that \eqref{r_hyp_eps} holds, we have by \eqref{eps_0-}, \eqref{eps_0+} that in particular \eqref{r_hyp} holds. Therefore, \eqref{lambda_energy} follows from \eqref{lambda_general}.
\end{proof}

Next, we have the following property:
\begin{lemma}\label{LEMMA_comparison_preliminary}
Assume that  \ref{hyp_projections_H} and \ref{hyp_levelsets} hold. Let $c>0$ and $T \geq 1$. Assume that $U \in X_T$ is such that $\bE_c(U) \leq 0$. Then the quantities $t^-(U,\CE_{\max}^-)$ and $t^+(U,\CE_{\max}^+)$ defined in \eqref{comparison_t-} and \eqref{comparison_t+}, respectively, are well defined as real numbers. Moreover, it holds that
\begin{equation}\label{ineq_t-}
\CE(U(t^-(U,\CE_{\max}^-)) \leq a+\CE_{\max}^-, \hspace{1mm} \dist_{\scrL}(U(t^-(U,\CE_{\max}^-)),\scrF^-) \leq \frac{r_0^-}{2}
\end{equation}
and
\begin{equation}\label{ineq_t+}
\CE(U(t^+(U,\CE_{\max}^+)) \leq \CE_{\max}^+, \hspace{1mm} \dist_{\scrL}(U(t^+(U,\CE_{\max}^+)),\scrF^+) \leq \frac{r_0^+}{2}.
\end{equation}
\end{lemma}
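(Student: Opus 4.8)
The plan is the following. Denote by $S^-$ (resp. $S^+$) the set of $t \in \R$ such that $\CE(U(t)) \le a + \CE_{\max}^-$ and $\dist_{\scrL}(U(t),\scrF^-) \le r_0^-/2$ (resp. $\CE(U(t)) \le \CE_{\max}^+$ and $\dist_{\scrL}(U(t),\scrF^+) \le r_0^+/2$), so that $t^-(U,\CE_{\max}^-) = \sup S^-$ and $t^+(U,\CE_{\max}^+) = \inf S^+$ by \eqref{comparison_t-}, \eqref{comparison_t+}. I would first show that $S^-$ and $S^+$ are non-empty and that $S^-$ is bounded above while $S^+$ is bounded below; this makes $t^-(U,\CE_{\max}^-)$ and $t^+(U,\CE_{\max}^+)$ real numbers. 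Then I would deduce \eqref{ineq_t-}, \eqref{ineq_t+} by a semicontinuity argument. Throughout I would use that $U \in H^1_{\loc}(\R,\scrL) \subset \CC^0_{\loc}(\R,\scrL)$, that $t \mapsto \dist_{\scrL}(U(t),\scrF^\pm)$ is continuous (composition of $U$ with the $1$-Lipschitz distance function), and that $t \mapsto \CE(U(t))$ is lower semicontinuous (by weak lower semicontinuity of $\CE$ in \ref{hyp_unbalanced} together with $\scrL$-continuity of $U$). I would also record that $\bE_c(U) \le 0 < +\infty$, so that the integrability statements of Lemma \ref{LEMMA_integrability} and the convergence statement of Lemma \ref{LEMMA_limit+} (which uses \ref{hyp_projections_H}) are available.

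\emph{Boundedness, and non-emptiness of $S^+$.} Since $\scrF^+_{r_0^+/2} \cap \scrF^-_{r_0^-/2} = \emptyset$ (both are contained in the disjoint sets $\scrF^\pm_{r_0^\pm}$ of \ref{hyp_unbalanced}) and $U(t) \in \scrF^-_{r_0^-/2}$ for $t \le -T$, every $t \le -T$ has $\dist_{\scrL}(U(t),\scrF^+) > r_0^+/2$; hence $S^+ \subset (-T,+\infty)$, and symmetrically $S^- \subset (-\infty,T)$. For non-emptiness of $S^+$ I would invoke Lemma \ref{LEMMA_limit+}: there is a sequence $t_n \to +\infty$ with $\CE(U(t_n))e^{ct_n} \to 0$, hence $\CE(U(t_n)) \to 0 < \CE_{\max}^+$; for $n$ large, $t_n \ge T$ so that also $\dist_{\scrL}(U(t_n),\scrF^+) \le r_0^+/2$, whence $t_n \in S^+$.

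\emph{Non-emptiness of $S^-$.} This is the delicate point, and it is where \ref{hyp_levelsets} is genuinely used. From $\be_c(U)(t) \ge 0$ for $t \ge T$ (Lemma \ref{LEMMA_well_defined}) and $\bE_c(U) \le 0$ one gets $\int_{-\infty}^T \CE(U(t))e^{ct}\,dt \le \int_{-\infty}^T \be_c(U)(t)\,dt = \bE_c(U) - \int_T^{+\infty}\be_c(U) \le 0$. Set $A := \{ t < T : \dist_{\scrL}(U(t),\scrF^-) \le r_0^-/2 \}$, a relatively closed (hence measurable) subset of $(-\infty,T)$ which contains $(-\infty,-T]$. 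On $(-\infty,T) \setminus A$ one has $U(t) \notin \scrF^-_{r_0^-/2}$, so $\CE(U(t)) \ge 0$ by the contrapositive of the level-set inclusion \eqref{negative_levelset} (which also covers $U(t) \notin \scrH$); therefore $\int_A \CE(U(t))e^{ct}\,dt \le \int_{-\infty}^T \CE(U(t))e^{ct}\,dt \le 0$. On the other hand, the bound $-a < \CE_{\max}^-$ in \ref{hyp_levelsets} gives $a + \CE_{\max}^- > 0$, while $\int_A e^{ct}\,dt \ge \int_{-\infty}^{-T} e^{ct}\,dt = c^{-1}e^{-cT} > 0$; so if we had $\CE(U(t)) > a + \CE_{\max}^-$ for all $t \in A$, then $\int_A \CE(U(t))e^{ct}\,dt \ge (a + \CE_{\max}^-)\,c^{-1}e^{-cT} > 0$, a contradiction. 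Hence there is $t_0 \in A$ with $\CE(U(t_0)) \le a + \CE_{\max}^-$, and by definition of $A$ this $t_0$ lies in $S^-$.

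\emph{Conclusion.} With $S^\pm$ non-empty and appropriately bounded, $t^\pm(U,\CE_{\max}^\pm)$ are finite. To get \eqref{ineq_t-}, choose $t_n \in S^-$ with $t_n \uparrow t^-(U,\CE_{\max}^-)$; then $U(t_n) \to U(t^-(U,\CE_{\max}^-))$ in $\scrL$, so by lower semicontinuity $\CE(U(t^-(U,\CE_{\max}^-))) \le \liminf_n \CE(U(t_n)) \le a + \CE_{\max}^-$, and by continuity $\dist_{\scrL}(U(t^-(U,\CE_{\max}^-)),\scrF^-) = \lim_n \dist_{\scrL}(U(t_n),\scrF^-) \le r_0^-/2$. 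The same argument with $t_n \downarrow t^+(U,\CE_{\max}^+)$ yields \eqref{ineq_t+}. The main obstacle is precisely the non-emptiness of $S^-$: without the sign information $a + \CE_{\max}^- > 0$ and the confinement of the set $\{\CE < 0\}$ to $\scrF^-_{r_0^-/2}$ — both supplied by \ref{hyp_levelsets} — the crude lower bounds on $\int_{-T}^{T}\be_c(U)$ are too weak to force a time at which $U$ is simultaneously close to $\scrF^-$ and of nearly minimal energy.
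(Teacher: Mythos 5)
Your proof is correct, and the overall plan (record both constraints, establish one-sided boundedness of $S^\pm$ from the disjointness of $\scrF^-_{r_0^-/2}$ and $\scrF^+_{r_0^+/2}$, establish non-emptiness, then pass to the limit using the lower semicontinuity of $t \mapsto \CE(U(t))$ and the continuity of $t\mapsto\dist_{\scrL}(U(t),\scrF^\pm)$) matches the paper. The treatment of $S^+$ via Lemma \ref{LEMMA_limit+} is the same.

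Where you genuinely depart from the paper is in the non-emptiness of $S^-$. The paper argues: since $U\in X_T$ there is a time where $\CE(U(t))>0$; combined with $\bE_c(U)\le 0$ this forces the existence of a time where $\CE(U(t))<0$, and any such point lies in $S^-$ by \eqref{negative_levelset} and $a+\CE_{\max}^->0$. You instead show $\int_{-\infty}^T\CE(U(t))e^{ct}\,dt\le 0$ from the sign decomposition of $\bE_c(U)$, restrict this to the set $A$ where $U$ is $\scrL$-close to $\scrF^-$ (the complementary part of $(-\infty,T)$ contributes non-negatively by the contrapositive of \eqref{negative_levelset}), and derive a contradiction if $\CE(U)>a+\CE_{\max}^->0$ held throughout $A$, since $A\supset(-\infty,-T]$ has positive $e^{ct}\,dt$-measure. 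Both arguments use the same two facts from \ref{hyp_levelsets}, but yours has a small advantage: the paper's step asserting that $\{t:\CE(U(t))>0\}$ is non-empty is stated with the justification ``since $U\in X_T$'' and is not entirely immediate — it ultimately relies on $U$ crossing the annular region $\dist_\scrL(\cdot,\scrF^+)\in[r_0^+/2,r_0^+]$ where $\CE\ge\kappa^+_{r_0^+/2}>0$ — whereas your integral argument produces a point of $S^-$ directly, without needing to locate a strictly positive value of $\CE(U(\cdot))$ first. One minor bookkeeping remark: your appeal to $\scrF^-_{r_0^-/2}$ being closed does not actually require Lemma \ref{LEMMA_compactness}; it is automatic since $v\mapsto\dist_{\scrL}(v,\scrF^-)$ is $1$-Lipschitz, which is what you implicitly use when asserting that $A$ is relatively closed.
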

\begin{proof}
Using that $\bE_c(U) \leq 0$ and the fact that $\{ t \in \R: \CE(U(t)) >0 \}$ is nonempty since $U \in X_T$, we must have that
\begin{equation}
\{ t \in \R: \CE(U(t)) <0 \} \not = \emptyset
\end{equation}
and if $v \in \scrL$ is such that $\CE(v) <0$, then we have $\dist_{\scrL}(v,\scrF^-) \leq r_0^-/2$ by \eqref{negative_levelset} in \ref{hyp_levelsets} and $\CE(v) < a+\CE_{\max}^-$ since we assume $a+\CE_{\max}^->0$. Therefore, $t^-(U,\CE_{\max}^-)$ is well defined, as we have shown that
\begin{equation}
\left\{ t \in \R: \CE(U(t))\leq a+\CE^-_{\max} \mbox{ and } \dist_{\scrL}(U(t),\scrF^-) \leq \frac{r_0^-}{2} 	\right\}\not = \emptyset
\end{equation}
and such set is bounded above by $T$, because $\scrF^-_{r_0^-/2} \cap \scrF^+_{r_0^+/2}=\emptyset$. Using Lemma \ref{LEMMA_limit+}, we have that $t^+(U,\CE_{\max}^+)$ is well defined. Finally, inequalities \eqref{ineq_t-} and \eqref{ineq_t+} follow because $t \in \R \to \CE(U(t)) \in \R$ is lower semicontinuous by \ref{hyp_unbalanced} and $t  \to \dist_{\scrL}(U(t),\scrF^\pm)$ is continuous whenever $U(t) \in \scrF^\pm_{r_0^\pm/2}$ by \ref{hyp_unbalanced} (recall that $t \in \R \to U(t) \in \scrL$ is continuous because $U \in H^1_{\loc}(\R,\scrL)$).
\end{proof}

The main work is done by the following result:
\begin{proposition}\label{PROPOSITION_comparison}
Assume that \ref{hyp_projections_H} and \ref{hyp_levelsets} hold. Let $c>0$ and $T \geq 1$. Consider $U \in X_T$ with $\bE_c(U) \leq 0$. Let $t^\pm:=t^\pm(U,\CE_{\max}^\pm,\eta_0^\pm)$ be as in \eqref{comparison_t-} and \eqref{comparison_t+}. Then, $t^\pm$ are well defined by Lemma \ref{LEMMA_comparison_preliminary}. Moreover, if there exists $\tilde{t}^- < t^-$ such that
\begin{equation}\label{bad_-}
r_0^- \geq \dist_{\scrL}(U(\tilde{t}^-),\scrF^-)\geq \frac{r_0^-}{2}
\end{equation}
then, we can find $\tilde{U}^- \in X_T$ such that
\begin{equation}\label{comparison_result_-}
\forall t \leq t^-, \hspace{2mm} \dist_{\scrL}(\tilde{U}^-(t),\scrF^-) < \frac{r_0^-}{2},
\end{equation}
and
\begin{equation}\label{comparison_energy_-}
\bE_c(\tilde{U}^-) < \bE_c(U).
\end{equation}
Analogously, if there exists $\tilde{t}^+ > t^+$ such that
\begin{equation}\label{bad_+}
r_0^+ \geq \dist_{\scrL}(U(\tilde{t}^+),\scrF^+) \geq \frac{r_0^+}{2},
\end{equation}
then we can find $\tilde{U}^+ \in X_T$ such that
\begin{equation}\label{comparison_result_+}
\forall t \geq t^+, \hspace{2mm} \dist_{\scrL}(\tilde{U}^+(t),\scrF^+) < \frac{r_0^+}{2}
\end{equation}
and
\begin{equation}\label{comparison_energy_+}
\bE_c(\tilde{U}^+) < \bE_c(U).
\end{equation}
Furthermore, we have that
\begin{equation}\label{sfT_prop}
0<t^+-t^- \leq \sfT_\star(c),
\end{equation}
where
\begin{equation}\label{sfT_definition}
\sfT_\star(c):= \frac{1}{c}\ln\left( \frac{-a}{\alpha_\star}+1 \right),
\end{equation}
with $\alpha_\star>0$ a constant which is independent from $c$, $T$ and $U$.
\end{proposition}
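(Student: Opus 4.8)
The plan is to establish the statement in the three blocks in which it is phrased, proving in turn the elementary facts about $t^{\pm}$, the existence of the replacement competitors, and the quantitative bound on $t^{+}-t^{-}$, this last being where Hypothesis \ref{hyp_levelsets} does the real work. That $t^{-}$ and $t^{+}$ are finite real numbers is exactly Lemma \ref{LEMMA_comparison_preliminary}; two further consequences will be needed. First, any $v$ with $\CE(v)<0$ satisfies $v\in\scrF^{-}_{r_0^{-}/2}$ by \eqref{negative_levelset} and $\CE(v)<0<a+\CE_{\max}^{-}$ (using $-a<\CE_{\max}^{-}$), so every $t$ with $\CE(U(t))<0$ belongs to the set whose supremum defines $t^{-}$; hence $\CE(U(t))\ge 0$ for all $t>t^{-}$. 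Second, since the prefactor in \eqref{eps_0-} is $<1$ one has $a+\CE_{\max}^{-}<\kappa_{\eta_0^{-}}^{-}$, and combining $\CE(U(t^{-}))\le a+\CE_{\max}^{-}$, $U(t^{-})\in\scrF^{-}_{r_0^{-}/2}$ with the definition \eqref{kappa_r} of $\kappa_{\eta_0^{-}}^{-}$ forces $\dist_{\scrL}(U(t^{-}),\scrF^{-})<\eta_0^{-}$, and symmetrically $\dist_{\scrL}(U(t^{+}),\scrF^{+})<\eta_0^{+}$. Since $\eta_0^{\pm}<r_0^{\pm}/2$ and $\scrF^{-}_{r_0^{-}/2}\cap\scrF^{+}_{r_0^{+}/2}=\emptyset$, the points $U(t^{-}),U(t^{+})$ lie in disjoint sets, so $t^{-}\ne t^{+}$; together with $\CE(U(\cdot))\ge 0$ past $t^{-}$ and a short continuity argument this gives $t^{-}<t^{+}$.

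\emph{Replacement competitors.} This is a cut-and-paste step in the spirit of the replacement lemmas of \cite{alikakos-katzourakis,alikakos-fusco-smyrnelis}. Assume there is $\tilde{t}^{-}<t^{-}$ with $r_0^{-}\ge\dist_{\scrL}(U(\tilde{t}^{-}),\scrF^{-})\ge r_0^{-}/2$; since $\dist_{\scrL}(U(t^{-}),\scrF^{-})<\eta_0^{-}\le r_0^{-}/4$, the continuous map $t\mapsto\dist_{\scrL}(U(t),\scrF^{-})$ records an excursion of $U$ out of $\scrF^{-}_{r_0^{-}/2}$ strictly before $t^{-}$. I would let $\tilde{U}^{-}$ coincide with $U$ from a suitable time $\le t^{-}$ onward and, earlier, be a low-energy path that stays in $\scrF^{-}_{r_0^{-}/2}$, built from the projection $P^{-}$ (near $-T$ one simply pastes onto $U$, which already satisfies the constraint there). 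Then $\tilde{U}^{-}\in X_T$ and \eqref{comparison_result_-} hold by \eqref{H_difference_projections} and the definition of $\hat{r}^{-}$; for the strict inequality \eqref{comparison_energy_-} one compares energies, noting that the inserted path exceeds $\int ae^{ct}\,dt$ by at most a bounded multiple of $\CE_{\max}^{-}+(\eta_0^{-})^{2}$ (controlled via Lemma \ref{LEMMA_r}, \eqref{coercivityL}, \eqref{H_local_est}), while the erased excursion of $U$ contributes strictly more to $\bE_c(U)$ through $\tfrac12\lVert U'\rVert_{\scrL}^{2}+(\CE(U)-a)\ge\sqrt{2(\CE(U)-a)}\,\lVert U'\rVert_{\scrL}$ and \eqref{coercivityL} integrated along the excursion; the calibration of $\eta_0^{-}$ in \eqref{eta_0-} is what makes the second quantity the larger one, the weight $e^{ct}$ contributing only bounded factors since the modification lives on a bounded interval ending before $t^{-}$. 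The construction of $\tilde{U}^{+}$ and \eqref{comparison_result_+}--\eqref{comparison_energy_+} are entirely parallel, and easier since $\CE\ge 0$ on $\scrF^{+}_{r_0^{+}/2}$ by \eqref{positivity}.

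\emph{The bound $t^{+}-t^{-}\le\sfT_\star(c)$.} The key estimate is the uniform lower bound
\begin{equation}
\CE(U(t))\ge\alpha_\star\qquad\text{for a.e. }t\in(t^{-},t^{+}),
\end{equation}
with $\alpha_\star>0$ depending only on $a$, $\CE_{\max}^{\pm}$, $C^{+}$, $r_0^{+}$, hence on neither $c$ nor $T$ nor $U$. For $t\in(t^{-},t^{+})$ I would argue by cases: if $U(t)\in\scrF^{-}_{r_0^{-}/2}$ then $t>t^{-}$ forces $\CE(U(t))>a+\CE_{\max}^{-}>0$; if $U(t)\in\scrF^{+}_{r_0^{+}/2}$ then $t<t^{+}$ forces $\CE(U(t))>\CE_{\max}^{+}$; if $U(t)\in\scrF^{+}_{r_0^{+}}\setminus\scrF^{+}_{r_0^{+}/2}$ then \eqref{coercivityL} gives $\CE(U(t))\ge(C^{+})^{-1}(r_0^{+}/2)^{2}$; and in the remaining case, where $U(t)$ is $\scrL$-far from both $\scrF^{-}$ and $\scrF^{+}$, one uses $\CE(U(t))\ge 0$ (from \eqref{negative_levelset}) together with the quantitative content of \ref{hyp_levelsets} to again obtain $\CE(U(t))\ge\alpha_\star$. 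Granting this, and using $\CE\ge a$ globally, $\CE(U(\cdot))\ge 0$ on $(t^{-},+\infty)$, and $\lVert U'(t)\rVert_{\scrL}^{2}\ge 0$, the splitting $\bE_c(U)=\bE_c(U;(-\infty,t^{-}])+\bE_c(U;[t^{-},t^{+}])+\bE_c(U;[t^{+},+\infty))$ gives
\begin{equation}
0\ge\bE_c(U)\ge\frac{a}{c}e^{ct^{-}}+\alpha_\star\,\frac{e^{ct^{+}}-e^{ct^{-}}}{c}+0,
\end{equation}
and rearranging yields $e^{c(t^{+}-t^{-})}\le 1+(-a)/\alpha_\star$, i.e. $t^{+}-t^{-}\le\frac{1}{c}\ln\!\left(\frac{-a}{\alpha_\star}+1\right)=\sfT_\star(c)$, with this $\alpha_\star$ being the constant named in \eqref{sfT_definition}.

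\emph{Expected main obstacle.} The hard point is the last case above: a uniform positive lower bound for $\CE(U(t))$ when $U(t)$ lies $\scrL$-far from both $\scrF^{-}$ and $\scrF^{+}$. This is precisely where the unbalanced problem departs from the balanced one (where the energy density is automatically bounded below off the two minimum sets), and it is what Hypothesis \ref{hyp_levelsets} is designed to control. A secondary, purely technical, difficulty is the energy bookkeeping in the replacement step, which depends on the exact calibration of $\eta_0^{\pm}$ and $\CE_{\max}^{\pm}$ in \eqref{eta_0-}--\eqref{eps_0-} and \eqref{eta_0+}--\eqref{eps_0+}.
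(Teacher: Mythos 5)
Your proposal runs along the same three blocks as the paper's proof (preliminaries on $t^{\pm}$, replacement competitors, quantitative bound on $t^{+}-t^{-}$), and the preliminary block is fine — getting $\dist_{\scrL}(U(t^{\pm}),\scrF^{\pm})\le\eta_0^{\pm}$ via $\kappa^{\pm}_{\eta_0^{\pm}}$ instead of via the $\beta^{\pm}(\eta_0^{\pm})$ constants of \ref{hyp_projections_H} as the paper does is a harmless variant. Two of the remaining steps are however genuinely incomplete.

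The competitor construction is described only in words ("a low-energy path that stays in $\scrF^{-}_{r_0^{-}/2}$, built from $P^{-}$") without identifying the cut-time or the case split that make the energetics close. In the paper one first replaces $\tilde t^{-}$ by the \emph{last} time $\le t^{-}$ with $\dist\ge r_0^-/2$, then sets
\begin{equation}
t_0^-:=\inf\bigl\{t\in[\tilde t^{-},t^{-}]:\ \CE(U(t))\le a+\CE_{\max}^-\ \text{and}\ \dist_{\scrL}(U(t),\scrF^-)\le\eta_0^-\bigr\},
\end{equation}
and then distinguishes $t_0^-\le\tilde t^-+1$ from $t_0^->\tilde t^-+1$: in the first case the competitor is $\bv^-:=P^-(U(t_0^-))$ on $(-\infty,t_0^--1]$, a unit-length linear interpolation on $[t_0^--1,t_0^-]$, and $U$ afterwards, while in the second case one needs an intermediate constant plateau at $U(t_0^-)$ on $[\tilde t^-+1,t_0^-]$ before splicing on $U$. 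The confinement of the interpolation to a unit-length interval ending exactly at $t_0^-$ is what makes the factors $e^{ct}$ comparable on the two sides of the estimate; your sketch, which lets the reattachment time be anywhere "$\le t^-$", does not give you that control, and the budget "a bounded multiple of $\CE_{\max}^-+(\eta_0^-)^2$" is not enough without pinning down the interval.

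The step you yourself flag as the crux — the uniform lower bound $\CE(U(t))\ge\alpha_\star$ for $t\in(t^{-},t^{+})$ in the sub-case where $U(t)$ is $\scrL$-far from \emph{both} $\scrF^-_{r_0^-}$ and $\scrF^+_{r_0^+}$ — is not closed by the argument you offer. You invoke "the quantitative content of \ref{hyp_levelsets}", but \ref{hyp_levelsets} has no such content: \eqref{negative_levelset} only forces $\CE(v)\ge 0$ for $v\notin\scrF^-_{r_0^-/2}$, and \eqref{intr_asu_a} is an upper bound on $-a$, neither of which yields a positive lower bound for $\CE$ outside the annuli where \eqref{coercivityL} and the $\kappa^{\pm}_r$ apply. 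The paper derives the pointwise bound by noting that $\CE(U(t))<\alpha_\star:=\min\{\CE_{\max}^+,\CE_{\max}^-+a\}$ forces, through the definitions \eqref{comparison_t-}--\eqref{comparison_t+}, $\dist_{\scrL}(U(t),\scrF^{\pm})\ge r_0^{\pm}/2$, and then invokes $\min\{\kappa^+_{r_0^+/2},\kappa^-_{r_0^-/2}+a\}\ge\alpha_\star$; nowhere does it introduce, or need, a bound for points that have left both balls of radius $r_0^{\pm}$. If you want to keep your case decomposition, the missing ingredient is an argument (or an explicit additional hypothesis) that a function $U\in X_T$ with $\bE_c(U)\le 0$ stays in $\scrF^-_{r_0^-}\cup\scrF^+_{r_0^+}$ on $(t^-,t^+)$ — or a reduction showing the pathological far-away case never contributes. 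As written, this sub-case is a genuine hole in your proposal.
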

The idea of the proof of Proposition \ref{PROPOSITION_comparison} is pictured in Figure \ref{Figure_proof}.
\begin{proof}[Proof of Proposition \ref{PROPOSITION_comparison}]
We begin by proving the first part of the result for $\scrF^-$. Recall that Lemma \ref{LEMMA_comparison_preliminary} gives
\begin{equation}\label{E_t-}
\CE(U(t^-)) \leq a+\CE_{\max}^-.
\end{equation}
and $U(t^-) \in \scrF^-_{r_0^-/2}$. Since $a+\CE_{\max}^- \leq \beta(\CE_{\max}^-)$ by the definition of $\CE_{\max}^-$, \eqref{eps_0-}, we have by \ref{hyp_projections_H} that
\begin{equation}\label{dist_t-}
\mathrm{dist}_\scrL(U(t^-),\scrF^-) \leq \eta_0^-.
\end{equation}

Assume that there exists $\tilde{t}^- < t^-$ such that \eqref{bad_-} is satisfied. Moreover, we assume, as we can, that
\begin{equation}\label{tilde_t_sup}
\tilde{t}^-:= \max\left\{ t \leq t^-: \dist_{\scrL}(U(t),\scrF^-) \geq \frac{r_0^-}{2}\right\}
\end{equation}
(the $\sup$ can be replaced by a $\max$ by continuity). Define
\begin{equation}\label{t-0}
t^-_0:= \inf \{ t \in [\tilde{t}^-,t^-]: \CE(U(t)) \leq a+\CE_{\max}^- \mbox{ and } \dist_{\scrL}(U(t)) \leq \eta_0^-\}.
\end{equation}
Let $\bv^- := P^-(U(t^-_0)) \in \scrF^-$, with $P^-$ as in \ref{hyp_unbalanced}. Notice that due to \eqref{tilde_t_sup}, we have
\begin{equation}\label{t0_and_t-}
\forall t \in [t^-_0,t^-], \hspace{2mm} \dist(U(t),\scrF^-) < \frac{r_0^-}{2}.
\end{equation}

The proof now bifurcates according to two possible cases: 
\begin{itemize}
\item $t^-_0 \leq \tilde{t}^-+1$. In that case, set
\begin{equation}
\tilde{U}^-(t):= \begin{cases}
\bv^- &\mbox{ if } t \leq t_0^--1,\\
(t_0^--t)\bv^-+(t-t_0^-+1)U(t_0^-) &\mbox{ if } t^-_0-1 \leq t \leq t_0^-,\\
U(t) &\mbox{ if } t \geq t^-_0,
\end{cases}
\end{equation}
which belongs to $X_T$. Due to the definition of $\tilde{U}^-$ and \eqref{t0_and_t-}, we have that $\tilde{U}^-$ satisfies \eqref{comparison_result_-}. It remains to check \eqref{comparison_energy_-}. We have
\begin{equation}\label{case1_est_1_Utilde}
\int_{t_0^--1}^{t_0^-}\be_c(\tilde{U}^-(t))dt \leq \int_{t_0^--1}^{t_0^-}\left[ \frac{\lVert U(t_0^-) - \bv^- \rVert_{\scrL}^2}{2}+\CE(\tilde{U}^-(t))\right]e^{ct}dt.
\end{equation}
Fix $t \in [t^-_0-1,t^-_0]$. Choosing $\lambda=t-t_0^-+1\in [0,1]$ and applying \eqref{lambda_energy} in Lemma \ref{LEMMA_r} and \eqref{E_t-}, we have that
\begin{equation}
\CE(\tilde{U}^-(t)) \leq a+\sfC^-\CE_{\max}^-.
\end{equation}
The previous fact combined with \eqref{dist_t-}, \eqref{E_t-} and \ref{case1_est_1_Utilde}, gives
\begin{equation}\label{case1_est_2_Utilde}
\int_{t_0^--1}^{t_0^-} \be_c(\tilde{U}^-(t))dt \leq \left(\frac{(\eta_0^-)^2}{2} + \sfC^-\CE_{\max}^-\right) e^{ct^-_0} +\frac{a(e^{ct^-_0}-e^{c(t_0^--1)})}{c}.
\end{equation}
The continuity of $U$ and \eqref{bad_-} implies that there exists $\tilde{t}^-_2 \in (\tilde{t}^-,t^-_0)$ such that
\begin{equation}\label{t_tilde_2}
\dist(U(\tilde{t}_2^-),\scrF^-)=\frac{r_0^-}{4} \mbox{ and }\forall t \in [\tilde{t}^-,\tilde{t}^-_2], \hspace{2mm} \dist_{\scrL}(U(t),\scrF^-) \geq \frac{r_0^-}{4}.
\end{equation}
Using \eqref{t_tilde_2}, we get
\begin{equation}\label{case1_est_01_U}
\int_{\tilde{t}^-}^{\tilde{t}_2^-} \lVert U'(t) \rVert_{\scrL}e^{ct}dt \geq \frac{r_0^-e^{c\tilde{t}^-}}{4}
\end{equation}
and \eqref{t_tilde_2} also implies
\begin{equation}\label{case1_est_02_U}
\forall t \in [\tilde{t}^-,\tilde{t}_2^-], \hspace{2mm} \CE(U(t)) \geq \kappa^-_{r_0^-/4}.
\end{equation}
Inequalities \eqref{case1_est_01_U} and \eqref{case1_est_02_U} along with the definition of $\eta_0^-$ in \eqref{eta_0-} and Young's inequality give
\begin{align}\label{case1_est_1_U}
\int_{\tilde{t}^-}^{\tilde{t}_2^-} \be_c(U(t))dt & \geq \frac{r_0^-e^{c\tilde{t}^-_0}}{4}\sqrt{2(\kappa^-_{r_0^-/4}-a)}+a\frac{e^{c\tilde{t}_2^-}-e^{c\tilde{t}^-}}{c} \\ &= e (\eta_0^-)^2e^{c\tilde{t}^-}+a\frac{e^{c\tilde{t}_2^-}-e^{c\tilde{t}^-}}{c} ,
\end{align}
which gets to, using also that $\tilde{t}^- \geq t^-_0-1$,
\begin{align}\label{case1_est_2_U}
\int_{-\infty}^{t^{-}_0}\be_c(U(t))dt &= \int_{(-\infty,t^-_0] \setminus [\tilde{t}^-,\tilde{t}_2^-]}\be_c(U(t))dt + \int_{\tilde{t}^-}^{\tilde{t}_2^-} \be_c(U(t))dt \\
&\geq e(\eta_0^-)^2e^{c\tilde{t}^-}+ a\frac{e^{ct^-_0}}{c} \geq (\eta_0^-)^2e^{ct_0^-}+ a\frac{e^{ct^-_0}}{c} .\\
\end{align}
Using now \eqref{case1_est_2_Utilde} we get
\begin{align}\label{case1_est_3_Utilde}
\int_{-\infty}^{t^-_0} \be_c(\tilde{U}^-(t))dt &= \int_{-\infty}^{t^-_0-1}\be_c(\tilde{U}^-(t))dt + \int_{t^-_0-1}^{t^-_0}\be_c(\tilde{U}^-(t))dt \\
& \leq \left(\frac{(\eta_0^-)^2}{2} + \sfC^-\CE_{\max}^-\right) e^{ct^-_0} +\frac{ae^{ct^-_0}}{c}.
\end{align}
Therefore, subtracting \eqref{case1_est_3_Utilde} from \eqref{case1_est_2_U}, we get
\begin{align}
&\int_{-\infty}^{t^{-}_0}\be_c(U(t))dt-\int_{-\infty}^{t^-_0} \be_c(\tilde{U}^-(t))dt  \\&\geq \left( \frac{(\eta_0^-)^2}{2}+\sfC^-\CE_{\max}^-\right) e^{ct^-_0}, \\
\end{align}
which is positive because \eqref{eps_0-} implies
\begin{equation}
\sfC^-\CE_{\max}^- \leq \frac{(\eta_0^-)^2}{4}.
\end{equation}
Since $\tilde{U}^-$ and $U$ coincide in $[t^-_0,+\infty)$, the proof of the first case is concluded.
\item $t_0^- > \tilde{t}+1$. In such a case, set
\begin{equation}
\tilde{U}^-(t):= \begin{cases}
\bv^- &\mbox{ if } t \leq \tilde{t}^-, \\
(t-\tilde{t}^-)U(t_0^-)+(\tilde{t}^-+1-t)\bv^- &\mbox{ if } \tilde{t}^- \leq t \leq \tilde{t}^-+1,\\
U(t^-_0) &\mbox{ if } \tilde{t}^-+1 \leq t \leq t^-_0, \\
U(t) &\mbox{ if } t^-_0 \leq t,
\end{cases}
\end{equation}
which clearly belongs to $X_T$ and for all $t \leq t^-$, $U(t) \in \scrF^-_{r_0^-/2}$ by \eqref{t0_and_t-}. We have that $\tilde{U}^-$ is constant in $[\tilde{t}^-+1,t_0^-]$, therefore
\begin{equation}
\int_{\tilde{t}^-+1}^{t^-_0}\be_c(\tilde{U}^-(t))dt \leq (a+\CE_{\max}^-) \frac{e^{ct_0^-}-e^{c\tilde{t}^-+1}}{c}
\end{equation}
and, due to the definitions of $\CE_{\max}^-$ in \eqref{eps_0-} and $t_0^-$ in \eqref{t-0}
\begin{equation}
\int_{\tilde{t}^-+1}^{t_0^-} \be_c(U(t))dt \geq \min\{ a+\CE_{\max}^-,\kappa_{\eta_0^-} \}\frac{e^{ct_0^-}-e^{c\tilde{t}^-+1}}{c} \geq \int_{\tilde{t}^-+1}^{t^-_0}\be_c(\tilde{U}^-(t))dt,
\end{equation}
because $\CE_{\max}^-+a >0$ by \ref{hyp_levelsets} and $t_0 \geq \tilde{t}^-+1$ by assumption. Hence
\begin{equation}
\int_{\tilde{t}^-+1}^{+\infty}\be_c(\tilde{U}(t))dt \leq \int_{\tilde{t}^-+1}^{\infty} \be_c(U(t))dt.
\end{equation}
Arguing as in the first case scenario, we can prove that
\begin{equation}
\int_{-\infty}^{\tilde{t}^-+1} \be_c(\tilde{U}(t))dt  < \int_{-\infty}^{\tilde{t}^-+1} \be_c(U(t))dt,
\end{equation}
which concludes the proof of the second case.
\end{itemize}
To sum up, we have shown that if \eqref{bad_-} is satisfied, then there exists $\tilde{U}^-$ such that \eqref{comparison_result_-} and \eqref{comparison_energy_-} hold, as we wanted.

Assume now that there exists $\tilde{t}^+ > t^+$ such that \eqref{bad_+} holds. As before, Lemma \ref{LEMMA_comparison_preliminary} and the definition of $\CE_{\max}^+$, \eqref{eps_0+}, imply that $t^+:= t^+(U,\CE_{\max}^+)$ is such that
\begin{equation}\label{dist_t+}
\mathrm{dist}_\scrL(U(t^+),\scrF^+) \leq \eta_0^+
\end{equation}
and
\begin{equation}\label{E_t+}
\CE(U(t^+)) \leq \CE_{\max}^+.
\end{equation}
We claim that we can assume without loss of generality that
\begin{equation}\label{comparison_positivity}
\forall t \in [t^+,+\infty), \hspace{2mm} \CE(U(t)) \geq 0.
\end{equation}
Indeed, if we can find $t_0 \in (t^+,+\infty)$ such that $\CE(U(t))<0$, then by \ref{hyp_levelsets} we have have that $\CE(U(t_0)) \leq a	+\CE_{\max}^-$ and by \eqref{negative_levelset} in \ref{hyp_levelsets} we also have that $\dist_{\scrL}(U(t_0),\scrF^-) \leq r_0^-/2$. Therefore, we have by the definitions \eqref{comparison_t-} and \eqref{comparison_t+} that $t^- \geq t_0$ and $t^+ > t^-$, a contradiction since we assume $t_0 > t^+$. 

For the positive case, the proof is simpler as it suffices to define $\bv^+:= P^+(U(t^+))$ and
\begin{equation}\label{Utilde+_def}
\tilde{U}^+(t):=\begin{cases}
\bv^+ &\mbox{ if } t \geq t^++1 \\
(t-t^+)\bv^++(t^++1-t)U(t^+) &\mbox{ if } t^++1 \geq t \geq t^+,\\
U(t) &\mbox{ if } t^+ \geq t,
\end{cases}
\end{equation}
which is such that $U \in X_T$. Moreover, it holds that for all $t \geq t^+$, we have $\tilde{U}^+(t) \in \scrF^+_{r_0^+/2}$. Therefore, the requirements \eqref{comparison_result_+} and \eqref{comparison_energy_+} hold for $\tilde{U}^+$. It remains to check that \eqref{comparison_energy_+} is also fulfilled. We have that
\begin{equation}\label{case2_est_Utilde_1}
\int_{t^+}^{t^++1}\be_c(\tilde{U}^+(t))dt = \int_{t^+}^{t^++1} \left[\frac{\left\lVert U(t^+)-\bv^+ \right\rVert_{\scrL}^2}{2}+\CE(\tilde{U}^+(t)) \right] e^{ct}dt.
\end{equation}
Using \eqref{lambda_energy} in Lemma \ref{LEMMA_r} and \eqref{E_t+}, we get
\begin{equation}\label{case2_est_Utilde_11}
\int_{t^+}^{t^++1} \CE(\tilde{U}^+(t))e^{ct}dt  \leq \sfC^+\CE_{\max}^+ e^{c(t^++1)}.
\end{equation}
Using now \eqref{dist_t+}, we get
\begin{equation}\label{case2_est_Utilde_12}
\int_{t^+}^{t^++1} \frac{\left\lVert U(t^+)-\bv^+ \right\rVert_{\scrL}^2}{2}  e^{ct}dt \leq \frac{(\eta_0^+)^2}{2}e^{c(t^++1)}.
\end{equation}
Plugging \eqref{case2_est_Utilde_11} and \eqref{case2_est_Utilde_12} into \eqref{case2_est_Utilde_1}, we get
\begin{equation}\label{case2_est_Utilde}
\int_{t^+}^{t^++1}\be_c(\tilde{U}^+(t))dt \leq \left(\frac{(\eta_0^+)^2}{2}+\sfC^+\CE_{\max}^+\right) e^{ct^++1}.
\end{equation}
Since for all $t \geq t^++1$ we have $\tilde{U}^+(t)=\bv^+$, we obtain from \eqref{case2_est_Utilde}
\begin{equation}\label{case2_est_Utilde_def}
\int_{t^+}^{+\infty}\be_c(\tilde{U}^+(t))dt \leq \left(\frac{(\eta_0^+)^2}{2}+\sfC^+\CE_{\max}^+\right) e^{ct^++1}.
\end{equation}
Next, notice that by continuity we can find $\tilde{t}_2^+ \in (t^+,\tilde{t}^+)$ such that
\begin{equation}\label{t_tilde_2+}
\dist(U(\tilde{t}_2^+),\scrF^+)=\frac{r_0^+}{4} \mbox{ and }\forall t \in [\tilde{t}^+,\tilde{t}_2^+],  \hspace{2mm} \frac{r_0^+}{2} \geq \dist_{\scrL}(U(t),\scrF^+) \geq \frac{r_0^+}{4}.
\end{equation}
Therefore, using \eqref{bad_+} and \eqref{t_tilde_2+}, we obtain
\begin{equation}\label{case2_est_01_U}
\int_{\tilde{t_2}^+}^{\tilde{t}^+} \lVert U'(t) \rVert_{\scrL} e^{ct} dt \geq \frac{r_0^+}{4}e^{ct^++1}e^{-1}
\end{equation}
and \eqref{t_tilde_2+}, \eqref{coercivityL} in \ref{hyp_unbalanced} imply
\begin{equation}\label{case2_est_02_U}
\forall t \in [\tilde{t}^+,\tilde{t}_2^+],  \hspace{2mm} \CE(U(t)) \geq \kappa_{r_0^+/4}^+.
\end{equation}
Inequalities \eqref{case2_est_01_U}, \eqref{case2_est_02_U} yield by Young's inequality
\begin{equation}
\int_{\tilde{t}_2^+}^{\tilde{t}^+}\be_c(U(t)) dt \geq \frac{r_0^+}{4}e^{ct^++1}e^{-1}\sqrt{2 \kappa_{r_0^+/4}^+}=(\eta_0^+)^2 e^{t^++1},
\end{equation}
where the last equality is due to the definition of $\eta_0^+$ in \eqref{eta_0+}. Combining with \eqref{comparison_positivity}, we get
\begin{equation}
\int_{t^+}^{+\infty} \be_c(U(t)) dt \geq  (\eta_0^+)^2 e^{t^++1}.
\end{equation}
The definition of $\CE_{\max}^+$ in \eqref{eps_0+} and \eqref{case2_est_Utilde_def} imply then that
\begin{equation}
\int_{t^+}^{+\infty} \be_c(U(t)) dt > \int_{t^+}^{+\infty} \be_c(\tilde{U}^-(t)) dt,
\end{equation}
which establishes \eqref{comparison_energy_+}.

We now show the last part of the proof: we show that \eqref{sfT_prop} holds with the constant $T_\star(c)$ defined in \eqref{sfT_definition}. The argument is the same as in \cite{alikakos-fusco-smyrnelis}, \textit{Lemma 2.10}. Assume by contradiction that there exists $t \in (t^-,+\infty)$ such that $\CE(U(t))<0$. Then, arguing as above we must have $t<t^-$ by the definition of $t^-$ in \eqref{comparison_t-}, a contradiction. Therefore, we can write
\begin{align}\label{eq1_t-t+}
\bE_c(U)&= \frac{1}{2}\int_\R \lVert U'(t) \rVert_{\scrL}^2e^{ct}dt-\int_{-\infty}^{t^-}\CE^-(U(t))e^{ct}dt\\ &+ \int_{\R} \CE^+(U(t))e^{ct}dt,
\end{align}
where $\CE^-$ and $\CE^+$ stand for the positive and the negative part of $\CE$, respectively. We have that
\begin{equation}\label{ineq1_t-t+}
\int_{-\infty}^{t^-} \CE^-(U(t))e^{ct}dt \leq \frac{-a}{c}e^{ct^-}.
\end{equation}
Set $\alpha_\star:= \min\{ \CE_{\max}^+,\CE_{\max}^-+a\}>0$, which is independent on $U$, $c$ and $T$. Notice that for all $t \in (t^-,t^+)$ we have that $\CE(U(t)) \geq \alpha_\star$. Indeed, if $\CE(U(t)) < \alpha_\star$, then by the definition of $t^-$ and $t^+$ in \eqref{comparison_t-} and \eqref{comparison_t+} we get
\begin{equation}
\dist_{\scrL}(U(t),\scrF^\pm) \geq \frac{r_0^\pm}{2},
\end{equation}
which implies that
\begin{equation}
\CE(U(t)) \geq \min\{ \kappa^+_{r_0^+/2},\kappa^-_{r_0^-/2}+a\} \geq \alpha_\star,
\end{equation}
by \eqref{eps_0-} and \eqref{eps_0+}, a contradiction. Therefore, 
\begin{equation}\label{ineq2_t-t+}
\int_{\R}\CE^+(U(t))e^{ct}dt \geq \int_{t^-}^{t^+}\CE^+(U(t))e^{ct}dt \geq \frac{\alpha_\star}{c}\left(e^{ct^+}-e^{ct^-} \right).
\end{equation}
Plugging \eqref{ineq1_t-t+} and \eqref{ineq2_t-t+} into \eqref{eq1_t-t+} and using that $\bE_c(U) \leq 0$, we obtain
\begin{equation}
0 \geq \frac{a}{c}e^{ct^-}+\frac{\alpha_\star}{c}\left(e^{ct^+}-e^{ct^-} \right) \geq \left( \frac{a}{c}+\frac{\alpha_\star}{c}\left(e^{c(t^+-t^-)}-1\right)\right)e^{ct^-},
\end{equation}
that is,
\begin{equation}
0 \geq  -\left(\frac{-a}{\alpha_\star}+1\right)+e^{c(t^+-t^-)},
\end{equation}
which implies
\begin{equation}
0 < t^+-t^- \leq \frac{1}{c}\ln\left( \frac{-a}{\alpha_\star}+1 \right)=\sfT_\star(c),
\end{equation}
which is exactly \eqref{sfT_prop} according to the definition \eqref{sfT_definition}.
\end{proof}

\begin{figure}
\centering
\includegraphics[scale=0.3]{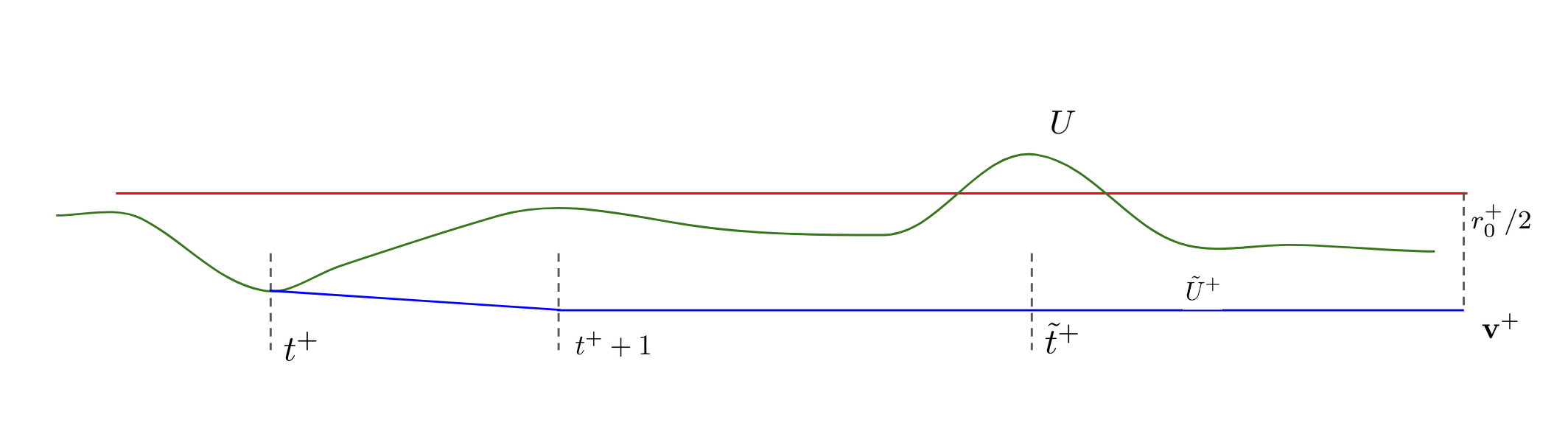}
\includegraphics[scale=0.3]{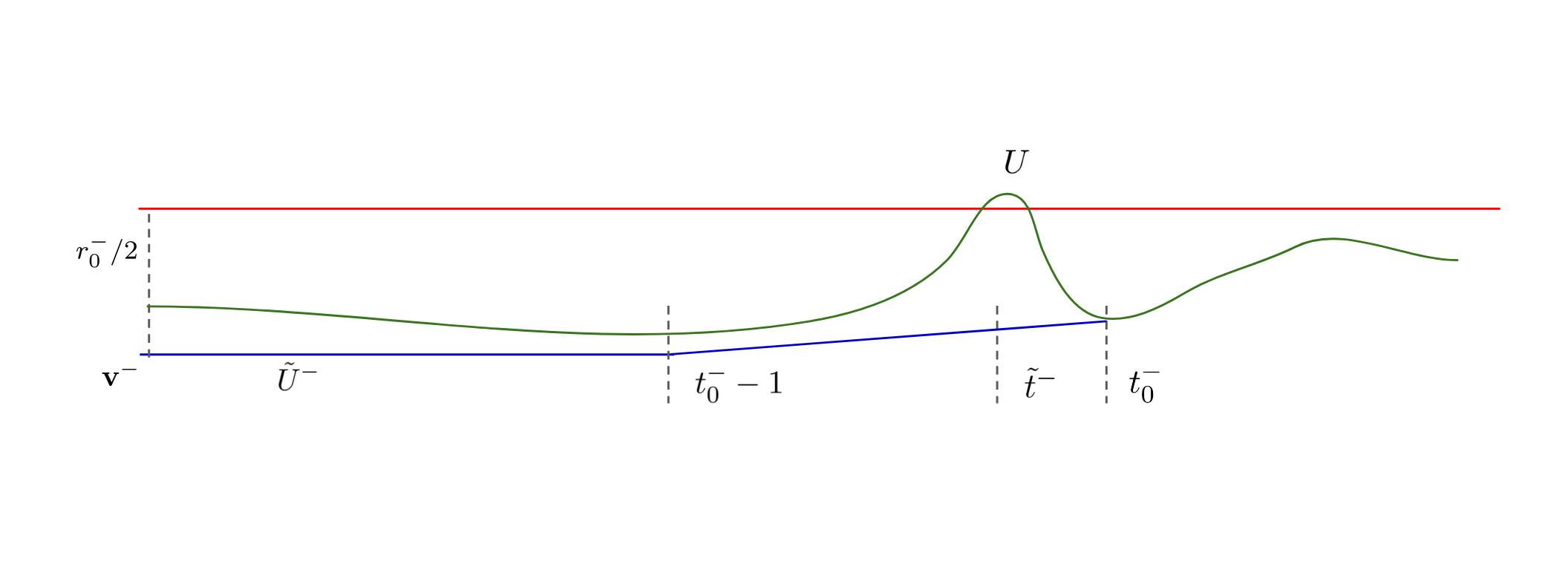}
\includegraphics[scale=0.3]{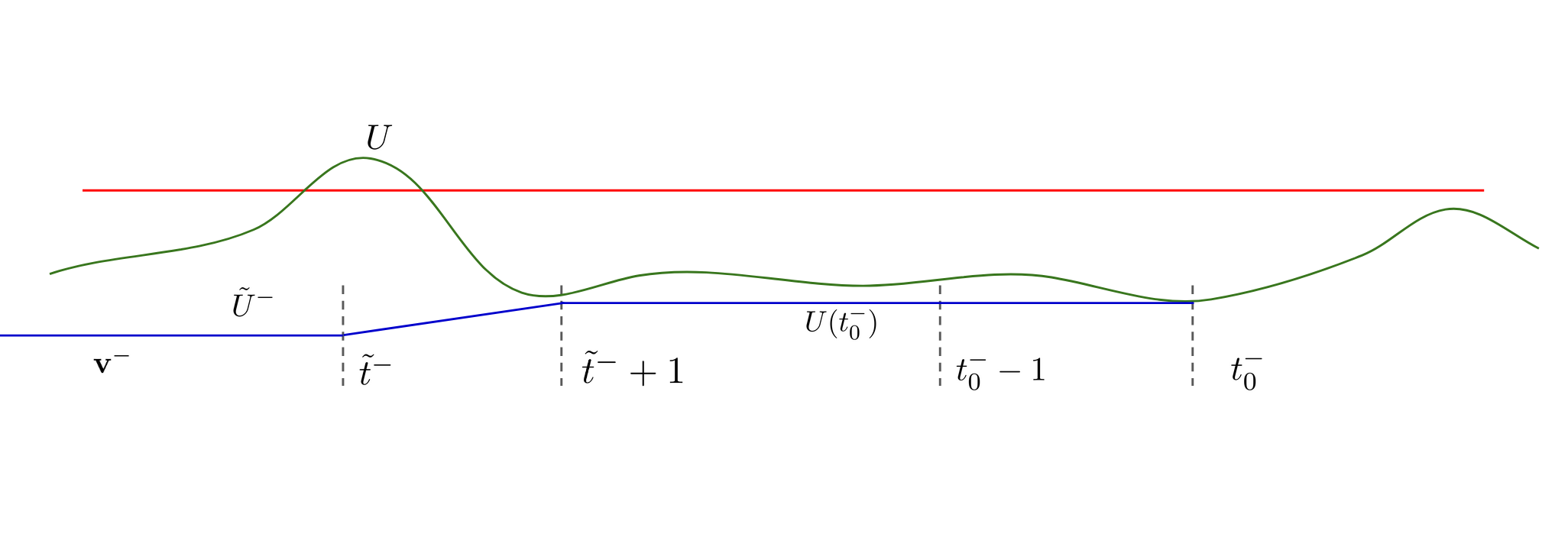}
\caption{As it has been shown, the proof of Proposition \ref{PROPOSITION_comparison} consists on showing that if the function $U$ gets too far from $\scrF^\pm$ after getting too close, then we can find a suitable competitor with strictly less energy. Above, we see a design for the positive case (the competitor $\tilde{U}^+$ is represented in blue). The second and third picture correspond to the two possible scenarios for the negative case (the competitor $\tilde{U}^-$ is represented in blue).}\label{Figure_proof}
\end{figure}

The importance of Proposition \ref{PROPOSITION_comparison} is summarized by the following result, which gives important information on the behavior of the constrained minimizers:
\begin{corollary}\label{COROLLARY_constrained_behavior}
Assume that \ref{hyp_compactness}, \ref{hyp_projections_inverse}, \ref{hyp_projections_H} and \ref{hyp_levelsets} hold. Let $c>0$ and $T \geq 1$. Let $\bU_{c,T}$ be an associated minimizer of $\bE_c$ in $X_T$ given by Lemma \ref{LEMMA_mcT}. Then, if $t^\pm:= t^\pm(\bU_{c,T},\CE_{\max}^\pm)$ are as in \eqref{comparison_t-}, \eqref{comparison_t+} it holds that
\begin{equation}\label{behavior-}
\forall t \leq t^-, \hspace{2mm} \dist_{\scrL}(\bU_{c,T}(t),\scrF^-) < \frac{r_0^-}{2}
\end{equation}
and
\begin{equation}\label{behavior+}
\forall t \geq t^+, \hspace{2mm} \dist_{\scrL}(\bU_{c,T}(t),\scrF^+) < \frac{r_0^+}{2},
\end{equation}
Moreover, we have
\begin{equation}\label{behavior_positivity}
\forall t \geq t^-, \hspace{2mm} \CE(\bU_{c,T}(t)) \geq 0.
\end{equation}
Finally, we have that if $\bE_c(\bU_{c,T})\leq 0$, then
\begin{equation}\label{sfT}
0<t^+-t^- \leq \sfT_\star(c),
\end{equation}
where $\sfT_\star(c)$ is as in \eqref{sfT_definition}. In particular, the function
\begin{equation}
c \in (0,+\infty) \to \sfT_\star(c)
\end{equation}
is continuous.
\end{corollary}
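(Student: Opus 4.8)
The plan is to deduce the corollary from Proposition~\ref{PROPOSITION_comparison} together with the minimality of $\bU_{c,T}$. Throughout I argue under the condition $\bE_c(\bU_{c,T})=\bm_{c,T}\le 0$: this is exactly what the phrase ``if $t^\pm$ are as in \eqref{comparison_t-}, \eqref{comparison_t+}'' presupposes, since by Lemma~\ref{LEMMA_comparison_preliminary} it is $\bE_c(\bU_{c,T})\le 0$ that makes the entry times $t^\pm:=t^\pm(\bU_{c,T},\CE_{\max}^\pm)$ well-defined real numbers, and it is in any case the explicit hypothesis under which \eqref{sfT} is stated. Under it, exactly as in the opening lines of the proof of Proposition~\ref{PROPOSITION_comparison} --- i.e.\ applying \eqref{H_local_bound} of \ref{hyp_projections_H} with $r^\pm=\eta_0^\pm$, using $\CE_{\max}^\pm\le\beta^\pm(\eta_0^\pm)$ and $\lVert\cdot\rVert_{\scrL}\le\lVert\cdot\rVert_{\scrH}$ --- one obtains the quantitative bound $\dist_{\scrL}(\bU_{c,T}(t^\pm),\scrF^\pm)\le\eta_0^\pm<r_0^\pm/2$, which is used repeatedly below; note also $t^-<T$ since $\scrF^+_{r_0^+/2}\cap\scrF^-_{r_0^-/2}=\emptyset$.

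To prove \eqref{behavior-} (the proof of \eqref{behavior+} being symmetric), suppose it fails: there is $t_1\le t^-$ with $\dist_{\scrL}(\bU_{c,T}(t_1),\scrF^-)\ge r_0^-/2$, and necessarily $t_1<t^-$ because the distance at $t^-$ is strictly below $r_0^-/2$. Since $t\mapsto\dist_{\scrL}(\bU_{c,T}(t),\scrF^-)$ is continuous ($\bU_{c,T}\in H^1_{\loc}(\R,\scrL)$) and takes a value $<r_0^-$ at $t^-$, an intermediate-value argument on $[t_1,t^-]$ yields $\tilde t^-\in[t_1,t^-)$ with $r_0^-\ge\dist_{\scrL}(\bU_{c,T}(\tilde t^-),\scrF^-)\ge r_0^-/2$, i.e.\ hypothesis \eqref{bad_-} of Proposition~\ref{PROPOSITION_comparison} holds. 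The proposition then supplies $\tilde U^-\in X_T$ with $\bE_c(\tilde U^-)<\bE_c(\bU_{c,T})=\bm_{c,T}$, contradicting the definition of $\bm_{c,T}$; hence \eqref{behavior-} holds. The same argument on the right, with $\dist_{\scrL}(\bU_{c,T}(t^+),\scrF^+)\le\eta_0^+<r_0^+/2$ and \eqref{bad_+}, gives \eqref{behavior+}.

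For the positivity \eqref{behavior_positivity}, suppose $\CE(\bU_{c,T}(t_0))<0$ for some $t_0\ge t^-$. If $t_0>t^-$, then $\bU_{c,T}(t_0)\in\scrH$ (as $\CE(\bU_{c,T}(t_0))<+\infty$), so \eqref{negative_levelset} of \ref{hyp_levelsets} gives $\dist_{\scrL}(\bU_{c,T}(t_0),\scrF^-)\le r_0^-/2$, while $\CE(\bU_{c,T}(t_0))<0<a+\CE_{\max}^-$ by \eqref{intr_asu_a}; thus $t_0$ lies in the set whose supremum defines $t^-$, giving $t_0\le t^-$, a contradiction. If $t_0=t^-$, then by \eqref{behavior-} the $X_T$-constraint is inactive in a neighbourhood of $t^-$, so $\bU_{c,T}$ solves \eqref{abstract_equation} there and $t\mapsto\CE(\bU_{c,T}(t))$ is continuous near $t^-$ (Lemma~\ref{LEMMA_mcT_solution}); hence $\CE(\bU_{c,T}(t))<0$ would persist for $t$ slightly larger than $t^-$, contradicting the case already treated. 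Finally, \eqref{sfT} is precisely \eqref{sfT_prop} of Proposition~\ref{PROPOSITION_comparison} applied to $U=\bU_{c,T}$ (which has $\bE_c(\bU_{c,T})\le 0$), and the continuity of $c\mapsto\sfT_\star(c)=\frac{1}{c}\ln\left(\frac{-a}{\alpha_\star}+1\right)$ on $(0,+\infty)$ is immediate from \eqref{sfT_definition}, since $\alpha_\star>0$ and $-a>0$ do not depend on $c$.

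I do not expect a genuine obstacle here: the analytic heart --- the construction of the competitors $\tilde U^\pm$ with strictly smaller energy, and the bound $t^+-t^-\le\sfT_\star(c)$ --- is already packaged in Proposition~\ref{PROPOSITION_comparison}. The only points requiring care are bookkeeping ones: that the distance to $\scrF^\pm$ at the entry times $t^\pm$ is \emph{strictly} less than $r_0^\pm/2$ (needed both for the strict inequality $t_1<t^-$ and for the smoothness of $\bU_{c,T}$ at the endpoint $t^-$ in \eqref{behavior_positivity}), which rests on the built-in estimate $\eta_0^\pm\le r_0^\pm/4$ from \eqref{intr_eta_0-} and \eqref{eta_0+}; and the replacement, via the intermediate value theorem, of a possibly too-distant bad point by one in the admissible band $[r_0^\pm/2,r_0^\pm]$ demanded by \eqref{bad_-}--\eqref{bad_+}.
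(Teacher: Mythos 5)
Your proof is correct and follows the same contradiction-via-Proposition~\ref{PROPOSITION_comparison} route as the paper; the paper's own argument is just terser. You are also slightly more precise at two points the paper glides over: the intermediate-value step that produces a $\tilde t^\pm$ lying in the band $[r_0^\pm/2,r_0^\pm]$ required by \eqref{bad_-}--\eqref{bad_+}, and the endpoint $t_0=t^-$ in \eqref{behavior_positivity}, which the dichotomy ``$\CE\ge a+\CE_{\max}^-$ or $\dist\ge r_0^-/2$'' does not directly cover and which you handle via the continuity of $\CE\circ\bU_{c,T}$ furnished by Lemma~\ref{LEMMA_mcT_solution}.
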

\begin{proof}
If we assume by contradiction that \eqref{behavior+} does not hold, then we necessarily have that there exists $\tilde{t}^- < t^-$ such that \eqref{bad_-} holds. Proposition \ref{PROPOSITION_comparison} implies then the existence of $\tilde{U} \in X_T$ such that $\bE_c(\tilde{U})< \bE_c(\bU_{c,T})=\bm_{c,T}$, a contradiction. Therefore, \eqref{behavior+} holds. Similarly, we can show that \eqref{behavior-} also holds. Finally, in order to establish \eqref{behavior_positivity}, we argue as in the proof Proposition \ref{PROPOSITION_comparison}. Indeed, due to the definition of $t^-$, we have that for $t \geq t^-$ it holds that either
\begin{equation}
\CE(\bU_{c,T}(t)) \geq a+\CE_{\max}^->0
\end{equation}
(which is \ref{hyp_levelsets}) or
\begin{equation}
\dist_{\scrL}(\bU_{c,T}(t),\scrF^-) \geq \frac{r_0^-}{2}
\end{equation}
which by \eqref{negative_levelset} in \ref{hyp_levelsets} implies that $\CE(\bU_{c,T}(t)) \geq 0$. Therefore, \eqref{behavior_positivity} holds and the proof is concluded.
\end{proof}

Moreover, Lemma \ref{LEMMA_mcT_solution} applies to $\bV_{c,T}$ as follows:
\begin{corollary}\label{COROLLARY_mcT_solution}
Assume that \ref{hyp_compactness}, \ref{hyp_projections_inverse}, \ref{hyp_projections_H} and \ref{hyp_levelsets} hold. Let $c>0$ and $T \geq 1$. Let $\bU_{c,T}$ be an associated minimizer of $\bE_c$ in $X_T$ given by Lemma \ref{LEMMA_mcT}. Then, if $t^\pm:= t^\pm(\bU_{c,T},\CE_{\max}^\pm)$ are as in \eqref{comparison_t-}, \eqref{comparison_t+} it holds that there exists $\delta_{c,T}>0$ such that the set
\begin{equation}\label{ScT}
S_{c,T}:=(-\infty,t^-+\delta_{c,T}) \cup (-T,T) \cup (t^+-\delta_{c,T},+\infty)
\end{equation}
is such that $\bU_{c,T} \in \CA(S_{c,T})$ (see \eqref{CA_def}) and
\begin{equation}\label{corollary_mcT_solution_equation}
\bU_{c,T}'' - D_{\scrL}\CE(\bU_{c,T})=-c\bU_{c,T}' \mbox{ in } S_{c,T}.
\end{equation}
\end{corollary}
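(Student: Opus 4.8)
The plan is to obtain Corollary~\ref{COROLLARY_mcT_solution} by patching together the interior regularity statement of Lemma~\ref{LEMMA_mcT_solution} with the \emph{strict} non-saturation of the constraints established in Corollary~\ref{COROLLARY_constrained_behavior}, using that membership in $\CA(\cdot)$ and solving \eqref{abstract_equation} are \emph{local} properties. The two ingredients are: Lemma~\ref{LEMMA_mcT_solution}, which gives $\bU_{c,T}\in\CA((-T,T))$ solving \eqref{abstract_equation} on $(-T,T)$ and, moreover, solving \eqref{abstract_equation} on a small interval around any $t\ge T$ with $\dist_{\scrL}(\bU_{c,T}(t),\scrF^+)<r_0^+/2$ (resp.\ around any $t\le -T$ with $\dist_{\scrL}(\bU_{c,T}(t),\scrF^-)<r_0^-/2$); and Corollary~\ref{COROLLARY_constrained_behavior}, which gives $\dist_{\scrL}(\bU_{c,T}(t),\scrF^-)<r_0^-/2$ for every $t\le t^-$ and $\dist_{\scrL}(\bU_{c,T}(t),\scrF^+)<r_0^+/2$ for every $t\ge t^+$. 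The first step is to record the positioning $t^-<T$ and $t^+>-T$: for $t\ge T$ one has $\bU_{c,T}(t)\in\scrF^+_{r_0^+/2}$, and since $\dist_{\scrL}(\scrF^+_{r_0^+/2},\scrF^-_{r_0^-/2})=d_0>0$ (see \eqref{d0}) this forces $\dist_{\scrL}(\bU_{c,T}(t),\scrF^-)>r_0^-/2$, so that comparing with \eqref{behavior-} at $t=t^-$ yields $t^-<T$, and symmetrically $t^+>-T$. The same $d_0$-argument shows that no $t\ge T$ satisfies $\dist_{\scrL}(\bU_{c,T}(t),\scrF^-)<r_0^-/2$ and no $t\le -T$ satisfies $\dist_{\scrL}(\bU_{c,T}(t),\scrF^+)<r_0^+/2$.

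Next, I would introduce the open sets
\begin{equation}
O^-:=\{\,t\in\R:\dist_{\scrL}(\bU_{c,T}(t),\scrF^-)<r_0^-/2\,\}\cup(-T,T),
\end{equation}
\begin{equation}
O^+:=\{\,t\in\R:\dist_{\scrL}(\bU_{c,T}(t),\scrF^+)<r_0^+/2\,\}\cup(-T,T),
\end{equation}
which are open because $t\mapsto\bU_{c,T}(t)$ is $\scrL$-continuous and $v\mapsto\dist_{\scrL}(v,\scrF^\pm)$ is $1$-Lipschitz. Combining Lemma~\ref{LEMMA_mcT_solution} with the previous step, every point of $O^-$ (resp.\ $O^+$) has a neighborhood on which $\bU_{c,T}$ lies in $\CA$ and solves \eqref{abstract_equation}: for points in $(-T,T)$ this is the first part of the lemma; for points $t\le -T$ of the first set defining $O^-$ it is the $\delta^-(t)$ part of the lemma — and there are no points $t\ge T$ in that set by the $d_0$-argument — and symmetrically for $O^+$. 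Since $\CA(\cdot)$ and solving \eqref{abstract_equation} are local conditions, this yields $\bU_{c,T}\in\CA(O^-)\cap\CA(O^+)$ and $\bU_{c,T}$ solves \eqref{abstract_equation} on $O^-\cup O^+$.

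Finally, by \eqref{behavior-} and \eqref{behavior+} we have $(-\infty,t^-]\subset O^-$ with $t^-\in O^-$, and $[t^+,+\infty)\subset O^+$ with $t^+\in O^+$. Since $O^-$ is open and contains both $(-\infty,t^-]$ and a full neighborhood of $t^-$, there is $\delta^-_{c,T}>0$ with $(-\infty,t^-+\delta^-_{c,T})\subset O^-$; likewise there is $\delta^+_{c,T}>0$ with $(t^+-\delta^+_{c,T},+\infty)\subset O^+$. Setting $\delta_{c,T}:=\min\{\delta^-_{c,T},\delta^+_{c,T}\}>0$ one gets $S_{c,T}=(-\infty,t^-+\delta_{c,T})\cup(-T,T)\cup(t^+-\delta_{c,T},+\infty)\subset O^-\cup O^+$, whence $\bU_{c,T}\in\CA(S_{c,T})$ and \eqref{corollary_mcT_solution_equation} holds, as claimed.

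This is essentially a gluing/bookkeeping argument, and I do not expect a serious obstacle; the two points that genuinely matter are that the inequalities in Corollary~\ref{COROLLARY_constrained_behavior} are \emph{strict} — this is exactly what allows $\delta_{c,T}$ to be taken strictly positive rather than merely nonnegative — and that in the outer regions $\{t\le t^-\}$, $\{t\ge t^+\}$ one must invoke the correct alternative of Lemma~\ref{LEMMA_mcT_solution}, which is why the positioning $t^-<T$, $t^+>-T$ is needed so that these regions are covered by the interior interval $(-T,T)$ together with the one-sided neighborhoods the lemma supplies beyond $\pm T$.
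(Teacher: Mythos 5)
Your argument is correct and is precisely the "straightforward combination" the paper alludes to without writing out: you patch the interior regularity on $(-T,T)$ from Lemma~\ref{LEMMA_mcT_solution} together with the one-sided neighborhoods it supplies at non-saturated constraint points, and you use the strict inequalities in Corollary~\ref{COROLLARY_constrained_behavior} (together with $d_0>0$ to place $t^-<T$, $t^+>-T$ and to rule out the wrong alternative applying) to extract a uniform $\delta_{c,T}>0$. The only point worth tightening in the write-up is that "$\bU_{c,T}\in\CA(O^-)\cap\CA(O^+)$" should be read as membership of the respective restrictions; the conclusion $\bU_{c,T}\in\CA(S_{c,T})$ then follows because $S_{c,T}\subset O^-\cup O^+$ is open and all conditions defining $\CA$ and the equation are local.
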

The proof of \ref{COROLLARY_mcT_solution} is obtained in a straightforward manner by combining Lemma \ref{LEMMA_mcT_solution} with the informations given by Corollary \ref{COROLLARY_constrained_behavior}. Notice that \ref{COROLLARY_mcT_solution} implies that constrained solutions are picewise solutions and, in particular, they solve the equation for times with large absolute value.

\subsection{Existence of the unconstrained solutions}

We now establish the existence of the unconstrained solutions making use of the previous comparison results. As in \cite{alikakos-fusco-smyrnelis} and  \cite{alikakos-katzourakis}, we define the set
\begin{equation}\label{set_CC}
\CC:= \{ c>0: \exists T \geq 1 \mbox{ and } U \in X_T \mbox{ such that } \bE_c(U)<0\}.
\end{equation}
We first prove some important properties for $\CC$ which are the same  to \textit{Lemma 2.12} in \cite{alikakos-fusco-smyrnelis} and \textit{Lemma 27} in \cite{alikakos-katzourakis}:
\begin{lemma}\label{LEMMA_set_CC}
Assume that \ref{hyp_compactness}, \ref{hyp_projections_inverse} and \ref{hyp_projections_H} hold. Let $\CC$ be the set defined in \eqref{set_CC}.  Then, $\CC$ is open and non-empty. Moreover, if we assume that \ref{hyp_levelsets} holds, then $\CC$ is also bounded with
\begin{equation}\label{sup_C_bound}
\sup \CC \leq \frac{\sqrt{-2a}}{d_0},
\end{equation}
where $d_0$ was defined in \eqref{d0}.
\end{lemma}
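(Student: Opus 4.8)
The plan is to establish the three assertions in turn: non-emptiness via the explicit competitor $\Psi$ of Lemma~\ref{LEMMA_PSI}, openness via the continuity statement in Lemma~\ref{LEMMA_ATU}, and boundedness via the comparison estimates of Corollary~\ref{COROLLARY_constrained_behavior}.

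For \emph{non-emptiness}, I would fix $\hat{\bv}^\pm\in\scrF^\pm$, take $\Psi$ as in \eqref{Psi} (which lies in every $X_T$ by Lemma~\ref{LEMMA_PSI}), and split its energy exactly as in the proof of that lemma to obtain, for all $c>0$,
\begin{equation}
\bE_c(\Psi)\;\leq\;\frac{a}{c}\,e^{-c}+K\,\frac{e^{c}-e^{-c}}{c},\qquad K:=\frac{\lVert\hat{\bv}^+-\hat{\bv}^-\rVert_\scrL^2}{4}+2\max_{\lambda\in[0,1]}\CE\big(\lambda\hat{\bv}^++(1-\lambda)\hat{\bv}^-\big)<+\infty
\end{equation}
($K$ finite by \ref{hyp_smaller_space}, since $\CE$ is continuous on $\scrH$). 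Since $a<0$, the right-hand side tends to $-\infty$ as $c\to0^+$, so $\bE_c(\Psi)<0$, i.e. $c\in\CC$, for all sufficiently small $c>0$.

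For \emph{openness}, given $c\in\CC$ with $U\in X_T$ and $\bE_c(U)<0$, the idea is to replace $U$ by a competitor $\tilde U$ that is eventually constant, so that $\bE_{c'}(\tilde U)<+\infty$ for \emph{every} $c'>0$; then Lemma~\ref{LEMMA_ATU} makes $c'\mapsto\bE_{c'}(\tilde U)$ continuous on $(0,+\infty)$ and the sign condition $\bE_{c'}(\tilde U)<0$, hence $c'\in\CC$, persists on a neighbourhood of $c$. To construct $\tilde U$ I would use Lemma~\ref{LEMMA_limit+} to pick $t_n\to+\infty$, $t_n\geq T$, with $\CE(U(t_n))e^{ct_n}\to0$ (so $U(t_n)\in\scrF^+_{r_0^+/2}$ and $\CE(U(t_n))\to0$), and then set $\tilde U=U$ on $(-\infty,t_n]$, $\tilde U$ equal to the affine interpolation from $U(t_n)$ to $\bv^+:=P^+(U(t_n))$ on $[t_n,t_n+1]$, and $\tilde U\equiv\bv^+$ afterwards; then $\tilde U\in X_{t_n+1}$ and $A_{t_n+1,\tilde U}=(0,+\infty)$. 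For $n$ large Lemma~\ref{LEMMA_r} together with \eqref{coercivityL} bounds the transition cost by $(\tfrac{C^+}{2}+\sfC^+)\,e^{c}\,\CE(U(t_n))e^{ct_n}=o(1)$, and since $\bE_c(U;[t_n,+\infty))\geq0$ by Lemma~\ref{LEMMA_well_defined} one gets $\bE_c(\tilde U)\leq\bE_c(U)+o(1)<0$.

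For \emph{boundedness}, assuming \ref{hyp_levelsets}, let $c\in\CC$, so $\bm_{c,T}<0$ for some $T\geq1$, attained by a minimizer $\bU:=\bU_{c,T}$ (Lemma~\ref{LEMMA_mcT}). Corollary~\ref{COROLLARY_constrained_behavior} yields, with $t^\pm:=t^\pm(\bU,\CE_{\max}^\pm)$, that $t^-<t^+$, that $\CE(\bU(t))\geq0$ for all $t\geq t^-$, and that $\bU(t^\pm)\in\scrF^\pm_{r_0^\pm/2}$, whence $\lVert\bU(t^+)-\bU(t^-)\rVert_\scrL\geq d_0$. By Cauchy--Schwarz $d_0^2\leq\frac{e^{-ct^-}}{c}\int_{t^-}^{t^+}\lVert\bU'\rVert_\scrL^2 e^{ct}\,dt$, hence $\int_{t^-}^{t^+}\lVert\bU'\rVert_\scrL^2 e^{ct}\,dt\geq c\,d_0^2\,e^{ct^-}$; combining this with $\CE\geq a$ on $(-\infty,t^-]$ and $\CE(\bU)\geq0$ on $[t^-,+\infty)$ gives
\begin{equation}
0>\bm_{c,T}=\bE_c(\bU)\;\geq\;\Big(\frac{c\,d_0^2}{2}+\frac{a}{c}\Big)e^{ct^-},
\end{equation}
so $c^2d_0^2<-2a$, i.e. $c<\sqrt{-2a}/d_0$. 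Since $c\in\CC$ was arbitrary, this is exactly \eqref{sup_C_bound}.

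The hard part is openness to the \emph{right} of $c$: a priori $\bE_{c'}(U)=+\infty$ for $c'>c$, because the weight $e^{c't}$ can destroy integrability of the positive part of the energy density near $+\infty$; it is precisely the reduction to an eventually-constant competitor $\tilde U$ with $A_{t_n+1,\tilde U}=(0,+\infty)$ — which leans on the quantitative control of $\CE$ near $\scrF^+$ provided by \ref{hyp_unbalanced}, \ref{hyp_projections_H} and Lemma~\ref{LEMMA_r} — that circumvents this. Non-emptiness and boundedness are by comparison routine, the latter being essentially a direct consequence of Corollary~\ref{COROLLARY_constrained_behavior}.
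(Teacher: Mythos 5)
Your proof is correct and follows essentially the same route as the paper: non-emptiness via the competitor $\Psi$ of Lemma~\ref{LEMMA_PSI} and letting $c\to 0^+$, openness by truncating to an eventually-constant competitor (using Lemma~\ref{LEMMA_limit+}, Lemma~\ref{LEMMA_r}, \eqref{coercivityL}) so that Lemma~\ref{LEMMA_ATU} applies on all of $(0,+\infty)$, and boundedness via Corollary~\ref{COROLLARY_constrained_behavior} together with $d_0\le\lVert\bU(t^+)-\bU(t^-)\rVert_\scrL$, Cauchy--Schwarz, and the sign information $\CE\ge a$ on $(-\infty,t^-]$, $\CE(\bU)\ge 0$ on $[t^-,+\infty)$. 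The only cosmetic differences are that you work with an arbitrary $U\in X_T$ with $\bE_c(U)<0$ for the openness step where the paper takes the constrained minimizer, and your final boundedness inequality is rearranged into the form $\bE_c(\bU)\ge(\tfrac{cd_0^2}{2}+\tfrac{a}{c})e^{ct^-}$ rather than the paper's $d_0^2\le -2a/c^2$, but these are equivalent.
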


\begin{proof}
Firstly, we show that $\CC \not = \emptyset$. For that purpose, consider the function $\Psi$ introduced in \eqref{Psi}. Consider the function
\begin{equation}
f: c \in (0,+\infty) \to e^{-c}\left(\frac{a}{c}+e^{2c}\int_{-1}^1\left( \frac{\lVert \Psi'(t) \rVert^2_{\scrL}}{2}+\CE(\Psi(t)) \right)dt \right) \in \R,
\end{equation}
which is well defined by Lemma \ref{LEMMA_PSI}. We have that for all $c>0$
\begin{equation}\label{energy_Psi_ineq}
\bE_c(\Psi)= \frac{-a}{c}e^{-c}+\int_{-1}^1\left(\frac{\lVert \Psi'(t) \rVert^2_{\scrL}}{2}+\CE(\Psi(t)) \right)e^{ct}dt \leq f(c)
\end{equation}
and $f$ is a continuous function such that $\lim_{c \to 0}f(c)=-\infty$ because $a<0$. Moreover, we have that for all $c>0$,
\begin{equation}
f'(c) = -e^{-c}a+ce^{2c}\int_{-1}^1\left( \frac{\lVert \Psi'(t) \rVert^2_{\scrL}}{2}+\CE(\Psi(t)) \right)dt  > 0
\end{equation}
and $\lim_{c \to +\infty}f(c)=+\infty$. Therefore, there exists a unique $c_\Psi >0$ such that $f(c_\Psi)=0$ and for all $c < c_\Psi$ we have $\bE_c(\Psi)<0$ by \eqref{energy_Psi_ineq}. Therefore, $(0,c_\Psi) \subset \CC$, meaning that $\CC \not = \emptyset$ as we wanted to show.

We next prove that $\CC$ is open. Let $c \in \CC$, we have that $\bE_c(\bU_{c,T})<0$, where $\bU_{c,T}$ is a minimizer of $\bE_c$ in $X_T$ given by Lemma \ref{LEMMA_mcT}. By \eqref{limit+_energy0_weak} in Lemma \ref{LEMMA_limit+}, there exists a sequence $(t_n)_{n \in \N}$ in $[T,+\infty)$ such that $t_n \to +\infty$ and
\begin{equation}\label{CC_lim}
\lim_{n \to \infty}\CE(\bU_{c,T}(t_n))=0.
\end{equation}
Up to subsequences, we have that for all $n \in \N$, $\bU_{c,T}(t_n) \in \scrF^+_{r_0^+}$. Hence, we can define
\begin{equation}
\bU_{c,T}^n(s):= \begin{cases}
\bU_{c,T}(s) &\mbox{ if } s \leq t,\\
(1+t_n-s)\bU_{c,T}(t_n)+(s-t_n)P^+(\bU_{c,T}(t_n))&\mbox{ if } t_n \leq s \leq t_n+1,\\
P^+(\bU_{c,T}(t_n)) &\mbox{ if } t_n+1 \leq s.
\end{cases}
\end{equation}
We have that for all $n \in \N$,
\begin{align}\label{ineq_Uctn}
\bE_c(\bU^n_{c,T}(s))&=\int_{-\infty}^{t_n} \be_c(\bU_{c,T}(s))ds+\frac{\lVert \bU_{c,T}(t_n)-P^+(\bU_{c,T}(t_n)) \rVert_{\scrL}^2 }{2}\\&+\int_{t_n}^{t_n+1} \CE(\bU^n_{c,T}(s))ds \\
& \leq \bE_c(\bU_{c,T})+\frac{\lVert \bU_{c,T}(t_n)-P^+(\bU_{c,T}(t_n)) \rVert_{\scrL}^2 }{2}\\ &+\int_{t_n}^{t_n+1}\CE((1+t_n-s)\bU_{c,T}(t_n)+(s-t_n)P^+(\bU_{c,T}(t_n)))ds,
\end{align}
where we have used that $t_n \geq T$ in order to obtain the inequality. Let $\beta^+(\hat{r}^+)$ be as in Lemma \ref{LEMMA_r}. Up to a subsequence, we have that for all $n \in \N$ it holds $\CE(\bU_{c,T}(t_n)) \leq \beta^+(\hat{r}^+)$. Therefore, by Lemma \ref{LEMMA_r} we have that for all $\lambda \in [0,1]$ and $n \in \N$ it holds
\begin{equation}
\CE(\lambda \bU_{c,T}(t_n)+(1-\lambda)P^+(\bU_{c,T}(t_n))) \leq \sfC^+ \CE(\bU_{c,T}(t_n)),
\end{equation}
where $\sfC^+>0$ is independent on $n$ (see \eqref{sfC}). Plugging into \eqref{ineq_Uctn}, we obtain that for all $n \in \N$ it holds
\begin{equation}\label{ineq_Uctn_def}
\bE_c(\bU^n_{c,T}(s)) \leq \bE_c(\bU_{c,T})+\frac{\lVert \bU_{c,T}(t_n)-P^+(\bU_{c,T}(t_n)) \rVert_{\scrL}^2 }{2}+\sfC^+ \CE(\bU_{c,T}(t_n)).
\end{equation}
Recall that we assume  Notice also that \eqref{limit+_function0_weak} implies in particular that
\begin{equation}
\lim_{n \to +\infty}\lVert \bU_{c,T}(t_n)-P^+(\bU_{c,T}(t_n)) \rVert_{\scrL}^2  =0
\end{equation}
which, in combination with inequalities \eqref{CC_lim} and \eqref{ineq_Uctn_def} together with the fact that $\bE_c(\bU_{c,T})<0$, gives that there exists $N \in \N$ such that $\bE_c(\bU_{c,T}^N)<0$. Since $\bU_{c,T}^N$ is constant in $[t_N+1,+\infty)$, we have that for all $\tilde{c} >0$, $\bE_{\tilde{c}}(\bU_{c,T}^N)<+\infty$. Therefore, by Lemma \ref{LEMMA_ATU} we have that
\begin{equation}
\tilde{c} \in (0,+\infty) \to \bE_{\tilde{c}}(\bU_{c,T}^N) \in \R
\end{equation}
is well defined and continuous. Therefore, we can find some $\delta>0$ such that for all $\tilde{c} \in (c-\delta,c+\delta)$, $\bE_{\tilde{c}}(\bU_{c,T}^N)<0$. As a consequence, we have that $(c-\delta,c+\delta) \subset \CC$, which shows that $\CC$ is open.

We now assume that \ref{hyp_levelsets} holds and we use it to establish the bound \eqref{sup_C_bound}. In particular, we can apply Proposition \ref{PROPOSITION_comparison}. Let $c>0$ and $T \geq 1$ be such that $\bE_c(\bU_{c,T})<0$ with $\bU_{c,T} \in X_T$ a minimizing solution given by Lemma \ref{LEMMA_mcT}. Let $t^\pm:=t^\pm(\bU_{c,T},\CE_{\max}^\pm)$ be as in \eqref{comparison_t-}, \eqref{comparison_t+}. Inequality \eqref{sfT} in Proposition \ref{PROPOSITION_comparison} implies that $t^- < t^+$. Recall the definition of $d_0$ in \eqref{d0} and the fact that $\bU_{c,T}(t^\pm) \in \scrF^\pm_{r_0^\pm/2}$. Those facts imply
\begin{equation}\label{d0_ineq}
d_0 \leq \lVert \bU_{c,T}(t^+)-\bU_{c,T}(t^-) \rVert_{\scrL}.
\end{equation}
Since \ref{hyp_levelsets} holds, we can use \eqref{behavior_positivity} in Corollary \ref{COROLLARY_constrained_behavior} to obtain
\begin{align}
\lVert \bU_{c,T}(t^+)-\bU_{c,T}(t^-) \rVert_{\scrL}^2 &\leq 2\int_{\R} \frac{\lVert \bU_{c,T}'(t) \rVert^2}{2}e^{ct}dt \left( \frac{e^{-ct^-}-e^{-ct^+}}{c} \right) 
\\ & \leq 2\left( \bE_{c}(\bU_{c,T}) - \frac{a}{c	}e^{ct^-} \right)\left( \frac{e^{-ct^-}-e^{-ct^+}}{c} \right) .
\end{align}
Using now that $\bE_c(\bU_{c,T}) \leq 0$, the fact that $t^- < t^+$ and \eqref{d0_ineq}, the inequality above becomes
\begin{equation}
d_0^2 \leq -2a \frac{1-e^{c(t^--t^+)}}{c^2} \leq \frac{-2a}{c^2},
\end{equation}
so that \eqref{sup_C_bound} follows.
%by \eqref{sup_C_bound}) and \eqref{set_CC_asu_a}) that
%\begin{equation}
%c \leq \frac{\sqrt{2(d_0\gamma^+)^2/8}}{d_0}= \frac{\gamma^+}{2}<\gamma^+
%\end{equation}
%and there exists $T \geq 1$ such that $\bE(\bU_{c,T})<0$. Since $c<\gamma^+$ and $-a<\eps_0^-$, we have by Lemma \ref{LEMMA_inclusion} that $(0,c+\delta] \subset A_{T,\bU_{c,T}}$, where $ A_{T,\bU_{c,T}}$ is as in \eqref{ATU}) in Lemma \ref{ATU} and $\delta < (\gamma^+-c)/2>0$. Applying Lemma \ref{ATU}, we have that $\tilde{c} \to (0,c+\delta] \to \bE_{\tilde{c}}(\bU_{c,T})$ is continuous, which implies that there exists $\delta_1>0$ such that
%\begin{equation}
%\forall \tilde{c} \in (c-\delta_1,c+\delta_1), \hspace{2mm} \bE_{\tilde{c}}(\bU_{c,T})<0,
%\end{equation}
%so that $(c-\delta_1,c+\delta_1) \subset \CC$, which shows that $\CC$ is open and concludes the proof.
\end{proof}

We now have all the ingredients for establishing the existence of the unconstrained solutions:
\begin{proposition}\label{PROPOSITION_existence}
Assume that \ref{hyp_compactness}, \ref{hyp_projections_inverse}, \ref{hyp_projections_H}, \ref{hyp_regularity} and \ref{hyp_levelsets} hold.  Let $\overline{c} \in \partial(\CC) \cap (0,+\infty)$, where $\partial(\CC)$ stands for the boundary of the set $\CC$ defined in \eqref{set_CC}. Then, there exists $\overline{T} \geq 1$ such that $\bm_{\overline{c},\overline{T}}=0$ ($\bm_{\overline{c},\overline{T}}$ as in \eqref{mcT}) and $\overline{\bU} \in X_{\overline{T}}$ an associated minimizer of $\bE_{\overline{c}}$ in $X_{\overline{T}}$ which does not saturate the constraints, i. e.
\begin{equation}\label{unconstrained+}
\forall t \geq \overline{T}, \hspace{2mm} \dist_{\scrL}(\overline{\bU}(t),\scrF^+) < \frac{r_0^+}{2}
\end{equation}
and
\begin{equation}\label{unconstrained-}
\forall t \leq -\overline{T}, \hspace{2mm} \dist_{\scrL}(\overline{\bU}(t),\scrF^-) < \frac{r_0^-}{2}.
\end{equation}
Moreover, $\overline{\bU} \in \CA(\R)$ and the pair $(\overline{c},\overline{\bU})$ solves \eqref{abstract_equation}.
\end{proposition}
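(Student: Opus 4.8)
The plan is to recover the unconstrained minimizer from the boundary speed $\overline{c}$ by the classical scheme of \cite{alikakos-fusco-smyrnelis} and \cite{alikakos-katzourakis}, the essential new input being the $c$-uniform transition-time bound $\sfT_\star$ furnished by Corollary \ref{COROLLARY_constrained_behavior}.

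First I would extract the two facts packaged in $\overline{c}\in\partial(\CC)$. Since $\CC$ is open by Lemma \ref{LEMMA_set_CC}, we have $\overline{c}\notin\CC$, which by the definition \eqref{set_CC} means $\bm_{\overline{c},T}\geq 0$ for every $T\geq 1$; and since $\overline{c}$ lies in the closure of $\CC$, there is a sequence $c_n\in\CC$ with $c_n\to\overline{c}$, hence for each $n$ a radius $T_n\geq 1$ and, by Lemma \ref{LEMMA_mcT}, a minimizer $\bU_{c_n,T_n}\in X_{T_n}$ with $\bE_{c_n}(\bU_{c_n,T_n})=\bm_{c_n,T_n}<0$. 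Applying Corollary \ref{COROLLARY_constrained_behavior} with $t_n^\pm:=t^\pm(\bU_{c_n,T_n},\CE_{\max}^\pm)$ gives $0<t_n^+-t_n^-\leq\sfT_\star(c_n)$ together with $\dist_\scrL(\bU_{c_n,T_n}(t),\scrF^-)<r_0^-/2$ for $t\leq t_n^-$ and $\dist_\scrL(\bU_{c_n,T_n}(t),\scrF^+)<r_0^+/2$ for $t\geq t_n^+$. Since $c_n\to\overline{c}>0$ and $\sfT_\star$ is continuous and positive, $\overline{T}:=\max\{\,2,\ 1+\sup_{n}\sfT_\star(c_n)\,\}$ is a finite real number with $\overline{T}\geq 1$, and $\overline{T}>\sfT_\star(c_n)$ for all $n$.

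Next I would translate and take a limit at the fixed radius $\overline{T}$. Set $\tilde{U}_n:=\bU_{c_n,T_n}(\cdot+t_n^-)$; translation invariance of the constraints gives $\tilde{U}_n\in X_{\overline{T}}$ (because $\overline{T}>t_n^+-t_n^-$), while $\bE_{c_n}(\tilde{U}_n)=e^{-c_nt_n^-}\bm_{c_n,T_n}<0$, so $\sup_n\bE_{c_n}(\tilde{U}_n)\leq 0<+\infty$. Lemma \ref{LEMMA_LSC}, applied with the fixed radius $\overline{T}$, then yields (after extraction) functions $V_n\in X_{\overline{T}}$ with $\bE_{c_n}(V_n)=\bE_{c_n}(\tilde{U}_n)$, the weak convergences \eqref{U_infty_conv1}--\eqref{U_infty_conv2}, and a limit $V_\infty\in X_{\overline{T}}$ with $\bE_{\overline{c}}(V_\infty)\leq\liminf_n\bE_{c_n}(V_n)\leq 0$. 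Combining $\bE_{\overline{c}}(V_\infty)\geq\bm_{\overline{c},\overline{T}}\geq 0$ (from $\overline{c}\notin\CC$) with the previous bound forces $\bm_{\overline{c},\overline{T}}=\bE_{\overline{c}}(V_\infty)=0$, so $V_\infty$ is a minimizer of $\bE_{\overline{c}}$ in $X_{\overline{T}}$. Then I would re-center once more: Corollary \ref{COROLLARY_constrained_behavior} applies to $V_\infty$ (since $\bE_{\overline{c}}(V_\infty)=0$) and gives $s^\pm:=t^\pm(V_\infty,\CE_{\max}^\pm)$ with $0<s^+-s^-\leq\sfT_\star(\overline{c})<\overline{T}$ and the corresponding strict distance bounds for $t\leq s^-$ and $t\geq s^+$. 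Put $\overline{\bU}:=V_\infty(\cdot+s^-)$; then $\bE_{\overline{c}}(\overline{\bU})=e^{-\overline{c}s^-}\cdot 0=0$, the entry times of $\overline{\bU}$ are $0$ and $s^+-s^-\in(0,\overline{T})$, and the distance bounds give $\dist_\scrL(\overline{\bU}(t),\scrF^-)<r_0^-/2$ for all $t\leq 0$ and $\dist_\scrL(\overline{\bU}(t),\scrF^+)<r_0^+/2$ for all $t\geq s^+-s^-$. Since $-\overline{T}\leq 0$ and $\overline{T}>s^+-s^-$, this shows $\overline{\bU}\in X_{\overline{T}}$, that \eqref{unconstrained+}--\eqref{unconstrained-} hold, and that $\bE_{\overline{c}}(\overline{\bU})=0=\bm_{\overline{c},\overline{T}}$, i.e.\ $\overline{\bU}$ is again a minimizer. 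Finally, Corollary \ref{COROLLARY_mcT_solution} gives $\delta>0$ with $\overline{\bU}\in\CA(S_{\overline{c},\overline{T}})$ solving $\overline{\bU}''-D_\scrL\CE(\overline{\bU})=-\overline{c}\,\overline{\bU}'$ on $S_{\overline{c},\overline{T}}$; because $0$ lies in $(-\overline{T},\overline{T})$ and $s^+-s^-<\overline{T}$, the three intervals composing $S_{\overline{c},\overline{T}}$ chain together to cover $\R$, so $\overline{\bU}\in\CA(\R)$ and $(\overline{c},\overline{\bU})$ solves \eqref{abstract_equation}.

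The hard part is entirely in arranging compactness at a fixed radius $\overline{T}$: a priori the radii $T_n$ and the entry times $t_n^\pm$ are unbounded, and it is exactly the translation by $t_n^-$ combined with the $c$-continuous bound $\sfT_\star$ from Corollary \ref{COROLLARY_constrained_behavior} that confines the whole transition region to a bounded interval, after which Lemma \ref{LEMMA_LSC} and the openness of $\CC$ close the argument. Everything else — the two translations, the non-saturation statement, and the assembly of $S_{\overline{c},\overline{T}}$ into $\R$ — is bookkeeping with translations and the definitions of the entry times.
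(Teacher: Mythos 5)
Your proof is correct and follows essentially the same route as the paper's: extract $\bm_{\overline{c},T}\geq 0$ from $\overline{c}\notin\CC$, use Corollary \ref{COROLLARY_constrained_behavior} to bound the transition lengths of the constrained minimizers $\bU_{c_n,T_n}$ uniformly by $\sfT_\star$, translate to a fixed radius, pass to the limit via Lemma \ref{LEMMA_LSC}, and then finish by showing via Corollary \ref{COROLLARY_mcT_solution} that the three pieces of $S_{\overline{c},\overline{T}}$ overlap to cover $\R$. The only differences are bookkeeping (you translate by $t_n^-$ where the paper uses $t_n^+$, you re-center the limit function once more rather than shrinking the radius, and your $\overline{T}$ carries an extra margin of $1$), and they do not affect the argument.
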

\begin{remark}
Notice that Lemma \ref{LEMMA_set_CC} implies that (under the necessary assumptions) the set $\CC$ is bounded, meaning that $\partial(\CC) \cap (0,+\infty) \not = \emptyset$. Such a fact, in combination with Proposition \ref{PROPOSITION_existence} shows the existence of the unconstrained solutions.
\end{remark}

\begin{proof}[Proof of Proposition \ref{PROPOSITION_existence}]
By Lemma \ref{LEMMA_set_CC}, we have that $\CC \not = \emptyset$ is open, which implies that $\partial(\CC) \subset \R \setminus \CC$. Therefore, we have $\overline{c} \not \in\CC$. Recall that due to the definition of $\CC$ in \eqref{set_CC}, we have that
\begin{equation}\label{c^star_nonnegative}
\forall T \geq 1, \hspace{2mm} \bm_{\overline{c},T} \geq 0.
\end{equation}
The definition of the boundary allows to consider a sequence $(c_n)_{n \in \N}$ contained in $\CC$ such that $c_n \to \overline{c}$. Then, for each $n \in \N$ there exists $T_n \geq 1$ such that $\bE_{c_n}(\bU_{c_n,T_n})<0$, where, for each $n \in \N$, $\bU_{c_n,T_n}$ is a minimizer of $\bE_{c_n}$ in $X_{T_n}$. For each $n \in \N$, set $t_n^\pm:= t^+(\bU_{c_n,T_n},\CE_{\max}^\pm)$ as in \eqref{comparison_t-}, \eqref{comparison_t+}. Using \eqref{sfT} in Corollary \ref{COROLLARY_constrained_behavior} we have that that
\begin{equation}
\forall n \in \N, \hspace{2mm}0 < t^+_n-t^-_n \leq \sfT_\star(c_n),
\end{equation}
and the function
\begin{equation}
c \in (0,+\infty) \to \sfT_\star(c) \in (0,+\infty)
\end{equation}
is continuous. Since the sequence $(c_n)_{n \in \N}$ is bounded, we have that
\begin{equation}
\sfT_\star:= \max\left\{1,\sup_{n \in \N} \sfT_\star(c_n)\right\}<+\infty
\end{equation}
and
\begin{equation}\label{unconstrainted_sfT}
\forall n \in \N, \hspace{2mm}0 < t^+_n-t^-_n \leq \sfT_\star,
\end{equation}
so that we have a bound on $(t^+_n-t^-_n)_{n \in \N}$. Moreover, \eqref{behavior-} and \eqref{behavior+} in Corollary \ref{COROLLARY_constrained_behavior} imply
\begin{equation}\label{unconstrained_t+}
\forall n \in \N, \forall t \geq t^+_n, \hspace{2mm} \dist_{\scrL}(\bU_{c_n,T_n}(t),\scrF^+) < \frac{r_0^+}{2}
\end{equation}
and
\begin{equation}\label{unconstrained_t-}
\forall n \in \N, \forall t \leq t^-_n, \hspace{2mm} \dist_{\scrL}(\bU_{c_n,T_n}(t),\scrF^-) < \frac{r_0^-}{2}.
\end{equation}
For each $n \in \N$, define the function $\bU_{c_n,T_n}^{t_n^+}:=\bU_{c_n,T_n}(\cdot + t_n^+)$. Then, \eqref{unconstrainted_sfT} implies that \eqref{unconstrained_t+} and \eqref{unconstrained_t-} write as
\begin{equation}\label{translate_unconstrained_+}
\forall n \in \N, \forall t \geq 0, \hspace{2mm} \dist_{\scrL}(\bU_{c_n,T_n}^{t_n^+}(t),\scrF^-) < \frac{r_0^-}{2}.
\end{equation}
and
\begin{equation}\label{translate_unconstrained_t-}
\forall n \in \N, \forall t \leq -\sfT_\star, \hspace{2mm} \dist_{\scrL}(\bU_{c_n,T_n}(t),\scrF^-) < \frac{r_0^-}{2}.
\end{equation}
so that for all $n \in \N$ we have $\bU_{c_n,T_n}^{t_n^+} \in X_{\sfT_\star}$. Moreover, a computation shows
\begin{equation}
\forall n \in \N, \hspace{2mm} \bE_{c_n}(\bU_{c_n,T_n}^{t_n^+})=e^{-c_nt_n^+}\bE_{c_n}(\bU_{c_n,T_n})<0.
\end{equation}
Therefore, if we apply Lemma \ref{LEMMA_LSC} with sequence of speeds $(c_n)_{n \in \N}$ and the sequence $(\bU_{c_n,T_n}^{t_n^+})_{n \in \N}$ in $X_{\sfT_\star}$, we obtain $\overline{\bU} \in X_{\sfT_\star}$ such that
\begin{equation}
\bE_{\overline{c}}(\overline{\bU}) \leq \liminf_{n \to \infty } \bE_{c_n}(\bU_{c_n,T_n}^{t_n^+}) \leq 0,
\end{equation}
which in combination with \eqref{c^star_nonnegative} implies that $\bm_{\overline{c},\sfT_\star}=0$. Therefore, we have $\bE_{\overline{c}}(\overline{\bU})=0$, so that $\overline{\bU}$ is a minimizer of $\bE_{\overline{c}}$ in $X_{\sfT_\star}$. Set $t^\pm_{\star}:= t^\pm(\overline{\bU},\CE_{\max}^\pm)$ as in \eqref{comparison_t-}, \eqref{comparison_t+}. Invoking \eqref{c^star_nonnegative} and Corollary \ref{COROLLARY_mcT_solution}, we obtain that for all $T \geq 1$ such that $\bm_{\overline{c},T}=0$ and $\overline{\bU} \in X_T$, we have $\overline{\bU} \in \CA(S_{\overline{c},T})$ with
\begin{equation}\label{Scstar}
S_{\overline{c},T}:= (-\infty,t^-_\star+\delta_\star(T)) \cup (-T,T) \cup (t^+_\star-\delta_\star(T),+\infty)
\end{equation}
for some $\delta_\star(T)>0$ and
\begin{equation}\label{Scstar_eq}
\overline{\bU}''-D_{\scrL}\CE(\overline{\bU})=-\overline{c} \overline{\bU}' \mbox{ in } S_{\overline{c},T}.	
\end{equation}
Moreover, using \eqref{behavior-} and \eqref{behavior+} in Corollary \ref{COROLLARY_constrained_behavior}, we obtain as before that
\begin{equation}\label{t_star+}
\forall t \geq t^+_\star, \hspace{2mm} \dist_{\scrL}(\overline{\bU}(t),\scrF^+) < \frac{r_0^+}{2}
\end{equation} 
and
\begin{equation}\label{t_star-}
\forall t \leq t^-_\star, \hspace{2mm} \dist_{\scrL}(\overline{\bU}(t),\scrF^-) < \frac{r_0^-}{2}.
\end{equation}
Therefore, if we set $\overline{T}=\max\{1,t^+_\star,-t^-_\star\}$, then \eqref{t_star+} and \eqref{t_star-} imply that $\overline{\bU} \in X_{\overline{T}}$ and that \eqref{unconstrained+}, \eqref{unconstrained-} hold. Moreover, we have that $\bE_{\overline{c}}(\overline{\bU})=0$, so that $\overline{\bU}$ is a minimizer of $\bE_{\overline{c}}$ in $X_{\overline{T}}$ by \eqref{c^star_nonnegative}. Therefore, we obtain that $\overline{\bU} \in \CA(S_{\overline{c},\overline{T}})$ and
\begin{equation}
\overline{\bU}''-D_{\scrL}\CE(\overline{\bU})=-\overline{c} \overline{\bU}' \mbox{ in } S_{\overline{c},\overline{T}},
\end{equation}
with $S_{\overline{c},\overline{T}}$ as in \eqref{Scstar}. The choice of $\overline{T}$ implies that $S_{\overline{c},\overline{T}}=\R$. Therefore, $\overline{\bU} \in \CA(\R)$ and $(\overline{c},\overline{\bU})$ solves \eqref{abstract_equation}, which finishes the proof.
\end{proof}
Notice that our Proposition \ref{PROPOSITION_existence} follows very similar lines than the analogous results in \cite{alikakos-fusco-smyrnelis} and \cite{alikakos-katzourakis}. 

\subsection{Uniqueness of the speed}

The precise statement of the uniqueness result is as follows:
\begin{proposition}\label{PROPOSITION-uniqueness}
Assume that \ref{hyp_regularity} holds. Let $X$ be the set defined in \eqref{X}. Let $(c_1,c_2) \in (0,+\infty)^2$ be such that there exist $\bU_1$ and $\bU_2$ in $X\cap \CA(\R)$ such that $(c_1,\bU_1)$ and $(c_2,\bU_2)$ solve \eqref{abstract_equation} and for each $i \in \{1,2\}$, $\bE_{c_i}(\bU_i) <+\infty$. Assume moreover that
\begin{equation}\label{uniqueness_condition}
\forall i \in \{1,2\},\forall j \in \{1,2\} \setminus \{i\}, \hspace{2mm} \bE_{c_i}(\bU_j) \geq 0.
\end{equation}
 Then, we have $c_1=c_2$.
\end{proposition}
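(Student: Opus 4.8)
The plan is to first upgrade both solutions to ones whose weighted energy vanishes, and then to obtain a contradiction from $c_1\neq c_2$ by showing that $c\mapsto\bE_c(\bU_j)$ is strictly increasing at $c_j$.

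\textbf{Step 1: a Hamiltonian identity forcing $\bE_{c_i}(\bU_i)=0$.} Fix $i$ and write $(c,\bU)=(c_i,\bU_i)$. Since $\bU\in\CA(\R)$ is $\CC^1$ into $\scrH$ and $\CE$ is $\CC^1$ on $\scrH$ by \ref{hyp_smaller_space}, we may pair \eqref{abstract_equation} with $\bU'$ in $\scrL$, use the chain rule, and multiply by $e^{ct}$ to get, as in \cite{alikakos-katzourakis,alikakos-fusco-smyrnelis},
\[
\frac{d}{dt}\Big[\big(\CE(\bU(t))-\tfrac12\lVert\bU'(t)\rVert_{\scrL}^2\big)e^{ct}\Big]=c\,\be_c(\bU)(t),
\]
with $\be_c$ as in \eqref{Ec}. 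Because $\bE_c(\bU)<+\infty$ and, by Lemma \ref{LEMMA_well_defined}, the negative part of $\be_c(\bU)$ is always integrable, $\be_c(\bU)\in L^1(\R)$; hence the bracketed function $G(t)=\be_c(\bU)(t)-\lVert\bU'(t)\rVert_{\scrL}^2e^{ct}$ has finite limits at $\pm\infty$. Along sequences $t_n\to\pm\infty$ on which both $\be_c(\bU)(t_n)\to0$ and $\lVert\bU'(t_n)\rVert_{\scrL}^2e^{ct_n}\to0$ — such sequences exist since both functions are in $L^1(\R)$, the second by Lemma \ref{LEMMA_integrability}, and near $+\infty$ one uses $\bU\in X$ together with \eqref{positivity} to know $\be_c(\bU)\ge0$ — we find $G(\pm\infty)=0$. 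Integrating the identity, $c\,\bE_c(\bU)=0$, so $\bE_{c_1}(\bU_1)=0=\bE_{c_2}(\bU_2)$. The problem is thus reduced to: given $\bE_{c_1}(\bU_1)=\bE_{c_2}(\bU_2)=0$ together with $\bE_{c_1}(\bU_2)\ge0$ and $\bE_{c_2}(\bU_1)\ge0$ (as in \eqref{uniqueness_condition}), conclude $c_1=c_2$.

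\textbf{Step 2: strict monotonicity of the weighted energy in the speed.} Suppose for contradiction $c_1<c_2$. By Lemma \ref{LEMMA_ATU}, $A_j:=\{c>0:\bE_c(\bU_j)<+\infty\}$ is an interval; since $c_j\in A_j$ it contains $(0,c_j]$, so $c_1\in A_2$ and $\bE_{c_1}(\bU_2)$ is a finite nonnegative number, and $c\mapsto\bE_c(\bU_2)$ is continuous on $A_2$ and, differentiating under the integral sign, $\CC^1$ on its interior. The goal is to show this function is strictly increasing at $c_2$, so that $\bE_{c_1}(\bU_2)<\bE_{c_2}(\bU_2)=0$, contradicting \eqref{uniqueness_condition}; the symmetric computation with $\bU_1$ and $\bE_{c_2}(\bU_1)$ disposes of $c_2<c_1$, giving $c_1=c_2$. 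Concretely, writing $\rho_j(t):=\tfrac12\lVert\bU_j'(t)\rVert_{\scrL}^2+\CE(\bU_j(t))$ for the unweighted density, membership of $\bU_j$ in $X$ and \eqref{positivity} give $\rho_j\ge0$ on a half-line $[T_j,+\infty)$; after translating $\bU_j$ — which replaces $\bE_c(\bU_j)$ by a strictly positive multiple of itself and therefore preserves all the hypotheses — one normalizes suitably and computes $\frac{d}{dc}\bE_c(\bU_j)=\int_\R t\,\rho_j(t)e^{ct}\,dt$, where, using $\int_\R\rho_j e^{c_j t}\,dt=0$, the reference point of $t$ may be centred to kill the term $\int_\R\rho_j e^{c_j t}\,dt$. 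The contribution of the region where $\rho_j\ge0$ is then of the correct sign and strictly so because $\bU_j$ is nonconstant (it joins the disjoint sets $\scrF^-$ and $\scrF^+$), and the strict differential inequality at $c_2$, integrated down to $c_1$ inside $A_2$, yields $\bE_{c_1}(\bU_2)<0$.

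\textbf{Main obstacle.} The delicate point is the sign of $\rho_j$ — equivalently, of $\CE(\bU_j(t))$ — on the complementary half-line, where $\bU_j$ has not yet settled near $\scrF^+$: there $\CE(\bU_j(\cdot))$ may change sign (the potential is only $\scrL$-lower semicontinuous, so staying in an $\scrL$-neighbourhood of $\scrF^-$ does not pin down its sign), and one must check that this region cannot spoil strict positivity of $\frac{d}{dc}\bE_c(\bU_j)$ at $c_j$. Controlling it requires combining the normalization $\bE_{c_j}(\bU_j)=0$, the coercivity \eqref{coercivityL} near $\scrF^-$ (which bounds $\CE(\bU_j(t))$ from below by $a$ and controls it by the $\scrL$-distance to $\scrF^-$), and the entry-time bookkeeping around $\scrF^\pm$ in the spirit of \eqref{comparison_t-}. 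This is precisely the step carried out, in the finite-dimensional setting, in Alikakos and Katzourakis \cite{alikakos-katzourakis} and in \cite{alikakos-fusco-smyrnelis}; here it must be redone with every quantity measured in the $\scrL$-norm, which I expect to be the technical heart of the proof.
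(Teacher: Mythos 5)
There is a genuine gap. Your Step 1 is correct (and reproves, via the equipartition identity \eqref{equipartition_identity} integrated against $e^{c_it}$, that $\bE_{c_i}(\bU_i)=0$ — essentially the diagonal case of \eqref{abstract_speed_formula}), but it is not actually needed. The problem is Step 2: you reduce the assertion to showing that $c\mapsto\bE_c(\bU_2)$ is strictly \emph{increasing on the whole interval} $(c_1,c_2)$, but you establish (heuristically) only strict positivity of the derivative at the single endpoint $c=c_2$ and then assert that this "integrates down to $c_1$." A pointwise differential inequality at one value of $c$ does not integrate to anything, and you yourself concede that controlling $\int_\R t\,\rho_2(t)e^{ct}\,dt$ away from $c=c_2$ would need "entry-time bookkeeping" whose execution you defer. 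There are additional issues you do not address: differentiability of $c\mapsto\bE_c(\bU_2)$ under the integral sign requires $\int_\R |t|\big\lvert\rho_2(t)\big\rvert e^{ct}\,dt<+\infty$, which is not implied by $\bE_{c_2}(\bU_2)<+\infty$ alone; and the "centring" of $t$ only uses $\bE_{c_2}(\bU_2)=0$ and therefore fixes no sign of $\int_\R t\,\rho_2e^{c_2t}\,dt$ either.

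The paper's argument sidesteps all of this by \emph{cross-weighting}: it applies the equipartition identity \eqref{equipartition_identity} to $\bU_2$ (the solution at speed $c_2$) but multiplies it by $e^{c_1t}$, not $e^{c_2t}$. This turns \eqref{equipartition_identity} into the exact derivative identity \eqref{identity_uniqueness_2},
\[
c_1\,\bE_{c_1}(\bU_2;(t_1,t_2))=(c_1-c_2)\int_{t_1}^{t_2}\lVert\bU_2'(t)\rVert_{\scrL}^2e^{c_1t}\,dt
+\left[e^{c_1t}\Big(\CE(\bU_2(t))-\tfrac12\lVert\bU_2'(t)\rVert_{\scrL}^2\Big)\right]_{t_1}^{t_2}.
\]
Because $c_1<c_2$, the boundary term is pointwise dominated by $\lvert\be_{c_2}(\bU_2)(t)\rvert$, which is in $L^1(\R)$ since $\bE_{c_2}(\bU_2)<+\infty$ and $\bU_2\in X$ (Lemma \ref{LEMMA_well_defined}); hence it vanishes along suitable sequences $t^\pm_n\to\pm\infty$. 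Passing to the limit gives the closed formula $c_1\,\bE_{c_1}(\bU_2)=(c_1-c_2)\int_\R\lVert\bU_2'\rVert_{\scrL}^2e^{c_1t}\,dt<0$, directly contradicting the hypothesis $\bE_{c_1}(\bU_2)\ge 0$ from \eqref{uniqueness_condition}. No differentiation in $c$, no sign analysis of $t\,\rho_2(t)$, and no use of $\bE_{c_i}(\bU_i)=0$ or of the projection/entry-time machinery is required; the only tool needed beyond \ref{hyp_regularity} is Lemma \ref{LEMMA_ATU}, which (as you noticed) guarantees that $\bE_{c_1}(\bU_2)$ is a well-defined real number.
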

\begin{proof}
We prove the result by contradiction. Hence, we can assume without loss of generality that $c_1<c_2$. A direct computation shows that for every $(c,U) \in (0,+\infty) \times (X \cap \CA(\R))$ a solution to \eqref{abstract_equation}, we have
\begin{equation}\label{identity_uniqueness}
\forall t \in \R, \hspace{2mm} \frac{\lVert U'(t) \rVert_{\scrL}^2}{2}+\CE(U(t))= e^{-ct}\left( \frac{e^{ct}}{c}\left( \CE(U(t))-\frac{\lVert U'(t) \rVert_{\scrL}^2}{2} \right)\right)'.
\end{equation}
Replacing $(c_2,U_2)$ in \eqref{identity_uniqueness} and multiplying for each $t \in \R$ by $e^{c_1t}$, computations show that
\begin{align}\label{identity_uniqueness_2}
\forall t_1 < t_2, \hspace{2mm} c_1 \bE_{c_1}(\bU_2;(t_1,t_2))&=(c_1-c_2)\int_{t_1}^{t_2}\lVert \bU_2'(t) \rVert_{\scrL}^2 e^{c_1t}dt \\ &+\left[e^{c_1t}\left( \CE(\bU_2(t))-\frac{\lVert \bU_2'(t) \rVert_{\scrL}^2}{2} \right)\right]^{t_2}_{t_1}.
\end{align}
Notice now that the definition of $X$ in \eqref{X} implies that
\begin{equation}
X=\bigcup_{T \geq 1}X_T,
\end{equation}
which means that there exists $T \geq 1$ such that $\bU_2 \in X_T$. Combining then Lemma \ref{LEMMA_well_defined} and the fact that $\bE_{c_2}(\bU_2)<+\infty$, we get that $\be_{c_2}(\bU_2(\cdot)) \in L^1(\R)$. Therefore, we can find two sequences $(t_n^+)_{n  \in \N}$ and $(t_n^-)_{n \in \N}$ such that $t_n^\pm \to \pm\infty$ and
\begin{equation}\label{lim_uniqueness}
\lim_{n \to \infty} \be_{c_2}(\bU_2(t_n^\pm))=0.
\end{equation}
Since we have $c_1<c_2$, it holds
\begin{equation}
\forall t \in \R, \hspace{2mm} e^{c_1t}\left\lvert \CE(\bU_2(t))-\frac{\lVert \bU_2'(t) \rVert_{\scrL}^2}{2} \right\rvert \leq \lvert \be_{c_2}(\bU_2(t)) \rvert
\end{equation}
which in combination with \eqref{lim_uniqueness} implies
\begin{equation}\label{lim_uniqueness_def}
\lim_{n \to \infty} e^{c_1t_n^\pm}\left( \CE(\bU_2(t_n^\pm))-\frac{\lVert \bU_2'(t_n^\pm) \rVert_{\scrL}^2}{2} \right)=0.
\end{equation}
Therefore, if we replace for each $n \in \N$ in \eqref{identity_uniqueness_2} with $a=t_n^-$ and $b=t_n^+$, we can then pass to the limit \eqref{lim_uniqueness_def} and obtain
\begin{equation}
c_1\bE_{c_1}(\bU_2) =(c_1-c_2) \int_{\R}\lVert \bU_2'(t) \rVert_{\scrL}^2e^{c_1t}dt <0
\end{equation}
because we assume $c_1 < c_2$. However, by \eqref{uniqueness_condition} we have $\bE_{c_1}(\bU_2)  \geq 0$, which is a contradiction.
\end{proof}

\begin{remark}
Again, the proof of Proposition \ref{PROPOSITION-uniqueness} is essentially a direct adaptation of that given in \cite{alikakos-fusco-smyrnelis} and \cite{alikakos-katzourakis}. Our hypothesis are slightly weaker, since we only assume that the solutions have finite energies and \eqref{uniqueness_condition}, while in \cite{alikakos-fusco-smyrnelis} and \cite{alikakos-katzourakis} it is assumed that the solutions are global minimizers of the corresponding energy functionals. Notice also that \ref{hyp_levelsets} is not needed for proving Proposition \ref{PROPOSITION-uniqueness}, which holds in a more general setting.
\end{remark}

Proposition \ref{PROPOSITION-uniqueness} along with Proposition \ref{PROPOSITION_existence} allows to show that the set $\CC$  defined in \eqref{set_CC} is in fact an open interval:
\begin{corollary}\label{COROLLARY_CC}
Assume that \ref{hyp_compactness}, \ref{hyp_projections_inverse}, \ref{hyp_projections_H}, \ref{hyp_regularity} and \ref{hyp_levelsets} hold. Let
\begin{equation}\label{c(CC)}
c(\CC):= \sup \CC.
\end{equation}
Then, we have $\CC=(0,c(\CC))$. 
\end{corollary}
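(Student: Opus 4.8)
The plan is to argue by contradiction, playing Proposition \ref{PROPOSITION_existence} (which produces a genuine minimizing solution of \eqref{abstract_equation} at every positive boundary point of $\CC$) against Proposition \ref{PROPOSITION-uniqueness} (which forbids two distinct speeds from simultaneously carrying suitably compatible solutions). First I would record the free facts: by Lemma \ref{LEMMA_set_CC} the set $\CC$ is open, nonempty and bounded, and $\CC \subseteq (0,+\infty)$ by the very definition \eqref{set_CC}. Hence $c(\CC) = \sup\CC$ is a finite positive number, and since $\CC$ is open it cannot contain its supremum, so $c(\CC) \in \partial(\CC) \cap (0,+\infty)$; in particular every element of $\CC$ lies strictly below $c(\CC)$, i.e. $\CC \subseteq (0,c(\CC))$.

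It then remains to prove $(0,c(\CC)) \subseteq \CC$. Suppose not; then $\CC$ is a nonempty proper open subset of the connected set $(0,c(\CC))$. A nonempty proper subset of a connected space cannot be clopen, so $\CC$ is not closed in $(0,c(\CC))$, and there exists $\overline{c} \in \overline{\CC} \cap (0,c(\CC))$ with $\overline{c} \notin \CC$. Since $\CC$ is open, $\overline{\CC}\setminus\CC = \partial(\CC)$, so $\overline{c} \in \partial(\CC)\cap(0,+\infty)$ and $\overline{c} < c(\CC)$.

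Next I would invoke Proposition \ref{PROPOSITION_existence} twice. Applied at $\overline{c}$ it gives $\overline{T}\geq 1$ and $\overline{\bU} \in X_{\overline{T}}\cap\CA(\R) \subseteq X\cap\CA(\R)$ with $(\overline{c},\overline{\bU})$ solving \eqref{abstract_equation} and $\bE_{\overline{c}}(\overline{\bU}) = 0$; applied at $c(\CC)$ it gives $T^\star\geq 1$ and $\bU^\star \in X_{T^\star}\cap\CA(\R)\subseteq X\cap\CA(\R)$ with $(c(\CC),\bU^\star)$ solving \eqref{abstract_equation} and $\bE_{c(\CC)}(\bU^\star) = 0$. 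Now, because $\overline{c}\notin\CC$, the definition \eqref{set_CC} of $\CC$ forces $\bE_{\overline{c}}(U)\geq 0$ for every $U \in X = \bigcup_{T\geq 1}X_T$, so in particular $\bE_{\overline{c}}(\bU^\star)\geq 0$; likewise $c(\CC)\notin\CC$ gives $\bE_{c(\CC)}(\overline{\bU})\geq 0$. Thus the pairs $(\overline{c},\overline{\bU})$ and $(c(\CC),\bU^\star)$ meet all hypotheses of Proposition \ref{PROPOSITION-uniqueness}: both solutions lie in $X\cap\CA(\R)$, both have finite (indeed zero) energy, and the cross condition \eqref{uniqueness_condition} was just verified. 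Hence $\overline{c} = c(\CC)$, contradicting $\overline{c} < c(\CC)$. Therefore $(0,c(\CC))\subseteq\CC$, which together with $\CC\subseteq(0,c(\CC))$ yields $\CC = (0,c(\CC))$.

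I do not anticipate a real obstacle: the proof is essentially an assembly of already-established results, and one checks once that the assumptions of Corollary \ref{COROLLARY_CC}, namely \ref{hyp_compactness}, \ref{hyp_projections_inverse}, \ref{hyp_projections_H}, \ref{hyp_regularity} and \ref{hyp_levelsets}, are precisely those required by Lemma \ref{LEMMA_set_CC}, Proposition \ref{PROPOSITION_existence} and Proposition \ref{PROPOSITION-uniqueness}. The only step demanding a modicum of care is the connectedness argument extracting the boundary point $\overline{c}\in\partial(\CC)$ with $\overline{c}<c(\CC)$ from the assumption $\CC\neq(0,c(\CC))$; everything else is the bookkeeping of the two cited propositions.
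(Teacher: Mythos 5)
Your proof is correct and follows essentially the same route as the paper's: both reduce the claim to uniqueness of positive boundary points of $\CC$ and prove it by pairing the boundary-minimizer produced by Proposition \ref{PROPOSITION_existence} with the speed-uniqueness of Proposition \ref{PROPOSITION-uniqueness}, checking the cross-sign condition \eqref{uniqueness_condition} from $\overline{c},c(\CC)\notin\CC$. The only difference is cosmetic: you extract a second positive boundary point below $c(\CC)$ via a connectedness argument under the contradiction hypothesis, whereas the paper opens by asserting the equivalence of the conclusion with $\partial(\CC)\cap(0,+\infty)=\{c(\CC)\}$ and then shows any such boundary point coincides with the supremum.
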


\begin{proof}
The statement of the result is equivalent to show that 
\begin{equation}
\partial(\CC)\cap(0,+\infty)=\{ c(\CC)\}
\end{equation}
The quantity $c(\CC)$ is well defined in $(0,+\infty)$ because $\CC$ is non-empty and bounded by Lemma \ref{LEMMA_set_CC}. Therefore, we have $c(\CC) \in \partial(\CC) \cap (0,+\infty)$ because $\CC$ is open, so it does not contain its limit points. By Proposition \ref{PROPOSITION_existence}, we find $\bU^\CC \in X$ such that $(c(\CC),\bU^\CC)$ solves \eqref{abstract_equation}. Let now $\overline{c}\in \partial(\CC) \cap (0,+\infty)$. If we show that $\overline{c}=c(\CC)$, the proof will be finished. Applying Proposition \ref{PROPOSITION_existence} with $\overline{c}$, we find $\overline{\bU} \in X$ such that $(\overline{c},\overline{\bU})$ solves \eqref{abstract_equation}. Proposition \ref{PROPOSITION_existence}, along with the fact that $\overline{c}$ and $c(\CC)$ do not belong to $\CC$, also implies that
\begin{equation}
\inf_{U \in X}\bE_{\overline{c}}(U)=\bE_{\overline{c}}(\overline{\bU})=0=\bE_{c(\CC)}(\bU^\CC)=\inf_{U \in X}\bE_{c(\CC)}(U)
\end{equation}
so that
\begin{equation}
\bE_{c(\CC)}(\overline{\bU}) \geq 0 \mbox{ and } \bE_{\overline{c}}(\bU^\CC) \geq 0
\end{equation}
meaning that we can apply Proposition \ref{PROPOSITION-uniqueness} to $(c(\CC),\bU^\CC)$, $(\overline{c},\overline{\bU})$. As a consequence, we have $\overline{c}=c(\CC)$, which concludes the proof.
\end{proof}
\subsection{Proof of Theorem \ref{THEOREM-ABSTRACT} completed}

All the elements of the proof of Theorem \ref{THEOREM-ABSTRACT} are already present in previous result. Indeed, Proposition \ref{PROPOSITION_existence} along with Corollary \ref{COROLLARY_CC} implies the existence of $(c^\star,\bU) \in (0,+\infty) \times X_{T^\star}$ a solution to \eqref{abstract_equation} with $c^\star=c(\CC)$. Conditions \eqref{abstract_bc_weak-} and \eqref{abstract_bc_weak+} are satisfied due to the fact that $\bU \in X_{T^\star}$. The statement regarding the uniqueness of the speed $c^\star$ follows from Proposition \ref{PROPOSITION-uniqueness}. Finally, we have that \eqref{limit+_function0_weak} is exactly the exponential rate of convergence \eqref{exponential_abstract}, which completes the proof.

\qed 

\subsection{Asymptotic behavior of the constrained solutions at $-\infty$}\label{subs_asymptotic_abstract}
As it has been pointed out before, almost nothing can be said about the behavior of arbitrary function in $X_T$ at $-\infty$. However, it turns out that constrained minimizers converge exponentially at $-\infty$ with respect to the $\scrL$-norm provided that the speed fulfills an explicit upper bound, see Proposition \ref{PROPOSITION_conv_sol_-}. Such an upper bound also allows to establish some other properties. Once Proposition \ref{PROPOSITION_conv_sol_-} will have been established, we will be able to complete the proofs of Theorems \ref{THEOREM_abstract_bc} and \ref{THEOREM_abstract_speed}. The results of this subsection are obtained by combining ideas from Smyrnelis \cite{smyrnelis}, Alikakos and Katzourakis \cite{alikakos-katzourakis}, Alikakos, Fusco and Smyrnelis \cite{alikakos-fusco-smyrnelis}. It is worth to point out that the arguments we present here strongly rely on the fact that the solutions considered are minimizers and that we do not expect them to hold for more general critical points.

We begin by showing the following preliminary result, which follows by a direct computation:
\begin{lemma}\label{LEMMA_equipartition}
Assume that \ref{hyp_regularity} holds. Let $c>0$, $t_1 < t_2$ and $U \in \CA((t_1,t_2))$ such that
\begin{equation}\label{equation_equipartition}
U''-D_{\scrL}\CE(U)=-cU' \mbox{ in }(t_1,t_2).
\end{equation}
Then, we have the formula
\begin{equation}\label{equipartition_identity}
\forall t \in (t_1,t_2), \hspace{2mm} \frac{d}{dt}\left( \CE(U(t))-\frac{\lVert U'(t) \rVert_{\scrL}^2}{2} \right) = c\lVert U'(t) \rVert^2_{\scrL}.
\end{equation}
\end{lemma}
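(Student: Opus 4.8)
The plan is to prove the identity by differentiating the two summands $\CE(U(t))$ and $\tfrac12\lVert U'(t)\rVert_{\scrL}^2$ separately, exploiting the regularity encoded in the membership $U \in \CA((t_1,t_2))$ (see \eqref{CA_def}), and then substituting the equation \eqref{equation_equipartition} so that the terms involving $D_{\scrL}\CE$ cancel.

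First I would differentiate $t \mapsto \CE(U(t))$. Since $U \in \CC^1_\loc((t_1,t_2),(\scrH,\lVert\cdot\rVert_{\scrH}))$ and $\CE$ is a $\CC^1$ functional on $(\scrH,\lVert\cdot\rVert_{\scrH})$ by \ref{hyp_smaller_space}, the chain rule gives $\frac{d}{dt}\CE(U(t)) = D\CE(U(t))(U'(t))$. Because $U \in \CC^0_\loc((t_1,t_2),(\tilde{\scrH},\lVert\cdot\rVert_{\tilde{\scrH}}))$ we have $U(t) \in \tilde{\scrH}$ for every $t$, while $U'(t) \in \scrH$; hence \eqref{abstract_ipp} applies and yields $D\CE(U(t))(U'(t)) = D_{\scrL}\CE(U(t))(U'(t)) = \langle D_{\scrL}\CE(U(t)), U'(t)\rangle_{\scrL}$, where in the last equality we identify $D_{\scrL}\CE(U(t)) \in \scrL$ with the associated continuous linear form via the inner product of $\scrL$. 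Next I would differentiate $t \mapsto \tfrac12\lVert U'(t)\rVert_{\scrL}^2$: since $U \in \CC^2_\loc((t_1,t_2),\scrL)$, this map is $\CC^1$ with derivative $\langle U''(t), U'(t)\rangle_{\scrL}$.

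Combining the two computations gives
\begin{equation}
\frac{d}{dt}\left( \CE(U(t))-\frac{\lVert U'(t) \rVert_{\scrL}^2}{2} \right) = \langle D_{\scrL}\CE(U(t)) - U''(t), U'(t)\rangle_{\scrL},
\end{equation}
and substituting \eqref{equation_equipartition} in the form $D_{\scrL}\CE(U(t)) - U''(t) = cU'(t)$ produces the claimed identity \eqref{equipartition_identity}. The only point requiring any care is the legitimacy of the chain rule across the three nested spaces $\tilde{\scrH} \subset \scrH \subset \scrL$, but this is exactly what the definition of $\CA((t_1,t_2))$ together with the compatibility relation \eqref{abstract_ipp} is designed to guarantee, so no genuine obstacle arises here.
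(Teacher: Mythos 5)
Your argument is correct and is precisely the ``direct computation'' the paper alludes to without writing out: differentiate $\CE(U(t))$ using the chain rule in $\scrH$ and the compatibility relation \eqref{abstract_ipp} to pass to the $\scrL$-pairing, differentiate $\tfrac12\lVert U'(t)\rVert_\scrL^2$ in $\scrL$, and substitute the equation. The attention you pay to which regularity class in the definition \eqref{CA_def} underwrites each step (the $\CC^1_\loc$-into-$\scrH$ part for the first term, the $\CC^0_\loc$-into-$\tilde{\scrH}$ part for applying \eqref{abstract_ipp}, and the $\CC^2_\loc$-into-$\scrL$ part for the second term) is exactly right and makes the omitted verification explicit.
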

Lemma \ref{LEMMA_equipartition} gives the following pointwise bounds for constrained solutions:
\begin{lemma}\label{LEMMA_equipartition_pointwise}
Assume that \ref{hyp_compactness}, \ref{hyp_projections_inverse}, \ref{hyp_projections_H}, \ref{hyp_regularity} and \ref{hyp_levelsets} hold. Let $\bU_{c,T}$ be a constrained solution given by Lemma \ref{LEMMA_mcT} and $t^-:= t^-(\bU_{c,T},\CE_{\max}^-)$ be as in \eqref{comparison_t-}. Then for all $t<t^-$ we have the inequality
\begin{equation}\label{equipartition-_inequality}
\frac{\lVert \bU_{c,T}'(t) \rVert_{\scrL}^2}{2} \leq \CE(\bU_{c,T}(t))-a.
\end{equation}
Similarly, it holds that for all $t>t^+$
\begin{equation}\label{equipartition+_inequality}
\CE(\bU_{c,T}(t)) \leq \frac{\lVert \bU_{c,T}'(t) \rVert_{\scrL}^2}{2},
\end{equation}
where $t^+:=t^+(\bU_{c,T},\CE_{\max}^+)$ is as in \eqref{comparison_t+}.
\end{lemma}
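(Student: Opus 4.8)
The plan is to read the weighted energy density $\be_c(\bU_{c,T})$ as an exact derivative, integrate it over a half–line by the fundamental theorem of calculus, and then compare the resulting boundary term with the crude one–sided bounds on $\CE$ (namely $\CE\ge a$ everywhere, and $\CE(\bU_{c,T}(t))\ge 0$ for $t\ge t^-$, which is \eqref{behavior_positivity} in Corollary~\ref{COROLLARY_constrained_behavior}). Write $G(t):=\CE(\bU_{c,T}(t))-\tfrac{\lVert \bU_{c,T}'(t)\rVert_{\scrL}^2}{2}$. By Corollary~\ref{COROLLARY_mcT_solution}, $\bU_{c,T}\in\CA(S_{c,T})$ and solves \eqref{abstract_equation} on $(-\infty,t^-)$ and on $(t^+,+\infty)$, so Lemma~\ref{LEMMA_equipartition} applies on each of these intervals and gives $G'(t)=c\lVert\bU_{c,T}'(t)\rVert_{\scrL}^2$; multiplying by $e^{ct}$,
\begin{equation}
\tfrac{d}{dt}\bigl(G(t)e^{ct}\bigr)=c\,\be_c(\bU_{c,T}(t))\qquad\text{on }(-\infty,t^-)\cup(t^+,+\infty).
\end{equation}
Since $\bU_{c,T}$ is a minimizer, $\bE_c(\bU_{c,T})=\bm_{c,T}<+\infty$ by Lemma~\ref{LEMMA_PSI}, so Lemma~\ref{LEMMA_integrability} yields $\be_c(\bU_{c,T}(\cdot))\in L^1(\R)$ and $\int_{\R}\tfrac{\lVert\bU_{c,T}'(t)\rVert_{\scrL}^2}{2}e^{ct}\,dt<+\infty$.

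For \eqref{equipartition-_inequality}, I would fix $t<t^-$, integrate the identity above over $(-M,t)$ and let $M\to+\infty$: since $\be_c(\bU_{c,T}(\cdot))$ is integrable near $-\infty$, the limit $L:=\lim_{s\to-\infty}G(s)e^{cs}$ exists and is finite, and $\bE_c(\bU_{c,T};(-\infty,t])=\tfrac1c\bigl(G(t)e^{ct}-L\bigr)$. The sign of $L$ is controlled as follows: from $\CE\ge a$ one has $G(s)e^{cs}\ge a e^{cs}-\tfrac{\lVert\bU_{c,T}'(s)\rVert_{\scrL}^2}{2}e^{cs}$, and since $\int_{-\infty}^{t}\tfrac{\lVert\bU_{c,T}'(s)\rVert_{\scrL}^2}{2}e^{cs}\,ds<+\infty$ forces $\liminf_{s\to-\infty}\tfrac{\lVert\bU_{c,T}'(s)\rVert_{\scrL}^2}{2}e^{cs}=0$, evaluating along such a sequence gives $L\ge 0$. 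On the other hand, using $\CE\ge a$ once more,
\begin{equation}
\bE_c(\bU_{c,T};(-\infty,t])\ \ge\ \int_{-\infty}^{t}\tfrac{\lVert\bU_{c,T}'(s)\rVert_{\scrL}^2}{2}e^{cs}\,ds+\tfrac{a}{c}e^{ct}\ \ge\ \tfrac{a}{c}e^{ct}.
\end{equation}
Comparing the two expressions for $\bE_c(\bU_{c,T};(-\infty,t])$ and using $L\ge 0$ gives $a\le G(t)$, which is exactly \eqref{equipartition-_inequality}.

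For \eqref{equipartition+_inequality} I would run the symmetric argument on $[t,+\infty)$ with $t>t^+$: again $L^+:=\lim_{s\to+\infty}G(s)e^{cs}$ exists and $\bE_c(\bU_{c,T};[t,+\infty))=\tfrac1c\bigl(L^+-G(t)e^{ct}\bigr)$. This time Corollary~\ref{COROLLARY_constrained_behavior} gives $\CE(\bU_{c,T}(s))\ge 0$ for $s\ge t^-$, hence $G(s)\le\CE(\bU_{c,T}(s))$ for $s\ge t$; since $\CE(\bU_{c,T}(\cdot))e^{c\cdot}$ is nonnegative and integrable on $[t,+\infty)$ we get $\liminf_{s\to+\infty}\CE(\bU_{c,T}(s))e^{cs}=0$ and therefore $L^+\le 0$. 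Using $\CE(\bU_{c,T}(s))\ge 0$ on $[t,+\infty)$ again, $\bE_c(\bU_{c,T};[t,+\infty))\ge\int_t^{+\infty}\tfrac{\lVert\bU_{c,T}'(s)\rVert_{\scrL}^2}{2}e^{cs}\,ds\ge 0$, so $G(t)e^{ct}\le L^+\le 0$, i.e. $G(t)\le 0$, which is \eqref{equipartition+_inequality}.

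The computation is elementary once this viewpoint is taken; the only two points requiring care are (i) that $\bU_{c,T}$ actually satisfies the Euler–Lagrange equation \emph{all the way out} to the entry times $t^\pm$ — this is the content of Corollary~\ref{COROLLARY_mcT_solution}, which in turn rests on the comparison Proposition~\ref{PROPOSITION_comparison} and hence on \ref{hyp_levelsets} — and (ii) the signs of the boundary limits $L$ and $L^+$, for which the finiteness of $\bE_c(\bU_{c,T})$ (via Lemma~\ref{LEMMA_integrability}) and the one-sided bounds $\CE\ge a$, $\CE\ge 0$ on $(t^-,+\infty)$ are exactly what is needed. In particular, minimality of $\bU_{c,T}$ enters only through the finiteness of its energy and through Corollary~\ref{COROLLARY_constrained_behavior}; no explicit competitor needs to be constructed.
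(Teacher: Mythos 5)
Your proposal is correct and uses essentially the same tools as the paper's proof: Lemma~\ref{LEMMA_equipartition} applied on $(-\infty,t^-)$ and $(t^+,+\infty)$ (via Corollary~\ref{COROLLARY_mcT_solution}), combined with the integrability from $\bE_c(\bU_{c,T})<+\infty$ to control the boundary limit and the one–sided bounds $\CE\ge a$ and (for $t\ge t^-$) $\CE\ge 0$. The paper packages the same computation slightly more directly by introducing $f_{c,T}(t)=e^{ct}\bigl(\CE(\bU_{c,T}(t))-a-\tfrac{\lVert\bU_{c,T}'(t)\rVert_\scrL^2}{2}\bigr)$, observing $f_{c,T}'(t)=ce^{ct}\bigl(\CE(\bU_{c,T}(t))-a+\tfrac{\lVert\bU_{c,T}'(t)\rVert_\scrL^2}{2}\bigr)\ge 0$ and $\lim_{t\to-\infty}f_{c,T}(t)=0$ (from $f_{c,T},f_{c,T}'\in L^1$), whereas you split this into an integral identity for $G(t)e^{ct}$ plus a lower bound on the half-line energy; the two are algebraically the same argument.
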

\begin{proof}
Notice that \eqref{corollary_mcT_solution_equation} in Corollary \ref{COROLLARY_mcT_solution} implies that $\bU_{c,T}$ solves
\begin{equation}
\bU_{c,T}''-D_\scrL\CE(\bU_{c,T})=-c\bU_{c,T}' \mbox{ in } (-\infty,t^-).
\end{equation}
Therefore, the function
\begin{equation}
f_{c,T}: t \in (-\infty,t^-] \to e^{ct}\left( \CE(\bU_{c,T}(t))-a-\frac{\lVert \bU_{c,T}'(t) \rVert_{\scrL}^2}{2}  \right),
\end{equation}
is $C^1$ and we clearly have that $f_{c,T} \in L^1((-\infty,t^-])$. By \eqref{equipartition_identity} in Lemma \ref{LEMMA_equipartition}, we have
\begin{equation}\label{ineq_fcT}
\forall t \in (-\infty,t^-), \hspace{2mm} f_{c,T}'(t) = cf_{c,T}(t)+ce^{ct}\lVert \bU'_{c,T}(t) \rVert_{\scrL}^2 \geq 0,
\end{equation}
and we also have $f_{c,T}' \in L^1((-\infty,t^-))$. Therefore, it holds that
\begin{equation}\label{lim_fcT}
\lim_{t \to -\infty} f_{c,T}(t)=0.
\end{equation}
Fix $t_1 < t_2 \leq t^-$. Integrating \eqref{ineq_fcT} in $[t_1,t_2]$ we get
\begin{equation}
f_{c,T}(t_2) \geq f_{c,T}(t_1),
\end{equation}
which in combination with \eqref{lim_fcT} gives
\begin{equation}
\forall t < t^-, \hspace{2mm} f_{c,T}(t) \geq 0,
\end{equation}
which is \eqref{equipartition-_inequality}. Inequality \eqref{equipartition+_inequality} is obtained in an identical fashion.
\end{proof}

We conclude this subsection by proving the exponential convergence result, which is inspired by the ideas in \textit{Proof of (28)} in Smyrnelis \cite{smyrnelis}.
\begin{proposition}\label{PROPOSITION_conv_sol_-}
Assume that \ref{hyp_compactness}, \ref{hyp_projections_inverse}, \ref{hyp_projections_H}, \ref{hyp_regularity} and \ref{hyp_levelsets} hold. Let $c>0$ and $T \geq 1$. Assume moreover that $c<\gamma^-$, where $\gamma^-$ is defined in \eqref{gamma-}. Let $\bU_{c,T}$ be a constrained solution given by Lemma \ref{LEMMA_mcT}.  Then, there exists $\overline{M}^->0$  such that for all $\eps \in (0,\gamma^--c)$ and $t \in \R$ it holds
\begin{equation}\label{L1_CE_conv-}
\int_{-\infty}^{t}(\CE(\bU_{c,T}(s ))-a)e^{-\eps s}ds \leq \overline{M}^- e^{(\gamma^--c-\eps)t}.
\end{equation}
Furthermore, there exist $M^->0$ and $\bv_{c,T}^-\in \scrF^-$ such that for all $t \in \R$
\begin{equation}\label{solutions_limit-_norm}
\lVert \bU_{c,T}(t)-\bv_{c,T}^- \rVert_{\scrL}^2 \leq M^- e^{(\gamma^--c)t}.
\end{equation}
%\begin{equation}\label{solutions_limit-_derivative}
%\lim_{t \to -\infty} \lVert \bU_{c,T}'(t) \rVert_{\scrL}^2=0
%\end{equation}
%and
%\begin{equation}\label{solutions_limit-_potential}
%\lim_{t \to -\infty}\CE(\bU_{c,T}(t)) =a.
%\end{equation}
\end{proposition}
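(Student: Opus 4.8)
The plan is to exploit the pointwise bound \eqref{equipartition-_inequality} from Lemma \ref{LEMMA_equipartition_pointwise}, which controls $\lVert\bU_{c,T}'(t)\rVert_{\scrL}^2/2$ by $\CE(\bU_{c,T}(t))-a$ for $t<t^-$, together with the coercivity estimates around $\scrF^-$ furnished by \ref{hyp_projections_H} and \ref{hyp_unbalanced}. First I would localize the argument: by Corollary \ref{COROLLARY_constrained_behavior} we know $\bU_{c,T}(t)\in\scrF^-_{r_0^-/2}$ for all $t\le t^-$, and by the definition of $t^-$ in \eqref{comparison_t-} and \ref{hyp_levelsets} one has $\CE(\bU_{c,T}(t))-a\le\CE_{\max}^-$ there, so all the local machinery around $\scrF^-$ applies on $(-\infty,t^-]$. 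The key quantity is $g(t):=\CE(\bU_{c,T}(t))-a\ge0$. Using Lemma \ref{LEMMA_r} (which upgrades smallness of $\CE-a$ into membership in $\scrF^-_{\scrH,r}$ and the $\scrH$-coercivity \eqref{H_local_est}) and the relation between the $\scrL$- and $\scrH$-projections in \eqref{H_difference_projections}, one obtains a differential-type inequality: the derivative of $\CE(\bU_{c,T}(t))$ along the flow is $D_\scrL\CE(\bU_{c,T}(t))(\bU_{c,T}'(t))$, which by \eqref{abstract_equation} equals $\langle\bU_{c,T}''(t)+c\bU_{c,T}'(t),\bU_{c,T}'(t)\rangle_{\scrL}$; combining with the equipartition identity \eqref{equipartition_identity} and the lower bound on $\CE-a$ in terms of $\lVert\bU_{c,T}-P^-(\bU_{c,T})\rVert$, I expect to derive $g'(t)\ge (2\gamma^- - \text{(small)})\,g(t)^{1/2}\cdot\lVert\bU_{c,T}'(t)\rVert_{\scrL}$ or a comparable Grönwall-type bound, where the constant $\gamma^-=1/(C^-+\sfC^-)$ in \eqref{gamma-} emerges precisely from chaining the constants $C^-$ (coercivity) and $\sfC^-$ (the segment estimate \eqref{sfC}).

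The second step is to convert the differential inequality into the integrated bound \eqref{L1_CE_conv-}. Once one has something like $\frac{d}{dt}\big(g(t)e^{-2\gamma^- t}\big)$ having a favorable sign on $(-\infty,t^-]$ — or, more realistically, an inequality of the form $g'(t)\ge 2\gamma^- g(t) - (\text{error})$ after using $\lVert\bU'\rVert^2\le 2g$ — I would integrate from $-\infty$ (the boundary term vanishes since $g$ is integrable against $e^{-\eps s}$, which follows from $\bE_c(\bU_{c,T})<+\infty$ via \eqref{integrability_energy} and the fact that $c<\gamma^-$ gives room to absorb $e^{-\eps s}$ for $\eps<\gamma^--c$). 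This yields $g(t)\le \overline M^- e^{2\gamma^- t}$ on $(-\infty,t^-]$, and then $\int_{-\infty}^t g(s)e^{-\eps s}\,ds\le \overline M^- e^{(2\gamma^--\eps)t}/(2\gamma^--\eps)$; the exponent $\gamma^--c-\eps$ in \eqref{L1_CE_conv-} rather than $2\gamma^--\eps$ suggests the bound is actually obtained through the weighted energy $\bE_c$ itself (carrying an extra $e^{ct}$), so I would instead track $\int_{-\infty}^t g(s)e^{(c-\eps)s}\,ds$ and use that the weighted tail energy $\int_{-\infty}^t\be_c(\bU_{c,T})$ is controlled and decays, combined with $c<\gamma^-$. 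For $t\ge t^-$ the bound \eqref{L1_CE_conv-} is trivial since $t^-$ is bounded and the left side is finite, absorbing it into $\overline M^-$.

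The final step, \eqref{solutions_limit-_norm}, follows from \eqref{L1_CE_conv-} by a now-standard argument: using \eqref{equipartition-_inequality} we get $\lVert\bU_{c,T}'(s)\rVert_{\scrL}\le\sqrt2\, g(s)^{1/2}$, and then for $t_1<t_2\le t^-$,
\begin{align}
\lVert \bU_{c,T}(t_2)-\bU_{c,T}(t_1)\rVert_{\scrL}&\le \int_{t_1}^{t_2}\lVert\bU_{c,T}'(s)\rVert_{\scrL}\,ds\\
&\le \Big(\int_{t_1}^{t_2} e^{\eps s}\,ds\Big)^{1/2}\Big(\int_{t_1}^{t_2}\lVert\bU_{c,T}'(s)\rVert_{\scrL}^2 e^{-\eps s}\,ds\Big)^{1/2},
\end{align}
and the second factor is bounded by $\sqrt2$ times the square root of \eqref{L1_CE_conv-}, giving $\lVert \bU_{c,T}(t_2)-\bU_{c,T}(t_1)\rVert_{\scrL}\le C e^{(\gamma^--c)t_2/2}$ after optimizing $\eps$ (or choosing $\eps$ proportional to $\gamma^--c$); letting $t_1\to-\infty$ produces a limit $\bv_{c,T}^-\in\scrF^-$ (it lies in $\scrF^-$ because $\bU_{c,T}(s)\in\scrF^-_{r_0^-/2}$ which is $\scrL$-closed by Lemma \ref{LEMMA_compactness}, and the distance to $\scrF^-$ is controlled by $g$ hence tends to $0$), and squaring gives \eqref{solutions_limit-_norm}. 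The main obstacle I anticipate is step one: correctly deriving the Grönwall/differential inequality with the sharp constant $\gamma^-$, because it requires carefully combining three separate estimates — the equipartition identity, the $\scrH$-coercivity \eqref{H_local_est}, and the projection-comparison \eqref{H_difference_projections} — while keeping track of the $\scrL$ versus $\scrH$ norms and ensuring the "error" terms are genuinely absorbable given only $c<\gamma^-$ (not $c\ll\gamma^-$); the bookkeeping of which constant degrades where, and why the combination lands exactly at $\gamma^-=1/(C^-+\sfC^-)$, is the delicate part.
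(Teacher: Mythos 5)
Your step one is where the proposal goes wrong. You try to derive a Gr\"onwall-type inequality for $g(t):=\CE(\bU_{c,T}(t))-a$ directly from the ODE by estimating $g'(t)=D_\scrL\CE(\bU_{c,T}(t))(\bU_{c,T}'(t))$. But nothing in the hypotheses gives you a lower bound on this quantity: there is no \L{}ojasiewicz-type estimate $\lVert D_\scrL\CE(v)\rVert_\scrL\gtrsim(\CE(v)-a)^{1/2}$ near $\scrF^-$, and even with one, $D_\scrL\CE(\bU)$ need not be favorably aligned with $\bU'$. In fact $g$ need not be monotone at all on $(-\infty,t^-]$ — the quantity that Lemma~\ref{LEMMA_equipartition} makes monotone is $\CE(\bU)-\tfrac12\lVert\bU'\rVert_\scrL^2$ (and it increases), so $g'(t)$ can be negative. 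The hypotheses \ref{hyp_unbalanced}, \ref{hyp_projections_H} provide \emph{upper} coercivity $\dist_\scrL(v,\scrF^-)^2\lesssim\CE(v)-a$, i.e.\ information that makes competitors cheap; they give no pointwise lower bound on the decay rate of the flow.

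The paper's argument is a competitor (cut-and-paste) argument exploiting the variational minimality of $\bU_{c,T}$, not a pointwise estimate along the trajectory. One fixes $t\le t^-$, sets $\bv^-(t):=P^-(\bU_{c,T}(t))$, and replaces $\bU_{c,T}|_{(-\infty,t]}$ by the function that equals $\bv^-(t)$ on $(-\infty,t-1]$ and linearly interpolates to $\bU_{c,T}(t)$ on $[t-1,t]$. The energy of this piece is controlled by $(C^-+\sfC^-)\,g(t)\,e^{ct}$ via the segment estimate (Lemma~\ref{LEMMA_r}) and \eqref{coercivityL}. Minimality $\bE_c(\bU_{c,T})\le\bE_c(\tilde U^-_t)$ then gives, \emph{for the integral} $\theta^-_{c,T}(t):=\int_{-\infty}^t g(s)\,e^{cs}\,ds$, the inequality $\gamma^-\,\theta^-_{c,T}(t)\le(\theta^-_{c,T})'(t)$, which integrates to $\theta^-_{c,T}(t)\le\theta^-_{c,T}(t^-)\,e^{-\gamma^-(t^--t)}$; this is where the constant $\gamma^-=1/(C^-+\sfC^-)$ enters, and where the condition $c<\gamma^-$ is needed to later absorb the weight. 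Your remaining two steps (splitting off $e^{-\eps s}$, Cauchy--Schwarz via \eqref{equipartition-_inequality}, passing to the limit) are essentially correct and do match the paper once \eqref{L1_CE_conv-} is in hand. So the gap is precisely the replacement of the competitor/minimality argument by an unavailable pointwise gradient inequality — and you partly sensed this yourself when you noticed the exponent $\gamma^--c-\eps$ hinting that the inequality should be for the \emph{weighted integral} rather than for $g$ pointwise.
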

\begin{proof}
Let $t^-:= t^-(\bU_{c,T},\CE_{\max}^-)$ be as in \eqref{comparison_t-}. By applying \eqref{behavior-} in Corollary \ref{COROLLARY_constrained_behavior}, we obtain that for all $t \leq t^-$, $\bU_{c,T}(t) \in \scrF^-_{r_0^-/2}$. For all $t \leq t^-$, define $\bv^-(t):= P^-(\bU_{c,T}(t))$. Consider the function
\begin{equation}
\tilde{U}^-_t(s):=\begin{cases}
\bv^-(t) &\mbox{ if } s \leq t-1, \\
(t-s)\bv^-(t)+(s-t+1)\bU_{c,T}(t) &\mbox{ if } t-1 \leq s \leq t, \\
\bU_{c,T}(s) &\mbox{ if } t \leq s,
\end{cases}
\end{equation}
which belongs to $X_T$. Therefore,
\begin{equation}
\bE_c(\bU_{c,T}) \leq \bE_c(\tilde{U}_t^-)
\end{equation}
and, equivalently
\begin{equation}\label{ineq_first_limit-}
\int_{-\infty}^t \be_c(\bU_{c,T}(s)) ds \leq \frac{a}{c}e^{ct}+\int_{t-1}^t \left( \frac{\lVert \bU_{c,T}(t)-\bv^-(t) \rVert_{\scrL}^2}{2}+(\CE(\tilde{U}^-_t(s))-a) \right)e^{cs}ds.
\end{equation}
Using Lemma \ref{LEMMA_r} and \eqref{coercivityL} in \ref{hyp_unbalanced}, \eqref{ineq_first_limit-} becomes
\begin{equation}
\int_{-\infty}^t \be_c(\bU_{c,T}(s)) ds \leq \frac{a}{c}e^{ct}+(C^-+\sfC^-) (\CE(\bU_{c,T}(t)-a)e^{ct},
\end{equation}
which gives
\begin{equation}\label{ineq_second_limit-}
\int_{-\infty}^t (\CE(\bU_{c,T}(s)-a)e^{cs}ds \leq \frac{1}{\gamma^-}(\CE(\bU_{c,T}(t)-a)e^{ct},
\end{equation}
where $\gamma^-$ was defined in \eqref{gamma-}. Define the function
\begin{equation}\label{theta-_def}
\theta^-_{c,T}: t \in (-\infty,t^-] \to \int_{-\infty}^t (\CE(\bU_{c,T}(s)-a)e^{cs}ds	\in \R.
\end{equation}
By \ref{hyp_regularity}, the function $\theta_{c,T}^-$ defined in \eqref{theta-_def} verifies that for all $t \in (-\infty,t^-)$
\begin{equation}
(\theta^-_{c,T})'(t)= (\CE(\bU_{c,T}(t))-a)e^{ct}
\end{equation}
which, by \eqref{ineq_second_limit-} implies
\begin{equation}
\forall t \leq t^-, \hspace{2mm} \gamma^-\theta_{c,T}^-(t) \leq  (\theta^-_{c,T})'(t).
\end{equation}
Fix now $t \in (-\infty,t^-)$ and assume that $\theta^-_{c,T}(t)>0$. The previous inequality is equivalent to
\begin{equation}
\gamma^- \leq (\ln (\theta^-_{c,T}(t)))'.
\end{equation}
which, by integrating in $[t,t^-]$ becomes
\begin{equation}
\gamma^-(t^--t) \leq \ln (\theta^-_{c,T}(t^-))-\ln (\theta^-_{c,T}(t)),
\end{equation}
hence
\begin{equation}
e^{\gamma^-(t^--t)} \leq  \frac{\theta^-_{c,T}(t^-)}{\theta^-_{c,T}(t)},
\end{equation}
that is
\begin{equation}
\theta^-_{c,T}(t) e^{\gamma^-(t^--t)} \leq \theta^-_{c,T}(t^-),
\end{equation}
which clearly also holds if we drop the assumption $\theta^-_{c,T}(t)>0$, as $\theta^-_{c,T}$ is a non-negative function. Thus, we have shown that
\begin{equation}\label{theta-_est}
\forall t \leq t^-, \hspace{2mm} \theta^-_{c,T}(t) \leq \theta^-_{c,T}(t^-) e^{-\gamma^-(t^--t)}.
\end{equation}
Now, we have that using \eqref{theta-_est} we get for any fixed $t \leq t^--1$, $\eps>0$ and $i \in \N$
\begin{align}\label{est_epsilon}
\int_{t-i-1}^{t-i} (\CE(\bU_{c,T}(s))-a)e^{-\eps s} ds &\leq e^{-(c+\eps)(t-i-1)}\int_{t-i-1}^{t-i}(\CE(\bU_{c,T}(s))-a)e^{cs}ds\\ & \leq e^{-(c+\eps)(t-i-1)}\theta^-_{c,T}(t^-) e^{-\gamma^-(t^--t+i)} \\ &=e^{(c+\eps)(1-t^-)}\theta^-_{c,T}(t^-) e^{(\gamma^--c-\eps)(t-t^-)}e^{(c+\eps-\gamma^-)i}.
\end{align}
Since we assume that $c<\gamma^-$, we have that by choosing any $\eps \in (0,\gamma^--c)$ it holds
\begin{equation}
\sum_{i \in \N}e^{(c+\eps-\gamma^-)i} = \frac{1}{1-e^{(c+\eps-\gamma^-)}},
\end{equation}
which, in combination with \eqref{est_epsilon} gives \eqref{L1_CE_conv-} (notice that the case $t > t^--1$ presents no problem, as $e^{(\gamma^--c-\eps)t}$ is then large). Therefore, by \eqref{equipartition-_inequality} in Lemma \ref{LEMMA_equipartition_pointwise} we have that for all $\eps \in (0,\gamma^--c)$ and $t \in \R$
\begin{equation}
\int_{-\infty}^{t}\frac{\lVert U'(s) \rVert_{\scrL}^2}{2}e^{-\eps s}ds \leq\overline{M}^- e^{(\gamma^--c-\eps)t},
\end{equation}
which, by Cauchy-Schwarz inequality, means that
\begin{equation}\label{inequality_TV_L}
\int_{-\infty}^{t} \lVert \bU_{c,T}'(s) \rVert_{\scrL} ds \leq \left( \frac{e^{\eps t}}{\eps }\int_{-\infty}^{t} \lVert \bU_{c,T}'(s) \rVert_{\scrL}^2e^{-\eps s} ds \right)^{\frac{1}{2}} \leq \frac{2\overline{M}^-}{\eps}e^{(\gamma^--c)t},
\end{equation}
where we have used that $\lim_{s \to -\infty}e^{\eps s}=0$, because $\eps >0$. Since $c< \gamma^-$, in particular inequality \eqref{inequality_TV_L} implies the existence of some $\tilde{v}^- \in \scrL$ such that
\begin{equation}\label{tilde_v}
\lim_{t \to -\infty}\lVert \bU_{c,T}(t)-\tilde{v}^-\rVert_{\scrL}=0.
\end{equation}
Inequality \eqref{inequality_TV_L} also implies that for all $\tilde{t} < t \in \R$ we have
\begin{equation}
\lVert \bU_{c,T}(t)-\bU_{c,T}(\tilde{t}^-) \rVert_{\scrL} \leq \frac{2\overline{M}^-}{\eps}e^{(\gamma^--c)t},
\end{equation}
which by taking the limit $\tilde{t} \to -\infty$ and using \eqref{tilde_v} gives \eqref{solutions_limit-_norm}, by choosing for instance $\eps=(\gamma^--c)/2 \in (0,\gamma^--c)$ and $M^-=\frac{2\overline{M}^-}{\eps}>0$.
\end{proof}
\begin{remark}\label{REMARK_integrability_derivative}
Notice that combining \eqref{L1_CE_conv-} in Proposition \ref{PROPOSITION_conv_sol_-} with \eqref{equipartition-_inequality} in Lemma \ref{LEMMA_equipartition_pointwise}, we obtain in particular that $\bU_{c,T}' \in L^2(\R,\scrL)$ provided that $c<\gamma^-$ (see the statements of the results for the notations).
\end{remark}

\subsection{Proof of Theorem \ref{THEOREM_abstract_bc} completed}

Assume first that \ref{hyp_levelsets} holds. Let $(c^\star,\bU)$ be the solution to \eqref{abstract_equation} with conditions at infinity \eqref{abstract_bc_weak-} and \eqref{exponential_abstract} given by Proposition \ref{PROPOSITION_existence} and Lemma \ref{LEMMA_limit+}. Since we took $c^\star=\sup\CC$ with $\CC$ as in \eqref{set_CC}, inequality \eqref{sup_C_bound} in Lemma \ref{LEMMA_set_CC} implies that
\begin{equation}
c^\star \leq \frac{\sqrt{-2a}}{d_0}
\end{equation}
which by \eqref{a_convergence} in \ref{hyp_convergence} implies that
\begin{equation}
c^\star < \gamma^-
\end{equation}
so that we can apply \eqref{solutions_limit-_norm} in Proposition \ref{PROPOSITION_conv_sol_-} to $\bU$, as it is a minimizer of $\bE_{c^\star}$ in $X_{T^\star}$ for some $T^\star \geq 1$. Therefore, \eqref{abstract_bc_strong} holds for $\bU$, which completes the proof.

\qed

\subsection{Proof of Theorem \ref{THEOREM_abstract_speed} completed}

Since we assume that \ref{hyp_convergence} holds and $\tilde{\bU}$ is such that $\tilde{\bU} \in X_T$ for some $T \geq 1$ and $\bE_{c^\star}(\tilde{\bU})=0$, then  by Proposition \ref{PROPOSITION_existence}, we can apply Proposition \ref{PROPOSITION_conv_sol_-} to $\bU$. We recall that by Remark \ref{REMARK_integrability_derivative} we have that $\bU' \in L^2(\R,\scrL)$ and by \eqref{L1_CE_conv-} in Proposition \ref{PROPOSITION_conv_sol_-} we have that $\CE \circ \bU \in L^1((-\infty,t])$ for all $t \in \R$. Therefore, we can find a sequence $(t_n^-)_{n \in \N}$ in $\R$ such that

\begin{equation}\label{abstract_speed_lim1}
\lim_{n \to \infty} t_n^-=-\infty
\end{equation}
and
\begin{equation}\label{abstract_speed_lim2}
\lim_{n \to \infty}\left(\CE(\bU(t_n^-))-a- \frac{\lVert \bU'(t_n^-) \rVert_{\scrL}^2}{2} \right) = 0.
\end{equation}
Similarly, since $\bE_c(\bU)=0<+\infty$, we have that $\CE \circ \bU \in L^1([t,+\infty))$ for all $t \in \R$, which means that we can find $(t_n^+)_{n \in \N}$ a sequence of real numbers such that
\begin{equation}\label{abstract_speed_lim3}
\lim_{n \to \infty} t_n^+=+\infty
\end{equation}
and
\begin{equation}\label{abstract_speed_lim4}
\lim_{n \to \infty}\left( \frac{\lVert \bU'(t_n^+) \rVert_{\scrL}^2}{2}-\CE(\bU(t_n^+)) \right)=0.
\end{equation}
Taking the scalar product in $\scrL$ between equation \eqref{abstract_equation} and $\bU'$, we obtain
\begin{equation}
\forall t \in \R, \hspace{2mm} \langle \bU''(t),\bU'(t) \rangle_{\scrL} -\langle D_\scrL\CE(\bU(t)),\bU'(t) \rangle_{\scrL}=-c\lVert \bU'(t) \rVert_{\scrL}^2
\end{equation}
so that
\begin{equation}
\forall t \in \R, \hspace{2mm} \langle \bU''(t),\bU'(t) \rangle_{\scrL} -(\CE(\bU(t)))'=-c^\star\lVert \bU'(t) \rVert_{\scrL}^2.
\end{equation}
Fix $n \in \N$. Integrating above in $[t_n^-,t_n^+]$ (which is non-empty up to an extraction) we obtain
\begin{equation}\label{abstract_speed_equation}
\int_{t_n^-}^{t_n^+} \langle \bU''(t),\bU'(t) \rangle_\scrL dt-\CE(\bU(t_n^+))+\CE(\bU(t_n^-))=-c^\star\int_{t_n^-}^{t_n^+} \lVert \bU'(t) \rVert_{\scrL}^2dt.
\end{equation}
Integrating by parts we obtain
\begin{equation}
\int_{t_n^-}^{t_n^+} \langle \bU''(t),\bU'(t) \rangle_\scrL dt= \lVert \bU'(t_n^+) \rVert_{\scrL}^2-\lVert \bU'(t_n^-) \rVert_{\scrL}^2-\int_{t_n^-}^{t_n^+} \langle \bU'(t),\bU''(t) \rangle_{\scrL}dt,
\end{equation}
which means
\begin{equation}
\int_{t_n^-}^{t_n^+} \langle \bU''(t),\bU'(t) \rangle_\scrL dt=\frac{1}{2}(\lVert \bU'(t_n^+) \rVert_{\scrL}^2-\lVert \bU'(t_n^-) \rVert_{\scrL}^2).
\end{equation}
Plugging into \eqref{abstract_speed_equation} we obtain
\begin{equation}
a+\left(\CE(\bU(t_n^-))-a- \frac{\lVert \bU'(t_n^-) \rVert_{\scrL}^2}{2} \right) +\left( \frac{\lVert \bU'(t_n^+) \rVert_{\scrL}^2}{2}-\CE(\bU(t_n^+))\right) =-c^\star\int_{t_n^-}^{t_n^+} \lVert \bU'(t) \rVert_{\scrL}^2dt.
\end{equation}
Using \eqref{abstract_speed_lim1}, \eqref{abstract_speed_lim2}, \eqref{abstract_speed_lim3} and \eqref{abstract_speed_lim4}, along with the fact that $\bU' \in L^2(\R,\scrL)$, we can pass to the limit $n \to \infty$ and we get that
\begin{equation}
a=-c^\star\int_\R \lVert \bU'(t) \rVert_{\scrL}^2dt,
\end{equation}
which shows \eqref{abstract_speed_formula}. We now show that \eqref{abstract_speed_variational} holds. Inspecting again the proof of Theorem \ref{THEOREM-ABSTRACT}, we have that $c^\star$ is equal to $c(\CC)$ as in Corollary \ref{COROLLARY_CC}. Take $c<c^\star$, then by Corollary \ref{COROLLARY_CC} we have that $c \in \CC$. The definitions of $\CC$ in \eqref{set_CC} implies then that
\begin{equation}
\exists T \geq 1, \hspace{2mm} \inf_{U \in X_T} \bE_c(U)<0
\end{equation}
which, by considering $\tilde{U} \in X_T$ such that $\bE_c(\tilde{U})<0$ and then the sequence $(\tilde{U}(\cdot+n))_{n \in \N}$ which is contained in $X$, implies that $\inf_{U \in X}\bE_c(U)=-\infty$. If we now take $c > c^\star$, we have again by Corollary \ref{COROLLARY_CC} that
\begin{equation}
\forall T \geq 1, \hspace{2mm} \inf_{U \in X_T} \bE_c(U) \geq 0
\end{equation}
which means
\begin{equation}
\inf_{U \in X} \bE_c(U)=0.
\end{equation}
Therefore, \eqref{abstract_speed_variational} follows. Finally, we have that \eqref{abstract_speed_bound} is exactly \eqref{sup_C_bound} in Lemma \ref{LEMMA_set_CC}.
\qed
\section{Proofs of the main results completed}\label{section_proofs_final}
Once we have proved the abstract results, we are ready to prove the main ones. In order to do such a thing, we need to show that the main problem can be put into the abstract framework. This is shown in Lemma \ref{LEMMA_implication} which is in subsection \ref{subs_implication}. The next subsections are then devoted to the conclusion of the proofs of the main results, which are Theorems \ref{THEOREM_main_TW}, \ref{THEOREM_strong_bc} and \ref{THEOREM_carac_speed}. However, as pointed out before, we do not have a counterpart of Theorem \ref{THEOREM_uniform_convergence} in the abstract setting, which means that we prove it using arguments relative to the main setting.

\subsection{Proving the link between the main setting and the abstract setting}\label{subs_implication}
The following result establishes the link between the main assumptions and the abstract ones. As a consequence, the main results can be deduced from the abstract framework, which we have already established.
\begin{lemma}\label{LEMMA_implication}
Assume that \ref{asu_unbalanced} holds. Set
\begin{equation}\label{choice_spaces}
\scrL:= L^2(\R,\R^k), \hspace{2mm} \scrH:= H^1(\R,\R^k), \hspace{2mm} \tilde{\scrH}:= H^2(\R,\R^k),
\end{equation}
\begin{equation}\label{choice_constants}
r_0^\pm:= \rho_0^\pm
\end{equation}
and
\begin{equation}\label{choice_CE}
\forall v \in \scrL, \hspace{2mm} \CE(v):= \begin{cases}
E(\psi+v)-\Fm^+ &\mbox{ if } v \in \scrH, \\
+\infty &\mbox{ otherwise},
\end{cases}
\end{equation}
where $\Fm^+$ was introduced in \ref{asu_unbalanced}, the constants $\rho_0^\pm$ are those from \eqref{rho_1} and the function $\psi$ is any smooth function in $X(\sigma^-,\sigma^+)$ converging to $\sigma^\pm$ at $\pm \infty$ at an exponential rate and such that $\psi' \in H^2(\R,\R^k)$. Under this choice, assumptions \ref{hyp_unbalanced}, \ref{hyp_smaller_space}, \ref{hyp_compactness}, \ref{hyp_projections_inverse}, \ref{hyp_projections_H} and \ref{hyp_regularity} hold. Moreover, we have:
\begin{itemize}
\item If \ref{asu_perturbation} holds, then \ref{hyp_levelsets} holds.
\item If \ref{asu_convergence} holds, then \ref{hyp_convergence} holds.
\end{itemize}
\end{lemma}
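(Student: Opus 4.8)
The proof of Lemma~\ref{LEMMA_implication} is essentially a bookkeeping task: one must check, one abstract hypothesis at a time, that the concrete objects defined in \eqref{choice_spaces}--\eqref{choice_CE} satisfy each of \ref{hyp_unbalanced}--\ref{hyp_convergence}. The plan is to first record the elementary structural facts (the relation between $\scrL$, $\scrH$, $\tilde{\scrH}$, and the identification of $\CE$ with $E-\Fm^+$ under the affine shift by $\psi$), then translate each abstract assumption into a statement about the $1$D energy $E$ and invoke the corresponding result from \ref{asu_unbalanced} and the discussion following it (the Schatzman quadratic estimates \eqref{rho_1}--\eqref{rho_2}, the compactness property \eqref{MS_COMPACTNESS}), and finally check the two implications about \ref{hyp_levelsets} and \ref{hyp_convergence} by simply matching the constants defined in subsection~\ref{subsection_constants} with those in subsection~\ref{sect_abstract}.

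\textbf{Step-by-step plan.} First I would fix the smooth reference path $\psi \in X(\sigma^-,\sigma^+)$ with exponential convergence and $\psi' \in H^2$, so that $X(\sigma^-,\sigma^+) = \psi + H^1(\R,\R^k)$ as an affine space and $\CE(v) = E(\psi+v)-\Fm^+$ is well defined on $\scrH = H^1$; set $\scrF^\pm := \CF^\pm - \psi \subset H^1$. Then:
\begin{itemize}
\item For \ref{hyp_unbalanced}: weak lower semicontinuity of $\CE$ in $L^2$ follows from Fatou applied to $V \geq 0$ plus weak lower semicontinuity of the Dirichlet term; closedness of $\scrF^\pm$ in $L^2$ is the local compactness of minimizing heteroclinics; the value $a := \Fm^- - \Fm^+ < 0$ is exactly $-(\Fm^+-\Fm^-)$; the local minimality of $\Fq^+$ (condition 2.\ of \ref{asu_unbalanced}) gives $\CE(\bv^+)=0$ with $\bv^+$ a local minimizer; $\scrF^+_{r_0^+}\cap\scrF^-_{r_0^-}=\emptyset$ is the normalization made after \eqref{rho_1}--\eqref{rho_2}; the coercivity \eqref{coercivityL} is the Schatzman estimate \eqref{rho_2} rewritten with $C^\pm := \sqrt{\beta^\pm}$ (using $\min\{\pm(-a),0\} = 0$ for $\scrF^+$ and $= a$ for $\scrF^-$, since $-a>0$); uniqueness of the $L^2$-projection and its $C^2$ regularity is \eqref{rho_1} together with the fact that the projection onto $\CF^\pm$ is realized by a translation $t\mapsto \Fq^\pm(\cdot+\tau^\pm(q))$, and $\tau^\pm$ depends smoothly on $q$ by the implicit function theorem applied to $\langle q - \Fq^\pm(\cdot+\tau), (\Fq^\pm)'(\cdot+\tau)\rangle_{L^2} = 0$, using the non-degeneracy in condition~3.
\item For \ref{hyp_smaller_space}: $\scrH = \{\CE < +\infty\}$ is immediate from the definition; $\|\cdot\|_{L^2}\le\|\cdot\|_{H^1}$ is trivial; $\CE$ is $C^1$ on $H^1$ since $V\in C^2_{\loc}$ and \ref{asu-infinity} controls growth; the $\scrL$-gradient $D_\scrL\CE$ is $v\mapsto -(\psi+v)'' + \nabla_u V(\psi+v)$, which maps $H^2$ continuously into $L^2$ (again using \ref{asu-infinity}), and \eqref{abstract_ipp} is integration by parts.
\item For \ref{hyp_compactness}: $\scrL$-bounded subsets of $\scrF^\pm$ are $\scrH$-compact — this is the compactness property \eqref{MS_COMPACTNESS}, since an $L^2$-bounded family of translates $\Fq^\pm(\cdot+\tau_n)$ with $\tau_n$ bounded has an $H^1$-convergent subsequence.
\item For \ref{hyp_projections_inverse}: alternative 2.\ holds with $\hat{P}^\pm_{(v,\bv^\pm)}$ the translation operator that sends a translate of $\Fq^\pm$ to the prescribed one; \eqref{D_inverse} and \eqref{hatP_CE} hold because translation is an $L^2$-isometry preserving $E$.
\item For \ref{hyp_projections_H}: the $H^1$-projection onto $\CF^\pm$ exists and is unique near $\CF^\pm$ (again by non-degeneracy), \eqref{H_difference_projections} and \eqref{H_local_est} are the $H^1$-versions of Schatzman's estimates, and the implication \eqref{H_local_bound} ($\CE$ small $\Rightarrow$ close in $H^1$) is the content of \eqref{nu_r}, with $\beta^\pm(r^\pm) := \nu^\pm(r^\pm)$.
\item For \ref{hyp_regularity}: take $\FP$ to be the truncation map $\FP(v) := (\pi\circ(\psi+v)) - \psi$ where $\pi$ is the nearest-point projection onto a large closed ball $\overline{B(0,R)}\supset\Sigma$ with $R\ge R_0$; \eqref{FP_1}--\eqref{FP_3} follow from \ref{asu-infinity} ($V$ and its radial derivative grow outside $B(0,R_0)$, so truncating strictly decreases $E$ unless the curve already lies in the ball, and $\Sigma\subset B(0,R_0)$); the regularity conclusion — that $L^\infty$-bounded local minimizers are classical $C^{2,\alpha}$ solutions of the profile equation lying in $\CA(I)$ — is standard interior elliptic regularity bootstrapping for \eqref{profile}, which is where the $L^\infty$ bound from $\FP$ is used to control $\nabla_u V$.
\end{itemize}
Finally, for the last two bullets: \ref{hyp_levelsets} requires $-a < \CE_{\max}^-$ and $\{\CE<0\}\subset\scrF^-_{r_0^-/2}$, which are precisely $\Fm^+-\Fm^- < E_{\max}$ and \eqref{perturbation_ls} in \ref{asu_perturbation}, once one checks that the constant $\CE_{\max}^-$ of \eqref{intr_eps_0-} equals $E_{\max}$ of \eqref{Fb_max} under the dictionary $C^- = \sqrt{\beta^-}$, $\kappa_r^- = \Fe_r^-$, $\beta^-(\cdot) = \nu^-(\cdot)$, $\eta_0^- = \delta_0^-$, $\hat r^- = \Fr^-$; similarly \ref{hyp_convergence} ($-a < (d_0\gamma^-)^2/2$) is \eqref{convergence_1} in \ref{asu_convergence} under $d_0 = \Fd_0$ and $\gamma^- = \mu^-$ (matching \eqref{gamma-} with \eqref{mu-} via $\sfC^- = \overline{\beta^-}$).

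\textbf{Main obstacle.} The routine parts are the algebraic matching of constants and the weak-lower-semicontinuity/growth estimates. The genuinely delicate point is the verification of \ref{hyp_regularity}: one must produce a single map $\FP$ that is a $1$-Lipschitz $L^2$-contraction, fixes $\scrF^\pm$, strictly decreases $\CE$ off its fixed-point set, \emph{and} such that the $\FP$-invariant local minimizers are exactly the regular classical solutions. The contraction $v\mapsto$ (truncation) does not literally have norm-nonincreasing property on $H^1$ (only on $L^2$), so one must be careful that the composition of truncation with the $C^2$-projection-type arguments is consistent with Definition~\ref{def_local_minimizer}, which is stated with $\scrH$-test functions but measures distances in $\scrL$; I expect this compatibility check, together with the elliptic bootstrap giving membership in $\CA(\R) = C^2_{\loc}(\R,L^2)\cap C^1_{\loc}(\R,H^1)\cap C^0_{\loc}(\R,H^2)$, to be the part requiring the most care.
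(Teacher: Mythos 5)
Your proposal follows essentially the same route as the paper: the abstract hypotheses are verified one at a time by translating them, via the affine shift by $\psi$, into the Schatzman-type estimates from \ref{asu_unbalanced}, and the map $\FP$ is implemented as a superposition with the nearest-point projection onto a large ball containing $\Sigma$ (the paper's $f_{R_{\max}}$), exactly as you describe.

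One concrete slip in your constant dictionary: you set $C^\pm := \sqrt{\beta^\pm}$, but the correct identification is $C^\pm = \beta^\pm$. Indeed \eqref{rho_2} reads $\dist_{H^1}(q,\CF^\pm)^2 \le \beta^\pm(E(q)-\Fm^\pm)$, and since $\lVert\cdot\rVert_{L^2}\le\lVert\cdot\rVert_{H^1}$ this gives $(\beta^\pm)^{-1}\dist_\scrL(v,\scrF^\pm)^2\le\CE(v)-\min\{\pm(-a),0\}$, which is \eqref{coercivityL} with $C^\pm=\beta^\pm$; with $\sqrt{\beta^\pm}$ you get the wrong power. This matters for the final bullet: the identities $\hat r^-=\Fr^-$, $\sfC^-=\overline{\beta^-}$ and $\gamma^-=\mu^-$ used to deduce \ref{hyp_levelsets} and \ref{hyp_convergence} from \ref{asu_perturbation} and \ref{asu_convergence} all compare \eqref{intr_r-}, \eqref{sfC}, \eqref{gamma-} against \eqref{Fr}, \eqref{overline_beta}, \eqref{mu-}, and those only agree when $C^-=\beta^-$. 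With the correction, the matching is exact.

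Finally, the ``main obstacle'' you flag at the end is not actually an issue in the paper's framework: hypothesis \ref{hyp_regularity} only requires the map $\FP$ to be $\scrL$-nonexpansive (\eqref{FP_2}), not $\scrH$-nonexpansive. The truncation $f_{R_{\max}}$ is $1$-Lipschitz, giving $\scrL$-nonexpansiveness directly, and the pointwise bound $\lvert Df_{R_{\max}}\rvert\le 1$ is only used to ensure that truncation does not increase the Dirichlet part of $\CE$ (\eqref{fR_2}), not to control the difference $\lVert\FP(v_1)-\FP(v_2)\rVert_{\scrH}$. The compatibility with Definition \ref{def_local_minimizer} is then automatic; there is no hidden subtlety here.
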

\begin{proof}
The fact that the functional
\begin{equation}
v \in \scrH \to E(\psi+v)
\end{equation}
is well defined and, moreover, is a $C^1$ functional on $(\scrH,\lVert \cdot \rVert_{\scrH})$ is proven by classical arguments. See for instance Bisgard \cite{bisgard}, Montecchiari and Rabinowitz \cite{montecchiari-rabinowitz18}. See also \cite{oliver-bonafoux} for the precise statement in this setting. We obviously have $\scrF^\pm= \CF^\pm-\psi$. We now pass to prove that the assumptions are satisfied.

\textbf{Assumption \ref{hyp_unbalanced} is satisfied}: 

The fact that $\CE$ is weakly lower semicontinuous in $\scrL$ is standard, see \textit{Lemma 3.1} in \cite{smyrnelis}. We already invoked  \textit{Lemma 2.1} in \cite{schatzman} in Schatzmann so that  \eqref{rho_1} and \eqref{rho_2} hold. That is, due to \eqref{choice_constants} we have that if
\begin{equation}\label{inf_translations}
\inf_{\tau \in \R} \lVert v+\psi-\Fq^\pm(\cdot+\tau) \rVert_{\scrL} \leq r_0^\pm
\end{equation}
there is a unique $\tau(v) \in \R$ which attains the infimum in \eqref{inf_translations}. Moreover, the correspondence $v \to \tau(v)$ defined on the subset of $\scrL$ composed of functions that verify \eqref{inf_translations} is of class $C^2$. Therefore, the applications
\begin{equation}\label{projection_tau}
P^\pm: v \in \scrF^-_{r_0^\pm/2} \to \Fq^\pm(\cdot+\tau(v))-\psi \in \scrF^\pm,
\end{equation}
satisfy the properties required. Finally, we have that estimate \eqref{coercivityL} follows by \textit{Lemma 3.2} in \cite{monteil-santambrogio}, up to modifying the choice of the constants $\rho_0^\pm,\beta_0^\pm$.

\textbf{Assumption \ref{hyp_smaller_space} is satisfied}:

By \eqref{choice_spaces}, we have that $\tilde{\scrH} \subset \scrH \subset \scrL$ and the associated norms verify
\begin{equation}
\lVert \cdot \rVert_{\scrL} \leq \lVert \cdot \rVert_{\scrH} \leq \lVert \cdot \rVert_{\tilde{\scrH}}.
\end{equation}
As we pointed out before, $\CE$ restricted to $(\scrH,\lVert \cdot\rVert_{\scrH})$ is a $C^1$ functional. Moreover, as shown in \cite{bisgard,montecchiari-rabinowitz18}, we have that the differential is given by
\begin{equation}\label{DERIVATIVE_FUNCTIONAL}
\forall v \in \scrH, \hspace{2mm} D\CE(v):w \in \scrH \to \int_\R \left( \langle \psi'+v',w' \rangle + \langle \nabla V(\psi+v),w \rangle \right) \in \R.
\end{equation}
%We now show that \eqref{CE_R_est}) holds. For that purpose, let $R>0$, $v \in \scrH$ such that $\lVert v \rVert_{\scrH} \leq R$ and $v \in \scrH$. We have that
%\begin{equation}\label{CE_R_ineq_1}
%\lvert D\CE(v)(w) \rvert \leq \lVert \psi'+v' \rVert_{\scrL}\lVert w' \rVert_{\scrL}+ \int_\R \langle D^2 V(h) v,w \rangle,
%\end{equation}
%where for each $t \in \R$, $h(t)$ lies on the segment joining $v(t)+\psi(t)$ and $\psi(t)$. Since $\lVert v \rVert_{L^\infty(\R,\R^k)} \leq \lVert v \rVert_{\scrH} \leq R$ and $D^2V$ is continuous, we have
%\begin{equation}
%C_0(R):=\max\{ \lvert D^2V(u) \rvert: u \in \R^k, \lvert u \rvert \leq R+\lVert \psi \rVert_{L^\infty(\R,\R^k)}\}<+\infty,
%\end{equation}
%which, plugging into \eqref{CE_R_ineq_1}) gives
%\begin{equation}
%\lvert D\CE(v)(w) \rvert \leq (\lVert \psi' \rVert_{\scrL}+R)\lVert w' \rVert_{\scrL}+C_0(R)R\lVert w \rVert_{\scrL},
%\end{equation}
%so that by taking $C_\CE(R):= \lVert \psi' \rVert_{\scrL}+R+C_0(R)R$, \eqref{CE_R_ineq_1}) has been established.
Let now $v \in \tilde{\scrH}$, since $\psi$ is smooth with good behavior at infinity we can integrate by parts to get
\begin{align}\label{IBP}
\forall w \in \scrH, \hspace{2mm} D\CE(v)(w) &= \int_\R \langle -(\psi''+v'')+\nabla V(\psi+v), w \rangle\\
&= \langle D_\scrL\CE(v),w \rangle_{\scrL},
\end{align}
where we have set
\begin{equation}
D_{\scrL}\CE: v \in (\tilde{\scrH},\lVert \cdot \rVert_{\tilde{\scrH}}) \to -(\psi''+v'')+\nabla V(\psi+v) \in (\scrL,\lVert \cdot \rVert_{\scrL}),
\end{equation}
which, by standard arguments, can be shown to be continuous. Notice that \eqref{abstract_ipp} in \ref{hyp_smaller_space} is exactly \eqref{IBP} above, which concludes this part of the proof.

\textbf{Assumption \ref{hyp_compactness} is satisfied}:

Let $(\bv_n^-)_{n \in \N}$ be a $\scrL$-bounded sequence in $\scrF^-$. We want to show the existence of a subsequence of $(\bv_n^-)_{n \in \N}$ strongly convergent in $\scrH$. Since 
\begin{equation}
\scrF^-=\CF^--\psi=\{ \Fq^-(\cdot+\tau)-\psi: \tau \in \R\},
\end{equation}
we have that $(\bv_n^-)_{n \in\N}=(\Fq^-(\cdot+\tau_n)-\psi)_{n \in \N}$ with $(\tau_n)_{n \in \N}$ a bounded sequence of real numbers. Since such a sequence is bounded in $\scrL$, we know that, up to an extraction, there exists $\tilde{v} \in \scrL$ such that $\Fq^-(\cdot+\tau_n)-\psi \rightharpoonup \tilde{v}$ weakly in $\scrL$. Due to the weak lower semicontinuity of $\CE$, we have that
\begin{equation}
\CE(\tilde{v}) \leq \liminf_{n \to +\infty}\CE(\Fq^-(\cdot+\tau_n)-\psi) = a,
\end{equation}
which, by minimality, implies that $\CE(v)=0$, that is, $\tilde{v} \in \scrF^-$. We can then write $\tilde{v}=\Fq^-(\cdot+\tau)-\psi$ for some $\tau \in \R$. Now, notice that, by the compactness of minimizing sequences \eqref{MS_COMPACTNESS}, there exists a sequence $(\tau_n')_{n \in \N}$ of real numbers such that, up to an extraction
\begin{equation}\label{conv_taun_taun'}
\Fq^-(\cdot+\tau_n+\tau_n')-\Fq^- \to 0 \mbox{ strongly in } \scrH
\end{equation}
which necessarily implies that
\begin{equation}
\tau_n+\tau_n' \to 0
\end{equation}
and, since $(\tau_n)_{n \in \N}$ is bounded, we have that $(\tau_n')_{n \in \N}$ is a bounded sequence as well. Therefore, we can assume, up to an extraction, that $\tau_n' \to \tau$. Combining this information with \eqref{conv_taun_taun'}, we obtain
\begin{equation}
\Fq^-(\cdot+\tau_n)-\Fq^-(\cdot-\tau^-) \to 0 \mbox{ strongly in } \scrH,
\end{equation}
which establishes the claim.

We need to show the same for $\scrF^+$. The argument is identical to the one above, except for the fact that the compactness of minimizing sequences is replaced by 3. in assumption \ref{asu_unbalanced}, which is in fact stronger, and we use that the elements in $\scrF^+$ are local minimizers (instead of global ones), which does not require any modification of the reasoning.

\textbf{Assumption \ref{hyp_projections_inverse} is satisfied}:

More precisely, we show that 2. in \ref{hyp_projections_inverse} holds. Notice that since the results are local in nature and \ref{hyp_unbalanced} implies that locally the situation does not change between $\scrF^-$ and $\scrF^+$, we can treat both cases together. Let $(v,\bv^\pm) \in \scrF^\pm_{r_0^\pm}$. Let $\tau(v)$ be given by the projection map defined in \eqref{projection_tau}. We have that $\bv^\pm=\Fq^\pm(\cdot+\tau)-\psi$ for some $\tau \in \R$. Define

\begin{equation}
\hat{P}^\pm_{(v,\bv^\pm)}: w \in \scrL \to w(\cdot-\tau(v)+\tau)-\psi+\psi(\cdot-\tau(v)+\tau) \in \scrL.
\end{equation}
Clearly, using the definition of the projection in \eqref{projection_tau} and $\tau(v)$
\begin{align}
\lVert \hat{P}^\pm_{(v,\bv^\pm)}(v)-\bv^\pm \rVert_{\scrL}&= \lVert v(\cdot-\tau(v)+\tau)-(\Fq^\pm(\cdot+\tau)-\psi(\cdot-\tau(v)+\tau)) \rVert_{\scrL}\\&= \lVert v-(\Fq^\pm(\cdot+\tau(v))-\psi) \rVert_{\scrL}=\inf_{\tilde{\tau} \in \R}\lVert v-(\Fq^\pm(\cdot+\tilde{\tau})-\psi) \rVert_{\scrL},
\end{align}
meaning that
\begin{equation}
P^\pm(\hat{P}^\pm_{(v,\bv^\pm)}(v))=\bv^\pm
\end{equation}
and
\begin{equation}
\dist_{\scrL}(\hat{P}^\pm_{(v,\bv^\pm)}(v),\scrF^\pm)=\dist_{\scrL}(v,\scrF^\pm)
\end{equation}
which are \eqref{hatP} and \eqref{hatP_dist} respectively. Next, notice that for $(w_1,w_2) \in \scrL^2$ and $h \in \R$ we have
\begin{equation}
\hat{P}^\pm_{(v,\bv^\pm)}(w_1+hw_2)= \hat{P}^\pm_{(v,\bv^\pm)}(w_1)+hw_2(\cdot-\tau(v)+\tau)
\end{equation}
so that $\hat{P}^\pm_{(v,\bv^\pm)}$ is differentiable and
\begin{equation}
\forall (w_1,w_2) \in \scrL^2, \hspace{2mm} D(\hat{P}^\pm_{(v,\bv^\pm)})(w_1,w_2)=w_2(\cdot-\tau(v)+\tau)
\end{equation}
so that
\begin{equation}
\forall (w_1,w_2) \in \scrL^2, \hspace{2mm} \lVert D(\hat{P}^\pm_{(v,\bv^\pm)})(w_1,w_2) \rVert_{\scrL}=\lVert w_2 \rVert_{\scrL},
\end{equation}
which is \eqref{D_inverse}. Finally, notice that $v  \in \scrH$ if and only if $\hat{P}^\pm_{(v,\bv^\pm)} \in \scrH$. Assuming that $v \in \scrH$ we have
\begin{align}
\CE(\hat{P}^\pm_{(v,\bv^\pm)}(v))&=\CE( v(\cdot-\tau(v)+\tau)-\psi+\psi(\cdot-\tau(v)+\tau) )\\&= E(v(\cdot-\tau(v)+\tau)+\psi(\cdot-\tau(v)+\tau))=E(\psi+v)=\CE(v)
\end{align}
and if $v \in \scrL \setminus \scrH$, we have $\CE(\hat{P}^\pm_{(v,\bv^\pm)}(v))=+\infty=\CE(v)$. Therefore, \eqref{hatP_CE} holds. We have then showed that \ref{hyp_projections_inverse} holds.

\textbf{Assumption \ref{hyp_projections_H} is satisfied}: 

Lemma \textit{2.1} in Schatzmann states that for $v \in \scrF_{\scrH,r_0^\pm}^\pm$  the problem
\begin{equation}
\inf_{\tau \in \R}\lVert v+\psi-\Fq^\pm(\cdot+\tau) \rVert_{\scrH}
\end{equation}
has a unique solution $\tau_\scrH(v) \in \R$ and the projection map
\begin{equation}
P^\pm_\scrH: v \in \scrF^\pm_{\scrH,r_0^\pm} \to \Fq^\pm(\cdot+\tau_\scrH(v)) \in \scrF^\pm
\end{equation}
is $C^1$ with respect to the $\scrH$-norm. Next, we have that \eqref{H_difference_projections} is \textit{Corollary 2.3} in \cite{schatzman}. Finally, the fact that \eqref{H_local_bound} implies \eqref{H_local_est} for the constants $C^\pm$ (up to possibly increasing) is a consequence of the compactness of the minimizing sequences. See for example \textit{Corollary 3.2} in \cite{schatzman}.

\textbf{Assumption \ref{hyp_regularity} is satisfied}:

We show the existence of the map $\FP$. We follow \textit{Lemma 3.3} in \cite{schatzman}. Let $R_0>0$ be the constant from \ref{asu-infinity}. For $R\geq R_0$ define in $\R^k$
\begin{equation}
f_{R}(u):= \begin{cases}
u &\mbox{ if } \lvert u \rvert \leq R,\\
R \frac{u}{\lvert u \rvert} &\mbox{ otherwise},
\end{cases}
\end{equation}
where $R_0$ is the constant from \ref{asu-infinity}. For $u \in R^k$ such that $\lvert u \rvert \leq R$, we have $f_{R}(u)=u$. Assume that $u \in \R^k$ is such that $\lvert u \rvert > R$. In that case, there exists $\xi \in \left(\frac{R}{\lvert u \rvert},1\right)$ such that
\begin{equation}
V(u)=V(f_{R}(u))+\langle\nabla_uV(\xi u),u-f_{R}(u) \rangle = V(f_{R}(u))+\frac{1}{\xi}\left(1-\frac{R}{\lvert u \rvert}\right)\langle\nabla_uV(\xi u), \xi u \rangle
\end{equation}
which, by \ref{asu-infinity} implies
\begin{equation}\label{fR_0.5}
\forall R \geq R_0, \forall u \in \R^k: \lvert u \rvert > R, \hspace{2mm} V(u) \geq V(f_{R}(u))+\frac{1}{\xi}\left(1-\frac{R}{\lvert u \rvert}\right)\nu_0 \lvert \xi u \rvert^2 > V(f_{R}(u)).
\end{equation}
In particular, we have shown
\begin{equation}\label{fR_1}
\forall R \geq R_0, \forall u \in \R^k, \hspace{2mm} V(u) \geq V(f_{R}(u)).
\end{equation}
Next, let $J \subset \R$ be a compact interval and $v \in H^1(J,\R^k)$. For $R \geq R_0$, consider the function $v_R:= f_R \circ v$. Since we clearly have that for all $u \in \R^k$, $\lvert f_R(u) \rvert \leq \lvert u \rvert$, it holds that $v_R \in L^2(J,\R^k)$. Next, we have that $f_R$ is the projection onto the closed ball of center 0 and radius $R$, so that it is non-expansive. As a consequence, we have
\begin{equation}\label{fR_1.5}
\forall R \geq R_0, \forall u \in \R^k, \hspace{2mm} \lvert Df_R(u) \rvert \leq 1.
\end{equation}
 Therefore, applying the chain rule we obtain
\begin{equation}
\mbox{for a. e. } t \in J, \hspace{2mm} \lvert v_R'(t) \rvert \leq \lvert v'(t) \rvert,
\end{equation}
which means that $v_R \in H^1(J,\R^k)$ and, combining with \eqref{fR_1} we obtain
\begin{equation}\label{fR_2}
E(v_R;J) \leq E(v;J)
\end{equation}
and, by \eqref{fR_0.5} the equality above holds if and only if $v_R=v$. Let now
\begin{equation}
R_{\max}:=2 \max\{ R_0, \lVert \Fq^- \rVert_{L^\infty(\R,\R^k)},\lVert \Fq^+ \rVert_{L^\infty(\R,\R^k)}\}.
\end{equation}
Consider now the application
\begin{equation}\label{FP_def_link}
\FP: v \in \scrL \to f_{R_{\max}}\circ (v+\psi)-\psi \in \scrL
\end{equation}
which is well-defined due to the previous considerations. Moreover, the choice of $R_{\max}$ implies that $\FP$ equals the identity on $\{\Fq^-(\cdot+\tau)-\psi: \tau \in \R\}$ and $\{ \Fq^+(\cdot+\tau)-\psi:\tau \in \R \}$, which is exactly \eqref{FP_3}. Inequality \eqref{fR_2} gives \eqref{FP_1}. Finally, using \eqref{fR_1.5} we have that
\begin{align}
\forall (v_1,v_2) \in \scrL^2, \hspace{2mm} \lVert \FP(v_1)-\FP(v_2) \rVert_{\scrL}^2&= \int_\R \lvert f_{R_{\max}} \circ (v_1+\psi) - f_{R_{\max}}\circ (v_2+\psi) \rvert^2\\
& \leq \int_\R \sup_{u \in \R^k}\lvert Df_{R_{\max}}(u) \rvert^2 \lvert v_1-v_2 \rvert^2\\
& \leq \int_\R \lvert v_1-v_2 \rvert^2=\lVert v_1-v_2 \rVert_{\scrL}^2,
\end{align}
which is \eqref{FP_2}. Therefore, our map $\FP$ satisfies the required properties. Next, let $\bW$ be a local minimizer of $\bE_c$. We show that $\bW$ satisfies the desired regularity properties, that is, $\bW \in \CA(I)$ with $\CA(I)$ as in \eqref{CA_def}. Write $\overline{\bW}:=\bW+\psi$. We assume that for all $t \in I$, $\bW(t)=\FP(\bW)$. The definition of $\FP$ in \eqref{FP_def_link} implies that
 \begin{equation}
 \forall (x_1,x_2) \in I \times \R, \hspace{2mm} \overline{\bW}(x_1,x_2)=f_{R_{\max}}(\overline{\bW}(x_1,x_2))
\end{equation}
so that
\begin{equation}\label{Linfinity_W}
\lVert \overline{\bW} \rVert_{L^\infty(I\times \R,\R^k)} \leq R_{\max}.
\end{equation}
Therefore, by classical elliptic regularity arguments, we have that, with the obvious identifications, $\overline{\bW}$ solves
\begin{equation}\label{eq_U}
-c\partial_{x_1}\overline{\bW}-\Delta \overline{\bW} =-\nabla_uV(\overline{\bW}) \mbox{ in } I \times \R
\end{equation}
 and for all $\alpha \in (0,1)$  we have that $\overline{\bW} \in \CC^{3,\alpha}(I_C \times \R,\R^k)$ for any compact $I_C \subset I$. It is then clear that 
\begin{equation}
\bW \in \CC^2(I_C,L^2(\R,\R^k)) \cap \CC^1(I_C,H^1(\R,\R^k) \cap \CC^0(I_C,H^2(\R,\R^k))
\end{equation}
for any $I_C \subset I$ compact, which means that $\bW \in \CA(I)$.

\textbf{Assumption \ref{asu_perturbation} implies \ref{hyp_levelsets}}: Immediate.

\textbf{Assumption \ref{asu_convergence} implies \ref{hyp_convergence}}: Immediate.
\end{proof}

Once Lemma \ref{LEMMA_implication} has been established, the main results are easily obtained by rephrasing the abstract ones.
\subsection{Proof of Theorem \ref{THEOREM_main_TW} completed}
Assume that \ref{asu_perturbation} holds. Notice that \ref{asu_perturbation} implies that \ref{asu-sigma}, \ref{asu-infinity}, \ref{asu-zeros}, \ref{asu-wells} and \ref{asu_unbalanced} hold. Therefore, applying Lemma \ref{LEMMA_implication} we have that, choosing the objects as in its statement, we get that \ref{hyp_compactness}, \ref{hyp_projections_inverse}, \ref{hyp_projections_H}, \ref{hyp_regularity} and \ref{hyp_levelsets} hold. Those are exactly the assumptions which are needed for Theorem \ref{THEOREM-ABSTRACT} to hold, meaning that we obtain $(c^\star,\bU)$ with $c^\star>0$ and $\bU \in \CA(\R) \cap X$, with $\CA(\R)$ as in \eqref{CA_def} and $X$ as in \eqref{X}, which solves
\begin{equation}\label{abstract_equation_proof}
\bU''-D_{\scrL}\CE(\bU)=-c\bU' \mbox{ in } \R
\end{equation}
and satisfies the conditions at infinity
\begin{equation}\label{abstract_bc_weak_proof}
\exists T^- \leq 0: \forall t \leq T^-, \hspace{2mm} \bU(t) \in \scrF^-_{r_0^-/2} \hspace{1mm}\mbox{ and }\hspace{1mm} \exists \bv^+(\bU) \in \scrF^+: \lim_{t \to +\infty} \lVert \bU(t)-\bv^+(\bU) \rVert_{\scrH}=0.
\end{equation}
We now pass to prove each of the three statements of Theorem \ref{THEOREM_main_TW} separately:

\begin{enumerate}
\item \textbf{Existence}. Recall that for all $t \in \R$ we have $\bU(t) \in \scrL=L^2(\R,\R^k)$. Let us then define
\begin{equation}\label{identification}
\FU: (x_1,x_2) \in \R^2 \to \bU(x_1)(x_2) \in \R^k.
\end{equation}
It is clear then that since $\bU \in \CA(\R)$ we have that $\FU \in \CC^2_{\loc}(\R,\R^k)$ and, moreover for all $(x_1,x_2)$ and any pair of index $(i,j) \in \{0,1,2\}^2$ such that $i+j \leq 2$ we have
\begin{equation}\label{derivatives_FU}
\partial_{x_1}^i\partial_{x_2}^j\FU(x_1,x_2)=(\bU^{(i)}(x_1))^{(j)}(x_2)
\end{equation}
where for a curve $f$ taking values in a Hilbert space we denote by $f^{(i)}$ its $i$-th derivative, $i \in\N$. As a consequence of \eqref{abstract_equation_proof}, \eqref{derivatives_FU} and the formula for $D\CE$ when we make $\CE=E-\Fm^+$ (see \eqref{DERIVATIVE_FUNCTIONAL}) we obtain that
\begin{equation}\label{elliptic_system_proog}
-c\partial_{x_1}\FU-\Delta \FU = -\nabla_u V(\FU) \mbox{ in } \R^2
\end{equation}
and by \eqref{abstract_bc_weak_proof} we obtain that for some $L \in \R$ we have for some $x_1 \leq L$ that $\FU(x_1,\cdot) \in \CF^-_{\rho^-/2}$, since we choose $r_0^\pm=\rho^\pm$, so that $\CF^\pm_{\rho^\pm/2}=\scrF^\pm_{r_0^\pm/2}$.  The variational characterization \eqref{FU_min} follows directly from Theorem \ref{THEOREM_main_TW}, using the fact that we have $X=S$ and $\bE_c=E_{2,c}$ for all $c>0$ (again we implicitly identify $\FU$ with $\bU$ via \eqref{identification}. Finally, we have that for all $t \in \R$, $\bU(t)=\FP(\bU(t))$. According to the choice of $\FP$ made in Lemma \ref{LEMMA_implication}, this implies that $\lVert \FU \rVert_{L^\infty(\R,\R^k)}<+\infty$, which by classical Schauder theory and the smoothness properties of $V$ implies that for all $\alpha \in (0,1)$, $\FU \in \CC^{2,\alpha}(\R^2,\R^k)$. The proof of the existence part of Theorem \ref{THEOREM_main_TW} is hence completed.
\item \textbf{Uniqueness of the speed}. Again, we have that $X=S$ and $\bE_c=E_{2,c}$ for all $c>0$, meaning that the proof of this statement follows from the analogous one in Theorem \ref{THEOREM-ABSTRACT}.
\item \textbf{Exponential convergence}. Using the exponential rate of convergence of $\bU$ given by Theorem \ref{THEOREM-ABSTRACT}, which is \eqref{exponential_abstract}, we obtain that for some $b>c^\star/2$ it holds
\begin{equation}
\lim_{x_1 \to +\infty}\lVert \FU(x_1,\cdot)-\Fq^+(\cdot+\tau^+) \rVert_{H^1(\R,\R^k)}e^{bx_1}=0
\end{equation}
for some $\tau^+ \in \R$. This concludes the proof of the statement.
\end{enumerate}
The proof of Theorem \ref{THEOREM_main_TW} is concluded.
\qed
\subsection{Proof of Theorem \ref{THEOREM_strong_bc} completed}
Assume that \ref{asu_perturbation} and \ref{asu_convergence} hold. Arguing as in the proof of Theorem \ref{THEOREM_main_TW}, we have that the assumptions of Theorem \ref{THEOREM_abstract_bc} are fulfilled if we choose as in Lemma \ref{LEMMA_implication}. Therefore, Theorem \ref{THEOREM_strong_bc} is readily obtained by a straightforward rewriting of Theorem \ref{THEOREM_abstract_bc}.
\qed
\subsection{Proof of Theorem \ref{THEOREM_uniform_convergence}}
We now provide the proof of Theorem \ref{THEOREM_uniform_convergence}, which is a consequence of the following results, which are more general as required by Theorem \ref{THEOREM_uniform_convergence} and might be of independent interest:
\begin{lemma}\label{LEMMA_uniform_convergence+}
Assume that \ref{asu-sigma}, \ref{asu-infinity} and \ref{asu-zeros} hold. Let $(\hat{\sigma}_-,\hat{\sigma}_+) \in \Sigma^2$ (possibly equal) and $q \in X(\hat{\sigma}_-,\hat{\sigma}_+)$. Assume moreover that there exists $L^+ \in \R$ and $U \in H^1_{\loc}([L^+,+\infty) \times \R,\R^k)$ uniformly continuous and such that
\begin{equation}\label{uniform_convergence+hyp1}
\int_{L^+}^{+\infty}\lvert E(U(x_1,\cdot))-E(q)\rvert dx_1<+\infty,
\end{equation}
\begin{equation}\label{uniform_convergence+hyp2}
\lim_{x_1 \to +\infty}\lVert U(x_1,\cdot)-q \rVert_{L^2(\R,\R^k)}=0.
\end{equation}
Then, it holds that
\begin{equation}\label{uniform_convergence+res1}
\lim_{x_1 \to +\infty} \lVert U(x_1,\cdot)-q \rVert_{L^\infty(\R,\R^k)}=0
\end{equation}
and
\begin{equation}\label{uniform_convergence+res2}
\lim_{x_2 \to \pm \infty} \lVert U(\cdot,x_2)-\hat{\sigma}_\pm \rVert_{L^\infty([L^+,+\infty),\R^k)}=0.
\end{equation}
\end{lemma}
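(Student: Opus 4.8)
The plan is to exploit the elliptic regularity available for $U$ on the half-plane $[L^+,+\infty)\times\R$ together with the two integral hypotheses, in order to upgrade the $L^2$-in-$x_2$, integrable-in-$x_1$ convergence to uniform convergence. The key structural observation is that $\tilde{q}:=q$ solves the 1D system $q''=\nabla_uV(q)$, so the difference $v(x_1,\cdot):=U(x_1,\cdot)-q$ should be small in a strong norm as $x_1\to+\infty$. I would proceed as follows.

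\emph{Step 1: $L^\infty$ bound and interior elliptic estimates.} First I would note that $U$, being uniformly continuous on $[L^+,+\infty)\times\R$, together with \eqref{uniform_convergence+hyp2} and standard facts about $X(\hat\sigma_-,\hat\sigma_+)$ (namely that $q$ is bounded and converges exponentially to the wells), is bounded in $L^\infty([L^+,+\infty)\times\R,\R^k)$. Actually, to be careful, I would first argue that $U$ is $L^\infty$-bounded: if $U$ solves the profile equation \eqref{profile_recall} on the half-plane (which is implicit in the hypotheses via the energy being finite and $U$ being a solution; if not assumed, one uses that the relevant $U$ in the applications is the traveling wave profile, already known to be bounded), then by \ref{asu-infinity} a standard maximum-principle/truncation argument as in the proof of \ref{hyp_regularity} in Lemma~\ref{LEMMA_implication} gives the bound. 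Given the $L^\infty$ bound, interior Schauder estimates on unit balls $B_1(x_1,x_2)$ with $x_1\geq L^++1$ yield a uniform $\CC^{2,\alpha}$ bound on $U$ over $[L^++1,+\infty)\times\R$, and in particular a uniform bound on $\partial_{x_2}U$ and $\partial_{x_2}^2 U$.

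\emph{Step 2: From integrable $L^2$-in-$x_2$ smallness to pointwise smallness.} From \eqref{uniform_convergence+hyp1}, since the integrand is nonnegative away from... more precisely, combining \eqref{uniform_convergence+hyp1} with the coercivity of $E$ near $q$ (the energy is quadratic around the orbit of $q$, or more elementarily, $E(U(x_1,\cdot))-E(q)$ controls $\|U(x_1,\cdot)-q\|_{H^1(\R,\R^k)}^2$ on a neighborhood once one knows $L^2$-closeness, which holds for large $x_1$ by \eqref{uniform_convergence+hyp2}), one gets that $x_1\mapsto \|U(x_1,\cdot)-q\|_{H^1(\R,\R^k)}$ is small for large $x_1$ along a sequence, hence (using the equation to get a differential inequality, or using the uniform $\CC^{2,\alpha}$ bound to get uniform continuity of $x_1\mapsto\|U(x_1,\cdot)-q\|_{H^1}$) for all large $x_1$. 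Then $\|U(x_1,\cdot)-q\|_{L^\infty(\R,\R^k)}\leq C\|U(x_1,\cdot)-q\|_{H^1(\R,\R^k)}^{1/2}\|U(x_1,\cdot)-q\|_{L^2(\R,\R^k)}^{1/2}\to 0$ by Gagliardo--Nirenberg (or simply the 1D Sobolev embedding $H^1(\R)\hookrightarrow L^\infty(\R)$ applied to $U(x_1,\cdot)-q$, whose $H^1$ norm $\to 0$). This establishes \eqref{uniform_convergence+res1}.

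\emph{Step 3: Convergence in $x_2$.} For \eqref{uniform_convergence+res2}, fix $\epsilon>0$. Since $q(x_2)\to\hat\sigma_\pm$ as $x_2\to\pm\infty$, choose $R$ so that $|q(x_2)-\hat\sigma_\pm|<\epsilon/2$ for $\pm x_2\geq R$. By \eqref{uniform_convergence+res1} just proved, there is $M\geq L^+$ with $\|U(x_1,\cdot)-q\|_{L^\infty}<\epsilon/2$ for $x_1\geq M$; hence $|U(x_1,x_2)-\hat\sigma_\pm|<\epsilon$ for $x_1\geq M$, $\pm x_2\geq R$. On the compact strip $x_1\in[L^+,M]$, $\pm x_2\geq R$ one argues separately: the uniform $\CC^{2,\alpha}$ bound plus the integral condition \eqref{uniform_convergence+hyp1} forces $E(U(x_1,\cdot))$ to be close to $E(q)$ for $x_1$ large, which is not directly about $[L^+,M]$; so instead, for the strip $[L^+,M]$ I would use that $U\in H^1_{\loc}$ with the finite-energy / uniform continuity hypothesis implies $U(x_1,\cdot)\to$ wells as $x_2\to\pm\infty$ for each fixed $x_1$, and uniform continuity in $x_1$ upgrades this to uniformity on the compact $x_1$-interval (an Arzel\`a--Ascoli / Dini-type argument using that $x_1\mapsto U(x_1,\cdot)\in \CC^0(\R,\R^k)$ is continuous with values converging at $x_2=\pm\infty$). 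Combining the two regimes gives \eqref{uniform_convergence+res2}.

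\emph{Main obstacle.} The delicate point is Step~2: converting the hypothesis \eqref{uniform_convergence+hyp1}, which only says the \emph{nonnegative-after-normalization} energy excess is $L^1$ in $x_1$, plus the \emph{weak} ($L^2$-in-$x_2$) convergence \eqref{uniform_convergence+hyp2}, into genuine $H^1$-in-$x_2$ smallness for \emph{every} large $x_1$ rather than just along a subsequence. This requires either a Lyapunov/differential-inequality argument for $x_1\mapsto E(U(x_1,\cdot))-E(q)$ using the equation (showing it cannot oscillate), or the uniform $\CC^{2,\alpha}$ bound from Step~1 to ensure $x_1\mapsto \|U(x_1,\cdot)-q\|_{H^1(\R,\R^k)}$ is uniformly continuous so that $L^1$-smallness plus non-escape forces pointwise smallness; one must also be careful that the quadratic lower bound on the energy excess only holds in an $H^1$-neighborhood of the orbit of $q$, which is why \eqref{uniform_convergence+hyp2} (guaranteeing eventual $L^2$-proximity) and the $\CC^{2,\alpha}$ bound (upgrading $L^2$ to $H^1$ proximity along the relevant scale) are both needed. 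The $x_2$-direction statement \eqref{uniform_convergence+res2} on the bounded $x_1$-strip is the second, milder, technical nuisance, handled by a compactness argument.
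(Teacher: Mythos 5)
Your plan departs from the paper's proof in two significant ways, and both departures introduce genuine gaps.

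\textbf{Reliance on elliptic regularity.} The Lemma is a purely real-analytic statement about uniformly continuous $U\in H^1_{\loc}$ satisfying \eqref{uniform_convergence+hyp1}--\eqref{uniform_convergence+hyp2}; no PDE is assumed, and \ref{asu_unbalanced} is not among the hypotheses. Your Step~1 invokes interior Schauder estimates to get uniform $\CC^{2,\alpha}$ bounds, and Step~2 then needs these bounds both to pass from a subsequence to all large $x_1$ and to control $\partial_{x_2}U$. You acknowledge that this requires $U$ to solve the profile equation, which is not in the hypotheses, and then appeal to the specific application where $U=\FU$. That patches the application but not the Lemma as stated. The paper's proof avoids regularity altogether: it is a contradiction argument entirely in terms of uniform continuity of $U$ and integrability of $\lvert E(U(x_1,\cdot))-E(q)\rvert$.

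\textbf{The lower bound on the energy difference is false.} In Step~2 you assert that $E(U(x_1,\cdot))-E(q)$ controls $\lVert U(x_1,\cdot)-q \rVert_{H^1(\R,\R^k)}^2$ in an $L^2$-neighborhood of $q$. This is precisely the quadratic coercivity from Schatzman's spectral hypothesis, which requires $q$ to be a non-degenerate (modulo translations) \emph{minimizer}; here $q$ is an arbitrary element of $X(\hat\sigma_-,\hat\sigma_+)$ and assumption \ref{asu_unbalanced} is not in force, so the estimate simply fails (indeed $E(U)-E(q)$ can be negative). The ingredient the paper actually uses is the genuinely elementary Lemma~\ref{LEMMA_1D_H1conv}: $L^2$ convergence together with convergence of the energies to $E(q)$ implies $H^1$ convergence, via a Taylor expansion of $V$ around $q$ and weak $L^2$ convergence of the derivatives upgraded to strong convergence by norm convergence. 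This is the correct replacement for your coercivity claim, and it is exactly what makes the contradiction argument close: from a hypothetical bad sequence $x_{1,n}$, uniform continuity produces disjoint intervals $[x_{1,n}-\nu,x_{1,n}+\nu]$ on which the $L^\infty$ deviation stays $\geq\hat\delta/2$; integrability of $\lvert E(U(x_1,\cdot))-E(q)\rvert$ over their union $A$ yields a sequence $y_{1,n}\in A$ with $E(U(y_{1,n},\cdot))\to E(q)$; Lemma~\ref{LEMMA_1D_H1conv} then gives $H^1\to 0$ on that sequence, contradiction.

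\textbf{The compact strip in Step 3.} Your Arzel\`a--Ascoli/Dini suggestion for $[L^+,M]$ is too vague to carry the day: it presumes that $U(x_1,\cdot)\to\hat\sigma_\pm$ for every fixed $x_1$, but the hypotheses only ensure $E(U(x_1,\cdot))<\infty$ for a.e.\ $x_1$, and even where finite, the limits at $x_2\to\pm\infty$ need not a priori be $\hat\sigma_\pm$ on the transition strip. The paper instead propagates \eqref{uniform_convergence+res2} inward from large $x_1$ by small steps of width $\eta_\Sigma^+$ controlled by the modulus of continuity of $U$, and on each thin strip rules out a bad sequence $x_{2,n}\to\pm\infty$ by observing that disjoint balls where $\dist(U,\Sigma)\geq\hat\delta/2$ would force $\int_{L^+}^{+\infty}\lvert E(U(x_1,\cdot))-E(q)\rvert\,dx_1 = +\infty$. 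You would do well to replace your hand-waving argument by this energy accumulation argument.
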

Similarly, we have the following
\begin{lemma}\label{LEMMA_uniform_convergence-}
Assume that \ref{asu-sigma}, \ref{asu-infinity} and \ref{asu-zeros} hold. Let $(\hat{\sigma}_-,\hat{\sigma}_+) \in \Sigma^2$ (possibly equal) and $q \in X(\hat{\sigma}_-,\hat{\sigma}_+)$. Assume moreover that there exists $L^- \in \R$ and $U \in H^1_{\loc}((-\infty,L^-]\times \R,\R^k)$ uniformly continuous and such that
\begin{equation}\label{uniform_convergence-hyp1}
\int_{-\infty}^{L^-}lvert E(U(x_1,\cdot))-E(q)\rvert dx_1<+\infty,
\end{equation}
\begin{equation}\label{uniform_convergence-hyp2}
\lim_{x_1 \to -\infty}\lVert U(x_1,\cdot)-q \rVert_{L^2(\R,\R^k)}=0.
\end{equation}
Then, it holds that
\begin{equation}\label{uniform_convergence-res1}
\lim_{x_1 \to -\infty} \lVert U(x_1,\cdot)-q \rVert_{L^\infty(\R,\R^k)}=0
\end{equation}
and
\begin{equation}\label{uniform_convergence-res2}
\lim_{x_2 \to \pm \infty} \lVert U(\cdot,x_2)-\hat{\sigma}_\pm \rVert_{L^\infty((-\infty,L^-],\R^k)}=0.
\end{equation}
\end{lemma}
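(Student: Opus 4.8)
The proof I have in mind for Lemma~\ref{LEMMA_uniform_convergence-} proceeds in three steps; Lemma~\ref{LEMMA_uniform_convergence+} then follows by the reflection $x_1 \mapsto -x_1$, and Theorem~\ref{THEOREM_uniform_convergence} is recovered by applying both lemmas to $U=\FU$ with $q=\Fq^\pm(\cdot+\tau^\pm)$ and $(\hat\sigma_-,\hat\sigma_+)=(\sigma^-,\sigma^+)$: $\FU$ is uniformly continuous because it is $\CC^{2,\alpha}(\R^2,\R^k)$ with bounded norm, hypothesis \eqref{uniform_convergence-hyp2} is the content of the $L^2$-convergence statements in Theorems~\ref{THEOREM_main_TW} and~\ref{THEOREM_strong_bc}, and hypothesis \eqref{uniform_convergence-hyp1} follows from $\bE_{c^\star}(\FU)=0<+\infty$ together with Proposition~\ref{PROPOSITION_conv_sol_-} (on a right half-line the weight $e^{c^\star x_1}$ is bounded below, while on a left half-line Proposition~\ref{PROPOSITION_conv_sol_-} gives $E(\FU(x_1,\cdot))-\Fm^-\in L^1((-\infty,L])$ for every $L$). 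Since $(-\infty,0]\cup[0,+\infty)=\R$, the two ``strip'' conclusions \eqref{uniform_convergence-res2} and its $+$ analogue combine into the full-plane statement \eqref{uniform_convergence_4}.

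Set $g(x_1):=|E(U(x_1,\cdot))-E(q)|$, which lies in $L^1((-\infty,L^-])$ by \eqref{uniform_convergence-hyp1}; in particular $E(U(x_1,\cdot))<+\infty$ for a.e.\ $x_1$, so a.e.\ slice has limits in $\Sigma$ at $\pm\infty$, and on the ``good set'' $G:=\{x_1\le L^-:g(x_1)\le 1\}$ one has the uniform bound $\|\partial_{x_2}U(x_1,\cdot)\|_{L^2(\R)}^2\le 2(E(q)+1)$. A short argument using the non-degeneracy \ref{asu-zeros} of the wells shows that for a.e.\ $x_1$ close to $-\infty$ the slice $U(x_1,\cdot)$ belongs to $X(\hat\sigma_-,\hat\sigma_+)$ (two finite-energy curves that are $L^2$-close must share the same limits), so that $U(x_1,\cdot)-q\in H^1(\R,\R^k)$. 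Applying the one-dimensional interpolation inequality $\|f\|_{L^\infty(\R)}^2\le 2\|f\|_{L^2(\R)}\|f'\|_{L^2(\R)}$ to $f=U(x_1,\cdot)-q$ and using the uniform $H^1$-bound on $G$ together with \eqref{uniform_convergence-hyp2} gives $\|U(x_1,\cdot)-q\|_{L^\infty(\R)}\to 0$ as $x_1\to-\infty$ \emph{along} $G$. To remove the restriction to $G$ I would invoke uniform continuity: given $\eps>0$ with modulus $\delta$, and since $\int_{-\infty}^a g\to 0$ as $a\to-\infty$, for $x_1$ sufficiently negative every interval $(x_1-\delta/2,x_1+\delta/2)$ meets $G$ in positive measure (the complement of $G$ there has measure $<\delta$); picking $x_1'\in G$ in that interval and combining $\sup_{x_2}|U(x_1,x_2)-U(x_1',x_2)|\le\eps/2$ with the convergence along $G$ yields \eqref{uniform_convergence-res1}.

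For \eqref{uniform_convergence-res2}, the region $x_1\le -N$ is immediate from \eqref{uniform_convergence-res1} and $q(x_2)\to\hat\sigma_\pm$, since $\sup_{x_1\le -N}|U(x_1,x_2)-\hat\sigma_\pm|\le\sup_{x_1\le -N}\|U(x_1,\cdot)-q\|_{L^\infty(\R)}+|q(x_2)-\hat\sigma_\pm|\to 0$ as $x_2\to\pm\infty$. The delicate point — which I expect to be the main obstacle — is the bounded strip $[-N,L^-]\times\R$, where there is no $L^2$ control and where a priori $U$ might linger near a \emph{wrong} well far out in $x_2$. I would argue by contradiction: suppose $x_1^n\in(-\infty,L^-]$, $x_2^n\to+\infty$, $|U(x_1^n,x_2^n)-\hat\sigma_+|\ge\eps_0$; by \eqref{uniform_convergence-res1} the $x_1^n$ stay in a bounded interval, so up to a subsequence $x_1^n\to x_1^\infty$, and (passing to a further subsequence so that the $x_2^n$ are well-separated) the boxes $Q_n:=(x_1^\infty-\delta,x_1^\infty+\delta)\times(x_2^n-\delta',x_2^n+\delta')$ are disjoint (intersected with the domain). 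Uniform continuity keeps $|U-\hat\sigma_+|\ge\eps_0/2$ on $Q_n$, while $\int_{Q_n}\bigl(\tfrac12|\partial_{x_2}U|^2+V(U)\bigr)\to 0$, because the corresponding integral over the whole slab $(x_1^\infty-\delta,x_1^\infty+\delta)\times\R$ is finite (bounded by $\int(E(q)+g)$ over a bounded $x_1$-interval) and the $Q_n$ are disjoint with $x_2^n\to+\infty$. If $U$ stays away from all of $\Sigma$ on $Q_n$, then $V(U)\ge v_0>0$ there and $\int_{Q_n}V(U)\not\to 0$, a contradiction. If instead $U$ is within $\eps_0/2$ of some $\sigma'\ne\hat\sigma_+$ on $Q_n$, I would use that for a.e.\ $x_1$ the $+\infty$-limit of $U(x_1,\cdot)$ is a well $\sigma^+(x_1)$ which, by uniform continuity and discreteness of $\Sigma$, is a.e.\ equal to a single element of $\Sigma$, forced to be $\hat\sigma_+$ by \eqref{uniform_convergence-hyp2}; hence for a.e.\ $x_1\in(x_1^\infty-\delta,x_1^\infty+\delta)$ the slice $U(x_1,\cdot)$ must travel from near $\sigma'$ back to $\hat\sigma_+$ on $(x_2^n,+\infty)$, crossing a fixed spherical shell around $\hat\sigma_+$ on which $V\ge v_0>0$ over an $x_2$-length bounded below (by the modulus of continuity), so that $\int_{x_1^\infty-\delta}^{x_1^\infty+\delta}\!\int_{x_2^n}^{+\infty}V(U)\,dx_2\,dx_1$ is bounded below by a positive constant, again contradicting the vanishing of that tail. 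The same scheme applies for $x_2\to-\infty$ with $\hat\sigma_-$, completing \eqref{uniform_convergence-res2} and hence the lemma.
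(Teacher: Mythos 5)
Your argument is correct and reaches both conclusions, but by a genuinely different route from the paper's (which proves the $+$ version, Lemma~\ref{LEMMA_uniform_convergence+}, and declares the $-$ version analogous). For \eqref{uniform_convergence-res1}, the paper argues by contradiction: from the hypotheses it extracts a sequence $y_{1,n}\to\pm\infty$ along which $E(U(y_{1,n},\cdot))\to E(q)$, then invokes Lemma~\ref{LEMMA_1D_H1conv} to upgrade $L^2$-convergence plus energy \emph{convergence} into $H^1$-convergence, contradicting the positive $L^\infty$-gap propagated by uniform continuity. You instead exploit the Agmon interpolation $\lVert f\rVert_{L^\infty}^2\le 2\lVert f\rVert_{L^2}\lVert f'\rVert_{L^2}$ on a good set $G$ of full density at $-\infty$, where only energy \emph{boundedness} (not convergence) is needed since $\lVert f\rVert_{L^2}\to 0$ does all the work; this is a shorter path that bypasses Lemma~\ref{LEMMA_1D_H1conv} entirely. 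For \eqref{uniform_convergence-res2}, both you and the paper ultimately run a disjoint-box contradiction with $\int_{Q_n} V(U)\ge v_0\cdot|Q_n|>0$, but the organization differs: the paper first gets the convergence on a far tail (automatic from res1 plus $q\to\hat\sigma_\pm$), then extends inward by finite induction in slabs of width $\eta_\Sigma$, chosen small relative to $\lVert D\FU\rVert_{L^\infty}$ precisely so that $U$ cannot drift to a \emph{wrong} well within one slab, which reduces each inductive step to the single case ``$U$ stays away from all of $\Sigma$.'' You handle the bounded strip directly and must therefore dispatch the wrong-well case separately via the crossing argument (the offending slices must re-cross a shell around $\hat\sigma_+$ over a positive $x_2$-length, giving a non-vanishing tail of $\int V(U)$, which uses that the a.e.\ $x_2\to+\infty$ limit of the slices is constantly $\hat\sigma_+$ by a local-constancy-plus-connectedness argument seeded at $x_1\to-\infty$). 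Both are valid; the paper's induction buys a one-case argument at the cost of extra bookkeeping, while your direct strip attack needs the extra case but no induction.
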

In the proof of Lemmas \ref{LEMMA_uniform_convergence+} and \ref{LEMMA_uniform_convergence-}, we will need to use the following fact:
\begin{lemma}\label{LEMMA_1D_H1conv}
Assume that \ref{asu-sigma}, \ref{asu-infinity} and \ref{asu-zeros} hold. Let $(\hat{\sigma}_-,\hat{\sigma}_+) \in \Sigma^2$ (possibly equal) and $q \in X(\hat{\sigma}_-,\hat{\sigma}_+)$. Assume that $(q_n)_{n \in \N}$ is a sequence in $X(\hat{\sigma}_-,\hat{\sigma}_+)$ such that
\begin{equation}\label{H1conv_hyp1}
\lim_{n \to \infty} \lVert q_n-q \rVert_{L^2(\R,\R^k)}=0
\end{equation}
and
\begin{equation}\label{H1conv_hyp2}
\lim_{n \to \infty} E(q_n) = E(q),
\end{equation}
then, it holds that
\begin{equation}\label{H1conv_res}
\lim_{n \to \infty}\lVert q_n-q \rVert_{H^1(\R,\R^k)}=0.
\end{equation}
\end{lemma}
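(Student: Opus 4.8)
The statement is the classical fact that, for the $1$D energy $E$, convergence in $L^2$ together with convergence of the energy values upgrades to convergence in $H^1$; the mechanism is weak convergence of the derivatives plus a lower semicontinuity/Fatou argument for the potential term, which together pinch the $L^2$-norms of the derivatives. Here is the plan.

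\textbf{Step 1 (weak convergence of the derivatives).} Since $E(q_n)\to E(q)<+\infty$, the energies are uniformly bounded, hence $(q_n')_n$ is bounded in $L^2(\R,\R^k)$. If $g$ is any weak subsequential limit of $(q_n')$ in $L^2$, then testing against $\phi\in\CC^\infty_c(\R,\R^k)$ and using $q_n\to q$ in $L^2$ gives $\int\langle g,\phi\rangle=\lim\int\langle q_{n_k}',\phi\rangle=-\lim\int\langle q_{n_k},\phi'\rangle=-\int\langle q,\phi'\rangle=\int\langle q',\phi\rangle$, so $g=q'$. As $q'$ is the only weak subsequential limit, $q_n'\rightharpoonup q'$ weakly in $L^2(\R,\R^k)$ along the whole sequence, and in particular $\|q'\|_{L^2}^2\leq\liminf_n\|q_n'\|_{L^2}^2$.

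\textbf{Step 2 (lower semicontinuity of the potential term and pinching).} From $q_n\to q$ in $L^2(\R,\R^k)$ we get $q_n\to q$ in measure on $\R$; since $V$ is continuous by \ref{asu-sigma}, $V(q_n(\cdot))\to V(q(\cdot))$ in measure, and because $V\geq 0$ the Fatou inequality for convergence in measure yields $\int_\R V(q)\leq\liminf_n\int_\R V(q_n)$ (both integrals are finite, the left one because $E(q)<+\infty$). Writing $E(q_n)=\tfrac12\|q_n'\|_{L^2}^2+\int_\R V(q_n)$ and using $\limsup(a_n-b_n)\leq\limsup a_n-\liminf b_n$ with $E(q_n)\to E(q)$, we obtain
\[
\limsup_n\tfrac12\|q_n'\|_{L^2}^2\;\leq\;\limsup_n E(q_n)-\liminf_n\int_\R V(q_n)\;\leq\;E(q)-\int_\R V(q)\;=\;\tfrac12\|q'\|_{L^2}^2 .
\]
Combined with the lower semicontinuity bound of Step 1, this forces $\|q_n'\|_{L^2}\to\|q'\|_{L^2}$.

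\textbf{Step 3 (conclusion).} In the Hilbert space $L^2(\R,\R^k)$, weak convergence together with convergence of norms implies strong convergence, so $q_n'\to q'$ in $L^2(\R,\R^k)$. Since moreover $q_n\to q$ in $L^2(\R,\R^k)$ by hypothesis, and recalling that $q_n-q\in H^1(\R,\R^k)$ whenever $q_n,q\in X(\hat\sigma_-,\hat\sigma_+)$, this is exactly $\lVert q_n-q\rVert_{H^1(\R,\R^k)}\to 0$.

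I do not expect a genuine obstacle: the argument only uses boundedness of the energies, weak $L^2$ compactness, and non-negativity of $V$. The one point deserving a line of care is that the potential term $q\mapsto\int_\R V(q)$ is not continuous with respect to $L^2$-convergence — it can only drop in the limit — which is precisely why Step 2 is phrased as a Fatou-type lower bound; if one prefers to avoid convergence in measure, the identical conclusion follows by the subsequence principle, extracting from an arbitrary subsequence a further one along which $q_n\to q$ a.e. (and $q_n'\rightharpoonup q'$) and running Steps 1--3 there.
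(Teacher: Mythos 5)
Your proposal is correct, and the overall architecture agrees with the paper's: both arguments reduce \eqref{H1conv_res} to showing $\lVert q_n'\rVert_{L^2}\to\lVert q'\rVert_{L^2}$ and then invoke the fact that weak convergence plus norm convergence implies strong convergence in the Hilbert space $L^2(\R,\R^k)$. Where you differ is in how you pass from $E(q_n)\to E(q)$ to convergence of the kinetic part. The paper first establishes a uniform $L^\infty$ bound on $(q_n)$ and the integrability $\nabla V(q)\in L^2(\R,\R^k)$ (the latter using the quadratic behavior of $V$ near $\Sigma$, i.e.\ \ref{asu-zeros}), and then uses the pointwise Taylor expansion
\begin{equation}
V(q_n)=V(q)+\langle\nabla V(q),q_n-q\rangle+\int_0^1 D^2V(q+\lambda(q_n-q))(q_n-q)(q_n-q)\,d\lambda
\end{equation}
together with Cauchy--Schwarz to deduce the stronger fact that $\int_\R\lvert V(q_n)-V(q)\rvert\le C\lVert q_n-q\rVert_{L^2}\to 0$, hence $\int V(q_n)\to\int V(q)$ directly. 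You bypass all of this preliminary work by observing that the argument only needs a one-sided inequality: $V\ge 0$ and continuity of $V$ give $\int_\R V(q)\le\liminf_n\int_\R V(q_n)$ via Fatou (or the subsequence principle plus a.e.\ convergence), and that lower bound alone pinches $\limsup\lVert q_n'\rVert_{L^2}^2\le\lVert q'\rVert_{L^2}^2$, closing the gap with the weak lower semicontinuity from Step 1. Your route is therefore more elementary in its use of the structure of $V$ (no $L^\infty$ bound on $q_n$, no integrability of $\nabla V(q)$, no second-derivative bound), at the cost of proving slightly less (you do not obtain the $L^1$-convergence of $V(q_n)$, which the paper's estimate gives for free). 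Both are valid and lead to the same conclusion; in the context of this lemma the extra strength the paper obtains is not used elsewhere, so the two approaches are essentially equivalent in consequence.
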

\begin{proof}
First, notice that
\begin{equation}\label{H1_conv_uniform_bound_Linf}
\sup_{n \in \N}\lVert q_n \rVert_{L^\infty(\R,\R^k)}<+\infty.
\end{equation}
Indeed, \eqref{H1conv_hyp2} implies that $(q_n')_{n \in \N}$ is bounded in $L^2(\R,\R^k)$ which, in combination with \eqref{H1conv_hyp1}) means that $(q_n)_{n \in \N}$ is bounded in $H^1(\R,\R^k)$, hence in $L^\infty(\R,\R^k)$. We also have that
\begin{equation}\label{H1_conv_bound_gradient}
\nabla V(q) \in L^2(\R,\R^k),
\end{equation}
which follows easily from the fact that $V$ is smooth and quadratic near the wells. For all $n \in \N$, we write the following expansion
\begin{equation}\label{H1_conv_expansion}
V(q_n)=V(q)+\langle \nabla V(q),q_n-q \rangle +\int_{0}^1 D^2V(q+\lambda(q_n-q))(q_n-q)(q_n-q) d\lambda,
\end{equation}
which holds pointwise in $\R$. Therefore, Cauchy-Schwartz inequality implies
\begin{equation}
\left( \int_{\R} \lvert V(q_n)-V(q) \rvert \right)^2 \leq \left( \int_{\R} \lvert \nabla V(q) \rvert^2+\sup_{\substack{ u \in \R^k \\ \lvert u \rvert \leq \lVert q_n-q \rVert_{L^\infty}}}\lvert D^2V(u)\rvert \right) \lVert q_n-q \rVert_{L^2(\R,\R^k)}^2,
\end{equation}
hence, by \eqref{H1_conv_uniform_bound_Linf} and \eqref{H1_conv_bound_gradient} we find a constant $C>0$ such that for all $n \in \N$
\begin{equation}
 \int_{\R} \lvert V(q_n)-V(q) \rvert \leq C \lVert q_n-q \rVert_{L^2(\R,\R^k)}
\end{equation}
which by \eqref{H1conv_hyp1} means that $V(q_n)-V(q) \to 0$ in $L^1(\R,\R^k)$. As a consequence, \eqref{H1conv_hyp2} implies that
\begin{equation}\label{H1_conv_norm_derivative}
\lim_{n \to \infty} \lVert q_n' \rVert_{L^2(\R,\R^k)}=\lVert q' \rVert_{L^2(\R,\R^k)}.
\end{equation}
Suppose now by contradiction that \eqref{H1conv_res} does not hold. Then, we can find a subsequence $(q_{n_m})_{m \in \N}$ and $\hat{\delta}>0$ such that for all $m \in \N$
\begin{equation}\label{H1conv_contr}
\lVert q_{n_m}-q \rVert_{H^1(\R,\R^k)} \geq \hat{\delta}.
\end{equation}
Since $(q_{n_m}')_{m \in \N}$ is bounded in $L^2(\R,\R^k)$, it converges weakly in $L^2$ up to an extraction, and the limit is $q'$ by uniqueness of the limit in the sense of distributions. By \eqref{H1_conv_norm_derivative}, we have that such a subsequence also converges strongly in $L^2(\R,\R^k)$, which combining with \eqref{H1conv_hyp1} contradicts \eqref{H1conv_contr}.
\end{proof}
We now prove Lemma \ref{LEMMA_uniform_convergence+}. The proof of Lemma \ref{LEMMA_uniform_convergence-} being analogous, we skip it.
\begin{proof}[Proof of Lemma \ref{LEMMA_uniform_convergence+}]
Assume by contradiction that \eqref{uniform_convergence+res1} does not hold. Then, we can find a sequence $(x_{1,n})_{n \in \N}$ in $[L^+,+\infty) \times \R$ such that $x_{1,n} \to +\infty$ as $n \to \infty$ as well as $\hat{\delta}>0$ such that for all $n \in \N$
\begin{equation}
\lVert U(x_{1,n},\cdot)-q \rVert_{L^\infty(\R,\R^k)} \geq \hat{\delta}.
\end{equation}
By uniform continuity, there exists $\nu>0$ such that for all $n \in \N$ we have
\begin{equation}\label{uniform_convergence_contr1}
\max_{x_1 \in [x_{1,n}-\nu,x_{1,n}+\nu]}\lVert U(x_1,\cdot)-q \rVert_{L^\infty(\R,\R^k)} \geq \frac{\hat{\delta}}{2}
\end{equation}
Let $A:=\cup_{n \in \N}[x_{1,n}-\nu,x_{1,n}+\nu] $. By \eqref{uniform_convergence+hyp1} we have that
\begin{equation}
\int_A (E(U(x_1,\cdot))-E(q)) dx_1<+\infty,
\end{equation}
and since $A$ has positive measure and it is unbounded by above, we find a sequence $(y_{1,n})_{n \in \N}$ in $A$ such that $y_{1,n}\to +\infty$ as $n \to \infty$ and $\lim_{n \to \infty} E(U(y_{1,n},\cdot))=E(q)$. Combining this fact with \eqref{uniform_convergence+hyp2}, we have that assumptions \eqref{H1conv_hyp1} and \eqref{H1conv_hyp2} in Lemma \ref{LEMMA_1D_H1conv} hold, which means that
\begin{equation}
\lim_{n \to \infty} \lVert U(y_{1,n},\cdot) - q \rVert_{H^1(\R,\R^k)}=0
\end{equation}
which contradicts \eqref{uniform_convergence_contr1}. Therefore, we have shown that \eqref{uniform_convergence+res1} holds. In order to prove \eqref{uniform_convergence+res2}, we first show that there exists $\underline{L}^+ \leq L^+$ such that
\begin{equation}\label{uniform_convergence+res2_prel}
\lim_{x_2 \to \pm \infty} \lVert U(\cdot,x_2)-\hat{\sigma}_\pm \rVert_{L^\infty([\underline{L}^+,+\infty),\R^k)}=0.
\end{equation}
We prove \eqref{uniform_convergence+res2_prel} by contradiction. The other case being handled in an analogous fashion, assume that there exists a sequence $(x_{2,n})_{n \in \N}$ in $\R$ such that $x_{2,n}\to +\infty$ as $n \to \infty$, a sequence $(x_{1,n})_{n \in \N}$ in $[L^+,+\infty)$ tending to $+\infty$ and $\hat{\delta}>0$ such that for all $n \in \N$
\begin{equation}\label{uniform_convergence_contr2}
\lvert U(x_{1,n},x_{2,n})-\hat{\sigma}_+ \rvert \geq \hat{\delta}.
\end{equation}
Since we already proved that \eqref{uniform_convergence+res1} holds, there exists $N_1 \in \N$ such that for all $n \geq \N$ we have
\begin{equation}\label{uniform_convergence_N1}
\lVert U(x_{1,n},\cdot)-q \rVert_{L^\infty(\R,\R^k)} \leq \frac{\hat{\delta}}{4}
\end{equation}
and, since $q \in X(\hat{\sigma}_-,\hat{\sigma}_+)$,  there exists $\hat{t} \in \R$ such that for all $t \geq \hat{t}$ we have
\begin{equation}\label{uniform_convergence_N2}
\lvert q(t)-\hat{\sigma}_+ \rvert \leq \frac{\hat{\delta}}{4}.
\end{equation}
Let $N_2 \in \N$ be such that for all $n \in \N$, $x_{2,n} \geq \hat{t}$. Taking any $n \geq \max\{N_1,N_2\}$, we obtain by \eqref{uniform_convergence_N1} and \eqref{uniform_convergence_N2} that
\begin{equation}
\lvert U(x_{1,n},x_{2,n})-\hat{\sigma}_+ \rvert \leq \frac{\delta}{2},
\end{equation}
which contradicts \eqref{uniform_convergence_contr2} and establishes \eqref{uniform_convergence+res2_prel}. In order to establish \eqref{uniform_convergence+res2}, we handle the limit $x_2 \to +\infty$, as the other one is treated identically. Let $\rho_{\Sigma}^+:= \dist(\sigma^+,\Sigma \setminus \{\sigma^+\})>0$. 
We claim that for every $\tilde{L} \geq L^+$ we have that if
\begin{equation}\label{tilde_L_hyp}
\lim_{x_2 \to \pm \infty} \lVert U(\cdot,x_2)-\hat{\sigma}_\pm \rVert_{L^\infty([\tilde{L},+\infty),\R^k)}=0,
\end{equation}
then
\begin{equation}\label{tilde_L_res}
\lim_{x_2 \to \pm \infty} \lVert U(\cdot,x_2)-\hat{\sigma}_\pm \rVert_{L^\infty([\tilde{L}-\eta_{\Sigma}^+,+\infty),\R^k)}=0,
\end{equation}
where
\begin{equation}\label{eta_SIGMA}
\eta_{\Sigma}^+:= \min\left\{\tilde{L}-L^+,\frac{\rho_{\Sigma}}{4\lVert D\FU \rVert_{L^\infty(\R^2,\R^k)} }\right\}.
\end{equation}
Such a claim allows to easily complete the proof of \eqref{uniform_convergence+res2} by a finite induction process, due to the fact that \eqref{uniform_convergence+res2_prel} holds.
\end{proof}
It remains to establish one claim in the proof of Lemma \ref{LEMMA_uniform_convergence+}.
\begin{proof}[Proof that \eqref{tilde_L_hyp} implies \eqref{tilde_L_res}]
Assume that \eqref{tilde_L_hyp} holds. We show that for every $\eps \in (0,\eta_\Sigma^+)$ we have
\begin{equation}\label{tilde_L_res_eps}
\lim_{x_2 \to \pm \infty} \lVert U(\cdot,x_2)-\hat{\sigma}_\pm \rVert_{L^\infty([\tilde{L}-\eta_{\Sigma}^++\eps,+\infty),\R^k)}=0,
\end{equation}
which clearly implies \eqref{tilde_L_res} by uniform continuity. Fix then $\eps \in  (0,\eta_\Sigma^+)$. By assumption, there exists $\overline{x}_2^+ \in \R$ such that for all $x_2 \geq \overline{x}_2^+$ we have
\begin{equation}
\lvert U(\tilde{L},x_2)-\sigma^+ \rvert \leq \frac{\rho_{\Sigma}^+}{4},
\end{equation}
which, by \eqref{eta_SIGMA} implies that for all $(x_1,x_2) \in [\tilde{L}-\eta^+_{\Sigma}+\eps,\tilde{L}] \times [\overline{x}_2,+\infty)$, it holds
\begin{equation}\label{rho_sigma_contr1}
\lvert U(x_1,x_2)-\sigma^+ \rvert \leq \frac{\rho_{\Sigma}^+}{2}
\end{equation}
and the definition of $\rho_{\Sigma}^+$ gives in turn that for all such $(x_1,x_2)$ and $\sigma\in \Sigma \setminus \{\sigma^+\}$ we have
\begin{equation}\label{rho_sigma_contr2}
\lvert U(x_1,x_2)-\sigma \rvert \geq \frac{\rho_{\Sigma}^+}{2}.
\end{equation}
Assume now that \eqref{tilde_L_res_eps} does not hold. Then, inequalities \eqref{rho_sigma_contr1} and \eqref{rho_sigma_contr2} imply that we can find a sequence $(x_{1,n},x_{2,n})_{n \in \N}$ contained in $ [\tilde{L}-\eta^+_{\Sigma}+	\eps,\tilde{L}] \times [\overline{x}_2,+\infty)$, such that $x_{2,n} \to +\infty$ as $n \to \infty$ and $\hat{\delta}>0$ such that for all $n \in \N$ and $\sigma \in \Sigma$
\begin{equation}
\lvert U(x_{1,n},x_{2,n})-\sigma \rvert \geq \hat{\delta}.
\end{equation}
By uniform continuity, we can find $\nu \in (0,\eps)$ such that for all $n \in \N$ and
\begin{equation}
(x_1,x_2) \in B((x_{1,n},x_{2,n}),\nu) \subset  [\tilde{L}-\eta^+_{\Sigma},\tilde{L}] \times [\overline{x}_2,+\infty)
\end{equation}
we have for all $\sigma \in \Sigma$
\begin{equation}
\lvert U(x_1,x_2)-\sigma \rvert \geq \frac{\hat{\delta}}{2}
\end{equation}
or, equivalently
\begin{equation}\label{uniform_convergence_N3}
V(U(x_1,x_2)) \geq V_{\hat{\delta}/2}:=\min\left\{V(u): u \in \R^k, \dist(u,\Sigma) \geq \frac{\hat{\delta}}{2} \right\}
\end{equation}
which is positive by \ref{asu-sigma} and \ref{asu-zeros}. Up to an extraction and since $x_{2,n} \to +\infty$ as $n \to \infty$, we can assume that whenever $n \not = m$ we have
\begin{equation}
B((x_{1,n},x_{2,n}),\nu) \cap  B((x_{1,m},x_{2,m}),\nu) = \emptyset,
\end{equation}
which, due to the definition of $\eta_{\Sigma}^+$ in \eqref{eta_SIGMA} and \eqref{uniform_convergence_N3} implies that
\begin{align}
\int_{L^+}^{+\infty}\lvert E(U(x_1))-E(q)\rvert dx_1 &\geq \int_{\tilde{L}-\eta^+_{\Sigma}}^{\tilde{L}}E(U(x_1))dx_1-\eta_{\Sigma}^+E(q)\\& \geq \sum_{n \in \N}\left(\int_{B((x_{1,n},x_{2,n}),\nu)}V(U(x_1,x_2)) dx_1dx_2\right)- \eta_{\Sigma}^+E(q)\\
& \geq \sum_{n \in \N} (\pi \nu^2 V_{\hat{\delta}/2}) -\eta_{\Sigma}^+E(q)=+\infty,
\end{align}
which enters in contradiction with \eqref{uniform_convergence+hyp1}. Therefore, the claim has been proven.
\end{proof}
We have now all the necessary ingredients for completing the proof of Theorem \ref{THEOREM_uniform_convergence}:
\begin{proof}[Proof of Theorem \ref{THEOREM_uniform_convergence} completed]
Let $(c^\star,\FU)$ be the solution given by Theorem \ref{THEOREM_main_TW}, interpreted via the choices made in Lemma \ref{LEMMA_implication}. We will invoke Lemma \ref{LEMMA_uniform_convergence+}. The $L^2$ exponential convergence \eqref{exp_convergence_main} given by Theorem \ref{THEOREM_main_TW} implies in particular that assumption \eqref{uniform_convergence+hyp2} in Lemma \ref{LEMMA_uniform_convergence+} holds with $U=\FU$, $q=\Fq^+(\cdot+\tau^+)$. Moreover, since $E_{2,c^\star}(\FU)=0<+\infty$, assumption \eqref{uniform_convergence+hyp1} in Lemma \ref{LEMMA_uniform_convergence+} holds for all $L \in \R$ in view of the definition of $E_{2,c^\star}$ (recall that $c^\star>0$). Finally, we have by Theorem \ref{THEOREM_main_TW} that $\FU \in \CC^{2,\alpha}(\R^2,\R^k)$, $\alpha \in (0,1)$, so that $\FU$ is uniformly continuous. As a consequence, Lemma \ref{LEMMA_uniform_convergence+} applies and we have \eqref{uniform_convergence+res1} and \eqref{uniform_convergence+res2} for all $L \in \R$, and this is exactly \eqref{uniform_convergence_1} and \eqref{uniform_convergence_2} for all $L \in \R$. 

Assume now that \ref{asu_convergence} holds, so that Theorem \ref{THEOREM_strong_bc} applies. We will show that we can invoke Lemma \ref{LEMMA_uniform_convergence-}. We have that \eqref{exp_convergence_-} in Theorem \ref{THEOREM_strong_bc} implies that \eqref{uniform_convergence-hyp2} in Lemma \ref{LEMMA_uniform_convergence-} holds with $U=\FU$ and $q=\Fq^-(\cdot+\tau^-)$. Moreover, the abstract result Proposition \ref{PROPOSITION_conv_sol_-} in combination with Lemma \ref{LEMMA_implication} implies in particular that for all $L \in \R$, \eqref{uniform_convergence-hyp2} in Lemma \ref{LEMMA_uniform_convergence-} holds. Since $\FU$ is uniformly continuous, Lemma \ref{LEMMA_uniform_convergence-} applies, which means that \eqref{uniform_convergence-res1} holds, so that we have proven \eqref{uniform_convergence_3} in Theorem \ref{THEOREM_uniform_convergence}. Moreover, for all $L \in \R$ we have that \eqref{uniform_convergence-res2} holds, which combined with \eqref{uniform_convergence_2} (which also holds for all $L \in \R$) gives \eqref{uniform_convergence_4} and completes the proof.
\end{proof}
\subsection{Proof of Theorem \ref{THEOREM_carac_speed} completed}
Assume that \ref{asu_perturbation} and \ref{asu_convergence} hold. Arguing as in the proof of Theorem \ref{THEOREM_main_TW}, we have that the assumptions of Theorem \ref{THEOREM_abstract_speed} are fulfilled if we choose as in Lemma \ref{LEMMA_implication}. Notice that Theorem \ref{THEOREM_carac_speed} is exactly Theorem \ref{THEOREM_abstract_speed} if we choose the abstract objects as in Lemma \ref{LEMMA_implication}. Therefore, Theorem \ref{THEOREM_carac_speed} is established.
\qed
\printbibliography

\iffalse

\fi
\end{document}